\newcommand{\af}{ \mathbf{a} }
\newcommand{\mb}[1]{\mathbf{#1}}
\newcommand{\mc}[1]{\mathcal{#1}}
\renewcommand{\subset}{\subseteq}
\newcommand{\nt}{{}^{{ N}}\! T}
\newcommand{\nF}{{}^{{ N}}\! F}
\newcommand{\nE}{{}^{{ N}}\! E}
\newcommand{\nf}{{}^{{ N}}\! f}
\newcommand{\np}{{}^{{ N}}\! p}
\newcommand{\nq}{{}^{{ N}}\! q}
\newcommand{\nc}{{}^{{ N}}\! C}
\newcommand{\ntau}{{}^{{ N}}\! \tau}
\newcommand{\nb}{{}^{{ N}}\! \beta}
\newcommand{\wn}{W_{\leq N}}
\newcommand{\wnn}{W_{>N}}
\newcommand{\hn}{\mc{H}_{\leq N}}
\newcommand{\bd}{\mathbf{\cdot }}
\newtheorem{thm}{Theorem}[section]
\newtheorem{prop}[thm]{Proposition}
\newtheorem{lem}[thm]{Lemma}
\newtheorem{cor}[thm]{Corollary}
\newtheorem{ass}[thm]{Assumption}
\newtheorem{conj}[thm]{Conjecture}
\numberwithin{equation}{section}
\begin{document}
		\bibliographystyle{alpha}
	\author{Xun Xie}
	\title[Conjectures P1-P15]{Conjectures P1-P15 for Coxeter groups with complete graph}
	\address{School of Mathematics and Statistics, Beijing Institute of Technology, Beijing 100081, China}
	\email{xieg7@163.com}
	\date{\today}
	
\subjclass[2010]{Primary 20C08; Secondary  20F55}

\keywords{ Lusztig's a-function, conjectures P1-P15, factorization formula, Coxeter groups with complete graph, right-angled Coxeter groups} 

\maketitle
\begin{abstract}
	We prove  Lusztig's conjectures P1-P15   for  Coxeter groups with complete graph, using decreasing induction on $ \af  $-values  and a kind of factorization formula of Kazhdan-Lusztig basis elements. As a byproduct, we give a description of the left, right, and two-sided cells. In the appendix, we prove P1-P15 for right-angled Coxeter groups by the same methods.
\end{abstract}

\setcounter{tocdepth}{1}
\tableofcontents
\section{Introduction}

\subsection{Background}
Lusztig stated a series of conjectures in \cite[14.2]{lusztig2003hecke}, called P1-P15, for general Coxeter groups with a (positive) weight function. These conjectures mainly concern some properties of cells and the $ \af $-function defined in terms of Kazhdan-Lusztig basis of the Hecke algebra with unequal parameters. The main goal of this paper is to prove P1-P15 for Coxeter groups with complete (Coxeter) graph.

The cells of a Coxeter group (in the equal parameter case) are defined in \cite{kazhdan_lusztig79representation} for the study of representations of Hecke algebras. Cells of finite and affine Weyl groups also appear naturally in other contexts of representation theory. Left cells of a finite Weyl group are in bijection with  primitive ideals of the enveloping algebra of the corresponding semisimple Lie algebra, see \cite[1.6(c)]{kazhdan_lusztig79representation}. Cells of finite Weyl groups (including some special unequal parameter case) play an important role in classification of characters of finite groups of Lie type, see \cite{lusztig1984char}. The two-sided cells of the affine Weyl groups also have connection with modular representations of Lie algebras and algebraic groups, see \cite{Hum2002modular,achar2018cells}.
In his works \cite{lusztig1985cellsI,lusztig1987cellsII,lusztig1987III,lusztig1989cellsIV} on cells of affine Weyl groups, Lusztig introduced  $ \af $-functions and asymptotic rings, which are applied to study representations of affine Hecke algebras, see also \cite{xi1994book,xi2007rep}.
Lusztig  proved that cells of the affine Weyl groups are in bijection with unipotent conjugacy classes of algebraic groups over $ \mathbb{C} $, and that values of the $ \af $-function can be given by the dimensions of springer fibers. Based on these works, Lusztig summarized P1-P15 for general Coxeter group with weight function in \cite{lusztig2003hecke}. In the equal parameter case, P1-P15 can be proved by using the positivity conjecture (\cite{kazhdan_lusztig79representation}) of Kazhdan-Lusztig polynomial and  the boundedness conjecture (\cite[13.4]{lusztig2003hecke}) of the $ \af $-function, see \cite[\S15]{lusztig2003hecke}. Up to now, the positivity conjecture has been proved for general Coxeter groups, see \cite{kazhdan-lusztig1980positivity,elias-williamson2014positivity}. However, the boundedness conjecture is only known for  finite Coxeter groups, affine Weyl groups (\cite{lusztig1985cellsI}),  Coxeter groups with complete graph (\cite{xi2012afunction,Shi-Yang2016}),  the rank 3 case (\cite{zhou,gao2016rank3}), universal Coxeter groups \cite{Shi-Yang2015universal} and Coxeter groups with non-3-edge-labeling graph (\cite{li-Shi-non3edge}).  In other words, even in equal parameter case, P1-P15 are  known only for these Coxeter groups.

The Kazhdan-Lusztig basis and cells can be defined for Hecke algebras with unequal parameters (\cite{lusztig1983leftCell}). An important difference from the  equal parameter case  is that there is no  positivity for Kazhdan-Lusztig polynomials. It is an interesting question to know whether P1-P15 hold for the unequal parameter case. In \cite{lusztig2003hecke}, Lusztig proved P1-P15 for the quasi-split case and infinite dihedral groups.
For Weyl group of type $ B_n $ with asymptotic parameters, Bonnaf{\'e} and Iancu  defined an analogue of the Robinson-Schensted algorithm, and gave a description of left cells, see \cite{bonnafe-iancu2003leftcell}. Later on Bonnaf{\'e} determined the two-sided cells,  see \cite{bonnafe2006twosidedcell}. Geck and Iancu proved conjectures P1-P15 except P9, P10, P15 in \cite{geck-iancu2006afunction} using the method of ``leading matrix coefficients" introduced by Geck \cite{geck2002leading}. Later on Geck proved P9, P10 and a weak version of P15 in\cite{geck2006relative}, and  the proof of P15 was  given in \cite[Lem. 4.7]{geck2011rank2}. In the same paper,  P1-P15   are proved for finite dihedral groups  and Coxeter groups of type $ F_4 $, see \cite[Prop. 5.1 and 5.2]{geck2011rank2}. Therefore, for finite Coxeter groups, P1-P15 are still open only for Weyl groups of type $ B_n $ with non-asymptotic parameters. See \cite{bonnafe-geck-iancu-lam2010} for a conjectural description of their cells.

In \cite{guilhot2008computedrank2,guilhot2010cells}, Guilhot  explicitly determined  the left and two-sided cells  of affine Weyl groups of  types $ \tilde{B}_2 $ (or $ \tilde{C}_2 $) and $ \tilde{G}_2 $. Based on the cell partitions, Guilhot and Parkinson gave a proof of P1-P15 for affine Weyl groups of type  $ \tilde{C}_2 $ and $ \tilde{G}_2 $, see \cite{guilhot-parkinson2019G2,guilhot-parkinson2019C2}. They introduced a notion, called a balanced system of cell representations.
%which was inspired by the work \cite{geck2011rank2} of Geck for the finite case. 
Moreover, they found an interesting connection of cells with Plancherel Theorem. Conjectures  P1-P15 for universal Coxeter groups (which are called free Coxeter groups in \cite[Ch.24]{bonnafe2017book}) are also proved in \cite{Shi-Yang2015universal}.

 The lowest two-sided cell of the affine Weyl group is a typical cell. In the equal parameter case, the lowest two-sided cell has an  explicit description, see \cite{shi1987lowest-I,shi1988lowest-II,bedard1988lowest}. For the unequal parameter case, the lowest two-sided cell has a similar description, see \cite[Ch.3]{xi1994book} and \cite{bremke1997lowest,guilhot2008lowest}. In fact,  there always exists a unique lowest two-sided cell for any Coxeter group with weight function if the boundedness conjecture is true, see \cite[Thm. 1.5]{xi2012afunction} for the equal parameter case and \cite[Thm. 2.1]{xie2017complete} for a straightforward generalization to the unequal parameter case.

 In \cite{xi1990based_ring,xi1994based}, Xi proved a conjecture on the structure of the asymptotic ring (also called based ring) of the affine Weyl group in the case of the lowest two-sided cell, and applied it to study certain representations of the affine Hecke algebra. In \cite{xie2015lowest}, we tried to generalize Xi's works to the unequal parameter case. To define the asymptotic ring, we need first to prove  P1-P15 for the lowest two-sided cell. To describe the structure of the asymptotic ring, we need to generalize  a kind of factorization formula for the Kazhdan-Lusztig basis elements corresponding to the lowest two-sided cell (see \cite[Lem.2.7 and Thm.2.9]{xi1990based_ring} and \cite{blasiak2009factorization}) to the unequal parameter case. We found that this factorization formula can be used to prove P1-P15 for the lowest two-sided cell.
Motivated by this, in  \cite{xie2015dec-rank2}, we determined a kind of factorization formula for all the Kazhdan-Lusztig basis elements of affine Hecke algebras of type $ \tilde{B}_2 $ and $ \tilde{G}_2 $, and proved P1-P15 under some assumptions.

Xi \cite{xi2012afunction} proved the boundedness conjecture for Coxeter groups with complete graph, see also \cite{Shi-Yang2016} for the unequal parameter case. It turns out these Coxeter groups are relatively easy to deal with, partly because the reduced expressions of their elements can be described explicitly (see \cite{shi-2015-reduced-strict,Shi2016reduced}).  Based on these works, we proved P1-P15 for the lowest two-sided cell of the Coxeter group with complete graph, and gave a description of the structure of its asymptotic ring, see \cite{xie2017complete}. 

The main goal of this paper is to prove P1-P15 for Coxeter groups with complete graph. Some ideas for the proof of P1-P15 here originate from our previous works \cite{xie2015lowest,xie2015dec-rank2,xie2017complete} on the lowest two-sided cell and the factorization formula.
\subsection{Main idea}

In this subsection, $ (W,S,L) $ is a  weighted Coxeter group with complete graph. Let $ N\in\mathbb{N} $. Define $$  W_{\geq N}=\{w\in W\mid \af(w)\geq N \}  ,$$ and similarly  $ W_{\leq N} $, $ W_{>N} $, $ W_N $. 
Let $ D $ be the set consisting of: 
\begin{itemize}
\item $ w_J $, where $ J\subset S $ such that the parabolic subgroup $ W_I $ is finite, and 
\item $ sw_I $, where $ I=\{s,t\}\subset S $, $ 4\leq m_{st}<\infty $, $ L(s)<L(t) $.
\end{itemize}
 Let $ \af':D\to \mathbb{N} $ be a function given by \begin{align*}
\af'(w_J)&=L(w_J),\\
\af'(sw_I)&=L(t)+(\frac{m_{st}}{2}-1)(L(t)-L(s)).
\end{align*}
Define $ D_{\geq N}=\{d\in D \mid \af'(d)\geq N\} $ and  $ D_N=D_{\geq N}\setminus D_{\geq N+1} $. Define 
\[ 
\Omega_{\geq N} =\left\{w\in W\,\Big |\,  
w=x\bd d\bd y \text{ for some }d\in D_{\geq N}, x,y\in W
\right\},
\]
and  $ \Omega_{ N}=\Omega_{\geq N}\setminus \Omega_{\geq N+1}$. For $ d\in D_N $, define
\[U_d=\{y\in W\mid dy=d\bd y\in \Omega_{N}\},\]
\[B_d=\{b\in U_d^{-1} \,|\,
\text{if } bd=w\bd v \text{ with }  v\neq e,\text{ then }  w\in \Omega_{<N}  \}.
\]
Here we use  notation \eqref{convention}.

Assume that $ W_{>N} $ is $ \prec_{ {LR}} $ closed. Then we can consider the quotient algebra $ \hn=\mc{H}/\mc{H}_{>N} $, where $ \mc{H} $ is the Hecke algebra and  $ \mc{H}_{>N} $ is the two-sided ideal spanned by $C_w  $, $ w\in W_{>N} $. Denote the image of $ T_w $ and $ C_w $ in $ \hn $ by $ \nt_w $ and $ \nc_w $ for any $ w\in W $. Then $$  \{\nt_w\mid w\in\wn \},\quad \{\nc_w\mid w\in\wn \}  $$ form two basis of $ \hn $. Define the degree of an element of $ \hn $ to be the maximal degree of its coefficients with respect to the basis $ \nt_w $, $ w\in\wn $.

The starting point of this paper is to show that if $ \wnn=\Omega_{>N} $ is $ \prec_{ {LR}} $ closed, then we have inequalities about degrees of products:
\begin{itemize}
\item [(i)] $ \deg \nt_x\nt_y\leq N $ for any $ x,y\in\wn $, and the equality holds only if $ x,y\in\Omega_{ N} $;
\item [(ii)] $ \deg \nt_{x}\nt_v\nt_y \leq -\deg p_{v,d}$ for any $ d\in D_N $, $ x\in U_d^{-1}$,  $  y\in U_d $, $ v\leq d $;
\item [(iii)] $ \deg \nt_{b}\nt_v\nt_y < -\deg p_{v,d}$ for any $ d\in D_N $, $ b\in B_d $, $ y\in U_d $, $ v<d $.
\end{itemize}
Using these, we prove a factorization formula\[
\nc_{bdy}=\nE_b\nc_d\nF_{y}\text{ in }\hn,
\]
where $  \nE_b$, $ \nF_{y} $ are elements such that $ \nc_{bd}=\nE_b\nc_d $, $ \nc_{dy}=\nc_d\nF_{y} $. The key point here is that $ \nE_b $ (resp. $ \nF_y $) is independent of $  y$ (resp. $ b $), and $ \nc_d\nc_d=\eta_d\nc_d $ with $ \deg \eta_d=N $. Note that  (i) is a kind of generalization of the boundedness conjecture.

The main strategy  of this paper is using decreasing induction on $ N $  to prove    P1-P11,  P13-P15 for $ W_{\geq N} $ and $ W_{\geq N}=\Omega_{\geq  N} $. It holds for $ N $ large enough, since $  W_{\geq N}=\Omega_{\geq  N}=\emptyset $ by the boundedness conjecture which has been proved in \cite{xi2012afunction}. We deal with P12 alone.
It is worth mentioning that general facts in section \ref{sec:general} play an important role in our proof, and Lemma \ref{lem:compute} can be used to  compute $ \af $-values.

%Since the boundedness conjecture has been proved for Coxeter groups of rank 3 (\cite{zhou,gao2016rank3}),  Gao and the author are trying to prove P1-P15 for hyperbolic Coxeter groups of rank 3 based on ideas of this paper. In this case, the counterpart of Lemma \ref{lem:shiyang} becomes  complicated.  The case of the lowest two-sided cell has been worked out in a recent work \cite{gao2019lowest} by Gao.
Recently, Gao and the author have proved P1-P15 for hyperbolic Coxeter groups of rank 3 based on ideas of this paper. In this case, the counterpart of Lemma \ref{lem:shiyang} becomes  complicated.  
\subsection{Organization}

In section \ref{sec:conj}, we fix some basic notations, and clarify  the precise meaning of ``P1-P15 for $ W_{\geq N} $". 
In section \ref{sec:general}, we consider the quotient algebra $ \hn $ and prove that $ \nt_w $, $ w\in\wn $ and $ \nc_w $, $ w\in\wn $ form two-basis of $ \hn $. We prove a cyclic property (Lemma \ref{lem:cyc}(iii)), which is useful in determining  left cells, and prove Lemma \ref{lem:compute}, which can be used to compute the $ \af $-values.
In section \ref{sec:rank2}, we fix some notations about finite dihedral group that we use frequently, and  prove Proposition \ref{prop:key} on some properties in finite dihedral groups, which will be used in the proof of Proposition \ref{prop:key1}.

In section \ref{sec:complete}, we recall some basic properties about Coxeter groups with complete graph.
Section \ref{sec:main} is the main part of this paper. We prove the factorization formula (Theorem \ref{thm:dec}),  and its corollaries Theorem  \ref{thm:cor} and \ref{thm:cor2}. Then we prepare two propositions for the proof of P1-P15. Section  \ref{sec:proof} is devoted to the proof of P1-P15. In section \ref{sec:cells}, we describe  two-sided cells of $ W $. In section \ref{sec:generalized}, we conjecture that our methods  can be generalized to Coxeter groups  whose finite parabolic subgroups have irreducible components of rank 1 or 2.

In appendix \ref{ap:dihedral}, we give a new proof of P1-P15 for finite dihedral groups. In  appendix \ref{ap:ra} we prove P1-P15 for right-angled Coxeter groups. 

\section{Conjectures $ (\text{P1-P15})_{\geq N} $}\label{sec:conj}
%Let us keep the notations from \cite{xie2017complete}.

Let $ (W,S) $ be a Coxeter group. Throughout this  article, we always assume that $ S $ is a finite set. For $ s,t\in S $, let $ m_{st} \in \mathbb{N}\cup\{\infty\}$ be the order of $ st $ in $ W $. For example, if $ m_{st}=1 $, then $ s=t $; if $ m_{st}=2 $, then $ st=ts $.
The neutral element of the group $ W $ is denoted by $ e $.
Associated to $ (W,S) $, we can define a graph, called Coxeter graph. We call $ (W,S) $  a Coxeter group with complete graph if its Coxeter graph is complete, or equivalently  $ m_{st}\geq 3 $ for any $ s\neq t $ in $ S $. For $ I\subset S $, we have a parabolic subgroup $ W_I $, which is the subgroup of $ W $ generated by $ I $. If $ W_I $ is a finite group, then we denote by $ w_I $ the longest element of $ W_I $. If $ I=\{ s,t\} $ with $ m_{st}<\infty $, we also write $ w_I $ simply as $ w_{st} $.

For a Coxeter group $ (W,S) $, we denote the length function  by $l: W\to \mathbb{N} $. 
For $ w,w_1,w_2,\dots, w_k\in W $, we write
\begin{equation}\label{convention}
w=w_1\bd w_2\bd \dots \bd w_k \text{ if } w=w_1w_2\dots w_k \text{ and } l(w)=\sum _{i=1}^{k}l(w_i).
\end{equation}
For simplicity,  we often use  \[xy=x\bd y  \text{ (resp.  }xy<x\bd y)\] to represent $ l(xy)=l(x)+l(y) $ (resp. $ l(xy)<l(x)+l(y) $) with $ x,y\in W $. 
We also use similar notations for the product of more than two elements.

A weight function on $ W $ is  a  function  $ L:W\to \mathbb{Z} $ such that  $$  L(ww')=L(w)+L(w')  $$ when $ ww'=w\bd w' $. Unless otherwise stated, the weight function in this paper is assumed to be positive, i.e. $ L(s)>0 $ for any $ s\in S $.

Let $ \mc{A}=\mathbb{Z}[q,q^{-1}] $. Associated to $ (W,S,L) $, we have an algebra  $ \mc{H} $ over $ \mc{A} $, called the Hecke algebra. It has an $ \mc{A} $-basis $ \{T_w\mid w\in W \} $ with relations: $$  T_{ww'}=T_wT_{w'} \text{ if } ww'=w\bd w',  $$ $$ \text{ and } T_s^2=1+\xi_s T_s,  \text{ where } \xi_s=q^{L(s)}-q^{-L(s)} \in\mc{A}.$$
For $ 0\neq a=\sum_i\alpha_i q^i\in \mc{A} $ with $ \alpha_i \in \mathbb{Z}$, we define $$  \deg a=\max\{i\mid \alpha_i\neq 0 \} . $$ For $ 0\in \mc{A} $, we define $ \deg 0=-\infty $. For $  h=\sum_{w\in W} a_w T_w$ with $ a_w\in\mc{A} $, we define $$  \deg h=\max\{ \deg a_w \mid w\in W\}  .$$ This gives a function $ \deg : \mc{H}\to \mathbb{N}\cup \{-\infty\} $.

There is a unique $ \mc{A} $-basis $ \{C_w\mid w\in W \} $ of $ \mc{H} $, called the Kazhdan-Lusztig basis, such that \begin{itemize}
	\item [(1)] $ C_w\equiv T_w\mod \mc{H}_{<0} $, where $ \mc{H}_{<0}=\bigoplus_{w\in W}\mc{A}_{<0}T_w $ with $ \mc{A}_{<0}=q^{-1}\mathbb{Z}[q^{-1}] $,
	\item [(2)] and $ C_w $ is invariant under the bar involution $\bar{\cdot}$, which is a $ \mathbb{Z} $-algebra endomorphism on $ \mc{H} $ such that $ \overline{q}=q^{-1} $ and $ \overline{T_w}=T_{w^{-1}}^{-1} $.
\end{itemize}
Let $ p_{y,w}\in\mc{A} $ be the Kazhdan-Lusztig polynomial, which is given by $ C_w=\sum_{y\in W} p_{y,w}T_y $. By  definition, we have  $ p_{w,w}=1 $, $ y\leq w $ if $ p_{y,w}\neq0 $, and $ \deg p_{y,w}< 0 $ if $ y<w $.
Using Kazhdan-Lusztig basis, one can define preorders  $  \prec_{ {L}}$, $ \prec_{ {R}} $, $ \prec_{ {LR}} $ on $ W $, and  corresponding equivalence relations $ \sim_{ {L}} $, $ \sim_{ {R}} $, $ \sim_{ {LR}} $ on $ W $. The associated equivalence classes are called respectively left cells, right cells and two-sided cell. See for example \cite[\S 8]{lusztig2003hecke}.

Define $ f_{x,y,z} \in\mc{A}$ and $ h_{x,y,z} \in\mc{A}$ by \[
T_xT_y=\sum_{z\in W} f_{x,y,z}T_z,\quad C_xC_y=\sum_{z\in W} h_{x,y,z}C_z.
\]
For $ w\in W $, define $$  \af(w):=\max\{\deg h_{x,y,w}\mid x,y\in W \}  .$$ Then $ \af:W\to \mathbb{N}\cup\{\infty \} $ is called the Lusztig's $ \af $-function. Define $ \gamma_{x,y,z^{-1}} $ to be the coefficient of $ q^{\af(z)} $ in $ h_{x,y,z} $.
About boundedness of the $ \af $-function, there is a conjecture as follows, which is still open in general.
\begin{conj}{\normalfont (\cite[13.4]{lusztig2003hecke})} \label{conj:bound}
	Assume that  $ (W,S) $  is  a Coxeter group  with weight function $ L $.  Define \(
	N_0=\max\{L(w_I)\mid I\subset S\text{ with }W_I\text{ finite}\}.
	\)
Then the following equivalent statements hold:
\begin{itemize}
	\item $ \af(w)\leq N_0 $ for any $ w\in W $, 
	\item  $\deg T_xT_y\leq N_0  $ for any $ x,y\in W $,
	\item $ \deg h_{x,y,z}\leq N_0 $ for any $ x,y,z\in W $,
	\item $ \deg f_{x,y,z}\leq N_0 $ for any $ x,y,z\in W $.
\end{itemize}
\end{conj}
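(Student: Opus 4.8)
The conjecture bundles two assertions: that the four displayed conditions are equivalent for every weighted Coxeter group $(W,S,L)$, and that they hold. The first is formal, so I would dispose of it first. The key observation is that both transition matrices between $\{T_w\}_{w\in W}$ and $\{C_w\}_{w\in W}$ are unitriangular for the Bruhat order with all off-diagonal entries of strictly negative degree — for $C_w$ written in the $T_y$ this is the bound $\deg p_{y,w}<0$ ($y<w$), and for $T_w$ written in the $C_y$ it follows by inverting, each entry being a finite alternating sum of products of such $p$'s — so that the degree of any $h\in\mc{H}$ is the same whether read off from its $T$-coordinates or its $C$-coordinates. Then $\deg(T_xT_y)=\max_z\deg f_{x,y,z}$ and $\deg(C_xC_y)=\max_z\deg h_{x,y,z}$; expanding $C_xC_y$ in the $T$-basis and $T_xT_y$ in the $C$-basis and using $\deg p_{y,w}\le 0$ gives $\deg(C_xC_y)\le\max_{a\le x,\,b\le y}\deg(T_aT_b)$ and the reverse inequality, hence $\sup_{x,y}\deg(T_xT_y)=\sup_{x,y}\deg(C_xC_y)$; and $\af(w)=\max_{x,y}\deg h_{x,y,w}$ links these to the first bullet. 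All four equivalences follow, with $N_0$ the same throughout.

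For the assertion itself I should say at once that it is open for general $(W,S,L)$; what I would prove is the case needed in this paper, namely $(W,S,L)$ with complete Coxeter graph — thereby recovering \cite{xi2012afunction} and \cite{Shi-Yang2016} — and, by the same method, the right-angled case of the appendix. By the equivalence it suffices to bound $\deg(T_xT_y)$. Writing $x$ and $y$ in reduced form and expanding the product via repeated use of $T_s^2=1+\xi_sT_s$ with $\deg\xi_s=L(s)$, the only mechanism that can raise the degree is a cancellation $l(uv)<l(u)+l(v)$ inside the product; a naive induction on $l(y)$ bounds the outcome only by $l(y)\cdot\max_s L(s)$. The plan is to show that this cancellation localizes: using the structural description of reduced expressions in a complete-graph Coxeter group (Shi; cf. Lemma \ref{lem:shiyang} and Section \ref{sec:complete}), under which the descent sets and the possible overlaps are tightly constrained and every element factors canonically through rank $\le 2$ dihedral parabolic pieces, one forces every degree-increasing cancellation in $T_xT_y$ to occur inside a single finite rank $\le 2$ parabolic subgroup $W_I$, where a direct computation in the dihedral Hecke algebra — of exactly the type assembled in Section \ref{sec:rank2} — shows the degree contributed is at most $L(w_I)\le N_0$. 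Concretely I would induct on $l(y)$, peeling off one syllable at a time and maintaining the invariant that the relevant overlap between the current product and the next syllable lies in a finite rank $\le 2$ parabolic.

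The hard part is precisely this localization step: proving that passing a single generator through $T_xT_{y'}$ cannot set off a cascade of applications of the quadratic relation whose degree contributions accumulate beyond one dihedral block, so that $l(y)\cdot\max_s L(s)$ can honestly be replaced by $N_0$. This is the one place where the complete-graph hypothesis is essential, entering through the rigidity of reduced words. For right-angled $(W,S,L)$ the relevant finite parabolics are generated by pairwise-commuting generators, so that $N_0=\max\{\sum_{s\in I}L(s): I\subset S\text{ with }m_{st}=2\text{ for all }s,t\in I\}$, and the analogous — and simpler — localization is carried out by the parallel argument of Appendix \ref{ap:ra}.
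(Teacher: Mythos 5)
Your formal part is fine: the unitriangularity of both transition matrices between $\{T_w\}$ and $\{C_w\}$, with off-diagonal entries of negative degree, does give the equivalence of the four bullets, and this is standard (the paper does not spell it out; it simply quotes Lusztig). You are also right that the statement is a conjecture in general, and that what matters for this paper is the complete-graph case and the right-angled case. Note, however, what the paper actually does: it does \emph{not} prove boundedness for complete-graph groups at all --- it cites \cite{xi2012afunction} and \cite{Shi-Yang2016} and uses the result as input to start the decreasing induction in Section \ref{sec:main}; the only case it proves itself is the right-angled one, in Appendix \ref{ap:ra}, via the commutation Lemma \ref{lem:ra-com} and the estimate $\deg T_xT_dT_y\leq N_0-L(d)$ of Lemma \ref{lem:bound-ra}.

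The genuine gap is in your complete-graph argument. The ``localization'' claim --- that every degree-raising cancellation in $T_xT_y$ is confined to a single finite rank~$\leq 2$ parabolic, so that contributions never accumulate past one dihedral block --- is not a routine consequence of the rigidity of reduced words; it \emph{is} the theorem of Xi and Shi--Yang, and you explicitly leave it unproved (``the hard part is precisely this localization step''). Your proposed invariant for the peeling induction is also not obviously maintainable: as Lemma \ref{lem:shiyang}(iii) already shows, a cancellation inside the block $\{s,t\}$ can spawn a new product $T_{x_1}T_{z_1}T_{y_1}$ whose critical block is a \emph{different} pair $\{t,r\}$, and controlling how the degrees $L(r)$ arising from successive blocks combine is exactly where the complete-graph combinatorics of reduced expressions (Xi's Lemmas 2.2--2.6, the dihedral computations of Section \ref{sec:rank2}, and in the paper's refined quotient version, the stratification by $\Omega_{\geq N}$ in Proposition \ref{prop:key1}) has to be deployed. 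Also be aware that Proposition \ref{prop:key1} is not a substitute: it proves a bound in the quotient algebra $\mc{H}_{\leq N}$ under inductive hypotheses that already presuppose boundedness to get off the ground. So as written, your proposal establishes the equivalences but not the boundedness statement itself for complete-graph groups; for that you must either reproduce the Xi/Shi--Yang argument in full or, as the paper does, cite it.
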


%\newpage
%\subsection{Conjectures P1-P15}\label{subsec:basis}
%In this subsection, we assume that \( (W,S) \) is any finitely generated Coxeter group and \( L:W\to \mathbb{Z}  \) is a weight function on \(W\) such that \(L(s)>0\), for all  $ s\in S $.
%
%The boundedness conjecture says that for any $ x,y,z \in W $, we have $ \deg f_{x,y,z}\leq N_0 $. In this subsection, we  always assume that the boundedness conjecture holds for $ (W,S) $. It is known that this holds for a Coxeter group with complete graphs.

For $ w\in W $, integers $ \Delta(w) $ and $ n_w $ are 
defined by\[
p_{e,w}=n_wv^{-\Delta(w)}+\text{ terms of lower degree, and } n_w\neq0.
\]
Let $ \mc{D}=\{z\in W \mid \af(z)=\Delta(z) \} $.

For $ N\in\mathbb{N} $, we denote by
\[
W_{\geq N}:=\{w\in W\mid \af(w)\geq N \},
\]
\[
W_{>N}:=W_{\geq (N+1)},\quad W_{N}:=W_{\geq N}\setminus W_{>N},
\]
and similarly  define  $ W_{\leq N} $,  $ W_{<N} $. Note that it is possible that $ W_{N} $ is an empty set. Let  $ \mc{D}_{\geq N}=\mc{D}\cap W_{\geq N} $, and similarly define $  \mc{D}_{N}  $ etc.
\begin{conj}\label{conj15} Let $ N\in\mathbb{N} $.
	
	$ (\text{P1})_{\geq N} $. For any  $w\in W_{\geq N} $, we have $\af(w)\leq\Delta(w)$.
	
$ (\text{P2})_{\geq N} $. If $ z\in\mc{D}_{\geq N} $ and $ x,y\in W $ such that $ \gamma_{x,y,z}\neq0 $, then $ x=y^{-1} $.
	
$ (\text{P3})_{\geq N} $. If $ y\in W_{\geq N} $, there exists a unique $ z\in \mc{D} $ such that $ \gamma_{y^{-1},y,z} \neq0$.
	
$ (\text{P4})_{\geq N} $. If $ w'\prec_{LR} w $ with $ w\in W_{\geq N} $, then $ \af(w')\geq\af(w) $.
% In particular, $ W_{\geq N} $ is closed under the preorder $ \prec_{ {LR}} $.
	
$ (\text{P5})_{\geq N} $. If $z\in\mathcal{D}_{\geq N}$, $ y\in W $, $\gamma_{y^{-1},y,z}\neq0$, then $\gamma_{y^{-1},y,z}=n_z=\pm1$.
	
$ (\text{P6})_{\geq N} $. For any $z\in\mathcal{D}_{\geq N}$, we have  $z^2=e$.
	
$ (\text{P7})_{\geq N} $.  For any  $x,y,z\in W$ with one of them belonging to $ W_{\geq N} $, we have $\gamma_{x,y,z}=\gamma_{y,z,x}=\gamma_{z,x,y}$.
	
$ (\text{P8})_{\geq N} $.    For any $x,y,z\in W$ with one of them belonging to $ W_{\geq N} $,  $\gamma_{x,y,z}\neq0$ implies that  $x\sim_{L}y^{-1}$, $y\sim_{ L}z^{-1}$, $z\sim_{ L}x^{-1}$.
	
$ (\text{P9})_{\geq N} $.  If  $w'\prec_{ {L}}w$ with $ w\in W_{\geq N} $ and $\af(w')=\af(w)$, then $w'\sim_{ {L}}w$.
	
$ (\text{P10})_{\geq N} $. If  $w'\prec_{ {R}}w$ with $ w\in W_{\geq N} $ and $\af(w')=\af(w)$, then $w'\sim_{ {R}}w$.
	
$ (\text{P11})_{\geq N} $.  If  $w'\prec_{ {LR}}w$ with $ w\in W_{\geq N} $ and $\af(w')=\af(w)$, then $w'\sim_{ {LR}}w$.
	
$ (\text{P12})_{\geq N} $. For any $I\subset S$ and $y\in W_I\cap W_{\geq N}$,  the $ \mb{a} $-value of $ y $ computed in  $W_I$ is equal to that  in $W$. 
	
$ (\text{P13})_{\geq N} $. Any left cell  $\Gamma\subset W_{\geq N}$ contains  a unique element $ z$ in  $\mathcal{D}$. Moreover, for any $y\in\Gamma$, we have $\gamma_{y^{-1},y,z}\neq0$. 
	
$ (\text{P14})_{\geq N} $. For any $w\in W_{\geq N}$,  we have  $w\sim_{ {LR}}w^{-1}$.
	
$ (\text{P15})_{\geq N} $. For   $w,w'\in W$ and  $ x,y\in  W_{\geq N} $ such that  $ \af(x)=\af(y)$, we have\[
\sum_{z\in W}h_{w,x,z}\otimes h_{z,w',y}  =\sum_{z\in W} h_{w,z,y}\otimes h_{x,w',z} \in  \mc{A}\otimes_\mathbb{Z}\mc{A}.	\]
\end{conj}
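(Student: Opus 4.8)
\emph{Proof strategy.} The plan is to establish all of $(\text{P1-P15})_{\geq N}$ by \textbf{decreasing induction on} $N$, carrying along the auxiliary assertion that $W_{\geq N}=\Omega_{\geq N}$. For the base case, boundedness of $\af$ for Coxeter groups with complete graph (\cite{xi2012afunction}) gives $W_{\geq N}=\emptyset$ once $N>N_0$, and $D_{\geq N}$, hence $\Omega_{\geq N}$, is empty for $N$ large since $S$ is finite; so the statement is vacuous there. Now fix $N$ and assume $(\text{P1-P15})_{\geq N+1}$ together with $W_{\geq N+1}=\Omega_{\geq N+1}$. From $(\text{P4})_{\geq N+1}$ it follows that $W_{>N}$ is $\prec_{LR}$-closed: if $w'\prec_{LR}w\in W_{>N}$ then $\af(w')\geq\af(w)\geq N+1$. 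Hence $\mc{H}_{>N}$ is a two-sided ideal, the quotient $\hn=\mc{H}/\mc{H}_{>N}$ is defined, and by the general results of Section \ref{sec:general} the families $\{\nt_w\}_{w\in\wn}$ and $\{\nc_w\}_{w\in\wn}$ are $\mc{A}$-bases of $\hn$, with the cyclic property (Lemma \ref{lem:cyc}) and the $\af$-value criterion (Lemma \ref{lem:compute}) available.

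\emph{Step 1: degree inequalities and the decomposition formula.} Using the explicit description of reduced expressions in Coxeter groups with complete graph (Section \ref{sec:complete}, Lemma \ref{lem:shiyang}) and the dihedral computations of Section \ref{sec:rank2} (Propositions \ref{prop:key} and \ref{prop:key1}), I would first prove the three degree bounds stated in the introduction: $\deg\nt_x\nt_y\leq N$ for $x,y\in\wn$, with equality forcing $x,y\in\Omega_N$; $\deg\nt_x\nt_v\nt_y\leq-\deg p_{v,d}$ for $d\in D_N$, $x\in U_d^{-1}$, $y\in U_d$, $v\leq d$; and the strict version $\deg\nt_b\nt_v\nt_y<-\deg p_{v,d}$ for $b\in B_d$ and $v<d$. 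From these, together with the behaviour of $\nc_d$ for $d\in D_N$, one deduces the decomposition formula $\nc_{bdy}=\nE_b\nc_d\nF_y$ in $\hn$ (Theorem \ref{thm:dec}) — with $\nE_b$ independent of $y$, $\nF_y$ independent of $b$, and $\nc_d\nc_d=\eta_d\nc_d$ where $\deg\eta_d=N$ — and then its corollaries (Theorems \ref{thm:cor} and \ref{thm:cor2}), which yield $W_{\geq N}=\Omega_{\geq N}$, the identification of $W_N$ with $\Omega_N$, and an explicit description of the left, right and two-sided cells meeting $W_N$.

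\emph{Step 2: deducing P1--P11 and P13--P15.} For $w\in W_{\geq N+1}$ every statement is the induction hypothesis, so only the elements of $W_N$ need fresh argument, and there the idempotent-type identity $\nc_d\nc_d=\eta_d\nc_d$ and the decomposition formula are the engine. They locate $\mc{D}_N$ and, since the elements of $D$ (the $w_J$ and the $sw_I$) are involutions, give $z^2=e$ for $z\in\mc{D}_N$, hence $(\text{P6})_{\geq N}$; they evaluate the relevant $\gamma$-coefficients as $\pm1$ equal to $n_z$, yielding $(\text{P2})_{\geq N}$, $(\text{P3})_{\geq N}$, $(\text{P5})_{\geq N}$, $(\text{P7})_{\geq N}$, $(\text{P8})_{\geq N}$ and $(\text{P13})_{\geq N}$; they give $\af(w)\leq\Delta(w)$ from the factorization governing $p_{e,w}$, i.e.\ $(\text{P1})_{\geq N}$; and, combined with the cell description of Step 1 and the cyclic property of Lemma \ref{lem:cyc}, they produce the cell-containment and cell-coincidence statements $(\text{P4})_{\geq N}$, $(\text{P9})_{\geq N}$, $(\text{P10})_{\geq N}$, $(\text{P11})_{\geq N}$ and $(\text{P14})_{\geq N}$. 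Finally $(\text{P15})_{\geq N}$ is obtained from $(\text{P1--P14})_{\geq N}$ and the associativity of $\hn$ as in Lusztig's treatment \cite[\S15]{lusztig2003hecke}, the hypothesis $\af(x)=\af(y)$ with $x,y\in W_{\geq N}$ confining the identity to a single $\af$-level.

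\emph{Step 3: P12, and the main obstacle.} $(\text{P12})_{\geq N}$ is treated independently: for $I\subset S$ and $y\in W_I\cap W_{\geq N}$ one compares the $\af$-value of $y$ computed inside $W_I$ with that computed in $W$ — both reachable through Lemma \ref{lem:compute} and the explicit value of $\af'$ on $D$ — using that a parabolic subgroup of a Coxeter group with complete graph again has complete graph. I expect the genuine difficulty to sit entirely in Step 1: proving the three degree inequalities and, above all, the independence of $\nE_b$ from $y$ and of $\nF_y$ from $b$ in the decomposition formula. This demands tight control of the reduced expressions $xdy$ with $l(xdy)=l(x)+l(d)+l(y)$ and $d\in D_N$, and the rather delicate dihedral-subgroup computations of Proposition \ref{prop:key1}; once the decomposition formula is secured, the verification of P1--P15 is comparatively formal.
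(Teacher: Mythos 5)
Your proposal is correct and follows essentially the same route as the paper: decreasing induction on $N$ carrying $W_{\geq N}=\Omega_{\geq N}$, the degree bounds of Propositions \ref{prop:key1} and \ref{prop:leq}, the decomposition formula (Theorem \ref{thm:dec}) with Theorems \ref{thm:cor}, \ref{thm:cor2}, then Propositions \ref{prop:p4}, \ref{prop:p15} and \ref{prop:ind} to get $(\text{P1--P11, P13--P15})_N$, with P12 handled separately via $W_N=\Omega_N$ and \eqref{eq:omegaj}. The only cosmetic difference is that the paper's inductive step needs just $(\text{P1,P4,P7,P8})_{>N}$ and $W_{>N}=\Omega_{>N}$ rather than the full $(\text{P1--P15})_{>N}$, and its proof of P15 rests on the commuting-actions statement (Proposition \ref{prop:p15}) whose verification itself uses the decomposition formula, exactly as you anticipate.
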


We  refer to Conjecture \ref{conj15} as $ (\text{P1-P15})_{\geq N} $. Similarly, we use $ (\text{P1-P15})_{>N} $ (resp. $ (\text{P1-P15})_{N} $)
to denote the statements  obtained by replacing $\geq N $ by $   >N $ (resp. $  N  $) in  Conjecture \ref{conj15}.
When $ N=0 $, $ W_{\geq 0}=W $, and hence $  (\text{P1-P15})_{\geq 0}  $ is just Lusztig's conjectures P1-P15 from \cite[\S14.2]{lusztig2003hecke}.
% It is obvious that $ (\text{P1-P15})_{\geq N} $  for all $ N\geq 0 $ are equivalent to P1-P15.

The following lemma is immediate.
\begin{lem}\label{lem:closed}
	If $ (\text{P4})_{\geq N} $ (resp. $ (\text{P4})_{>N} $) holds, then  $ W_{\geq N} $ (resp. $ W_{>N} $) is closed under the preorder $ \prec_{ {LR}} $.
\end{lem}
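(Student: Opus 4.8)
The plan is to simply unwind definitions; the statement is recorded as obvious precisely because there is no computation involved. The first thing I would pin down is the meaning of a subset $X\subseteq W$ being \emph{closed under the preorder} $\prec_{ {LR}}$: namely, that $w\in X$ and $w'\prec_{ {LR}}w$ together force $w'\in X$ (so $X$ is a union of lower sets for $\prec_{ {LR}}$; this is the notion used in the introduction to form the two-sided ideal $\mc{H}_{>N}$ and the quotient $\hn$). With this convention fixed, proving the first assertion amounts to showing that $(\text{P4})_{\geq N}$ forces $W_{\geq N}$ to have exactly this property.

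Next I would take an arbitrary pair $w\in W_{\geq N}$ and $w'\in W$ with $w'\prec_{ {LR}}w$, and run the one-step argument: by the very definition of $W_{\geq N}$ we have $\af(w)\geq N$; since $w'\prec_{ {LR}}w$ with $w\in W_{\geq N}$, conjecture $(\text{P4})_{\geq N}$ yields $\af(w')\geq\af(w)$; chaining the two inequalities gives $\af(w')\geq N$, i.e. $w'\in W_{\geq N}$. As $w$ and $w'$ were arbitrary, $W_{\geq N}$ is $\prec_{ {LR}}$-closed.

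For the parenthetical ``$>N$'' variant, I would observe that $W_{>N}=W_{\geq N+1}$, and that $(\text{P4})_{>N}$ is — by the convention fixed after Conjecture~\ref{conj15}, where $\geq N$ is replaced by $>N$ throughout — precisely the statement $(\text{P4})_{\geq N+1}$. Hence the ``$>N$'' case is just the first part applied with $N$ replaced by $N+1$; alternatively one repeats the displayed one-line deduction verbatim with $W_{>N}$ in place of $W_{\geq N}$.

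I do not expect any genuine obstacle here. The only thing that needs care is the direction convention for ``closed under $\prec_{ {LR}}$'', and once that is settled the whole proof is a single invocation of $(\text{P4})_{\geq N}$ together with transitivity of $\geq$ on the integers. (The companion fact that this closedness is what makes $\mc{H}_{>N}$ a two-sided ideal, so that $\hn$ is well defined, is a separate — standard — point and is not needed for the lemma.)
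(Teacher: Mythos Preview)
Your proposal is correct and matches the paper's treatment: the lemma is recorded there as obvious with no proof given, and your one-line deduction from $(\text{P4})_{\geq N}$ (together with the observation that the $>N$ variant is just the $\geq N+1$ case) is exactly the intended argument.
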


The main goal of this paper is to prove P1-P15 for Coxeter groups with complete graph. Roughly speaking, our main strategy is using decreasing induction on $ N $: assuming  $ (\text{P1-P15})_{>N} $ and then proving $ (\text{P1-P15})_{N} $.

%We say that conjectures P1-P15 hold for $ W_{\geq N }$, and write $ (\text{P1-P15})_{\geq N} $ for short, if the elements that appear in  Conjecture \ref{conj15} are required to be in $ W_{\geq N} $, with the exception that for $ (\text{P4})_{\geq N} $
%%,  $ (\text{P7})_{\geq N} $ 
% and $ (\text{P8})_{\geq N} $, we only require the element $ z $ to be in $ W_{\geq N} $. In particular, if we say that $ (\text{P1-P15})_{\geq N} $ hold, then $ W_{\geq N} $ is closed under the preorder $ \prec_{ {LR}} $.
\section{Some general facts}\label{sec:general}

\subsection{The quotient algebra $ \mc{H}_{\leq N} $}\label{subsec:basis}

\begin{ass}\label{ass1}
	In this subsection, $ (W,S) $ is any Coxeter group, and we fix an integer $ N $ such that $ W_{> N} $ is closed under the preorder $ \prec_{ {LR}} $.
\end{ass}

Let $ \mc{H}_{>N} $ be the subspace of $ \mc{H} $ spanned by $ \{ C_w\mid w\in W_{>N} \} $ over $ \mc{A} $. Then $ \mc{H}_{>N} $ is a two-sided ideal of $ \mc{H} $ since $ W_{>N} $ is $ \prec_{LR} $ closed.
%The subspace of $ \mc{H} $, denoted by $ \mc{H}_{>N} $, which is spanned by $ \{ C_w\mid w\in W_{>N} \} $ over $ \mc{A} $, is a two-sided ideal of $ \mc{H} $.
 Let $ \mc{H}_{\leq N} $  be the quotient algebra $ \mc{H}/\mc{H}_{> N} $. For any $ w\in W $, denote by $ \nt_w $ the image of $ T_w $ under the quotient map \(
 \mc{H}\to \mc{H}_{\leq N}.
\)

\begin{lem}\label{lem:independent}
	The set \(
 \{\nt_w\mid w\in W_{\leq N} \} 
 \) forms an $ \mc{A} $-basis of the quotient algebra $ \mc{H}_{\leq N}  $.  
\end{lem}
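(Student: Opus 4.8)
The plan is to transfer the evident basis of $\hn$ coming from the Kazhdan--Lusztig basis to the family $\{\nt_w\mid w\in\wn\}$ by a triangular change of basis with respect to the Bruhat order. First observe that, since $\{C_w\mid w\in W\}$ is an $\mc{A}$-basis of $\mc{H}$ and $\mc{H}_{>N}$ is by definition the $\mc{A}$-span of the subfamily $\{C_w\mid w\in\wnn\}$, the quotient $\hn=\mc{H}/\mc{H}_{>N}$ is free over $\mc{A}$ with basis $\{\nc_w\mid w\in\wn\}$ (the images of the $C_w$), and moreover $\nc_w=0$ for $w\in\wnn$.

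Next I would prove, by simultaneous induction on $l(w)$, the two statements:
\begin{itemize}
\item[(a)] if $w\in\wn$, then $\nc_w=\nt_w+\sum_{v\in\wn,\,v<w}c_{v,w}\,\nt_v$ for suitable $c_{v,w}\in\mc{A}$;
\item[(b)] if $w\in\wnn$, then $\nt_w\in\sum_{v\in\wn,\,v<w}\mc{A}\,\nt_v$.
\end{itemize}
For the inductive step, start from the defining identity $C_w=T_w+\sum_{u<w}p_{u,w}T_u$ of the Kazhdan--Lusztig basis and pass to $\hn$, getting $\nc_w=\nt_w+\sum_{u<w}p_{u,w}\,\nt_u$. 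Every $u$ occurring there has $l(u)<l(w)$, so for those $u$ with $u\in\wnn$ I substitute the expression given by (b); this rewrites the right-hand side as $\nt_w$ plus an $\mc{A}$-combination of $\nt_v$ with $v\in\wn$ and $v<w$. If $w\in\wn$ this identity is precisely (a); if $w\in\wnn$ the left-hand side $\nc_w$ vanishes, and the identity rearranges to give (b). The case $l(w)=0$ is immediate since $C_e=T_e$.

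Finally, statement (a) expresses each member of the basis $\{\nc_w\mid w\in\wn\}$ as an $\mc{A}$-combination of the $\nt_v$ through a matrix $(c_{v,w})_{v,w\in\wn}$ that is unitriangular for the Bruhat order (each element of $W$ has only finitely many Bruhat predecessors), hence invertible over $\mc{A}$. Therefore $\{\nt_w\mid w\in\wn\}$ is the image of an $\mc{A}$-basis under an invertible $\mc{A}$-linear map, so it is itself an $\mc{A}$-basis of $\hn$, as claimed.

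The one point requiring care is that the Bruhat expansion of $C_w$ for $w\in\wn$ may a priori involve $T_u$ with $u\in\wnn$; part (b) of the induction is exactly the device that eliminates such terms and keeps the bookkeeping inside $\wn$. Everything else is routine manipulation of unitriangular matrices, and the standing hypothesis that $\wnn$ is $\prec_{LR}$-closed is needed only to guarantee that $\mc{H}_{>N}$ is a two-sided ideal, so that $\hn$ is an algebra rather than merely an $\mc{A}$-module.
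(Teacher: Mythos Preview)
Your proof is correct, but the route differs from the paper's in a mildly interesting way. The paper argues spanning exactly as you do in (b), via the relation $\nt_z=-\sum_{y<z}p_{y,z}\nt_y$ for $z\in\wnn$; for linear independence, however, it does not pass through the $\nc$-basis at all. Instead it lifts a putative dependence $\sum_{w\in\wn}a_w\nt_w=0$ to an equation $\sum_{w\in\wn}a_wT_w=\sum_{y\in\wnn}b_yC_y$ in $\mc{H}$, chooses $y_0$ Bruhat-maximal with $b_{y_0}\neq 0$, and derives a contradiction by comparing coefficients of $T_{y_0}$. In fact the paper's logical order is the reverse of yours: it first proves the $\nt_w$'s form a basis by this direct argument, and only afterwards (just before Lemma~\ref{lem:char}) deduces that the $\nc_w$'s form a basis. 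Your approach is slightly more conceptual (one free-module quotient observation plus one unitriangular transition), while the paper's is more hands-on but self-contained, not invoking the $\nc$-basis as an intermediary.
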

\begin{proof}
	If $ z\in\wnn $, we have $ C_z\in \mc{H}_{>N}  $, and hence
\begin{equation}\label{eq:minus}
	\nt_z=-\sum_{y<z}p_{y,z} \nt_y.
\end{equation}
Then using induction on the Bruhat order,  we know that elements $  \nt_w$, $w\in W_{\leq N}   $ indeed span $ \hn $.
	
	Assume that $  \nt_w, w\in W_{\leq N}   $  are not linearly independent. Then $$  \sum_{w\in W_{\leq N}} a_w\nt_w=0 $$ for some $ a_w\in\mc{A} $ and $ a_w\neq0 $ for some $ w $. Thus
\begin{equation}\label{eq:thus}
	\sum_{w\in W_{\leq N}} a_w T_w=\sum_{y\in W_{>N}}b_y C_y
\end{equation}
	 for some $ b_y\in\mc{A} $. We have $ b_y \neq0 $ for some $ y\in W_{>N} $. Let $ y_0 $ be the maximal element in $ W_{>N} $ such that $ b_{y_0} \neq 0$.  The coefficient of $ T_{y_0} $ is $ b_{y_0} \neq 0$ on the right-hand side of \eqref{eq:thus}, but it is  zero  on the left-hand side since $ y_0\notin\wn $. This is a contradiction.  Thus $  \nt_w, w\in W_{\leq N}   $  are linearly independent.	
\end{proof}

For $ x,y,z\in W_{\leq N} $, define $ \nf_{x,y,z}\in \mc{A} $  by the expansion
\begin{equation}\label{eq:fxyz}
\nt_x\nt_y =\sum_{z\in W_{\leq N} } \nf_{x,y,z}\nt_z.
\end{equation}
For $ h=\sum_{z\in\wn}b_z\nt_z \in\hn$, we define $$  \deg(h):= \max\{\deg b_{z}\mid z\in\wn \}.  $$ This gives a function $ \deg:\hn\to \mathbb{N}\cup\{-\infty\}$. Using \eqref{eq:minus} and induction, one can prove that \begin{equation}\label{eq:degree}
\deg(\nt_z)<0 \text{ for }z\in W_{>N}.
\end{equation}

For any $ w\in W $, we denote by $ \nc_w $ the image of $ C_w $ in the quotient algebra $ \hn $. We have $ \nc_w=\sum_{y\leq w} p_{y,w} \nt_y $ for any $ w\in W $.  Note that $ \nc_w=0 $ for $ w\in\wnn $. By applying \eqref{eq:minus} and using induction on length, for $ w\in W_{\leq N} $, we have  
\begin{equation}\label{eq:npyw}
	\nc_w=\sum_{y\in W_{\leq N}}\np_{y,w}\nt_y,
\end{equation}
for some $ \np_{y,w}\in\mc{A} $ such that 
$ \np_{y,w}=0 $ unless $ y\leq w $, $ \np_{w,w}=1 $,  and $ \deg \np_{y,w}<0 $ for $ y<w $. In particular,  by Lemma \ref{lem:independent}, $ \{\nc_w\mid w\in\wn \} $ is an $ \mc{A} $-basis of $ \hn $. 

Note that the two-sided ideal $  \mc{H}_{>N}   $ is bar invariant, and hence we have a bar involution on $ \hn $ which is induced naturally from the one on $ \mc{H} $. 

\begin{lem}\label{lem:char}
	The elements $ \nc_w $, $ w\in \wn $ form  an $ \mc{A} $-basis of $ \hn $, and they are characterized as the unique elements of $ \hn $ such that \[
	\nc_w\equiv \nt_w\mod (\hn)_{<0}, \text{ and } \nc_w\text{ is bar invariant,}
	\]where  $ (\hn)_{<0} =\bigoplus_{w\in \wn}\mc{A}_{<0}\nt_w$.
\end{lem}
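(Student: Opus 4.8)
The plan is to mirror the classical Kazhdan–Lusztig existence and uniqueness argument, transported through the quotient map, using the fact (already established in Lemma \ref{lem:independent}) that $\{\nt_w\mid w\in\wn\}$ is an $\mc{A}$-basis of $\hn$. For existence, I would simply take the images $\nc_w$ of the Kazhdan–Lusztig elements $C_w$ for $w\in\wn$. The displayed expansion \eqref{eq:npyw} shows $\nc_w=\nt_w+\sum_{y<w}\np_{y,w}\nt_y$ with $\deg\np_{y,w}<0$, so $\nc_w\equiv\nt_w\bmod(\hn)_{<0}$. Bar invariance is inherited: the bar involution on $\mc{H}$ preserves the ideal $\mc{H}_{>N}$ (since $\overline{C_w}=C_w$ and $\mc{H}_{>N}$ is spanned by the $C_w$, $w\in W_{>N}$), hence descends to a ring involution on $\hn$ fixing each $\nc_w$. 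That these span is already noted after \eqref{eq:npyw}, but in any case it follows from the triangularity against the $\nt_w$-basis.

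For uniqueness, suppose $C,C'\in\hn$ both satisfy $C\equiv\nt_w\equiv C'\bmod(\hn)_{<0}$ and are bar invariant. Set $h=C-C'$. Then $h$ is bar invariant and $h\in(\hn)_{<0}$, i.e. writing $h=\sum_{y\in\wn}b_y\nt_y$ we have $\deg b_y<0$ for all $y$. I want to conclude $h=0$. Here is where I must be a little careful: the standard argument uses that $\overline{\nt_y}=\nt_y+(\text{lower-order terms involving }\nt_{y'},\,y'<y)$, which in $\mc{H}$ comes from the formula for $\overline{T_y}=T_{y^{-1}}^{-1}$; this descends to $\hn$ because the quotient map is a bar-equivariant algebra homomorphism and because the Bruhat order is compatible with the filtration. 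Concretely, the elements $\nt_y$, $y\in\wn$, still satisfy an upper-triangular-with-$1$'s transformation under the bar involution relative to the Bruhat order (restricted to $\wn$, using \eqref{eq:minus} to rewrite any $\nt_z$ with $z\in\wnn$ appearing in $\overline{T_y}$ in terms of $\nt_{y'}$ with $y'<z$, hence $y'<y$ after tracking degrees). Then pick $y_0$ maximal in the Bruhat order among $\{y\mid b_y\neq0\}$; comparing the $\nt_{y_0}$-coefficient of $h=\bar h$ gives $b_{y_0}=\overline{b_{y_0}}$, forcing $b_{y_0}\in\mathbb{Z}$, which contradicts $\deg b_{y_0}<0$ unless $b_{y_0}=0$. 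Hence $h=0$ and $C=C'$.

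The main obstacle I anticipate is the bookkeeping in the uniqueness step: one must verify that after passing to the quotient the bar involution still acts unitriangularly on the basis $\{\nt_w\mid w\in\wn\}$ with respect to a well-founded order compatible with ``$\deg<0$''. The subtlety is that $\overline{T_w}$ naturally involves $T_{w'}$ for $w'\le w$ including possibly $w'\in\wnn$, and one needs \eqref{eq:minus} together with a degree/Bruhat-order compatibility to fold those terms back without breaking triangularity — but since $W_{>N}$ is $\prec_{LR}$-closed (our standing assumption) and hence Bruhat-closed-enough for this purpose, and $\deg\nt_z<0$ for $z\in\wnn$ by \eqref{eq:degree}, the argument goes through essentially verbatim from \cite[Ch.~5]{lusztig2003hecke}. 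So the proof is a routine adaptation, with the only real content being this triangularity check.
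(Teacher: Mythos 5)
Your proposal is correct, and the existence half is identical to the paper's (take images of $C_w$, read off \eqref{eq:npyw}, note $\mc{H}_{>N}$ is bar-stable). The uniqueness half, however, takes a genuinely different route. The paper does not prove any triangularity of the bar involution in $\hn$: it lifts the hypothetical bar-invariant element $\sum_{y\in\wn}a_y\nt_y\in(\hn)_{<0}$ to $h=\sum_{y\in\wn}a_yT_y\in\mc{H}$, observes $\bar h-h=\sum_{y\in\wnn}b_yC_y$ with $\overline{b_y}=-b_y$, solves $b_y=\overline{q_y}-q_y$ with $q_y\in\mc{A}_{<0}$, and corrects to $h'=h-\sum_{y\in\wnn}q_yC_y$, which is bar invariant and lies in $\mc{H}_{<0}$, hence vanishes by the standard fact \cite[5.2(e)]{lusztig2003hecke}; projecting back and using Lemma \ref{lem:independent} finishes. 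You instead stay inside $\hn$, prove that the bar involution is unitriangular on $\{\nt_y\mid y\in\wn\}$ for the Bruhat order (folding in $\nt_z$, $z\in\wnn$, via \eqref{eq:minus}, which only produces strictly smaller indices), and then run the classical maximal-coefficient argument. Both work; the paper's detour to $\mc{H}$ buys it the right to quote Lusztig's 5.2(e) without any triangularity check, while your version is self-contained in the quotient — and indeed the triangularity you worry about needs no degree bookkeeping at all, only strictness in the Bruhat order, so your ``main obstacle'' is not really one. Two small inaccuracies to fix in the write-up: the relation $b_{y_0}=\overline{b_{y_0}}$ does not force $b_{y_0}\in\mathbb{Z}$ (e.g.\ $q+q^{-1}$ is bar invariant); what you actually need, and what does hold, is that a bar-invariant element of $\mc{A}_{<0}$ is $0$, since its bar lies in $q\,\mathbb{Z}[q]$. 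Likewise the phrase ``hence $y'<y$ after tracking degrees'' should just read that \eqref{eq:minus} replaces $\nt_z$ by terms indexed by $y'<z\leq y$, $y'\in\wn$.
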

\begin{proof}
It only remains to prove uniqueness. For this,  it suffices to prove the claim that \begin{equation}\label{claim:uniqueness}
 \text{if }\sum_{y\in\wn}a_y\nt_y\in (\hn)_{<0}	\text{ and is bar invariant, then } a_y=0\text{ for all }y.
	\end{equation}
	
Take $ h=\sum_{y\in\wn} a_y T_y\in \mc{H} $. Since the image of $ h $ in $ \hn $ is bar invariant, we have $ \bar{h}-h\in\mc{H}_{>N}  $. Write $$  \bar{h}-h=\sum _{y\in\wnn} b_y C_y  \text{ with } b_y\in\mc{A}  .$$ Obviously, $ \overline{b_y}=-b_y $, and hence $ b_y=\overline{q_y}-q_y $ for some $ q_y\in \mc{A}_{<0} $. Then consider the element $h'=h-\sum_{y\in\wnn}q_y C_y $.  Since both $ a_y $ and $ q_y $ belong to $ \mc{A}_{<0} $, we have $h'\in \mc{H}_{<0} $. On the other hand, by our construction, $ h' $ is bar invariant. These force $ h' =0 $, see for example \cite[5.2(e)]{lusztig2003hecke}. Thus $ \sum_{y\in\wn}a_y\nt_y=0 $ in $ \hn $. By Lemma \ref{lem:independent}, $ a_y=0 $. This proves claim \eqref{claim:uniqueness}.
\end{proof}

It is easy to see that for $ x,y\in \wn $, we have \begin{equation}\label{eq:nhxyz}
  \nc_x\nc_y=\sum_{z\in\wn} h_{x,y,z} \nc_z. 
\end{equation}
By triangularity,
%with respect to restriction of the Bruhat order on $ \wn $, 
for any $ w\in\wn $, we have
\begin{equation}\label{eq:inverse}
\nt_w=\sum_{y\in\wn}\nq_{y,w}\nc_y,
\end{equation}
 for some $ \nq_{y,w}\in \mc{A} $ such that $ \nq_{y,w}\neq0 $ implies that $ y\leq w $, $ \nq_{w,w}=1 $ and $ \deg \nq_{y,w}<0 $ for $ y<w $. By \eqref{eq:fxyz}, \eqref{eq:npyw}, \eqref{eq:nhxyz} and \eqref{eq:inverse}, for $ x,y,z\in \wn $, we have 
% the following two equations about expressions between $ h_{x,y,z} $ and $ \nf_{x,y,z} $ :
 \begin{equation}\label{eq:hf}
 h_{x,y,z}=\sum_{x',y',z'}\np_{x',x}\np_{y',y}\nf_{x',y',z'}\nq_{z,z'}
\end{equation}
 \begin{equation}\label{eq:fh}
\nf_{x,y,z}=\sum_{x',y',z'}\nq_{x',x}\nq_{y',y}h_{x',y',z'}\np_{z,z'}
\end{equation}
 where $ x',y',z' $ run though $ \wn $.
 
\begin{lem}\label{lem:wn} 	Keep Assumption \ref{ass1}.
%	Assume that $ (W,S) $ is any Coxeter group and  $ N $ is a nonnegative integer such that $ W_{> N} $ is $ \prec_{ {LR}} $ closed.
	For any $ x,y\in\wn $, we have \begin{equation}\label{key}
	 \deg (\nt_x\nt_y)\leq N ,
	\end{equation} 
\begin{equation}\label{eq:wn}
	W_N=\{z\in \wn\mid \deg (\nf_{x,y,z})=N \text{ for some } x,y\in \wn \}.  
\end{equation}
Write \[
\nf_{x,y,z}=\nb_{x,y,z^{-1}}q^N+\text{terms of lower degree}.
\]
Then $ \nb_{x,y,z}=\gamma_{x,y,z} $  holds for $ x,y,z\in \wn $ such that $ \nb_{x,y,z}\neq0 $ or $ \af(z)=N $.
\end{lem}
 \begin{proof}
For all $ x,y,z\in\wn $, we have $ \deg h_{x,y,z}\leq N $. By \eqref{eq:fh}, we have   $ \deg \nf_{x,y,z}\leq N $. Thus $ \deg (\nt_x\nt_y)\leq N $ alway holds.

If $ \deg \nf_{x,y,z}=N $, then by \eqref{eq:hf}, $ \deg h_{x,y,z}=N $. Hence $ \af(z)\geq N $, but the assumption $ z\in \wn $ forces $ \af(z)=N $.

 Conversely, if $ \af(z)=N $, then there exist some $ u,v\in W $ such that $ \deg h_{u,v,z}=N $. Since  $ W_{>N} $ is $ \prec_{ {LR}} $ closed, we have 
$ h_{a,b,z}=0\text{ if }a \text{ or }b\in \wnn. $ Thus, $ u,v\in \wn $. By \eqref{eq:fh}, $\deg \nf_{u,v,z}=N$. Then we have proved \eqref{eq:wn}.

If  $ \af(z)=N $, then $ \gamma_{x,y,z} $ is the coefficient of $ q^N $ in $ h_{x,y,z^{-1}} $. Using \eqref{eq:fh}, we see that $ \nb_{x,y,z}=\gamma_{x,y,z} $. 

If $ \nb_{x,y,z}\neq0 $, then $ \deg\nf_{x,y,z^{-1}}=N$. By \eqref{eq:wn},  $ \af(z)=N $. From the last paragraph, we know  $ \nb_{x,y,z}=\gamma_{x,y,z} $. 
\end{proof}

Note that combining Lemma \ref{lem:wn} with claim \eqref{eq:degree}, we actually have 
\begin{equation}\label{eq:strict}
\deg \nt_x\nt_y\leq N \text{ for all } x,y\in W,
\end{equation}
and if $ x$ or  $y $ belongs to $ W_{>N} $, then 
\begin{equation}\label{eq:strict1}
\deg \nt_x\nt_y< N.
\end{equation}

 \subsection{Cyclic property}\label{subsec:cyc}
 
\begin{ass}\label{ass2}
	In this subsection, $ (W,S) $ is any Coxeter group, and $ N $ is an integer  such that $ (\text{P1})_{>N} $, $ (\text{P4})_{>N} $, $ (\text{P8})_{>N} $ hold. In particular, $ \wnn $ is $ \prec_{ {LR}} $ closed.
\end{ass}

Let $ \tau:\mc{H}\to\mc{A} $ be the $ \mc{A} $-linear map such that $ \tau(T_w)=\delta_{e,w} $. It is known that $ \tau(T_{x}T_y)=\delta_{x,y^{-1}} $, and for $ h,h'\in\mc{H} $, we have $ \tau(hh')=\tau(h'h) $. Then $ f_{x,y,z^{-1}}=\tau(T_xT_yT_z) $. Since $ \tau(T_xT_yT_z) =\tau(T_yT_zT_x) =\tau(T_zT_xT_y)  $, we have $ f_{x,y,z^{-1}}=f_{y,z,x^{-1}}=f_{z,x,y^{-1}} $.

Let $ \ntau:\hn\to\mc{A} $ be the map defined by $  \ntau(h) =b_e $ for  $   h=\sum_{w\in \wn}b_w\nt_w $ with  $b_w\in \mc{A} .$ 

\begin{lem}
	For $ x,y\in\wn $, we have \begin{equation}\label{eq:tau1}
	\ntau(\nc_x\nc_y)\in\delta_{x,y^{-1}}+\mc{A}_{<0},
	\end{equation}
%	Using \eqref{eq:inverse},  we have
\begin{equation}\label{eq:tnegative}
\text{ and }	\ntau(\nt_x\nt_y)\in\delta_{x,y^{-1}}+\mc{A}_{<0}.
	\end{equation}
\end{lem}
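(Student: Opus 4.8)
The plan is to prove \eqref{eq:tau1} first and to deduce \eqref{eq:tnegative} from it via the triangular expansion \eqref{eq:inverse}. The background fact I would start from is the (unconditional) near‑orthogonality of the Kazhdan--Lusztig basis under $\tau$: expanding $C_x=\sum_a p_{a,x}T_a$, $C_y=\sum_b p_{b,y}T_b$ and using $\tau(T_aT_b)=\delta_{a,b^{-1}}$ gives $\tau(C_xC_y)=\sum_a p_{a,x}\,p_{a^{-1},y}$, in which every summand has degree $<0$ except, when $x=y^{-1}$, the term $a=x$ which equals $1$; hence $\tau(C_xC_y)\in\delta_{x,y^{-1}}+\mc{A}_{<0}$. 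So \eqref{eq:tau1} reduces to showing $\tau(C_xC_y)-\ntau(\nc_x\nc_y)\in\mc{A}_{<0}$ for $x,y\in\wn$. Now $C_xC_y=\sum_{z\in W}h_{x,y,z}C_z$ is a finite sum, while in $\hn$ we have $\nc_x\nc_y=\sum_{z\in\wn}h_{x,y,z}\nc_z$ because $\nc_z=0$ for $z\in\wnn$; since $\tau(C_z)=p_{e,z}$ and $\ntau(\nc_z)=\np_{e,z}$ (see \eqref{eq:npyw}), this difference equals $\Sigma_1+\Sigma_2$ with $\Sigma_1=\sum_{z\in\wn}h_{x,y,z}(p_{e,z}-\np_{e,z})$ and $\Sigma_2=\sum_{z\in\wnn}h_{x,y,z}p_{e,z}$.

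For $\Sigma_1$ the idea is to rewrite the correction $p_{e,z}-\np_{e,z}$ in a form whose very negative degree is visible. For $z\in\wn$ put $\theta_z=C_z-\sum_{w\in\wn}\np_{w,z}T_w$; by \eqref{eq:npyw} the image of $\theta_z$ in $\hn$ is $\nc_z-\nc_z=0$, so $\theta_z\in\mc{H}_{>N}$ and $\theta_z=\sum_{v\in\wnn}c^{(z)}_vC_v$ for suitable $c^{(z)}_v\in\mc{A}$. On the other hand $\np_{z,z}=p_{z,z}=1$ together with $\deg p_{y,z}<0$ and $\deg\np_{w,z}<0$ for $y,w<z$ shows that all $T$‑coefficients of $\theta_z$ lie in $\mc{A}_{<0}$, i.e. $\theta_z\in\mc{H}_{<0}$, whence by a standard unitriangularity argument between $\{T_w\}$ and $\{C_w\}$ one gets $c^{(z)}_v\in\mc{A}_{<0}$ for all $v$. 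Applying $\tau$ to $\theta_z$ gives $p_{e,z}-\np_{e,z}=\sum_{v\in\wnn}c^{(z)}_v\,p_{e,v}$, so $\Sigma_1=\sum_{z\in\wn,\,v\in\wnn}h_{x,y,z}\,c^{(z)}_v\,p_{e,v}$; here $\deg h_{x,y,z}\le\af(z)\le N$, $\deg c^{(z)}_v\le-1$, and $\deg p_{e,v}=-\Delta(v)\le-\af(v)\le-(N+1)$ by $(\text{P1})_{>N}$ (as $v\in\wnn$), so each term has degree $\le N-1-(N+1)<0$ and $\Sigma_1\in\mc{A}_{<0}$. For $\Sigma_2$: if $z\in\wnn$ then $\deg(h_{x,y,z}p_{e,z})\le\af(z)-\Delta(z)\le0$ by $(\text{P1})_{>N}$, with equality only if $\deg h_{x,y,z}=\af(z)$, i.e. $\gamma_{x,y,z^{-1}}\ne0$; but $\af(z^{-1})=\af(z)>N$, so $(\text{P8})_{>N}$ applies and yields $y\sim_L z$, and then $(\text{P4})_{>N}$ forces $\af$ to be constant and $>N$ on the two‑sided cell of $z$, giving $\af(y)>N$ and contradicting $y\in\wn$. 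Hence $\gamma_{x,y,z^{-1}}=0$, the degree is $<0$, and $\Sigma_2\in\mc{A}_{<0}$. Together these prove \eqref{eq:tau1}.

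Finally, \eqref{eq:tnegative} follows by writing $\nt_x=\sum_{u\in\wn}\nq_{u,x}\nc_u$ and $\nt_y=\sum_{v\in\wn}\nq_{v,y}\nc_v$ via \eqref{eq:inverse}, so that $\ntau(\nt_x\nt_y)=\sum_{u,v}\nq_{u,x}\nq_{v,y}\,\ntau(\nc_u\nc_v)$. By \eqref{eq:tau1} the contribution of the $\mc{A}_{<0}$‑part of each $\ntau(\nc_u\nc_v)$ consists of terms of degree $<0$, while $\sum_{u,v}\nq_{u,x}\nq_{v,y}\delta_{u,v^{-1}}=\sum_v\nq_{v^{-1},x}\nq_{v,y}$ is congruent to $\delta_{x,y^{-1}}$ modulo $\mc{A}_{<0}$, since $\nq_{v^{-1},x}$ and $\nq_{v,y}$ have degree $<0$ unless $v^{-1}=x$, resp.\ $v=y$ (in which case each equals $1$). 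This gives $\ntau(\nt_x\nt_y)\in\delta_{x,y^{-1}}+\mc{A}_{<0}$.

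I expect the estimate of $\Sigma_1$ to be the main obstacle: the naive computation in the $T$‑basis breaks down because the structure constants $f_{x,y,z}$ are not controlled in degree by $\af(z)$, so one is forced into the $C$‑basis, which is exactly what makes the three decisive inputs $\theta_z\in\mc{H}_{<0}$, $\deg h_{x,y,z}\le\af(z)$, and $(\text{P1})_{>N}$ simultaneously available; the sole function of $(\text{P8})_{>N}$ in the proof is to annihilate the borderline degree‑zero contributions in $\Sigma_2$, and $(\text{P4})_{>N}$ is needed only to make $\hn$ well defined and to run the cell argument inside $\Sigma_2$.
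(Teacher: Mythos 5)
Your proposal is correct and takes essentially the same route as the paper: reduce \eqref{eq:tau1} to the classical fact $\tau(C_xC_y)\in\delta_{x,y^{-1}}+\mc{A}_{<0}$, split the comparison of $\tau(C_xC_y)$ with $\ntau(\nc_x\nc_y)$ into the $z\in\wn$ and $z\in\wnn$ parts, kill the borderline contribution of the $\wnn$ tail exactly as the paper does via $(\text{P1,P4,P8})_{>N}$, and deduce \eqref{eq:tnegative} from \eqref{eq:tau1} by the triangular change of basis. The only difference is in bookkeeping for the bound on $p_{e,z}-\np_{e,z}$: the paper proves $\deg\ntau(\nt_w)<-N$ for $w\in\wnn$ by induction on length using \eqref{eq:minus}, whereas you expand the discrepancy $\theta_z$ in the $C$-basis of $\mc{H}_{>N}$ with coefficients in $\mc{A}_{<0}$ and apply $(\text{P1})_{>N}$ to $p_{e,v}$, $v\in\wnn$; both yield the required degree estimate.
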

\begin{proof}
	By using $ \np_{y,w} $ and $ \nq_{y,w} $, it is easy to see that \eqref{eq:tau1} and \eqref{eq:tnegative} are equivalent. Since $ \tau(T_xT_y)=\delta_{x,y^{-1}} $, we have $  \tau(C_xC_y)\in \delta_{x,y^{-1}} +\mc{A}_{<0}$. Thus to prove \eqref{eq:tau1}, it suffices to prove that $ \ntau(\nc_x\nc_y) \equiv \tau(C_xC_y)\mod\mc{A}_{<0}$.
	We have  \begin{align}
	\tau(C_xC_y)&=\sum_{z\in \wn}\tau( h_{x,y,z}C_z) +\sum_{z\in\wnn}\tau(h_{x,y,z}C_z),\\
	\ntau(\nc_x\nc_y)&=\sum_{z\in \wn}\ntau( h_{x,y,z}\nc_z).
	\end{align}
	
	For $ z\in \wnn $,  we have $\deg\tau(h_{x,y,z}C_z) \leq0 $, since $$  \deg h_{x,y,z}\leq \af(z)\leq \Delta(z)=-\deg p_{e,z}  $$ by $ (\text{P1})_{>N} $. In fact, $\deg\tau(h_{x,y,z}C_z) <0 $; otherwise, we have
	$ \gamma_{x,y,z}\neq0 $, which implies that $ \af(x)=\af(z^{-1})=\af(z)>N $ by $ (\text{P4})_{>N} $ and $ (\text{P8})_{>N} $,
%\footnote{This is the only place where we apply $ (\text{P8})_{>N} $. It can been weakened as the condiiton that for $ N'>N $, $ x,y\in W_{\leq N'} $, $ z\in W_{N'} $, then $ \gamma_{x,y,z}\neq 0 $ implies that $ x,y\in W_{N'} $. This is precisely Lemma \ref{lem:cyc}(iv). Thus, if we assume the boundedness conjecture, then we can use induction on $ N' $ to prove this weaker assumption. In other words, $  (\text{P8})_{>N}  $ here can be replaced by assuming the boundedness conjecture. In the same time,  $  (\text{P4})_{>N} $ can be replaced by a weak condition that for any $ N'>N $, $ W_{\geq N'} $ is $ \sim_{ {LR}} $ closed.}
 a contradiction with $ x\in\wn $.  Thus, we always have $\sum_{z\in \wnn}\tau(h_{x,y,z}C_z) \in\mc{A}_{<0} $. 
	
To prove the lemma, it is remains to prove that, for $z\in\wn  $,	\begin{equation}\label{eq:remain}
\tau(h_{x,y,z}C_z)\equiv \ntau( h_{x,y,z}\nc_z )\mod \mc{A}_{<0}.
\end{equation}

By definition, for $ e\neq y\in W_{\leq N} $, $ \ntau(\nt_y) =0$. Then
for $ w\in\wnn $, applying $ \ntau $ to \eqref{eq:minus}, we have $$   \ntau(\nt_w)=-p_{e,w}-\sum_{y<w,y\in\wnn}p_{y,w}\ntau(\nt_y) . $$ By $ (\text{P1})_{>N} $, we have   $$  \deg p_{e,w}=-\Delta(w)\leq -\af(w)<-N  .$$ Then using induction on the length, we can prove  \(
	\deg \ntau(\nt_w)<-N\text{ for all }w\in \wnn.
	\)

For any $ z\in \wn $, we have \begin{align*}
 \ntau(\nc_z) &=\sum_{y\leq z} p_{y,z}\ntau(\nt_y)\\
& =p_{e,z}+\sum_{\substack{y< z\\y\in\wnn}} p_{y,z}\ntau(\nt_y)\\
&\equiv p_{e,z} \mod q^{-N-1}\mathbb{Z}[q^{-1}],
\end{align*}
by the last paragraph.
Since $ \tau(C_z)=p_{e,z} $, we obtain \begin{equation}\label{eq:tau}
	\tau(C_z)\equiv\ntau(\nc_z) \mod q^{-N-1}\mathbb{Z}[q^{-1}].
	\end{equation}
	Since $ z\in\wn $, we have $\deg h_{x,y,z} \leq N$, and hence $$  \tau(h_{x,y,z}C_z)\equiv \ntau( h_{x,y,z}\nc_z )\mod \mc{A}_{<0}.  $$ This proves \eqref{eq:remain}.
\end{proof}

%Since $ \tau(T_xT_y)=\delta_{x,y^{-1}} $, we have $  \tau(C_xC_y)\in \delta_{x,y^{-1}} +\mc{A}_{<0}$, and hence, by the last lemma,

 \begin{lem}\label{lem:cyc}
%% 	 Assume that  $ (\text{P1})_{>N} $, $ (\text{P4})_{>N} $,
%%% 	  $ (\text{P7})_{>N} $, 
% 	 $ (\text{P8})_{>N} $ hold. 
\begin{itemize}
	\item[(i)] For $ h,h'\in\hn $ such that $ \deg h\leq m $ and $ \deg h'\leq m' $, we have $ \ntau(hh')\equiv \ntau(h'h)\mod q^{m+m'-1}\mathbb{Z}[q^{-1}] $.
	\item [(ii)] For $ x,y,z\in\wn $, we have\[
	\nf_{x,y,z^{-1}}=\ntau(\nt_x\nt_y\nt_z) \mod q^{N-1}\mathbb{Z}[q^{-1}].
	\]This implies that 
	\begin{equation}\label{key}
	\nb_{x,y,z}=\nb_{y,z,x}=\nb_{z,x,y}.
	\end{equation}
\item [(iii)] If $ \nb_{x,y,z}\neq0 $ with $ x,y,z\in\wn $, then
\begin{equation}\label{eq:betagamma}
	\nb_{x,y,z}=\nb_{y,z,x}=\nb_{z,x,y}=\gamma_{x,y,z}=\gamma_{y,z,x}=\gamma_{z,x,y},
\end{equation}
and 
$$ y\sim_{ {L}} z^{-1} ,z\sim_{ {L}} x^{-1} ,x\sim_{ {L}}y^{-1} ,\af(x)=\af(y)=\af(z)=N . $$
\item [(iv)] If $ \gamma_{x,y,z}\neq0 $ with $ x,y\in\wn $ and  $ \af(z)=N $, then $$  \nb_{x,y,z}=\gamma_{x,y,z}\neq0  ,$$ and hence conditions and conclusions of assertion (iii) all hold.
\end{itemize}
 \end{lem}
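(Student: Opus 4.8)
\emph{Part (i).} Since both $\ntau$ and the multiplication of $\hn$ are $\mc A$-bilinear, it suffices to check the congruence on products of basis elements. Writing $h=\sum_x b_x\nt_x$ and $h'=\sum_y b'_y\nt_y$ (sums over $\wn$) with $\deg b_x\le m$ and $\deg b'_y\le m'$, one has
\[ \ntau(hh')-\ntau(h'h)=\sum_{x,y}b_xb'_y\bigl(\ntau(\nt_x\nt_y)-\ntau(\nt_y\nt_x)\bigr), \]
and by \eqref{eq:tnegative} each bracket lies in $(\delta_{x,y^{-1}}-\delta_{y,x^{-1}})+\mc A_{<0}=\mc A_{<0}$; hence every summand has degree $\le m+m'-1$, which gives (i).

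\emph{Part (ii).} For the first identity I expand $\nt_x\nt_y=\sum_{w\in\wn}\nf_{x,y,w}\nt_w$, multiply by $\nt_z$ on the right, and apply $\ntau$, getting $\ntau(\nt_x\nt_y\nt_z)=\sum_w\nf_{x,y,w}\,\ntau(\nt_w\nt_z)$. By \eqref{eq:tnegative} we have $\ntau(\nt_w\nt_z)\in\delta_{w,z^{-1}}+\mc A_{<0}$, and $\deg\nf_{x,y,w}\le N$ by Lemma \ref{lem:wn}, so the total error has degree $<N$ and $\ntau(\nt_x\nt_y\nt_z)\equiv\nf_{x,y,z^{-1}}\mod q^{N-1}\mathbb Z[q^{-1}]$. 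Now I apply (i) to the pair $(\nt_x\nt_y,\nt_z)$ — legitimate since $\deg(\nt_x\nt_y)\le N$ by Lemma \ref{lem:wn} and $\deg\nt_z=0$ — and to its cyclic rotations, obtaining $\ntau(\nt_x\nt_y\nt_z)\equiv\ntau(\nt_y\nt_z\nt_x)\equiv\ntau(\nt_z\nt_x\nt_y)\mod q^{N-1}\mathbb Z[q^{-1}]$. Combining this with the first congruence applied to the triples $(x,y,z)$, $(y,z,x)$, $(z,x,y)$ yields $\nf_{x,y,z^{-1}}\equiv\nf_{y,z,x^{-1}}\equiv\nf_{z,x,y^{-1}}\mod q^{N-1}\mathbb Z[q^{-1}]$, and comparing the coefficients of $q^N$ proves \eqref{key}.

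\emph{Parts (iii) and (iv).} Suppose $\nb_{x,y,z}\ne0$. By (ii) the three cyclic values $\nb_{x,y,z},\nb_{y,z,x},\nb_{z,x,y}$ are equal, hence all nonzero. For any triple $(a,b,c)$ with $\nb_{a,b,c}\ne0$ we have $\deg\nf_{a,b,c^{-1}}=N$, so $c^{-1}\in W_N$ by \eqref{eq:wn} and therefore $\af(c)=\af(c^{-1})=N$; applying this to the three rotations gives $\af(x)=\af(y)=\af(z)=N$, and Lemma \ref{lem:wn} then identifies each cyclic $\nb$ with the corresponding $\gamma$, which is \eqref{eq:betagamma}. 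For the cell relations, $\gamma_{x,y,z}\ne0$ forces $h_{x,y,z^{-1}}\ne0$, and a nonzero $h$-coefficient always satisfies $z^{-1}\prec_{ {R}}x$ and $z^{-1}\prec_{ {L}}y$ (a basic property of the structure constants needing no part of P1--P15); the two rotations give $x^{-1}\prec_{ {R}}y$, $x^{-1}\prec_{ {L}}z$ and $y^{-1}\prec_{ {R}}z$, $y^{-1}\prec_{ {L}}x$. Using $a\prec_{ {L}}b\Leftrightarrow a^{-1}\prec_{ {R}}b^{-1}$ to rewrite the $\prec_{ {R}}$-relations (for instance $z^{-1}\prec_{ {R}}x$ becomes $z\prec_{ {L}}x^{-1}$, and together with $x^{-1}\prec_{ {L}}z$ this yields $z\sim_{ {L}}x^{-1}$) one reads off $x\sim_{ {L}}y^{-1}$, $y\sim_{ {L}}z^{-1}$, $z\sim_{ {L}}x^{-1}$, completing (iii). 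Finally, in the situation of (iv) we have $z\in W_N\subset\wn$, so all three elements lie in $\wn$, Lemma \ref{lem:wn} gives $\nb_{x,y,z}=\gamma_{x,y,z}\ne0$, and (iii) applies.

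\emph{Main obstacle.} Once the quotient algebra $\hn$ and Lemmas \ref{lem:independent}--\ref{lem:wn} together with \eqref{eq:tnegative} are available, none of the steps is hard; the only point demanding care is the inverse-and-side bookkeeping in (iii) that converts the three nonvanishing statements $\gamma_{\cdot,\cdot,\cdot}\ne0$ into the three $\sim_{ {L}}$-relations, where one must keep the conventions for $\prec_{ {L}}$ and $\prec_{ {R}}$ straight.
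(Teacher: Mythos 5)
Your proof is correct and follows essentially the same route as the paper: (i) via bilinearity and \eqref{eq:tnegative}, (ii) by expanding $\nt_x\nt_y$ in the $\nt_w$-basis plus the cyclic invariance of $\ntau$ up to degree $N-1$, and (iii)–(iv) by combining Lemma \ref{lem:wn} with \eqref{eq:wn} and the standard implications $h_{x,y,z^{-1}}\neq 0\Rightarrow z^{-1}\prec_{ {L}}y,\ z^{-1}\prec_{ {R}}x$ applied to the three rotations. The only cosmetic difference is that you derive the equalities $\af(x)=\af(y)=\af(z)=N$ before identifying $\nb$ with $\gamma$, whereas the paper does it afterwards; both orders are valid.
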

\begin{proof}
	By \eqref{eq:tnegative}, we have $ \ntau(\nt_x\nt_y)\equiv \ntau(\nt_y\nt_x) \mod \mc{A}_{<0}$. Then assertion (i) follows immediately.
	
	We have $ \ntau(\nt_x\nt_y\nt_z)=\sum_{w\in\wn} \nf_{x,y,w}\ntau(\nt_w\nt_z)$. By  Lemma \ref{lem:wn}, we have $ \deg\nf_{x,y,w}  \leq N$. If $ w^{-1}\neq z $, then $$  \nf_{x,y,w}\ntau(\nt_w\nt_z)\in \nf_{x,y,w}\mc{A}_{<0} \subset q^{N-1}\mathbb{Z}[q^{-1}] .$$ If $ w^{-1}= z  $, then $$  \nf_{x,y,w}\ntau(\nt_w\nt_z)\in \nf_{x,y,w}(1+\mc{A}_{<0}) \subset  \nf_{x,y,z^{-1}}+q^{N-1}\mathbb{Z}[q^{-1}] .$$ Hence $   \ntau(\nt_x\nt_y\nt_z)\equiv \nf_{x,y,z^{-1}}\mod q^{N-1}\mathbb{Z}[q^{-1}]$.
	
Using Lemma \ref{lem:wn} and assertion (i), we have $$  \ntau(\nt_x\nt_y\nt_z) \equiv\ntau(\nt_z\nt_x\nt_y)\equiv\ntau(\nt_y\nt_z\nt_x)\mod q^{N-1}\mathbb{Z}[q^{-1}]  .$$
	Hence $$  \nf_{x,y,z^{-1}} \equiv\nf_{y,z,x^{-1}}\equiv\nf_{z,x,y^{-1}}\mod q^{N-1}\mathbb{Z}[q^{-1}]  .$$
	By taking  coefficients of $ q^N $, we have \begin{equation*} 
	\nb_{x,y,z}=\nb_{y,z,x}=\nb_{z,x,y}.
	\end{equation*} This proves assertion (ii).
	
Now we prove assertion (iii).	If $ \nb_{x,y,z}\neq0 $, we have $ \nb_{x,y,z}=\gamma_{x,y,z} $ by Lemma \ref{lem:wn}. Using assertion (ii), we obtain \eqref{eq:betagamma}. Now $ \gamma_{x,y,z}\neq0 $ implies $ z^{-1}\prec_{ {L}} y $, and $\gamma_{z,x,y}\neq0  $ implies $ y^{-1}\prec_R z $. Hence  $ y\sim_{ {L}} z^{-1} $. Similarly, we have $ z\sim_{ {L}} x^{-1} $, $ x\sim_{ {L}}y^{-1} $. At last, since $ \nb_{x,y,z}\neq0 $, we have $ \deg\nf_{x,y,z^{-1}}=N$. Using \eqref{eq:wn}, we have $ \af(z)=N $. Similarly, using $ \nb_{y,z,x}=\nb_{z,x,y}\neq 0 $, we have $ \af(x)=\af(y)=N $. This proves (iii).

Now we prove (iv).
Since $ \af(z)=N $, we have $   \nb_{x,y,z}=\gamma_{x,y,z}$ by Lemma \ref{lem:wn}. Thus $  \nb_{x,y,z}\neq 0 $ and  we can use (iii).
\end{proof}

\begin{lem}\label{lem:compute} Under Assumption \ref{ass2},
 we have \[
W_N=\{ x\in\wn\mid \deg \nt_x\nt_y=N \text{ for some }y\in\wn \}.
\]
\end{lem}\begin{proof}
This follows from Lemma \ref{lem:wn} and the cyclic property \eqref{eq:betagamma}.
\end{proof}
\subsection{Parabolic subgroups}
Let $ J \subset S$. The restriction of the weight function $ L $ on the parabolic subgroup $ W_J$ is still denoted by $ L $. Let $  \mc{H}_J $ be the Hecke algebra corresponding to $ (W_J,J,L) $. Then $ \mc{H}_J $ is naturally isomorphic to the subalgebra of $ \mc{H} $ spanned by $\{ T_w \mid w\in W_J\}$ over $ \mc{A} $. We will identify $ \mc{H}_J $ with this subalgebra. By definition, the Kazhdan-Lusztig basis elements indexed by $ w\in W_J $ in $ \mc{H}_J $ and $ \mc{H} $ coincide, and we use the same notation $ C_w $. 

We can define the preorders $ \prec_{ {L}}^J $, $ \prec_{ {R}}^J $ and $ \prec_{ {LR}}^J $ on $ W_J $ in the same way as those 
%$ \prec_{ {L}} $, $ \prec_{ {R}} $ and $ \prec_{ {LR}}$
on $ W $.
Let $ \af_J: W_J\to \mathbb{N} \cup\{\infty \}$ be the  $ \af $-function defined in terms of  $ W_J $.  Using $ \af_J $ one can define  $(W_J)_{>N}  $, $(W_J)_{N}  $  and so on. For example, $$  ({W_J})_{>N} =\{w\in W_J\mid \af_J(w)>N \}  .$$
If $  ({W_J})_{>N} $ is $ \prec_{LR}^J$ closed, then one can 
define  two-sided ideal $ (\mc{H}_J)_{>N} $ and quotient algebra $ (\mc{H}_J)_{\leq N} $ of $ \mc{H} _J$ in the same way as $ \mc{H} _{>N}$ and $ \mc{H}_{\leq N} $.

\begin{lem} \label{lem:nfxyz}
	Assume that $ ({W_J})_{>N}  $ (resp. $ W_{>N} $) is $ \prec_{LR}^J $ (resp. $\prec_{ {LR}}$) closed. If $ (W_J)_{\leq N}\subset W_{\leq N} $, then $ (\mc{H}_J)_{\leq N} $ can be naturally embedded into $  \mc{H}_{\leq N} $, and hence for $ x,y,z\in (W_J)_{\leq N} $, $ \nf_{x,y,z} \in\mc{A}$ that computed in $ \mc{H}_{\leq N} $ coincide with that  in  $ (\mc{H}_J)_{\leq N}$.
%	
%\begin{itemize}
%	\item [(i)] $ ({W_J})_{>N} \subset W_{>N} $.
%	\item [(ii)] 
%	Then we have an injective homomorphism\[
%	(\mc{H}_J)_{\leq N}\to \mc{H}_{\leq N}.
%	\]
%	Hence for $ x,y,z\in (W_J)_{\leq N} $, $ \nf_{x,y,z} \in\mc{A}$ computed in terms of $ W $ coincides with that in terms of $ W_J $.
%\end{itemize}
\end{lem}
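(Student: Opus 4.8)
The plan is to promote the inclusion of Hecke algebras $\mc{H}_J\subseteq\mc{H}$ to an inclusion of quotients. Concretely, I would consider the composite $\mc{A}$-algebra homomorphism $\pi\colon \mc{H}_J\hookrightarrow\mc{H}\twoheadrightarrow\mc{H}_{\leq N}$ and show that it is injective with image $\sum_{w\in(W_J)_{\leq N}}\mc{A}\,\nt_w$; the claim about $\nf_{x,y,z}$ then falls out by comparing structure constants.

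First I would record the compatibility of Kazhdan--Lusztig data. Since $\mc{H}_J$ is the subalgebra of $\mc{H}$ spanned by $T_w$, $w\in W_J$, and the basis elements $C_w$ for $w\in W_J$ coincide in $\mc{H}_J$ and $\mc{H}$, the product $C_xC_y$ for $x,y\in W_J$ lies in $\mc{H}_J$ and has the same expansion in both algebras; hence $h_{x,y,z}=0$ unless $z\in W_J$, and consequently $\af_J(w)=\max\{\deg h_{x,y,w}\mid x,y\in W_J\}\leq\max\{\deg h_{x,y,w}\mid x,y\in W\}=\af(w)$ for every $w\in W_J$. In particular $(W_J)_{>N}\subseteq W_{>N}\cap W_J$, equivalently $W_{\leq N}\cap W_J\subseteq(W_J)_{\leq N}$; combined with the hypothesis $(W_J)_{\leq N}\subseteq W_{\leq N}$ this gives the equality $(W_J)_{\leq N}=W_{\leq N}\cap W_J$, i.e. $(W_J)_{>N}=W_{>N}\cap W_J$.

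Next I would compute $\ker\pi=\mc{H}_J\cap\mc{H}_{>N}$. The inclusion $(\mc{H}_J)_{>N}\subseteq\mc{H}_J\cap\mc{H}_{>N}$ is immediate: each generator $C_w$ with $w\in(W_J)_{>N}\subseteq W_{>N}$ lies in $\mc{H}_{>N}$ and clearly in $\mc{H}_J$. For the reverse inclusion, take $h\in\mc{H}_J\cap\mc{H}_{>N}$; expand $h$ in the basis $\{C_w\mid w\in W_J\}$ of $\mc{H}_J$ and also in $\{C_z\mid z\in W_{>N}\}$, and use uniqueness of the expansion of $h$ in the full Kazhdan--Lusztig basis of $\mc{H}$ to conclude that $h$ is supported on $W_J\cap W_{>N}=(W_J)_{>N}$, so $h\in(\mc{H}_J)_{>N}$. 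Thus $\ker\pi=(\mc{H}_J)_{>N}$ and $\pi$ induces an injective $\mc{A}$-algebra homomorphism $\bar\pi\colon(\mc{H}_J)_{\leq N}\hookrightarrow\mc{H}_{\leq N}$ carrying the basis element $\nt_w$ of $(\mc{H}_J)_{\leq N}$ to $\nt_w\in\mc{H}_{\leq N}$ for every $w\in(W_J)_{\leq N}\subseteq W_{\leq N}$.

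Finally, for $x,y\in(W_J)_{\leq N}$ the element $T_xT_y$ lies in $\mc{H}_J$, so its image $\nt_x\nt_y$ in $\mc{H}_{\leq N}$ lies in $\bar\pi\big((\mc{H}_J)_{\leq N}\big)=\sum_{z\in(W_J)_{\leq N}}\mc{A}\,\nt_z$; hence in $\nt_x\nt_y=\sum_{z\in W_{\leq N}}\nf_{x,y,z}\nt_z$ we get $\nf_{x,y,z}=0$ for $z\notin(W_J)_{\leq N}$, and applying $\bar\pi$ to the analogous identity computed in $(\mc{H}_J)_{\leq N}$ and comparing coefficients (legitimate since $\bar\pi$ is injective and the $\nt_z$, $z\in(W_J)_{\leq N}$, are part of a basis of $\mc{H}_{\leq N}$) shows the surviving coefficients agree with those computed in $(\mc{H}_J)_{\leq N}$. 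There is no real obstacle here; the one point to watch is that the hypothesis $(W_J)_{\leq N}\subseteq W_{\leq N}$ is used exactly once, to upgrade the automatic inclusion $(W_J)_{>N}\subseteq W_{>N}\cap W_J$ to an equality, which is precisely what makes $(\mc{H}_J)_{>N}$---rather than a possibly larger ideal---equal to $\ker\pi$.
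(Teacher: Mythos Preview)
Your argument is correct and follows essentially the same route as the paper: both show that the inclusion $\mc{H}_J\hookrightarrow\mc{H}$ descends to an injection $(\mc{H}_J)_{\leq N}\hookrightarrow\mc{H}_{\leq N}$ using $(W_J)_{>N}\subset W_{>N}$ for well-definedness and the hypothesis $(W_J)_{\leq N}\subset W_{\leq N}$ for injectivity. The only cosmetic difference is that the paper checks injectivity by observing that the basis $\{\nt_w\mid w\in(W_J)_{\leq N}\}$ maps to a subset of the basis $\{\nt_w\mid w\in W_{\leq N}\}$ (Lemma~\ref{lem:independent}), whereas you compute $\ker\pi=\mc{H}_J\cap\mc{H}_{>N}=(\mc{H}_J)_{>N}$ directly via the Kazhdan--Lusztig basis; these are equivalent verifications of the same fact.
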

\begin{proof}
By definition of the $ \af $-function, we have $ ({W_J})_{>N} \subset W_{>N} $. Then $$  (\mc{H}_J)_{>N} \subset\mc{H}_{>N}   .$$ Thus we have a homomorphism $$   (\mc{H}_J)_{\leq N}\to \mc{H}_{\leq N} $$ induced from the inclusion $ \mc{H}_{J}\to \mc{H} $. Since we assume $ (W_J)_{\leq N}\subset W_{\leq N} $,   the homomorphism $  (\mc{H}_J)_{\leq N}\to \mc{H}_{\leq N}$ is injective by Lemma \ref{lem:independent}. This implies the lemma.
\end{proof}

%
%Let $ J \subset S$, and let $ \af_J: W_J\to \mathbb{N} $ be the  $ \af $-function defined in terms of  $ W_J $. If $ z\in({W_J})_{>N} =\{w\in W_J\mid \af_J(w)>N \}$, then $ z\in W_{>N} $ by the definition of $ \af $-function, i.e. $ {(W_J)}_{>N}\subset W_{>N} $. If we assume that $ (\text{P4})_{>N} $ holds for $ W_J $ and $ W $, we have the quotient algebras $ {(\mc{H}_J)}_{\leq N}:=\mc{H}_J/{(\mc{H}_J)}_{>N} $ and $ \mc{H}_{\leq N} $, and a homomorphism\[
%(\mc{H}_J)_{\leq N}\to \mc{H}_{\leq N},
%\] which is injective by Lemma \ref{lem:independent}. This implies that  for $  x,y,z\in W_J$, $ \nf_{x,y,z} $ in terms of $ W_J $ coincides with that in terms of $ W $.
%In the following, we will use this fact for $ W_J $ with $ |J|\leq 2 $, and  results in section \ref{subsec:rank2}.

\section{Finite dihedral groups}\label{sec:rank2}

\begin{ass}
	In this section,  $ (W_I,I) $ is   a dihedral group with a positive weight function $ L $ such that  $ I=\{s,t\}$ and  $ 3\leq m_{st}<\infty $.
\end{ass}

Here are some notations. 

Let $ L(s)=a $, $ L(t)=b $. Let $ \xi_a=\xi_s=q^a-q^{-a} $, $ \xi_b=\xi_t=q^b-q^{-b} $. 

For $ r\in I $ and $ 0<n\leq m_{st} $, we denote by $ w(r,n) $ (resp. $ w(n,r) $) the element $ w $ of  $ W_I $ satisfying $ l(w)=n $ and $ r \in\mc{L}(w)$ (resp. $ r\in\mc{R}(w) $). By convention, $ w(r,0)=w(0,r)=e $.

In the case of $ a\neq b  $, $ m_{st} $ is even, and we set $ m_{st}=2m $ for some $ m\in\mathbb{N} $. In this case, we denote by $ d_I $  the element $ w(r,2m-1) $, where $ r \in I$ is determined by $ \{r,r'\}=\{s,t\} $ and $ L(r) >L(r')$. In this case, we define a new weight function on $ W_I $  \begin{equation}\label{eq:l'}
L':W_I\to \mathbb{Z} 
\end{equation} by $ L'(r)=L(r) $ and $ L'(r')=-L(r') $. 

For example, if $ L(s)=a<b=L(t) $, then $ d_I =ts\cdots t$ with $ 2m-1 $ factors, and $ L'(d_I)=mb-(m-1)a $.

%Take $ I=\{s,t\}\subset S $ with $ m_{s,t}<\infty $, and consider the parabolic subgroup $ W_I $. Let $ L(s)=a $, $ L(t)=b $.  It is known that conjectures P1-P15 hold for $ W_I $, see....... Let $ \xi_s=\xi_a=q^a-q^{-a} $, $ \xi_t=\xi_b=q^b-q^{-b} $.

%\subsection{Two-sided cells and P1-P15}

%Let us describe two-sided cells of $ (W_I ,L)$, see \cite[\S 8]{lusztig2003hecke}.
%
%See \cite{geck2011rank2} for the proof of this result. 
\begin{lem}\label{lem:degd}
Assume $ a\neq b $.	For any $ v\leq d_I $, we have $$  \deg p_{v,d_I}=L'(v)-L'(d_I)  .$$

If $ a<b $, then \[
 C_tC_{d_I} =(q^b+q^{-b})C_{d_I},\quad C_sC_{d_I}=C_{w_I},
\]
which implies that 
\begin{equation}\label{eq:sdi}
T_tC_{d_I}=q^bC_{d_I} , \quad T_sC_{d_I}=-q^{-a}C_{d_I}+C_{w_I}.
\end{equation}
\end{lem}
\begin{proof}
The degree of $ p_{v,d_I} $ and the equality $ C_sC_{d_I}=C_{w_I} $ follow from 7.4, 7.5(a) and 7.6 of \cite{lusztig2003hecke}, where an explicit formula of $ C_{d_I} $ is given.	Other assertions are straightforward.
\end{proof}

Note that   \eqref{eq:sdi}  implies that $ \{ d_I,w_I \} $ is $ \prec_{LR} $ closed when $ a\neq b $.

\begin{lem}\label{lem:eta}
Assume $ a\neq b $. 
	We have $ \deg h_{d_I,d_I,d_I} =L'(d_I) $.
\end{lem}

\begin{proof}
	This follows from 7.8 of \cite{lusztig2003hecke}, where an expansion of $ C_{d_I}C_{d_I} $ is given.
%	By \eqref{eq:sdi} and \eqref{eq:tdi}, we have
%\begin{align*}
%\nc_{d_I}\nc_{d_I}&=(\sum_{y\leq d_I} p_{y,d_I} \nt_{y})\nc_{d_I}\\
%&=(\sum_{y\leq d_I} p_{y,d_I} (-1)^{l_s(y)} q^{L'(y)})\nc_{d_I},
%\end{align*}
%where $ l_s(y) $ is the number of $ s $ in the unique reduced expression of $ y $. Hence 
%\[
%\eta_{d_I}= h_{d_I,d_I,d_I}=(-1)^{m-1} q^{L'(d_I)}+\text{terms with lower degrees},
%\]
%and $ \deg \eta_{d_I}=L'(d_I) $.
\end{proof}

\begin{lem}\label{lem:plusdeg}
Let $ u,v\in W_I $. Then $ f_{u,v,w_I}\neq 0 $ if and only if
\begin{itemize}
\item [(i)] $ \mc{R}(u)\cap\mc{L}(v)=\emptyset $, and $ l(u)+l(v)\geq l(w_I) $, or
\item [(ii)] $ \mc{R}(u)\cap\mc{L}(v)\neq\emptyset $, and  $ l(u)+l(v)\geq l(w_I) +1$.
\end{itemize}
In both cases, we always  have   \begin{equation}\label{eq:deg11}
\deg f_{u,v,w_I}=L(u)+L(v)-L(w_I)  . 
\end{equation}
\end{lem}
\begin{proof}
It is easy to see that
\begin{itemize}
	\item  if $l(u)+l(v)<l(w_I)  $, we have $ f_{u,v,w_I}=0 $; 
	\item if $l(u)+l(v)=l(w_I)  $ but $ \mc{R}(u)\cap\mc{L}(v)\neq\emptyset $, we also have $ f_{u,v,w_I}=0 $. 
\end{itemize}
Then the `` only  if " part follows.

Assume that we are in case (i). Then we have $ u=u_1\bd u_2 $ for some $ u_1 ,u_2\in W_I$ and $ w_I=u_2v $. Thus $ f_{u,v,w_I}=f_{u_1,w_I,w_I} $, whose degree is $$  L(u_1)=L(u)+L(v)-L(w_I)  .$$ In particular, $ f_{u,v,w_I} $ is  nonzero.

Assume that we are in case (ii). If $ v=w_I $, then \eqref{eq:deg11} is obvious. Thus in the following we assume $ v<w_I $. Let $ r\in \mc{R}(u)\cap\mc{L}(v) $ and $ u=u'r $, $ v=rv' $. Then $ f_{u,v,w_I}=f_{u',v',w_I}+\xi_r f_{u',v,w_I} $ and $  \mc{R}(u')\cap\mc{L}(v)=\emptyset $. Since $ l(u')+l(v)\geq l(w_I) $, by the last paragraph, we have $$  \deg\xi_r f_{u',v,w_I}=L(u)+L(v)-L(w_I)  .$$ One can see that $  f_{u',v',w_I} $ is $ 0 $ or has degree $L(u')+L(v')-L(w_I)$ (by using induction on length). Thus, $ \deg f_{u,v,w_I}= L(u)+L(v)-L(w_I)$. In particular, $ f_{u,v,w_I}\neq 0 $ in this case. The `` if " part follows. 
\end{proof}

\begin{lem}\label{lem:degfp2}
Assume that $ a=L(s)\leq L(t)=b $,
$ u,v\in W_I\setminus\{w_I \} $ and $ z\in\{e,s,t,st,ts \} $. For  $ \delta=\deg f_{u,v,w_I}p_{z,w_I} $, we must be in one of the following situations.
\begin{itemize}
\item [(1)] $ \delta\leq 0 $;
\item [(2)]  $ \delta=b-2a>0 $,  $ z=t $, and $ u=v=d_I$;
\item [(3)]  $ \delta=b-a>0 $, $ z=st $ or $ ts $,  and $ u=v=d_I$.
\end{itemize}
\end{lem}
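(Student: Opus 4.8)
The plan is to combine the degree formula for $f_{u,v,w_I}$ from Lemma \ref{lem:plusdeg} with the explicit degrees $\deg p_{z,w_I}$ for $z\in\{e,s,t,st,ts\}$, and then run a short case analysis on where the longest element $w_I$ can factor. First I would record that by Lemma \ref{lem:plusdeg}, since $u,v\neq w_I$ and $f_{u,v,w_I}\neq 0$ forces $l(u)+l(v)\geq l(w_I)$, we have $\deg f_{u,v,w_I}=L(u)+L(v)-L(w_I)\leq 0$ with equality exactly when $l(u)+l(v)=l(w_I)$ and $\mc{R}(u)\cap\mc{L}(v)=\emptyset$; in the dihedral group the latter happens precisely when $\{u,v\}$ is one of the two "complementary" pairs $\{w(s,k),w(k',t)\}$ or $\{w(t,k),w(k',s)\}$ with $k+k'=l(w_I)$, and since $a\le b$ the maximal value $L(u)+L(v)-L(w_I)=0$ is attained only in specific spots — in particular $u=v=d_I$ gives $2L'(d_I)-L(w_I)$, which I would compute directly. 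Next I would compute $\deg p_{z,w_I}$ for the five values of $z$: $\deg p_{e,w_I}=-\af(w_I)=-L(w_I)<0$ (this always kills the product unless compensated), while for $z=s,t,st,ts$ the Kazhdan-Lusztig polynomials $p_{z,w_I}$ are explicitly small — in the dihedral group $p_{z,w_I}$ for short $z$ is governed by the unequal-parameter combinatorics, and the relevant degrees are $\deg p_{t,w_I}=$ (something like $b-a$ less than $0$ when $a<b$), etc.; I would get these either from Lemma \ref{lem:degd} applied with $w_I$ in place of $d_I$ where appropriate, or from the standard rank-two computations (e.g. \cite[\S 15]{lusztig2003hecke}, \cite{geck2011rank2}).

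The actual argument then proceeds as follows. Write $\delta=\big(L(u)+L(v)-L(w_I)\big)+\deg p_{z,w_I}$. For $z=e$ we have $\delta\le 0-L(w_I)<0$, so we are in case (1). For $z=s$, $\deg p_{s,w_I}\le -a<0$ and $L(u)+L(v)-L(w_I)\le 0$, so $\delta<0$, again case (1). For $z=t$: here $\deg p_{t,w_I}$ is the largest among the short elements because $L(t)=b\ge L(s)=a$, and one finds $\deg p_{t,w_I}$ equals $-a$ when $a<b$ (or more precisely a quantity I will pin down); combined with $L(u)+L(v)-L(w_I)$ this gives $\delta>0$ only when $L(u)+L(v)-L(w_I)$ is as large as possible AND equal to the value making the sum $b-2a$, which by the equality analysis above pins $u=v=d_I$, yielding $\delta=2L'(d_I)-L(w_I)+\deg p_{t,w_I}$; a direct substitution of $L'(d_I)=mb-(m-1)a$ and $L(w_I)=mb+ma$ (for $m_{st}=2m$) gives $\delta=b-2a$, which is case (2). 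For $z=st$ or $ts$, $\deg p_{z,w_I}$ is one step larger, producing $\delta=b-a$ under the same constraint $u=v=d_I$, which is case (3); for all other $(u,v)$ the first summand drops and $\delta\le 0$.

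The main obstacle I expect is pinning down the exact degrees $\deg p_{z,w_I}$ for $z\in\{s,t,st,ts\}$ in the unequal-parameter dihedral group and verifying that no "intermediate" pair $(u,v)$ — one that is not $(d_I,d_I)$ but still has $L(u)+L(v)-L(w_I)$ close to $0$ — can push $\delta$ above $0$ for $z=s$ or combine with $z=t,st,ts$ to give a value other than the ones listed. This requires checking that among pairs with $l(u)+l(v)=l(w_I)$ and disjoint descent sets, $u=v=d_I$ uniquely maximizes $L(u)+L(v)$ when $a<b$ (when $a=b$ the first summand is always $\le 0$ with the gap at least $a>0$ unless $l(u)+l(v)>l(w_I)$, and then one must check the short-$z$ cases separately — but here $\deg p_{z,w_I}<0$ strictly, so $\delta<0$ and we land in case (1), consistent with cases (2),(3) requiring $b>2a>0$ resp. $b>a$, i.e. $a<b$). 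Once the explicit small-$z$ polynomial degrees are in hand, the rest is bookkeeping with the additive weight function $L$ and its twisted version $L'$.
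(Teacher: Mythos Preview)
Your overall strategy --- add the degree of $f_{u,v,w_I}$ from Lemma \ref{lem:plusdeg} to the degree of $p_{z,w_I}$ and do a short case check --- is exactly the paper's approach. However, your execution contains several concrete errors that make the argument as written incorrect.

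First, you claim $\deg f_{u,v,w_I}=L(u)+L(v)-L(w_I)\le 0$. This inequality points the wrong way: Lemma \ref{lem:plusdeg} forces $l(u)+l(v)\ge l(w_I)$, and in general $L(u)+L(v)-L(w_I)$ is positive (for instance $u=v=d_I$ gives $L(w_I)-2a$, not something $\le 0$). Second, in your computation for $z=t$ you write $2L'(d_I)-L(w_I)+\deg p_{t,w_I}$; the weight that enters $\deg f_{u,v,w_I}$ is $L$, not the twisted weight $L'$, and with your substitution the numbers do not add to $b-2a$ except when $m=1$. Third, and most importantly, you leave $\deg p_{z,w_I}$ vague and try to extract it from Lemma \ref{lem:degd} (which concerns $d_I$, not $w_I$) or from external references. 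The point you are missing is the standard fact that for the longest element one has $p_{z,w_I}=q^{L(z)-L(w_I)}$, so $\deg p_{z,w_I}=L(z)-L(w_I)$ for every $z\le w_I$.

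With that formula in hand the paper's argument is immediate: write
\[
\delta=L(z)-(L(w_I)-L(u))-(L(w_I)-L(v)),
\]
observe that for $u\ne w_I$ the quantity $L(w_I)-L(u)$ lies in $\{a,b,a+b,2a+b,\dots\}$, and check the five values of $L(z)$. For $z\in\{e,s\}$ one gets $\delta\le a-2a<0$; for $z=t$ one gets $\delta\le b-2a$ with equality only if $L(w_I)-L(u)=L(w_I)-L(v)=a$, which forces $u=v=d_I$; and for $z\in\{st,ts\}$ one gets $\delta\le b-a$ under the same constraint. There is no need to analyse ``complementary pairs'' or worry about intermediate $(u,v)$: the single inequality $L(w_I)-L(u)\ge a$ does all the work.
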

\begin{proof}
	 Without loss of generality, we can assume $ f_{u,v,w_I}\neq 0 $.
 By Lemma \ref{lem:plusdeg},  \begin{align}
  \delta=&L(u)+L(v)-L(w_I)+L(z)-L(w_I) \\=&L(z)-(L(w_I)-L(u))-(L(w_I)-L(v)). 
 \end{align}

Since $ u,v\neq w_I $, the possible values of $ L(w_I)-L(u)$ (resp. $ L(w_I)-L(v) $) are $$  a,b,a+b,2a+b,a+2b,\cdots  .$$
\begin{itemize}
\item If $ z\in\{e,s\} $, then $ L(z)\leq a $, and we always have $ \delta< 0 $.
\item If $ z=t $ and $ \delta>0 $, then $  L(w_I)-L(u)=L(w_I)-L(v)=a$,   which implies $ \delta =b-2a>0 $, $ u=v=d_I $.
\item If $ z=\{st , ts\} $ and $ \delta>0 $, then  $  L(w_I)-L(u)=L(w_I)-L(v)=a$, which implies $ \delta =b-a$, $ u=v=d_I $.
\end{itemize}
This completes the proof. 
\end{proof}

\begin{lem}\label{lem:Fuv}
Assume $ L(s)<L(t) $.
Let $ u,v\in W_I\setminus\{w_I\} $, $ p=l(u),$  $ q=l(v) $, and\[
F(u,v)=f_{u,v,d_I}-p_{d_I,w_I}f_{u,v,w_I}.
\]
The following properties hold. \begin{itemize}
\item[(i)] If  $ vs<v $, then $ F(u,v)=-q^{-a}F(u,vs) $.
\item[(ii)] If  $ su<u $, then $ F(u,v)=-q^{-a}F(su,v) $.
\item [(iii)]Let  $ su>u $ and $ vs>v $. We have the following cases:
 \begin{itemize}
\item[(1)] if $ p+q< 2m-1 $, then $ F(u,v)=0 $;
\item [(2)]if $ p+q=2m-1 $, then $ F(u,v)=1 $;
\item[(3)] if $ p+q=2m $, then $$  F(u,v)=  \begin{cases}
\xi_a &\text{ if } p,q \text{ are even},\\
\xi_b &\text{ if } p,q \text{ are odd};
\end{cases}$$
\item [(4)]if $ p+q>2m $, then $ \deg F(u,v)=L'(u)+L'(v)-L'(d_I) $.
\end{itemize}
\end{itemize}
\end{lem}

\begin{proof} In this proof, we abbreviate $ d_I $ as $ d $.
Note that \begin{align}
 f_{u,v,d}-p_{d,w_I}f_{u,v,w_I}=&f_{v,d,u^{-1}}-q^{-a}f_{v,w_I,u^{-1}}\\ =&\tau(T_v(T_d-q^{-a}T_{w_I})T_u),
\end{align} which is the coefficient of $ T_{u^{-1}} $ in the product $ T_v(T_d-q^{-a}T_{w_I}) $.  In the following, we compute $ T_v(T_d-q^{-a}T_{w_I}) $. 

We use notations: $$  U_{2m-1}= T_d-q^{-a}T_{w_I},\quad U_{2m-2}=  T_{td}-q^{-a}T_{tw_I},\quad\cdots \quad U_0=T_e-q^{-a}T_s  .$$
In other words, $$  U_k=T_{w(k,t)}-q^{-a}T_{w(k+1,s)} \text{ for }0\leq k\leq 2m-1  .$$ Straightforward computations show that 
\[
T_tU_k=\begin{cases}
U_{k+1} &\text{ if } k\text{ is even,}\\
U_{k-1}+\xi_b U_{k} &\text{ if } k\text{ is odd,}
\end{cases}
\]
\[
T_sU_k=\begin{cases}
 U_{k+1}&\text{ if } k\text{ is odd and  }\neq  2m-1,\\
U_{k-1}+\xi_a U_{k} &\text{ if } k\text{ is even and }\neq 0,\\
(-q^{-a})U_k & \text{ if } k= 0, 2m-1.
\end{cases}
\]

Let $ (\lambda_{i,j}) _{0\leq i,j\leq 2m-1}$ be a $ 2m\times 2m $ matrix  with entries in $ \mc{A} $ such that\[
T_{w(i,t) }U_{2m-1}=\sum_{0\leq j\leq 2m-1}\lambda_{i,j}U_{2m-1-j}.
\]
Then  
\[ \lambda_{i,i}=1 \text{ for }  i\geq 0,\text{ and } \lambda_{i,j}=0 \text{ for } i<j, \] and we have a recursive formula, for $ i\geq 1 $,
\begin{equation}\label{eq:induc}
\lambda_{i,j}=\begin{cases}
(-q^{-a})\lambda_{i-1,j}&\text{ if } i \text{ is even and } j=0,2m-1,\\
  \lambda_{i-1,j-1}&\text{ if } i+j \text{ is even and } j\neq 0,\\
\xi_a\lambda_{i-1,j}+\lambda_{i-1,j+1}  &\text{ if } i \text{ is even,  } j\text{ is odd, and }  j \neq 2m-1,\\
\xi_b\lambda_{i-1,j}+\lambda_{i-1,j+1}  &\text{ if } i \text{ is odd and } j\text{ is even}.
\end{cases}
\end{equation}

Set $ \mu_i=\lambda_{i,0} $ for $0\leq i\leq 2m-1 $, and $ \mu_i=0 $ for $ i<0 $. 

Using \eqref{eq:induc}, we can express $ \lambda_{i,j} $ in terms of $ \mu_i $ as follows.
\begin{itemize}
	\item [(I)] If $i+j $ is even, we have $ \lambda_{i,j}=\mu_{i-j} $.
	\item [(II)] If $ i+j $ is odd and $ j\neq 0 $, we have\[
	\lambda_{i,j}=\begin{cases}
	\xi_a\sum_{n\geq0}\mu_{i-j-1-4n}+\xi_b\sum_{n\geq0}\mu_{i-j-3-4n}&\text{ if } i \text{  is even},\\
		\xi_b\sum_{n\geq0}\mu_{i-j-1-4n}+\xi_a\sum_{n\geq0}\mu_{i-j-3-4n}&\text{ if } i \text{  is odd}.
	\end{cases}
	\]
	\item [(III)] We have $ \mu_0=1 $,    and for $ i>0 $,  \[\mu_i=\begin{cases}
	(-q^{-a})\mu_{i-1}& \text{ if } i \text{ is even},\\
	\xi_b\sum_{n\geq0}\mu_{i-1-4n}+\xi_a\sum_{n\geq0}\mu_{i-3-4n}& \text{ if } i \text{ is odd}.
	\end{cases}
	 \] 
\end{itemize}

For $ k\in\mathbb{Z}_{\geq 1} $, we set $$  L'_k=b-a+b-a+b-\cdots  $$ with $ k $ terms appearing, and set $ L'_{0}=0 $. In other words,  $$ L'_{2r}=r(b-a) ,\quad L'_{2r+1} =r(b-a)+b .$$ Since $ b>a $, we have $L'_k>L'_{k_1}  $ if $ k $ is odd and $ k>k_1 $. Using (III), one can prove inductively that \[
\deg \mu_k=L'_k.
\]

Combining (I)(II)(III),  if $ i+j $ is even, or if $ i $ is odd and $ j $ is even, we have $$  \lambda_{i,j}=\mu_{i-j}  .$$ 

 Assume that $ i $ is even and $ j $ is odd. To determine $ \lambda_{i,j} $, let us compute $ 	\xi_a\mu_{2k}+\xi_b\mu_{2k-2} $ for $ k\geq 0$. 

If $ k=0 $, then $ \xi_a\mu_{0}+\xi_b\mu_{-2} =\xi_a $.
Assume $ k\geq 1 $.
 By (III), and using $ 1+(-q^{-a})\xi_a=q^{-2a} $, we have 
\begin{align*}
	&\phantom{=}\xi_a\mu_{2k}+\xi_b\mu_{2k-2}\\
&=(-q^{-a})\xi_a\mu_{2k-1}+\xi_b\mu_{2k-2}\\
	&=(-q^{-a})\xi_a\xi_b\sum_{n\geq0}\mu_{2k-2-4n}+(-q^{-a})\xi_a\xi_a\sum_{n\geq0}\mu_{2k-4-4n}+\xi_b\mu_{2k-2}\\
&=q^{-2a}\xi_b\mu_{2k-2}+(-q^{-a})\xi_a\left [\xi_a\sum_{n\geq0}\mu_{2k-4-4n}+\xi_b\sum_{n\geq0}\mu_{2k-6-4n}\right ].
\end{align*}
Then we have
\begin{align*}
&\phantom{=} \xi_a\sum_{n\geq 0}\mu_{2k-4n}+\xi_b\sum_{n\geq 0}\mu_{2k-2-4n}\\&=(\xi_a\mu_{2k}+\xi_b\mu_{2k-2})+(\xi_a\sum_{n\geq0}\mu_{2k-4-4n}+\xi_b\sum_{n\geq0}\mu_{2k-6-4n})\\
&=q^{-2a}\xi_b\mu_{2k-2}+q^{-2a}\left [\xi_a\sum_{n\geq0}\mu_{2k-4-4n}+\xi_b\sum_{n\geq0}\mu_{2k-6-4n}\right ]\\
&=q^{-2a}\left (\xi_b \sum_{n\geq 0} \mu_{2k-2-4n} +\xi_a \sum_{n\geq 0} \mu_{2k-4-4n} \right )\\
&=q^{-2a}\mu_{2k-1}.
\end{align*}
By (II) again, we have $$  \lambda_{i,j}=q^{-2a}\mu_{i-j-2}  ,$$ with $ i $ even, $ j $ odd and $ i-j\geq 3 $. If $ i-j=1 $ and $ i $ is even, then by (II) we have $$  \lambda_{i,i-1}=\xi_a  .$$ 

 Therefore, 
\[
\lambda_{i,j}=\begin{cases}
\mu_{i-j} &\text{ if } i+j \text{ is even, or } i \text{ is odd and } j \text{ is even},  \\
q^{-2a}\mu_{i-j-2}&\text{ if $ i $ is even, } j \text{ is odd, and } i-j\geq 3,\\
\xi_a&\text{ if } i  \text{ is even},  i-j=1.
\end{cases}
\]
Recall that 
\begin{equation}
f_{u,v,d}-p_{d,w_I}f_{u,v,w_I}= \tau(T_v(T_d-q^{-a}T_{w_I})T_u),
\end{equation} 
\begin{equation} 
T_sU_{2m-1}=(-q^{-a})U_{2m-1}=U_{2m-1}T_s .
\end{equation}
If $ s\in\mc{R}(v) $, $ v=v's $, then  
\[
f_{u,v,d}-q^{-a}f_{u,v,w_I}=(-q^{-a})(f_{u,v',d}-q^{-a}f_{u,v',w_I}).
\]
If $ s\in\mc{L}(u) $, $ u=su' $,   then  
\[
f_{u,v,d}-q^{-a}f_{u,v,w_I}=(-q^{-a})(f_{u',v,d}-q^{-a}f_{u',v,w_I}).
\]
By definition, for $ u=w(t,2m-1-j) $ and $ v=w(i,t) $, we have\[
f_{u,v,d}-q^{-a}f_{u,v,w_I}=\lambda_{i,j}.
\]
Using these results, one can determine $f_{u,v,d}-q^{-a}f_{u,v,w_I}  $ for any $ u,v\in W_I\setminus\{w_I \} $.

%Then we can conclude that  $ \deg (f_{u,v,d}-q^{-a}f_{u,v,w_I}) =L'(u)+L'(v)-L'(d)$ for any $ u,v\in W_I\setminus\{w_I \} $. 

For $ i\geq j $, we have 
\[
\deg\lambda_{i,j}=\begin{cases}
L'_{i-j} &\text{ if } i+j \text{ is even, or } i \text{ is odd and } j \text{ is even},  \\
L'_{i-j-2}-2a&\text{ if $ i $ is even, } j \text{ is odd, and } i-j\geq 3,\\
a&\text{ if } i  \text{ is even},  i-j=1.
\end{cases}
\]
(Recall that $ \lambda_{i,j}=0 $ for $ i<j $.)

 Let $ u=w(t,2m-1-j) $, $ v=w(i,t) $, $ i\geq j $.
If $ i $ is even, $ j $ is odd, and $ i-j\geq 3 $, then \begin{align*}
L'(u)+L'(v)-L'(d)&=\frac{2m-1-j}{2}(b-a) +\frac{i}{2}(b-a)-b-(m-1)(b-a)\\
&=\frac{i-j+1}{2}(b-a)-b\\
&=L'_{i-j+1}-b\\
&=(L'_{i-j-2}-a+b-a)-b\\
&=L'_{i-j-2}-2a.
\end{align*}
If $ i $ is even, $ j $ is odd and $ i-j= 1 $, then \begin{align*}
L'(u)+L'(v)-L'(d)=-a
\end{align*}
If $ i $ is odd, $ j $ is even and $ i> j $, then \begin{align*}
L'(u)+L'(v)-L'(d)&=b+\frac{2m-1-j-1}{2}(b-a) \\
&\phantom{=}+b+\frac{i-1}{2}(b-a)-b-(m-1)(b-a)\\
&=\frac{i-j-1}{2}(b-a)+b\\
&=L'_{i-j}.
\end{align*}
Similarly, one can prove  $ L'(u)+L'(v)-L'(d)=L'_{i-j} $ when $ i+j$ is even and $ i\geq j $. 

Now the lemma follows.
\end{proof}

\begin{cor}\label{cor:minus2}
Assume $ L(s)\neq L(t) $.
For $ u,v\in W_I\setminus \{d_I,w_I\} $, we have\[
\deg (f_{u,v,d_I}-p_{d_I,w_I}f_{u,v,w_I})\leq L'(d_I)-2L'(st).
\]
\end{cor}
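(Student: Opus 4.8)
The plan is to derive this straight from Lemma~\ref{lem:Fuv}. Since $d_I$, $w_I$, the weight function $L'$, and the structure constants $f_{u,v,d_I}$, $f_{u,v,w_I}$, $p_{d_I,w_I}$ are all intrinsic to $(W_I,L)$ and are unchanged under interchanging the names $s$ and $t$, we may assume $a=L(s)<L(t)=b$; write $m_{st}=2m$, so that $d_I=w(t,2m-1)$, $L'(d_I)=(m-1)(b-a)+b$ and $L'(st)=b-a$. Recall also that $p_{d_I,w_I}=q^{-a}$ by \eqref{eq:sdi}, so the quantity to be bounded is precisely $F(u,v)=f_{u,v,d_I}-p_{d_I,w_I}f_{u,v,w_I}$ of Lemma~\ref{lem:Fuv}.

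First I would use parts (i) and (ii) of Lemma~\ref{lem:Fuv} to reduce to the case $su>u$ and $vs>v$. Each use of (i) (resp.\ (ii)) multiplies $F(u,v)$ by $-q^{-a}$, hence lowers its degree since $a>0$, and replaces $v$ (resp.\ $u$) by a strictly shorter element that is still $\neq w_I$; moreover (i) (resp.\ (ii)) can be applied at most once, since afterwards $s\notin\mc{R}(v)$ (resp.\ $s\notin\mc{L}(u)$). One checks that in the resulting situation $l(u),l(v)\le 2m-2$: if no reduction was applied to $u$ then $s\notin\mc{L}(u)$, and since the only length-$(2m-1)$ elements of $W_I$ are $w(s,2m-1)$ and $w(t,2m-1)=d_I$, the hypothesis $u\neq d_I$ forces $l(u)\le 2m-2$; if a reduction was applied to $u$, its length dropped from $\le 2m-1$ to $\le 2m-2$; and similarly for $v$. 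In particular $u$ and $v$ stay in $W_I\setminus\{d_I,w_I\}$, so it suffices to bound $\deg F(u,v)$ when $su>u$, $vs>v$ and $0\le l(u),l(v)\le 2m-2$, in which case $u=w(t,p)$ and $v=w(q,t)$ with $0\le p,q\le 2m-2$.

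Now I would invoke Lemma~\ref{lem:Fuv}(iii). Using $L'(t)=b$, $L'(s)=-a$ and $b>a$, a short computation shows $\max_{0\le k\le 2m-2}L'(w(t,k))$ is attained at $k=2m-3$ and equals $L'(w(t,2m-3))=(m-1)b-(m-2)a=L'(d_I)-L'(st)$. Hence in case (iii)(4), $\deg F(u,v)=L'(w(t,p))+L'(w(q,t))-L'(d_I)\le 2\bigl(L'(d_I)-L'(st)\bigr)-L'(d_I)=L'(d_I)-2L'(st)$, the desired bound. In cases (iii)(1) and (iii)(2) we have $\deg F(u,v)\le 0$, and in case (iii)(3) we have $\deg F(u,v)\le\max\{L(s),L(t)\}=b$; so it remains to check that $0\le L'(d_I)-2L'(st)$ always, and that $b\le L'(d_I)-2L'(st)$ whenever (iii)(3) actually produces degree $b$. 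Since $L'(d_I)-2L'(st)=(m-3)(b-a)+b$, it is $\ge a>0$ for every $m\ge 2$ and is $\ge b$ for $m\ge 3$; and the degree-$b$ branch of (iii)(3) needs $p,q$ both odd with $p+q=2m$ and $p,q\le 2m-2$, which forces $p,q\le 2m-3$ and hence $m\ge 3$. When $m=2$ one has $p+q\le 2m$ always, so (iii)(4) is vacuous, (iii)(3) forces $p=q=2$ (both even) giving $\deg F(u,v)=a=L'(d_I)-2L'(st)$, and (iii)(1)--(2) give $\deg F(u,v)\le 0$. Combining the cases proves the corollary. The main obstacle is not any deep point but the bookkeeping: verifying that the reductions via Lemma~\ref{lem:Fuv}(i)--(ii) keep $u,v$ away from $\{d_I,w_I\}$, and that the identity $\max_{k\le 2m-2}L'(w(t,k))=L'(d_I)-L'(st)$ together with the explicit treatment of the small cases — especially $m=2$ — exhausts all possibilities.
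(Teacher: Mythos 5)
Your proof is correct and follows essentially the same route as the paper: both deduce the bound directly from Lemma~\ref{lem:Fuv} by locating the maximal possible degree of $F(u,v)$, which the paper records as being attained at $u=v=w(t,2m-3)$ for $m_{st}\geq 6$ and at $u=v^{-1}=ts$ (degree $a$) for $m_{st}=4$. Your write-up merely makes explicit the reduction via parts (i)--(ii) and the parity/case bookkeeping in part (iii) that the paper's terse proof leaves to the reader.
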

\begin{proof}
Without loss of generality, assume  $ a<b $.

If $ m_{st}\geq 6 $, by Lemma \ref{lem:Fuv},
the maximal degree is $ L'(d_I)-2L'(st) $ and is taken by $ u=v=w(t,2m-3) $ (note that we require $ u,v\neq d_I $). 

If $ m_{st}=4 $, the maximal degree is $ a = L'(d_I)-2L'(st) $ and is taken by $ u=v^{-1}=ts $.
\end{proof}

\begin{lem}\label{lem:gammaleq0}
Assume that $ a=L(s)<L(t)=b $. Let $ u,v\in W_I\setminus \{d_I,w_I\} , $ $$  z\in\{e,s,t,st,ts\} \text{ and }\gamma= \deg (f_{u,v,d_I}-p_{d_I,w_I}f_{u,v,w_I})p_{z,d_I} . $$ Then we are in one of the following situations.
\begin{itemize}
\item [(1)] $ \gamma\leq 0 $.
\item [(2)] $ \gamma\leq 2a-b >0$, $ z=t $, $ tu<u $ and $ vt<v $.
\end{itemize}
\end{lem}
\begin{proof}By Corollary \ref{cor:minus2} and Lemma \ref{lem:degd}, we have
\[
\gamma\leq L'(z)-2L'(st).
\]
Assume $ \gamma>0 $. By the above inequality, we must have $ z=t $. Hence $$  \gamma\leq 2a-b>0  .$$
Using (i) and (ii) of Lemma \ref{lem:Fuv} and the above inequality, if $ su<u $ or $ vs<v $, we will have \[
\gamma\leq 2a-b-a=a-b<0.
\]
Thus when $ \gamma>0 $, we must have
 $ tu<u $ and $ vt<v $.
\end{proof}

\begin{lem} \label{lem:deg1}
	Let  $ u,v\in W_I $.
	\begin{itemize}
		\item [(i)] If $ L(s)\neq L(t) $, then the possible values of $ f_{u,v,st} $ are $ \xi_s \xi_t$, $ \xi_s $, $ \xi_t $, $ 1 $, $ 0 $. Moreover,
		\begin{itemize}
			\item if $ f_{u,v,st}=\xi_s\xi_t $, then $ u=v=w_I $;
			\item if $ f_{u,v,st}=\xi_s $, then $ su<u $ and $ v=u^{-1}t$;
			\item if $ f_{u,v,st}=\xi_t $, then $ vt< v$ and $ u=sv^{-1} $.
		\end{itemize}
	\item [(ii)]If $ L(s)=L(t) $, then the possible values of $ f_{u,v,st} $ are $ \xi_s ^2$, $ \xi_s $, $ 1 $, $ 0 $. 
	Moreover,
	\begin{itemize}
		\item if $ f_{u,v,st}=\xi_s^2 $, then $ u=v=w_I $;
		\item if $ f_{u,v,st}=\xi_s $, then $ su<u $, $ v=u^{-1}t$, or $ vt< v$, $ u=sv^{-1} $.
	\end{itemize}
\item [(iii)]If $ r\in I $, then  the possible values of $ f_{u,v,r} $ are $ \xi_r $, $ 1 $, or $ 0 $. Moreover, if $ f_{u,v,r}=\xi_r $, then $ ru<u $ and $ vr<v $.
	\end{itemize}
\end{lem}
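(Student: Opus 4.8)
The plan is to prove both statements by explicit computation in the $ T $-basis, using only the multiplication rules $ T_rT_w=T_{rw} $ if $ rw>w $ and $ T_rT_w=T_{rw}+\xi_rT_w $ if $ rw<w $ (together with their right-hand analogues), the identity $ f_{u,v,z}=\tau(T_uT_vT_{z^{-1}}) $, and $ \tau(T_aT_b)=\delta_{a,b^{-1}} $. Combining the last two, $ f_{u,v,z} $ equals the coefficient of $ T_{u^{-1}} $ in the product $ T_vT_{z^{-1}} $, so in each case it suffices to expand that product and read off a single coefficient.

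For (iii), take $ z=r\in I $. If $ vr>v $ then $ T_vT_r=T_{vr} $, so $ f_{u,v,r}=\delta_{u,rv^{-1}}\in\{0,1\} $. If $ vr<v $, write $ v=v'r $; then $ T_vT_r=T_{v'}+\xi_rT_v $, hence $ f_{u,v,r}=\delta_{u,v'^{-1}}+\xi_r\delta_{u,v^{-1}} $. Since $ v'^{-1} $ and $ v^{-1} $ have different lengths, at most one Kronecker symbol is nonzero, so $ f_{u,v,r}\in\{0,1,\xi_r\} $; and $ f_{u,v,r}=\xi_r $ forces $ u=v^{-1} $ with $ r\in\mc{R}(v) $, whence $ r\in\mc{L}(v^{-1})=\mc{L}(u) $, i.e. both $ ru<u $ and $ vr<v $, as asserted.

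For (i) and (ii), take $ z=st $; then $ z^{-1}=ts $ and $ T_{z^{-1}}=T_tT_s $, so $ f_{u,v,st} $ is the coefficient of $ T_{u^{-1}} $ in $ T_vT_tT_s $. First expand $ T_vT_t $ (two cases, according to whether $ vt<v $), then multiply the outcome by $ T_s $, splitting again according to the relevant right descent (and treating separately the situation where an intermediate element is $ w_I $). This writes $ T_vT_tT_s $ as a sum of at most four $ T $-basis elements whose coefficients lie in $ \{1,\xi_s,\xi_t,\xi_s\xi_t\} $; note that a factor $ \xi_t $ can only be produced by the first multiplication and a factor $ \xi_s $ only by the second, so $ \xi_s^2 $ and $ \xi_t^2 $ never occur. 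Reading off the coefficient of $ T_{u^{-1}} $ and tracking which descent configuration makes each term survive yields the classification: the $ \xi_s\xi_t $ term survives only when $ s,t\in\mc{R}(v) $, i.e. $ v=w_I $, and only for $ u^{-1}=w_I $; a lone $ \xi_s $ survives only with $ s\in\mc{L}(u) $ and $ u^{-1}=vt $, i.e. $ v=u^{-1}t $; symmetrically a lone $ \xi_t $ forces $ vt<v $ and $ u=sv^{-1} $; all remaining terms contribute $ 1 $ or $ 0 $. For (ii) one substitutes $ \xi_s=\xi_t $, so $ \xi_s\xi_t $ becomes $ \xi_s^2 $ and the two one-sided $ \xi $-cases merge into the single stated alternative.

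The genuine work, and the only delicate point, is the exhaustive bookkeeping in (i)/(ii): one must verify that the case split on the descents of $ v $ and of the intermediate elements is complete, that the $ T $-basis elements produced along the way have pairwise distinct lengths so that at most the expected terms can equal a given $ T_{u^{-1}} $, and that the special products $ T_{w_I}T_r $ (and products involving the elements obtained from $ w_I $ by deleting a letter) are handled correctly. This is routine but must be carried out carefully to confirm that the list of possibilities, and the forced descent conditions, are exactly as stated.
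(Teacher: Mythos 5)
Your proposal is correct and follows essentially the same route as the paper: both use the cyclicity of $\tau$ to turn $f_{u,v,st}$ into a single coefficient of a two-step $T$-basis product and then do the descent case analysis, the only cosmetic difference being that you expand $T_vT_tT_s$ and read off the coefficient of $T_{u^{-1}}$, while the paper expands $T_{ts}T_u$ (via $f_{u,v,st}=f_{ts,u,v^{-1}}$) and reads off the coefficient of $T_{v^{-1}}$. The key observations — that at most one $\xi_s$ and one $\xi_t$ can appear, that the intermediate group elements are pairwise distinct so no coefficients combine, and the resulting descent conditions in each case — match the paper's argument.
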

\begin{proof}
	We only prove (i) here. Other assertions are proved in a similar way.
	
	We have $ f_{u,v,st}=f_{ts,u,v^{-1}} $. The possible values of $ f_{u,v,st} $ immediately follows from computing $ T_{ts}T_{u} $.
	If $ f_{ts,u,v^{-1}} =\xi_s\xi_t  $, then $ su<u $ and $ tu<u $, i.e. $ \mc{L}(u)=I $. Hence $ u=v=w_I $.		
	If $ f_{ts,u,v^{-1}} =\xi_s   $, then $ su<u $ and $ tu=v^{-1} $. 	
	 If $  f_{ts,u,v^{-1}} =\xi_t   $, then $ tsu<su $ and $ su=v^{-1} $, which implies that $ u=sv^{-1}$ and $ vt<v $. This proves (i).
\end{proof}

\begin{lem} \label{lem:ppd}
	
	 Conjectures P1-P15 hold for the finite dihedral group $ W_I $.

		 If  $ L(s)=L(t) $, the two-sided cells of $ W_I $ are $ \{e\} $, $ W_I\setminus \{e, w_I\} $, $ \{w_I\} $, and  the corresponding $ \af $-values are  $ 0 $, $ L(s) $, $ L(w_I) $.
		 
		 If $ L(s)<L(t) $,   the two-sided cells of $ W_I $ are $ \{e\} $, $ \{s\} $, $ W\setminus\{e,s,d_I, w_I \} $, $ \{d_I\} $, $ \{w_I \}$, and  the corresponding  $ \af $-values are  $ 0 $, $ L(s) $, $ L(t) $, $ L'(d_I) $, $ L(w_I) $.

\end{lem}
\begin{proof}
	Two-sided cells are given in  \cite[8.8]{lusztig2003hecke}, and the $ \af $-values are given in \cite[13.11]{lusztig2003hecke}.
	Conjectures P1-P15 for $ (W_I,L) $ are proved in \cite[\S 15]{lusztig2003hecke} for equal parameters, and in \cite[Thm.5.3]{geck2011rank2} for unequal parameters.
\end{proof}

By Lemmas \ref{lem:wn} and \ref{lem:compute}, we have the following lemma.
\begin{lem}\label{lem:degI}
	For any integer $ N $ and  $ u,v\in (W_I)_{\leq N} $, we have $ \deg\nt_u\nt_v\leq N $ in $ (\mc{H}_{I})_{\leq N} $, and the equality holds only if $ u,v\in (W_I)_{N} $.
\end{lem}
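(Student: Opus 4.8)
The plan is to obtain Lemma~\ref{lem:degI} as an immediate specialization of Lemma~\ref{lem:compute}, applied with the dihedral group $W_I$ (and its Hecke algebra $\mc{H}_I$) playing the role of the ambient group $W$ in Subsection~\ref{subsec:cyc}. The only preliminary point to settle is that the standing hypotheses of that subsection are available here, namely that $(\text{P1})_{>N}$, $(\text{P4})_{>N}$, $(\text{P8})_{>N}$ hold for $(W_I,L)$. But P1--P15 hold for $(W_I,L)$ by the lemma on finite dihedral groups recalled above, so these three statements hold for every integer $N$; in particular, by Lemma~\ref{lem:closed}, $(W_I)_{>N}$ is $\prec_{LR}^{I}$-closed, so the two-sided ideal $(\mc{H}_I)_{>N}$, the quotient algebra $(\mc{H}_I)_{\leq N}$, the basis $\{\nt_u\mid u\in (W_I)_{\leq N}\}$, and the structure constants $\nf_{u,v,z}$ (all computed inside $(\mc{H}_I)_{\leq N}$) are defined. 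Note that Lemma~\ref{lem:nfxyz} is not needed, since we never leave $(\mc{H}_I)_{\leq N}$.

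Granting this, Lemma~\ref{lem:compute} applied to $W_I$ yields at once $\deg\nt_u\nt_v\leq N$ for all $u,v\in (W_I)_{\leq N}$, together with the identification
\[
(W_I)_N=\{u\in (W_I)_{\leq N}\mid \deg\nt_u\nt_v=N\text{ for some }v\in (W_I)_{\leq N}\}.
\]
For the ``only if'' part I would argue as follows. Suppose $\deg\nt_u\nt_v=N$. The displayed identification gives $u\in (W_I)_N$ directly. For $v$, unwind the definition: $\deg\nt_u\nt_v=N$ forces $\deg\nf_{u,v,z^{-1}}=N$ for some $z\in (W_I)_{\leq N}$, which by Lemma~\ref{lem:wn} means $\nb_{u,v,z}\neq 0$. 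Now invoke Lemma~\ref{lem:cyc}(iii) (whose hypotheses hold for $W_I$ for the same reason): $\nb_{u,v,z}\neq 0$ forces $\af_I(u)=\af_I(v)=\af_I(z)=N$, so in particular $v\in (W_I)_N$. Alternatively, one can use only the cyclic identity \eqref{eq:betagamma}: $\nb_{u,v,z}\neq 0\Rightarrow \nb_{v,z,u}\neq 0\Rightarrow \deg\nf_{v,z,u^{-1}}=N\Rightarrow \deg\nt_v\nt_z=N$, whence $v\in (W_I)_N$ again by the displayed characterization.

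I do not expect any genuine obstacle: the statement is pure bookkeeping, transporting a general result to a special case. The one place that calls for (trivial) care is keeping track of which algebra everything is computed in---$(W_I)_{\leq N}$, $\af_I$, $\nf_{u,v,z}$ and $(\mc{H}_I)_{\leq N}$ all refer intrinsically to the dihedral group $W_I$---and recording once and for all that P1--P15 for $(W_I,L)$ supplies every hypothesis needed to quote Lemma~\ref{lem:compute} and Lemma~\ref{lem:cyc}(iii). Consequently I anticipate the written proof to be essentially one line: ``apply Lemma~\ref{lem:compute} to $W_I$, using that P1--P15 holds for $(W_I,L)$; the sharpening in both variables follows from \eqref{eq:betagamma} (equivalently Lemma~\ref{lem:cyc}(iii)).''
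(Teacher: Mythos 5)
Your proposal is correct and matches the paper's proof, which is exactly the one-liner you predicted: the lemma is stated as an immediate consequence of Lemma \ref{lem:compute} applied to the dihedral group $W_I$, with the hypotheses $(\text{P1})_{>N}$, $(\text{P4})_{>N}$, $(\text{P8})_{>N}$ supplied by P1--P15 for $(W_I,L)$. Your extra remark on extracting $v\in (W_I)_N$ via the cyclic property \eqref{eq:betagamma} is precisely how Lemma \ref{lem:compute} itself is justified in the paper, so there is no divergence in method.
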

The following proposition is based on Lemma \ref{lem:ppd}. It will be used in the proof of Proposition \ref{prop:key1}.
\begin{prop}\label{prop:key}
	Assume that $ N\in \mathbb{N}$,  $ u,v,z\in (W_I)_{\leq N} $, and $ z\in\{e,s,t,st,ts\} $. All the possible degrees of $ \nf_{u,v,z} $ are given as follows.
	\begin{itemize}
		\item [(i)]We have $ \deg \nf_{u,v,e}\leq 0 $.
		\item [(ii)]For $ r\in I $, $ \deg \nf_{u,v,r}\leq L(r) $, and the equality holds only if $ vr<v $ and $ ru<u $.
		\item [(iii)] Let   $ w=s_1s_2 $ with $ \{s_1,s_2\} =\{s,t\}$.
		Then we are in one of the following situations:
		\begin{itemize}
			\item [(1)]$ \deg \nf_{u,v,w} \leq 0$;
			\item [(2)] $ \deg \nf_{u,v,w}=L(w)$,   and $ u=v=w_I $;
			\item [(3)] $ \deg \nf_{u,v,w}=L(s_1) $, and $ s_1u<u $, $ v=u^{-1}s_2 $;
			\item [(4)] $ \deg \nf_{u,v,w}=L(s_2) $, and $ vs_2<v $, $ u=s_1v^{-1} $;
%			\item [(5)] $ \deg \nf_{u,v,w}=L(s_1)= L(s_2) $, and either $ s_1u<u $, $ v=u^{-1}s_2 $, or  $ vs_2<v $, $ u=s_1v^{-1} $.
			\item [(5)] $ \deg\nf_{u,v,w}=|L(s_1)- L(s_2)|>0$,  and $ u=v=d_I $.
		\end{itemize}
	\end{itemize}
\end{prop}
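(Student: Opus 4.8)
Throughout I may and do assume $L(s)\le L(t)$, by the symmetry of the statement in $s$ and $t$. The plan is to compute each $\nf_{u,v,z}$ inside $(\mc{H}_I)_{\le N}$ explicitly: start from $T_uT_v=\sum_{w\in W_I}f_{u,v,w}T_w$ in $\mc{H}_I$, pass to the quotient, and then rewrite every $\nt_w$ with $w\in (W_I)_{>N}$ using \eqref{eq:minus}. This expresses $\nf_{u,v,z}$ purely in terms of ordinary structure constants $f_{u,v,w}$ and Kazhdan--Lusztig polynomials $p_{z,w}$, every one of which has already been analysed in this section (Lemmas \ref{lem:deg1}, \ref{lem:degfp2}, \ref{lem:gammaleq0} and Corollary~\ref{cor:minus2}). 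The quotient is well defined because P4 holds for $W_I$, so $(W_I)_{>N}$ is $\prec_{LR}^I$-closed.

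First I would dispose of the small values of $N$: if $N<L(t)$, then by the description of the two-sided cells of $W_I$ one has $(W_I)_{\le N}\subseteq\{e,s\}$, hence $u,v,z\in\{e,s\}$, and the assertions follow at once from $\nt_e\nt_v=\nt_v$ together with $\nt_s^2=\nt_e+\xi_s\nt_s$ (the image of $T_s^2=1+\xi_sT_s$, which survives since $e,s\in(W_I)_{\le N}$). So assume $N\ge L(t)$. Then $(W_I)_{>N}\subseteq\{d_I,w_I\}$, with $d_I$ present only if $L(s)<L(t)$, and there are exactly three cases. If $(W_I)_{>N}=\emptyset$ we simply have $\nf_{u,v,z}=f_{u,v,z}$. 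If $(W_I)_{>N}=\{w_I\}$, then since $z$ has length $\le 2<l(w_I)$ and $u,v\ne w_I$, resolving $\nt_{w_I}$ once gives $\nf_{u,v,z}=f_{u,v,z}-f_{u,v,w_I}\,p_{z,w_I}$. If $(W_I)_{>N}=\{d_I,w_I\}$ (so $L(s)<L(t)$), then resolving $\nt_{w_I}$ produces a $p_{d_I,w_I}\nt_{d_I}$ term which must itself be resolved, and one obtains $\nf_{u,v,z}=f_{u,v,z}-f_{u,v,w_I}\,p_{z,w_I}-F(u,v)\,p_{z,d_I}$, where $F(u,v)=f_{u,v,d_I}-p_{d_I,w_I}f_{u,v,w_I}$ is the quantity of Lemma~\ref{lem:Fuv} and now $u,v\notin\{d_I,w_I\}$.

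With these formulas in hand I would bound each summand separately. The leading term $f_{u,v,z}$ is pinned down by Lemma~\ref{lem:deg1}: for $z=e$ its degree is $\le0$; for $z=r\in I$ it equals $\xi_r$ (exactly when $ru<u$ and $vr<v$), $1$, or $0$; for $z=s_1s_2$ it equals $\xi_{s_1}$ (exactly when $s_1u<u$ and $v=u^{-1}s_2$), $\xi_{s_2}$ (exactly when $vs_2<v$ and $u=s_1v^{-1}$), $\xi_{s_1}\xi_{s_2}$ (exactly when $u=v=w_I$), $1$, or $0$. The correction $f_{u,v,w_I}p_{z,w_I}$ is bounded by Lemma~\ref{lem:degfp2}: its degree is $\le0$ unless $u=v=d_I$ and $z\in\{t,st,ts\}$, in which case it equals $L(t)-2L(s)$ or $L(t)-L(s)$. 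The correction $F(u,v)p_{z,d_I}$ is bounded by Lemma~\ref{lem:gammaleq0} (whose hypothesis $u,v\notin\{d_I,w_I\}$ is exactly what holds in the third case): its degree is $\le0$ unless $z=t$, when it is $\le 2L(s)-L(t)$. Assembling: for $z=e$ all summands have degree $\le0$, giving (i); for $z=r\in I$ both corrections have degree $<L(r)$, so $\deg\nf_{u,v,r}\le L(r)$, and equality forces $\deg f_{u,v,r}=L(r)$, i.e.\ $ru<u$ and $vr<v$ by Lemma~\ref{lem:deg1}(iii), giving (ii). For $z=s_1s_2$: if $u=v=d_I$ (possible only in the second case) then $f_{d_I,d_I,s_1s_2}$ has degree $\le0$ --- it cannot be $\xi_{s_i}$, since $v=u^{-1}s_2$ and $u=s_1v^{-1}$ are incompatible with $u=v=d_I$ on length-parity grounds --- whereas $f_{d_I,d_I,w_I}p_{s_1s_2,w_I}$ has degree exactly $L(t)-L(s)=|L(s_1)-L(s_2)|$, which yields (iii)(5); if $\neg(u=v=d_I)$, both corrections have degree $\le0$, so $\deg\nf_{u,v,s_1s_2}=\deg f_{u,v,s_1s_2}$, and the five possibilities for the latter give (iii)(1)--(4) (with (2) occurring when $u=v=w_I$ in the first case).

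The step I expect to be most delicate is the bookkeeping in the third case: one must verify that iterating \eqref{eq:minus} on $\nt_{w_I}$ really produces the combination $F(u,v)=f_{u,v,d_I}-p_{d_I,w_I}f_{u,v,w_I}$ and not merely $f_{u,v,d_I}$ --- this is precisely the quantity that Lemma~\ref{lem:Fuv}, Corollary~\ref{cor:minus2} and Lemma~\ref{lem:gammaleq0} were designed to control, and getting exactly this combination is what makes the degree bounds close up. A secondary point requiring care is the tightness of the ``equality only if'' clauses, which needs two checks: that no cancellation occurs at the top degree among the (at most) three summands (it does not, because in each relevant case the corrections have strictly smaller degree than the leading term), and the length-parity argument showing $f_{d_I,d_I,s_1s_2}\in\{0,1\}$, so that case (iii)(5) is genuinely attained rather than dominated by the leading term.
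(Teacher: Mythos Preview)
Your proposal is correct and follows essentially the same approach as the paper: both split into the cases $(W_I)_{>N}\in\{\emptyset,\{w_I\},\{d_I,w_I\}\}$ together with the degenerate case $(W_I)_{\le N}\subseteq\{e,s\}$, derive the explicit formula $\nf_{u,v,z}=f_{u,v,z}-f_{u,v,w_I}p_{z,w_I}-F(u,v)p_{z,d_I}$ (with the appropriate terms absent in the smaller cases), and then bound each summand via Lemmas~\ref{lem:deg1}, \ref{lem:degfp2} and \ref{lem:gammaleq0}. Your derivation of the $F(u,v)$ combination from iterating \eqref{eq:minus} is exactly the computation underlying the paper's Case~(III), and your parity check that $f_{d_I,d_I,s_1s_2}\in\{0,1\}$ (in fact $=0$) matches the paper's assertion in its situation~(II.3).
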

\begin{proof}
	 Without loss of generality, we can assume that $ a=L(s)\leq b=L(t) $. Let $ z\in\{e,s,t,st,ts\} $.
According to $ (W_I)_{>N} $, the proof is divided into the following cases.

Case (I): $ (W_I)_{>N}=\emptyset $. Then we have $ \nf_{u,v,z}=f_{u,v,z} $, and the proposition follows from Lemma \ref{lem:deg1}.

Case (II): $ (W_I)_{>N}=\{w_I\} $. Then we have $ \nf_{u,v,z}=f_{u,v,z}-f_{u,v,w_I}p_{z,w_I} $. Let
$ \delta=\deg f_{u,v,w_I}p_{z,w_I} $. By Lemma \ref{lem:degfp2}, there are three situations as follows.
\begin{itemize}
\item [(1)] $ \delta\leq 0 $. Then $\nf_{u,v,z} =f_{u,v,z} \mod{\mathbb{Z}[q^{-1}]}$, and the proposition follows from Lemma \ref{lem:deg1}.
\item [(2)]  $ \delta=b-2a>0 $,  $ z=t $, and $ u=v=d_I$. We have $ f_{d_I,d_I,t}=\xi_t $, and $ \deg \nf_{d_I,d_I,t}=L(t) $, which is consistent with (ii).
\item [(3)]  $ \delta=b-a>0 $, $ z=st $ or $ ts $,  and $ u=v=d_I$. We have $ f_{d_I,d_I,z}=0 $, and $ \deg \nf_{d_I,d_I,z}=b-a $, which is case (iii,5).
\end{itemize}

Case (III): $ (W_I)_{>N}=\{d_I, w_I\} $. In this case, $ m_{st} $ is even, and $ L(s)=a<b=L(t) $.
We have \[
	\nf_{u,v,z} =f_{u,v,z}-f_{u,v,w_I}p_{z,w_I}-(f_{u,v,d_I}-f_{u,v,w_I}p_{d,w_I})p_{z,d_I}.
	\]
 Let $\gamma= \deg (f_{u,v,d_I}-p_{d_I,w_I}f_{u,v,w_I})p_{z,d_I} $. By Lemma \ref{lem:gammaleq0}, we have two situations as follows.
\begin{itemize}
\item [(1)] $ \gamma\leq 0 $. The third term does not affect the positive degree part of $ \nf_{u,v,z} $, and the proposition follows from case (II).
\item [(2)] $ \gamma\leq 2a-b >0$, $ z=t $, $ tu<u $ and $ vt<v $. Since $ 2a-b<b $, we have $ \deg \nf_{u,v,t}\leq L(t) $, which is consistent with (ii).
\end{itemize}

Case (IV): $ (W_I)_{\leq N}\subset\{e,s\} $. In this case, the proposition is obvious.
\end{proof}

\section{Basic properties of Coxeter groups with complete graph}
\label{sec:complete}

\begin{ass}
	In this section, $ (W,S) $ is a Coxeter group with complete graph, i.e. $ m_{st}\geq 3 $ for any $ s\neq t\in S $.
\end{ass}

By   \cite[Lem.2.2 and Lem. 2.6]{xi2012afunction}, we have the following lemma.  
\begin{lem}\label{lem:xi}
	Let  $ x\in W $ and $ s,t\in S $ such that $ xst=x\bd s\bd t  $.
	\begin{itemize}
		\item[(i)] We have $ \mc{R}(xst)=\{t\}$ or $ \{s,t\} $.
		\item [(ii)] If $ \mc{R}(xst)=\{t\}$, $ \mc{R}(xs)=\{s\}$ and $ \{s,t\}\cap\mc{L}(y) =\emptyset$, then \[
		xsty=x\bd s\bd t\bd y.
		\]
	\end{itemize}
\end{lem}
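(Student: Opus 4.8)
This is \cite[Lem.~2.2 and 2.6]{xi2012afunction}; I would reprove it along the following lines. The one ingredient I would isolate first is the \emph{full-cancellation principle}, valid in any Coxeter group: if $r\ne r'$ both lie in $\mc R(w)$, then $m_{rr'}<\infty$ and $l(wz)=l(w)-l(z)$ for every $z\in W_{\{r,r'\}}$. (Decompose $w$ along the right coset $wW_{\{r,r'\}}$; the minimal-length representative carries no letter of $\{r,r'\}$ in its right-descent set, so the $W_{\{r,r'\}}$-factor must be the longest element of $W_{\{r,r'\}}$, which in particular forces $m_{rr'}<\infty$.) Because the Coxeter graph is complete, $m_{rr'}\ge 3$ whenever $r\ne r'$, so in that identity the length-$2$ element $z=r'r$ is always available; this is what makes the arguments below close.

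\textbf{Part (i).} Clearly $t\in\mc R(xst)$, and I would prove $\mc R(xst)\subseteq\{s,t\}$ by induction on $l(x)$. Let $r\in\mc R(xst)$ with $r\ne t$. Apply the full-cancellation principle to $\{r,t\}\subseteq\mc R(xst)$ with $z=tr$: since $xst\cdot tr=xsr$ this gives $l(xsr)=l(x)$, i.e.\ $r\in\mc R(xs)$. If $r=s$ we are done; otherwise apply the principle again to $\{r,s\}\subseteq\mc R(xs)$ with $z=sr$, getting $l(xr)=l(x)-1$, i.e.\ $r\in\mc R(x)$. Write $x=x_0r$ with $l(x_0)=l(x)-1$, so $x_0\,r\,s\,t$ is reduced. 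Then $l(x_0\,r\,s)=l(x_0)+2$, so the induction hypothesis applied to $(x_0,r,s)$ gives $\mc R(xs)\subseteq\{r,s\}$, hence $\mc R(xs)=\{r,s\}$. Feeding this back through the full-cancellation principle and the induction hypothesis, applied to successive truncations of the reduced word $x_0rst$, forces a simple reflection to equal a conjugate of another simple reflection by a short nontrivial word, which $m_{uv}\ge 3$ excludes; this is the contradiction. The base case $l(x)=0$ is immediate: $m_{st}\ge 3$ forces $\mc R(st)=\{t\}$.

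\textbf{Part (ii).} I would prove $l(xsty)=l(x)+l(y)+2$ by induction on $l(y)$, the case $y=e$ being the hypothesis. For $l(y)\ge 1$ choose $r\in\mc L(y)$ and write $y=ry'$ with $l(y')=l(y)-1$; by hypothesis $r\notin\{s,t\}$, and $r\notin\mc L(y')$. Since $\mc R(xst)=\{t\}$ and $r\ne t$, $l(xstr)=l(x)+3$. Viewing $xstr$ as $(xs)\cdot t\cdot r$: the hypothesis gives $\mc R\big((xs)t\big)=\{t\}$, part (i) gives $\mc R(xstr)\subseteq\{t,r\}$, and I would rule out $t\in\mc R(xstr)$ by full cancellation in $W_{\{t,r\}}$ (with $m_{tr}\ge 3$ it would force $xstr\in W_{\{t,r\}}$, impossible since the reduced word of $xstr$ involves $s\notin\{t,r\}$). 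Hence $\mc R(xstr)=\{r\}$, so the triple $(xs,\,t,\,r)$ again satisfies the hypotheses of the lemma. If also $t\notin\mc L(y')$, the induction hypothesis finishes; if $t\in\mc L(y')$ I would peel off $t$ as well (legitimate since $t\notin\mc R(xstr)$), check by the same method that the triple obtained from $xstrt$ satisfies the hypotheses, and continue peeling until the remaining factor has both relevant simple reflections outside its left-descent set.

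\textbf{Main obstacle.} Part (i) is the engine, and it is the delicate point: its recursion does not terminate in one step, so one must carry along just enough information about the right-descent sets of truncated reduced words, repeatedly invoking $m_{uv}\ge 3$ to exclude the degenerate dihedral identities. In part (ii) the analogous nuisance is that the pair of "forbidden" left descents of $y$ is not literally preserved when a letter is peeled off, which forces the peel-again step. Arranging both inductions so that the hypotheses genuinely propagate is the real content, and I would organize it exactly as in \cite{xi2012afunction}.
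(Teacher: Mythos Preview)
The paper does not prove this lemma at all; it simply invokes \cite[Lem.~2.2 and Lem.~2.6]{xi2012afunction}. So there is nothing in the paper to compare against beyond the citation, and your opening sentence already matches the paper's ``proof'' exactly. What remains is whether your independent sketch stands on its own. It does not, for two reasons.

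\textbf{Part (ii) contains a false step.} Your justification for excluding $t\in\mc R(xstr)$ --- that full cancellation ``would force $xstr\in W_{\{t,r\}}$'' --- is simply wrong: full cancellation only gives $xstr=v\cdot w_{\{t,r\}}$ with $v$ minimal in the coset $xstr\,W_{\{t,r\}}$, and there is no reason for $v$ to be $e$. The correct argument uses the hypothesis $\mc R(xs)=\{s\}$ that you have carefully recorded but then do not use: since $t,r\notin\mc R(xs)$, the element $xs$ itself is the minimal representative of $xstr$ in $xstr\,W_{\{t,r\}}$, so the $W_{\{t,r\}}$-component of $xstr$ is exactly $tr$, of length $2$. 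If both $t$ and $r$ lay in $\mc R(xstr)$ that component would have to be $w_{\{t,r\}}$, of length $m_{tr}\ge 3$. That is the contradiction. (Equivalently, apply your principle with $z=rtr$: it would give $l(xsr)=l(x)$, contradicting $\mc R(xs)=\{s\}$.) Your ``peel-again'' step when $t\in\mc L(y')$ is also only asserted, not verified; the same coset argument is what makes it go through.

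\textbf{Part (i) is not a proof.} You reach the point $\mc R(xs)=\{r,s\}$ cleanly, but the sentence ``feeding this back \ldots forces a simple reflection to equal a conjugate of another simple reflection by a short nontrivial word, which $m_{uv}\ge 3$ excludes'' is a hope, not an argument: you have not exhibited any such identity, and the obvious candidate $r=tst$ only arises from one particular exchange position, not from the situation you have set up. Since you yourself call this ``the real content'' and then defer to Xi's organization, the honest move is the paper's: cite \cite{xi2012afunction} and do not pretend to reprove it.
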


The following lemma is useful in this paper. Note that (ii) of the following lemma appears in  \cite[Lem. 2.5(1)]{Shi-Yang2016}. We give a proof here based on Lemma \ref{lem:xi}.

\begin{lem}\label{lem:shiyang}
	Let $ I=\{s,t\}\subset S $ such that $ s\neq t $, $ m_{st}<\infty $. Let  $ z\in W_I $ and $ x,y\in W $ such that $ \mc{R}(x)\cup\mc{L}(y)\subset S\setminus I $.
	\begin{itemize}
		\item [(i)] If $ l(z)\geq 3 $ then $xzy=x\bd z\bd y $.
		\item [(ii)] If $ l(z)=2 $ and $ z=st $, then $xzy<x\bd z \bd y $ if and only if there exists $ r\in S $ such  that $ \mc{R}(xs)=\{s,r\} $,  $ \mc{L}(ty)=\{t,r\} $.
		
		\item [(iii)]  Suppose that conditions in (ii) hold.  Then$$  T_{xz}T_y=\xi_r T_{xsrty}+T_{x_1}T_{z_1} T_{y_1}  $$ for some $ x_1,y_1,z_1 $ such that  $ z_1=rw_{tr} \in W_{I_1}$ with $ I_1 =\{ t,r \}$, $ l(y_1)<l(y) $, and $ \mc{R}(x_1)\cup\mc{L}(y_1)\subset S\setminus I_1  $.
		If  $ x_1z_1y_1<x_1\bd z_1\bd y_1 $, then $$  m_{st}=m_{sr}=m_{tr}=3  .$$  
We have
\begin{equation}\label{eq:inddeg}
		\deg T_{xz}T_y=L(r).
\end{equation}
	\end{itemize}
\end{lem}
\begin{proof}
	  See \cite[Lem.2.5]{xi2012afunction} for assertion (i). 
	
	We prove assertion (ii).
	Assume that $ xzy<x\bd z\bd y$. By Lemma \ref{lem:xi} (i), we have $ \mc{R}(xst)=\{t\} $ or $ \{s,t\} $. If the latter case happens, then $t\in \mc{R}(x)$ since $ m_{st}\geq 3 $, which contradicts with the assumption $ \mc{R}(x)\subset S\setminus I $. Thus $ \mc{R}(xst)=\{t\} $. By Lemma \ref{lem:xi} (ii), $ xzy<x\bd z\bd y $ implies that $ \mc{R}(xs)=\{s,r_1\} $ for some $ r_1\in S $. Similarly, $ y^{-1}z^{-1}x^{-1}<y^{-1}\bd z^{-1}\bd x^{-1} $ implies that $ \mc{L}(ty)=\{t,r_2\} $ for some $ r_2 \in S$.
	
	Let $ xs=x'w_{r_1s} $, $ ty= w_{tr_2}y'$. Assume that $ r_1\neq r_2 $. Then $ \mc{R}(xs)\cap\{t,r_2\}=\emptyset $. This implies that $ xsw_{tr_2}=(xs)\bd w_{tr_2}$. Then  by assertion (i), we have $$  l(xsty)=l(xsw_{tr_2}y') =l(xs)+l(w_{tr_2}y')=l(xs)+l(ty)=l(x)+l(y)+2 ,$$ which  contradicts with $ xzy<x\bd z\bd y  $. Hence $ r_1=r_2 $, denoted by $ r $. This proves the `` only if " part of (ii). The `` if " part is obvious.
	
	Now we  prove assertion (iii). We have\[
	T_{xz}T_y=\xi_r T_{x'(w_{rs}r)}T_{w_{tr}y'}+T_{x'(w_{rs}r)}T_{(rw_{tr})y'}.
	\]
	By Lemma \ref{lem:xi}(i), we have $ \mc{R}(x'(w_{rs}r))=\{s\} $, and hence $$  l(x'(w_{rs}r)w_{tr})=l(x'(w_{rs}r))+l(w_{tr}) . $$ Then by assertion (i), we have $$   l(x'(w_{rs}r)w_{tr}y')=l(x'(w_{rs}r))+l(w_{tr})+l(y') .$$ Hence the first term $ \xi_r T_{x'(w_{rs}r)}T_{w_{tr}y'} $ is equal to $ \xi_r T_{xsrty} $. Take $$  x_1=x'(w_{rs}r),\quad  z_1= rw_{tr},\quad y_1=y'  .$$ Then the second term is $ T_{x_1}T_{z_1}T_{y_1} $, $ l(y_1)<l(y) $, and $ \mc{R}(x_1)\cup\mc{L}(y_1)$ has no intersection with $ I_1=\{t,r\}  $.
	
	Assume that $ x_1z_1y_1<x_1\bd z_1\bd y_1$. Then by assertion (i) and (ii),  $ m_{tr}=3 $ and $ \mc{R}(x_1t)=\{t, r'\} $ and $ \mc{L}(ty_1)=\{r,r'\} $ for some $ r'\in S $. Similarly,  considering $ (x',w_{rs}r,(rw_{tr})y') $, we have $ m_{sr}=3 $. Since $ x_1=x'rs$, we have $ \mc{R}(x_1t)=\{t\} $ or $ \{t,s\} $. Hence  $ r'=s $ and $ \mc{R}(x_1t)= \{t,s\} $. If $ m_{st}\geq 4 $, then by Lemma \ref{lem:xi}(i) $ \mc{R}(x_1t)=\mc{R}(x'rst)=\{t,s\} $ implies that $  \mc{R}(x'r)=\{s,t\} $,  a contradiction with $ r\in \mc{R}(x'r) $. Hence $ m_{st}=3 $. 
	
	At last, we prove \eqref{eq:inddeg} using induction on  $ l(y) $. If $ x_1z_1y_1=x_1\bd z_1\bd y_1 $, then   $ \deg (T_{x_1}T_{z_1}T_{y_1})=0 $. If $ x_1z_1y_1<x_1\bd z_1\bd y_1 $, using the induction hypothesis
	 or $ \deg (T_{x_1}T_{z_1}T_{y_1})=L(s) =L(r)=L(t)$ since $ m_{st}=m_{sr}=3 $.
	 Thus we always have
	$$  \deg (T_{xz}T_y)=\max\{L(r),\deg (T_{x_1}T_{z_1}T_{y_1})\}
	=L(r) .  $$ 
\end{proof}

%\begin{ex}
%	Assume that  $ m_{st}=m_{sr}=m_{tr}=3 $. Take  $ x=rtsr $, $ z=st $, $ y=rtsr $. Then
%	\begin{align*}
%	T_{rtsr}T_{st}T_{rtsr}&=\xi_r T_{rtrsrtrsr}+T_{trts}T_{tr}T_{sr}\\
%	&	=\xi_r T_{rtrsrtrsr}+\xi_s T_{trstsrs}+T_{trstrs}.
%	\end{align*}
%\end{ex}

\section{Factorization formulas}\label{sec:main}
\begin{ass}\label{ass3}
In this section, $ (W,S) $ is a Coxeter group with complete graph, i.e. $ m_{st}\geq 3 $ for any $ s\neq t\in S $, and $ L: W\to\mathbb{Z} $ is a fixed  positive weight function. 
\end{ass}

The boundedness conjecture \ref{conj:bound} holds for Coxeter groups with complete graph, by \cite{xi2012afunction} and \cite{Shi-Yang2016}. In particular, $ \wnn=\emptyset $ for large enough $ N $, and $ \af(w)<\infty $ for any $ w\in W $.

Denote by $ D $  the set consisting of involutions \begin{itemize}
	\item[(i)] $ w_J $ for some $ J\subset S $, noting  $ |J|=0,1,2 $, and 
	\item [(ii)]  $ d=w(t,m_{st}-1) $ for some $ s,t\in S $ with   $ L(s)<L(t) $.
\end{itemize}
We define a function $ \af' : D\to\mathbb{N}$ such  that $ \af'(d)= L(d)$ in case (i) and $ \af'(d)= L'(d) $ in case (ii) (see \eqref{eq:l'} for the definition of $ L' $).  Define \[
D_{\geq N}=\{d\in D\mid \af'(d)\geq N \},
\]and  $ D_{>N}=D_{\geq N+1} $, $ D_{N} =D_{\geq N}\setminus D_{>N}$.  Define
\begin{equation}\label{eq:omegaN}
\Omega_{\geq N} =\left\{w\in W\, |\,  
w=x\bd d\bd y \text{ for some }d\in D_{\geq N}, x,y\in W
\right\},
\end{equation}
and  $ \Omega_{>N} =\Omega_{\geq N+1}$, $ \Omega_{N} =\Omega_{\geq N}\setminus\Omega_{>N}$. By definition  the following facts hold:
\begin{itemize}
	\item $ \Omega_{ \geq N} $ and $ \Omega_{ N} $ are stable by taking inverse;
	\item if $ w\in \Omega_{ \geq N} $, then for all $ u,v\in W $ such that $ uwv=u\bd w\bd v $ we have $ uwv\in\Omega_{ \geq N} $.
\end{itemize}

%\begin{thm}\label{thm:induction}
%For $ N\in \mathbb{N} $, \begin{itemize}
%	\item [(i)]  P1-P15 hold  for $ W_{\geq N} $, and 
%	\item [(ii)] $ W_{\geq N} =\Omega_{\geq N}$.
%\end{itemize}
%\end{thm}
%When $ N=0 $,  $ (\text{P1-P15})_{\geq N} $ is just P1-P15.  Thus this theorem implies immediately conjectures P1-P15. 

%We  prove this theorem use decreasing induction on $ N\in \mathbb{N} $.
%When $ N $ is large enough, $ W_{\geq N}=\emptyset =\Omega_{\geq N}$, and $ (\text{P1-P15})_{\geq N} $ holds  trivially.  In the following, we always \textbf{assume}  $ (\text{P1-P15})_{>N} $ and $ \wnn=\Omega_{>N} $. 
%%Then we need to   prove $ (\text{P1-P15})_{\geq N} $, $ W_{ N} =\Omega_{N}$. 

For any subset $ J\subset S $, we define  $  (\Omega_J)_{ N}$ etc. by replacing $ W $ by $ W_J$ in the above definitions. It is easy to verify  that \begin{equation*}
 (\Omega_J)_{\geq N}= \Omega_{\geq N}\cap W_J \text{ for any } N.
\end{equation*}
Hence \begin{equation}\label{eq:omegaj}
(\Omega_J)_{ N}= \Omega_{N}\cap W_J\text{ for any } N.
\end{equation}

 For  $ N=0 $,   $ W_N=\{e\} $ (see 13.7 in \cite{lusztig2003hecke}), $ \Omega_N=\{e\}  $,
and 
all the results in this section hold trivially.
Thus in the following we assume  $ N>0 $.

\begin{ass}\label{ass4}
Until Theorem \ref{thm:dec} (included), we fix an integer $ N>0 $ such that  $ \wnn=\Omega_{>N} $ and $ \wnn $ is $ \prec_{ {LR}} $ closed. 
\end{ass}
By this assumption, it is obvious that
\begin{equation}\label{eq:empty}
\wn\cap \Omega_{> N} =\emptyset,\text{ and } \wn=\Omega_{\leq N}.
\end{equation}

% When $ N $ is large enough, $ W_{\geq N}=\emptyset $, and $ (\text{P1-P15})_{\geq N} $ hold  trivially. In the following, we assume that $ (\text{P1-P15})_{>N} $ hold, and  prove $ (\text{P1-P15})_{\geq N} $. 

We claim that \begin{equation}\label{eqempty}
 D_N\cap \Omega_{> N}=\emptyset .
\end{equation}
Assume that $ d\in D_N\cap \Omega_{> N}$. Then $ d\in W_I $ for some finite dihedral subgroup $ W_I $, and there exists some $ d'\in D_{>N} , x,y\in W$ such that $ d=x\bd d'\bd y $. Then $ d=x\bd d'\bd y \in W_I$ implies that $ d'\in W_I $, using a result due to Matsumoto and Tits (see for example \cite[Thm. 1.9]{lusztig2003hecke}). 
Now the claim follows from the fact that the following cannot happen in a finite dihedral group:
\[
\af'(d)<\af'(d')\text{ and } l(d)\geq l(d').
\]

For $ d\in D_N $, we have $ \af(d)\geq N $, since $ \deg h_{d,d,d}=\af'(d) $ (Lemma \ref{lem:eta}). If $\af(d)> N  $, then $ d\in \wnn=\Omega_{>N} $. This  contradicts with claim \eqref{eqempty}. Thus $ \af(d)=N $, i.e.  $ D_N\subset W_N $, and $ \af':D_N\to \mathbb{N} $ is the restriction of $ \af: W\to \mathbb{N} $.

Assumption \ref{ass4} guarantees that we can use results from section \ref{subsec:basis}.  

If $ W_I $ is a finite dihedral subgroup of $ W $, then by the explicit cell partitions  of $ W_I $ and their $ \af $-values, we have
\begin{equation}\label{eq:woi}
(W_I)_{N}=(\Omega_I)_{N} \text{ for any  } N.
\end{equation}
Combined with \eqref{eq:omegaj} and \eqref{eq:empty}, $$  (W_I)_{\leq N}=(\Omega_I)_{\leq N} =\Omega_{\leq N}\cap W_I=W_{\leq N} \cap W_I . $$
Then by Lemma \ref{lem:nfxyz}, for $ u,v,w\in W_I $,  $ \nf_{u,v,w} $ that computed in $ (\mc{H}_I)_{\leq N} $ coincides with that computed in $ \mc{H}_{\leq N} $. In particular, we can apply results in section \ref{sec:rank2}  about $ \nf_{u,v,w} $. In the following we  use this without  mention  it.

% If $ x\in  (W_I)_{\leq N} \cap W_{>N}$, then $ x\in\Omega_{>N}  $ since 
%$ W_{>N}= \Omega_{>N}  $ by Assumption \ref{ass:indx}. This contradicts with $ x\in  (W_I)_{\leq N} =(\Omega_I)_{\leq N} $.
%Hence $  (W_I)_{\leq N} \subset W_{\leq N}$. Then $ \nf_{u,v,w} $that computed in $ (\mc{H}_I)_{\leq N} $ coincides with that computed in $ \mc{H}_{\leq N} $. In particular, we can apply results in section \ref{sec:rank2}  about $ \nf_{u,v,w} $ for $ u,v,w\in W_I $ to the case where $ u,v,w $ are viewed as elements of $ W $.
\subsection{Degrees of products}\label{subsec:degest}
%\begin{lem}\label{lem:rank2}
%	Let $ I=\{s,t\}\subset S $ with $ m_{s,t}<\infty $.  Then Theorem \ref{thm:induction} holds for $ W_I $ and any $ N\in\mathbb{N} $. Moreover, for any $ x,y\in(W_I)_{\leq N} $, we have $ \deg \nt_x\nt_y\leq N $, and the equality holds only if $ y\in(\Omega_I)_{ \geq N} $, where $ y\in(\Omega_I)_{ \geq N}  $ is defined via replacing $ W $ by $ W_I $ in  \eqref{eq:omegaN}.
%\end{lem}
%\begin{proof}
%	Conjectures P1-P15 hold for  finite dihedral Coxeter group, due to \cite{geck2011rank2}. The two-sided cells are also clearly known. Thus Theorem \ref{thm:induction} holds for $ W_I $ and any $ N\in\mathbb{N} $. The remaining statement follows from Lemma \ref{lem:wn} and Lemma \ref{lem:cyc}(iii).
%\end{proof}

By Lemma \ref{lem:wn},  for all $ x,y\in\wn $, we have \begin{equation} 
\deg\nt_x\nt_y\leq N,
\end{equation}
see also \eqref{eq:strict} and \eqref{eq:strict1}.
The following proposition gives a necessary condition for the  equality $ \deg\nt_x\nt_y= N$.
\begin{prop}\label{prop:key1} 
Keep Assumptions \ref{ass3} and \ref{ass4}.
	For  $ x,y\in\wn $, the equality $ \deg\nt_x\nt_y= N $ holds only if $x, y\in \Omega_{ N} $.
\end{prop}
\begin{proof}
	If $  \deg\nt_x\nt_y=N $ implies $y\in \Omega_{\geq N}   $, then it also implies $x\in \Omega_{\geq N}   $ since $ \deg\nt_x\nt_y=\deg\nt_{y^{-1}}\nt_{x^{-1}} $. If we know $ x,y\in\Omega_{\geq N} $, then $ x,y\in\Omega_{ N} $ since $ x,y\in \wn=\Omega_{ \leq N} $.
	Thus it suffices to  prove that  $ y\in\Omega_{\geq N} $ is a necessary condition of the equality.
%	If $ N=0 $, the proposition is obvious. Thus, in the following, we also assume that $ N>0 $. 
	
	We prove the proposition by induction on the length of $ y $. If $ l(y)=0 $, the proposition is obvious. If $ l(y)=1 $, then $ y=r $ for some $ r\in S $. Then \[
	T_xT_y=\begin{cases}
	\xi_r T_x+T_{xr}&\text{ if }xr<x;\\
	T_{xr} &\text{ if }xr>x.
	\end{cases}
	\]
Thus $ \deg\nt_x\nt_y\leq L(r) $. Note that $ L(r)\leq N $, since $ y=r\in \wn=\Omega_{\leq N}$. If $ \deg\nt_x\nt_y=N $, then $ y\in \Omega_{N} $. The proposition follows in this case.   

Assume now that $ l(y)\geq 2 $ and the proposition has been proved for all $ y' \in\wn$ such that $ l(y')<l(y) $.

If $ r\in \mc{L}(y)\setminus\mc{R}(x) $, then $ \nt_x\nt_y=\nt_{xr}\nt_{ry} $.
%which has degree $ \leq N $ by our induction hypothesis. Here we need to note that: \begin{itemize}
%	\item $ ry \in \wn$; otherwise, $ ry\in \wnn=\Omega_{>N} $, which implies $ y=r(ry)\in\Omega_{>N} =\wnn$,   a contradiction;
%	\item when $ xr\in \wnn $, we need to use \eqref{eq:degree}.
%\end{itemize}
By the induction hypothesis and \eqref{eq:strict1}, the equality  $ \deg \nt_x\nt_y=N $ holds only if $ ry\in \Omega_{\geq N} $, which implies that $ y=r(ry)\in \Omega_{\geq N} $. The proposition follows in this case.  

If $ \mc{L}(y)\subset\mc{R}(x) $ and $ r\in \mc{L}(y) $ is such that $ xry=x\bd (ry) $, then $$  \nt_x\nt_y=\nt_{xr}\nt_{ry}+\xi_r \nt_{xry}  .$$
Since $ y=r(ry) \in\wn=\Omega_{\leq N}$, we have $ L(r)\leq N $. Hence $$ \deg \xi_r \nt_{xry}  \leq L(r)\leq N . $$
By \eqref{eq:strict}, $\deg \nt_{xr}\nt_{ry}\leq N $; the equality holds only if $ ry\in\Omega_{\geq N} $ by the inductive hypothesis. Hence the equality $ \deg\nt_x\nt_y= N $  holds only if $ ry\in\Omega_{\geq N} $ or $ L(r)=N $, which implies that $ y\in\Omega_{\geq N} $.

It remains to deal with the case where $  \mc{L}(y)\subset\mc{R}(x)  $ and   $ xry<x\bd (ry) $ for any $ r\in \mc{L}(y) $. 
In this case, for a   reduced expression $ y=t_1 t_2\cdots t_k $  of $ y $, we have  $ xt_1<x$, and we can find an integer $ m \geq 2$ such that $ l(xt_2\cdots t_{m-1})=l(x)+m-2 $ and $ xt_2\cdots t_m<xt_2\cdots t_{m-1}$. We can assume such an $ m $ is minimal among similar integers for all reduced expressions of $ y $. Then
by \cite[Lem. 2.3]{xi2012afunction}, $ t_1 t_2\cdots t_m $ is in a finite parabolic subgroup of $ W $. In particular,   $ y=t_1 t_2\cdots t_k $ is  a   reduced expression of $ y $ such that $ m_{t_1t_2}<\infty $. Let $ I=\{t_1,t_2\} $.

Write $ x=x_1u $, $ y=vy_1 $ with $ x_1 $ minimal in $ xW_I $, $ y_1 $ minimal in $ W_Iy $, and $ u,v\in W_I $. Then \[
\nt_x\nt_y=\sum_{w\in {(W_I)}_{\leq N}}\nf_{u,v,w}\nt_{x_1}\nt_w\nt_{y_1}.
\]

In the following,  we  prove that  $$  \deg ( \nf_{u,v,w}\nt_{x_1}\nt_w\nt_{y_1})\leq N $$  and the equality holds only if $ y\in\Omega_{\geq N} $. This will complete the proof.

\textbf{Case (i)}: $ l(w)\geq 3 $. 

By Lemma \ref{lem:shiyang} (i), we have $ T_{x_1w}T_{y_1}=T_{x_1wy_1} $. 
Then by Lemma \ref{lem:degI}, \eqref{eq:woi} and \eqref{eq:omegaj},
  $$  \deg(\nf_{u,v,w}\nt_{x_1w}\nt_{y_1})\leq \deg \nf_{u,v,w}\leq N, $$  and the equalities hold only if $ v\in (W_I)_{N}= (\Omega_I)_{N}\subset\Omega_{N}$,  which implies that  $ y=vy_1\in\Omega_{\geq N} $.
%The equality holds only when $ u=v=w=w_I \in P_0$ (see \eqref{eq:min}). Thus $ y=vy_1\in \Omega_1 $.

\textbf{Case (ii)}: $ l(w)=2 $.

 Let $ w=s_1s_2 $ with $ \{s_1,s_2\}=I $. If $ x_1wy_1=x_1\bd w\bd y_1 $, then we can use the same method as case (i). In the following we assume that $ x_1wy_1<x_1\bd w\bd y_1$. By Lemma \ref{lem:shiyang} (ii) and (iii), we have $ \mc{R}(x_1s_1)=\{s_1,r \} $, $ \mc{L}(s_2y_1)=\{s_2,r\} $ for some $ r\in S \setminus I$ and  \[\deg T_{x_1w}T_{y_1}=L(r) .\]
% \[
%T_{x_1w}T_{y_1}=T_{x_1wy_1}+\xi_rT_{x_1s_1rs_2y_1}.
%\]

According to Proposition \ref{prop:key} (iii), there are 5 possible cases about  $\deg \nf_{u,v,w} $ as follows:

\begin{itemize}
\item [(1):] $ \deg \nf_{u,v,w} \leq 0$. 
In this case we have $ \deg ( \nf_{u,v,w}\nt_{x_1}\nt_w\nt_{y_1})\leq L(r) $. Since $ \mc{L}(s_2y_1)=\{s_2,r\} $, $ m_{rs_2}\geq 3 $,  then $ r $ appears in a reduced expression of $ y_1 $. Then $ y\in \wn =\Omega_{\leq N}$ implies that $ L(r)\leq N $. Hence $ \deg ( \nf_{u,v,w}\nt_{x_1}\nt_w\nt_{y_1})\leq N$, and the equality holds only if $ L(r)=N $, which implies that $ y\in \Omega_{\geq N} $.

\item [(2):] $ \deg \nf_{u,v,w}=L(w)$,  and $ u=v=w_I $. 
Then $ y=w_Iy_1=z_1\bd w_{rs_2}\bd z_2 $ for some $ z_1,z_2\in W $, since $ \mc{L}(s_2y)=\{s_2,r\} $. Then $ y\in \wn =\Omega_{\leq N}$ implies that \begin{align}\label{eq:3r1}
  L(w_I)\leq N, \quad L(w_{rs_2})\leq N  .
\end{align} Similarly $ x\in\wn $ implies that  \begin{align}\label{eq:3r2}
L(w_{rs_1})\leq N  .
\end{align} 
Then we have
\begin{equation}\label{eq:half}
\begin{aligned}
L(s_1)+\frac12L(s_2)&\leq \frac12 N,\\
 L(s_2)+\frac12L(r)&\leq \frac12 N,\\
 L(r)+\frac12L(s_1)&\leq \frac12 N.
\end{aligned}
\end{equation}
Then
\[
\deg(\nf_{u,v,w}\nt_{x_1w}\nt_{y_1})\leq L(s_1)+L(s_2)+L(r)\leq N.
\]
If $ \deg(\nf_{u,v,w}\nt_{x_1w}\nt_{y_1})=N $, then all the equalities in \eqref{eq:half} holds. This implies that $ m_{s_1s_2}=m_{rs_1}=m_{rs_2}=3 $, $ w_I\in D_N $, and hence $ y\in\Omega_{\geq N} $.

\item [(3):] $ \deg \nf_{u,v,w}=L(s_1)  $, and $ s_1u<u $. Since $ \mc{R}(x_1s_1)=\{s_1,r\}$, we have $ x=x_1u=x_1s_1(s_1u)=z\bd w_{rs_1}\bd (s_1u) $  for some $ z $. Then $ x\in \Omega_{\leq N} $ implies that $ L(w_{rs_1}) \leq N$. Thus \[
\deg(\nf_{u,v,w}\nt_{x_1w}\nt_{y_1})\leq L(s_1)+L(r)< N.
\]
Note that the last inequality is strict.

\item [(4):] $ \deg \nf_{u,v,w}=L(s_2) $, and $ vs_2<v $. The proof is  similar to  (3).

%Case (5): $ \deg \nf_{u,v,w}=L(s_1)= L(s_2) $, and either $ s_1u<u $ or  $ vs_2<v $. The proof is similar to the case (3).

\item [(5):] $ \deg\nf_{u,v,w}=|L(s_1)- L(s_2)|>0$,  and $ u=v=d_I $. If $ L(s_1)>L(s_2) $, then $ s_1u<u $ since $u= d_I $. By the same reason as case (3), we have
\[
\deg(\nf_{u,v,w}\nt_{x_1w}\nt_{y_1})\leq L(s_1)-L(s_2)+L(r)< L(s_1)+L(r)<N.
\]
If $ L(s_1)<L(s_2) $, then $ vs_2<v $. By the same reason as case (4), we have
\[
\deg(\nf_{u,v,w}\nt_{x_1w}\nt_{y_1})\leq L(s_2)-L(s_1)+L(r)< L(s_2)+L(r)<N.
\]
\end{itemize}
This completes the proof of case (ii).

\medspace

\textbf{Case (iii)}: $ l(w)=1 $, i.e.  $ w=r $ for some $ r\in S $. We have
\begin{equation}\label{eq:tr}
\nt_{x_1r}\nt_{ry_1}=\xi_r\nt_{x_1r}\nt_{y_1}+\nt_{x_1}\nt_{y_1}.
\end{equation}
By claim \eqref{eq:strict}, we have 
%Since $ l(y_1)\leq l(y)-2 $, using claim \eqref{eq:strict} and applying the  induction hypothesis, we have 
\begin{equation}\label{eq:twoleq}
\deg (\nt_{x_1r}\nt_{ry_1})\leq N,\quad \deg (\nt_{x_1}\nt_{y_1})\leq N,
\end{equation}
and  \[
\deg(\xi_r\nt_{x_1r}\nt_{y_1})\leq \max\{\deg (\nt_{x_1r}\nt_{ry_1}), \deg (\nt_{x_1}\nt_{y_1}) \}\leq N.
\]
By Proposition \ref{prop:key} (ii), we have \[
\deg (\nf_{u,v,r}\nt_{x_1r}\nt_{y_1})\leq \deg (\xi_r\nt_{x_1r}\nt_{y_1})\leq N.
\]
Hence $ \deg (\nf_{u,v,r}\nt_{x_1r}\nt_{y_1})= N $ occurs only if 
\begin{itemize}
	\item $ \deg \nf_{u,v,r}=L(r) $, which implies that $ vr<v $, and 
	\item  $ \deg (\nt_{x_1r}\nt_{ry_1})= N $ or $ \deg (\nt_{x_1}\nt_{y_1})= N $, which implies that $$  y_1\in \Omega_{\geq N} \text{ or }ry_1 \in\Omega_{\geq N} $$ by  \eqref{eq:strict1} and the induction hypothesis (noting that  $ l(y_1)\leq l(y)-2 $).
\end{itemize}
Thus if $ \deg (\nf_{u,v,r}\nt_{x_1r}\nt_{y_1})=N $, then $ y=vy_1=(vr)(ry_1)\in\Omega_{\geq N} $.

\textbf{Case (iv)}: $ l(w)=0 $, i.e $ w=e $. By Proposition \ref{prop:key} (i) and \eqref{eq:strict},
\[
\deg (\nf_{u,v,e}\nt_{x_1}\nt_{y_1})\leq \deg(\nt_{x_1}\nt_{y_1})\leq N,
\]
and by  the induction hypothesis, the equality $ \deg (\nf_{u,v,e}\nt_{x_1}\nt_{y_1})=N $ holds only if $ y_1\in\Omega_{\geq N} $, which implies that $ y=vy_1\in\Omega_{\geq N} $.
\end{proof}

For $ d\in D_N $, define  
   \begin{equation}\label{eq:ud}
U_d=\{y\in W\mid dy=d\bd y\in \Omega_{N}\},
\end{equation}
\begin{equation}\label{eq:bd}
B_d=\{b\in U_d^{-1} \,|\,
\text{if } bd=w\bd v \text{ with }  v\neq e,\text{ then }  w\in \Omega_{<N}  \}.
\end{equation}
In particular, $  B_d\subset U_d^{-1} $, $ B_dd\subset \Omega_{N} $ and $ dU_d\subset \Omega_{N} $.

\begin{lem}\label{lem:bdy-basic}
\begin{itemize}
	\item [(i)] Let $ d\in D_N $. For any $ y\in U_d $, $ w\leq d $, we have \begin{equation}\label{eq:ddd}
	wy=w\bd y.
	\end{equation} 
	\item [(ii)]  For any $ w\in \Omega_{ N} $, there exist $ d\in D_N $, $ b\in B_d $, $ y\in U_d $ such that $ w=b\bd d\bd y $. In particular, \begin{equation}\label{eq:omega}
	\quad\Omega_{ N}\subset \bigcup_{d\in D_N}B_d d U_d.
	\end{equation}
\end{itemize}
\end{lem}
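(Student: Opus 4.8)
The plan is to prove the three assertions in turn, using throughout two elementary stability properties of the sets $\Omega_{\ge M}$. (a) $\Omega_{\ge M}$ is stable under multiplication on either side by a length-increasing element: if $u=x'd'y'\in\Omega_{\ge M}$ with $d'\in D_{\ge M}$ and $l(u)=l(x')+l(d')+l(y')$, and $l(uz)=l(u)+l(z)$, then $uz=x'd'(y'z)$ with the three lengths still adding, so $uz\in\Omega_{\ge M}$; likewise on the left. Hence the same holds for $\Omega_{>M}$, and in particular any left, or right, factor of an element of $\Omega_{N}$ that happens to lie in $\Omega_{\ge N}$ actually lies in $\Omega_{N}$ (otherwise the ambient element would lie in $\Omega_{>N}$). (b) Every $d\in D$ is an involution: for $d=w_J$ this is classical, and for $d=w(t,m_{st}-1)$ with $L(s)<L(t)$ the parameter $m_{st}$ is even (a positive weight function forces $L(s)=L(t)$ whenever $m_{st}$ is odd and finite), so $d$ has odd length and its reduced word $tst\cdots t$ is a palindrome. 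Consequently $D_{\ge M}$, and therefore $\Omega_{\ge M}$, $\Omega_{>M}$ and $\Omega_{N}$, are stable under $w\mapsto w^{-1}$.

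Assertion (i) is then immediate: $B_d\subset U_d^{-1}$ and $dU_d\subset\Omega_{N}$ are built into the definitions, and if $b\in B_d$ then $b^{-1}\in U_d$, so $db^{-1}\in\Omega_{N}$ with $l(db^{-1})=l(d)+l(b^{-1})$; applying (b) and $d^{-1}=d$ gives $bd=(db^{-1})^{-1}\in\Omega_{N}$ with $l(bd)=l(b)+l(d)$.

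For assertion (ii), set $\operatorname{supp}(d)=J$ if $d=w_J$ and $\operatorname{supp}(d)=\{s,t\}$ if $d=w(t,m_{st}-1)$. The key point is that $y\in U_d$ forces $\mc{L}(y)\cap\operatorname{supp}(d)=\emptyset$; granting this, $y$ is the minimal-length representative of the coset $W_{\operatorname{supp}(d)}\,y$, so $l(vy)=l(v)+l(y)$ for every $v\in W_{\operatorname{supp}(d)}$, and since $w\le d$ implies $w\in W_{\operatorname{supp}(d)}$ we obtain $l(wy)=l(w)+l(y)$. To prove the key point: when $d=w_J$, the bare equality $l(dy)=l(d)+l(y)$ already forces $\mc{R}(w_J)\cap\mc{L}(y)=J\cap\mc{L}(y)=\emptyset$. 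When $d=w(t,m_{st}-1)$ it forces $t\notin\mc{L}(y)$ since $\mc{R}(d)=\{t\}$; if in addition $s\in\mc{L}(y)$, write $y=sy'$ and note $ds=w_I$ with $I=\{s,t\}$ and $l(ds)=l(d)+1$, so $dy=w_I\,y'$ with $l(w_Iy')=l(w_I)+l(y')$, exhibiting $dy\in\Omega_{\ge\af'(w_I)}$; but $\af'(w_I)=L(w_I)=L'(d)+(m_{st}-1)L(s)>N$, so $dy\in\Omega_{>N}$, contradicting $dy\in\Omega_{N}$. Hence $s\notin\mc{L}(y)$ as well. This last step, where one genuinely needs the condition $dy\in\Omega_{N}$ in the definition of $U_d$ and not merely $l(dy)=l(d)+l(y)$, is the place I expect the only real subtlety to lie.

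For assertion (iii), given $w\in\Omega_{N}$ let $\mathcal F$ be the set of left factors of $w$ lying in $\Omega_{\ge N}$; it contains $w$, and by the stability properties $\mathcal F\subset\Omega_{N}$. Choose $u_0\in\mathcal F$ of minimal length. Since $u_0\in\Omega_{N}$ we may write $u_0=bd\,y_1$ with $d\in D_{\ge N}$ and the three lengths adding; then $d\in D_{N}$ (else $u_0\in\Omega_{>N}$), and $bd$ is a left factor of $u_0$ lying in $\Omega_{\ge N}$, so minimality of $l(u_0)$ forces $y_1=e$, i.e.\ $u_0=bd$. Writing $w=u_0y$ with lengths adding gives $w=bdy$ and $l(w)=l(b)+l(d)+l(y)$. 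Now $y\in U_d$: the right factor $dy$ of $w$ lies in $\Omega_{\ge N}$ (via $e\cdot d\cdot y$) hence in $\Omega_{N}$ by stability, and $l(dy)=l(d)+l(y)$ by subadditivity. Also $b\in U_d^{-1}$, since $u_0=bd\in\Omega_{N}$ and $d^{-1}=d$. Finally $b\in B_d$: if $bd=uv$ with lengths adding and $v\ne e$, then $u$ is a proper left factor of $bd=u_0$, hence of $w$, with $l(u)<l(u_0)$, so $u\notin\Omega_{\ge N}$ by minimality, i.e.\ $u\in\Omega_{<N}$. This gives the existence statement, and the inclusion $\Omega_{N}\subset\bigcup_{d\in D_N}B_d\,d\,U_d$ follows at once.
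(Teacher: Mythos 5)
Your proof is correct. Parts (i) and (ii) run exactly as in the paper: (i) is the definitional unwinding plus the observation that every $d\in D$ is an involution (the paper leaves this implicit in "follows from the definition"), and your (ii) is the paper's argument verbatim in substance — $t\notin\mc{L}(y)$ from $\mc{R}(d)=\{t\}$, and $s\notin\mc{L}(y)$ because otherwise $dy=w_{st}(sy)$ would exhibit $dy\in\Omega_{>N}$ via $w_{st}\in D_{>N}$; the subtlety you flagged is precisely the point the paper makes. For (iii) you take a slightly different route: the paper argues by induction on $l(w)$, repeatedly refactoring $x_1d_1=w_2v_2$ when $x_1\notin B_{d_1}$ and invoking the inductive hypothesis on the shorter element $w_2$, whereas you choose a left factor of $w$ in $\Omega_{\geq N}$ of minimal length and show directly that it must equal $bd$ with $b\in B_d$, the membership $b\in B_d$ falling out of minimality rather than out of an inductive hypothesis. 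The two implementations are equivalent in substance (both push the $D_N$-part as far left as possible and rest on the same stability facts: $\Omega_{\geq M}$ is closed under length-additive one-sided multiplication and under inversion), but your extremal formulation is a bit cleaner, since it avoids the recursion and the bookkeeping $y=y_2v_2y_1$ of the paper, and it makes explicit the inversion-stability of $\Omega_N$ that the paper uses silently in (i) and in asserting $x_1\in U_{d_1}^{-1}$.
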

\begin{proof}  
	
We prove (i).  If $ d=w_J $ for some $ J\subset S $, assertion (i) is well known. If $ d=w(t,2m-1) $ for some $ \{s,t\} \subset S$  with $ m_{s,t}=2m $ and $ L(s)<L(t) $, then $ ty>y$ is obvious, and $ sy>y $ also holds (otherwise $ sy<y $ and  $ dy=(w_{st})\bd (sy)\in \Omega_{>N} $ since $ w_{st}\in D_{>N} $,  which is a contradiction). Thus $ y $ is of minimal length in $ W_Iy $ with $ I=\{s,t\} $. This proves \eqref{eq:ddd}.

We  prove (ii) by induction on $ l(w) $.
Since  $w \in \Omega_N  $,  by definition we have  $ w=x_1\bd d_1\bd y_1 $  for some $ d_1\in D_N $, $ x_1\in U_{d_1}^{-1} $, $ y_1\in U_{d_1} $.    
%We prove  $ w\in  \bigcup_{d\in D_N}B_d d U_d$ by induction on the length of $ w $. 
If $ w $ is of minimal length in $ \Omega_N $, then $  w=d_1  $ since $ x_1d_1 $ and $ d_1y_1 $ are in $ \Omega_{ N} $. If $ x_1\in B_{d_1} $, then we are done.
If $ x_1\notin B_{d_1} $, then by   definition   we can find 
$ w_2\in \Omega_{\geq N} $ and $ v_2\neq e $ such that $ x_1d_1=w_2\bd v_2 $. Note that $ w_2 \in \Omega_{ N}$ and $ l(w_2)<l(w) $.
Using the induction hypothesis, we have $ w_2=b\bd d\bd y_2  $ for some $ d\in D_N $, $ b\in B_d $, $ y_2\in U_d $. Taking  $ y=y_2v_2y_1 $, we have
$ w=bdy_2v_2y_1=b\bd d\bd y $ and $ y\in U_d $. This proves (ii).
%Repeating this process enough times, one can see that we can find $ d\in D_N $, $ x\in B_d $, $ y\in U_d $ such that $ w=xdy $. 
\end{proof}

\begin{lem}\label{lem:length}
	For any $ d\in D_N $, $ b\in B_d $, $ y\in U_d $, we have \[
bdy=b\bd d \bd y.
	\]
\end{lem}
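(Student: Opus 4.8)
The plan is to reduce the statement to the assertion that, if $\underline b,\underline d,\underline y$ are reduced expressions of $b,d,y$, then the concatenation $\underline b\,\underline d\,\underline y$ is reduced, and then to split according to $l(d)$. First I would record some preliminaries. Every $d\in D$ is an involution: $w_J=w_J^{-1}$, while $d=w(t,m_{st}-1)$ has odd length $m_{st}-1$ (recall $m_{st}$ is even in case (ii) of the definition of $D$), so its alternating reduced word is a palindrome and $d=d^{-1}$. Since $b\in B_d\subset U_d^{-1}$ we have $b^{-1}\in U_d$, so Lemma~\ref{lem:bdy-basic}(ii), applied to $y$ and to $b^{-1}$ and combined with $d=d^{-1}$, gives $l(bw)=l(b)+l(w)$ and $l(wy)=l(w)+l(y)$ for every $w\le d$; in particular $l(bd)=l(b)+l(d)$ and $l(dy)=l(d)+l(y)$, which forces $\mc R(b)\cap\mc L(d)=\emptyset=\mc L(y)\cap\mc R(d)$. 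Writing $I$ for the support of $d$ (so $I=J$ if $d=w_J$ and $I=\{s,t\}$ if $d=d_I$), I would upgrade this to $\mc R(b)\cap I=\emptyset=\mc L(y)\cap I$: when $d=w_J$ one has $\mc L(d)=\mc R(d)=I$, and when $d=d_I$ the remaining generator is excluded because, exactly as in the proof of Lemma~\ref{lem:bdy-basic}(ii), membership of $y$ and of $b^{-1}$ in $U_{d_I}$ already forces $sy>y$ and $sb^{-1}>b^{-1}$.

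The main case is $l(d)\ge 3$. Note first that no element of $D$ has length $2$: for $|J|=2$ one has $l(w_J)=m_{st}\ge 3$ (complete graph), for $d_I$ one has $m_{st}$ even hence $\ge4$, so $l(d_I)=m_{st}-1\ge3$, and otherwise $l(w_J)\le1$. Thus $l(d)\ge3$ covers precisely $d=w_J$ with $|J|=2$ and $d=d_I$. In this situation $d\in W_I$ with $l(d)\ge3$ and $\mc R(b)\cup\mc L(y)\subset S\setminus I$, so Lemma~\ref{lem:shiyang}(i) applies verbatim and yields $l(bdy)=l(b)+l(d)+l(y)$.

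It remains to treat $l(d)=1$, i.e.\ $d=s$ with $\af'(s)=L(s)=N$ (the case $d=e$ does not arise, since $N>0$). Here I would argue by contradiction: assume $l(bsy)<l(b)+1+l(y)$. Since $bs>b$ and $sy>y$, the words $\underline b\,s$ and $s\,\underline y$ are reduced, so the deletion condition applied to the non-reduced word $\underline b\,s\,\underline y$ forces the two removable letters to lie one in $\underline b$ and one in $\underline y$; hence $bsy=\beta s\eta$ with $\beta<b$, $\eta<y$, $l(\beta)=l(b)-1$, $l(\eta)=l(y)-1$, and $\beta s$, $s\eta$ again given by reduced subwords. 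I would then localize this collapse: the offending subword of $\underline b\,s\,\underline y$ lies in a finite standard parabolic subgroup, which for a complete graph must be a dihedral $W_{I'}$ with $s\in I'$ (parabolics of rank $\ge3$ being infinite here), in the spirit of \cite[Lem.~2.3]{xi2012afunction}; analyzing the overlap inside $W_{I'}$ — and using that the $s$-end of the relevant factor cannot be killed by a descent of the $W_{I'}$-part of $y$ — forces that factor of $bs$ (or of $sy$) to equal $w_{I'}$ or $d_{I'}$. Since $\af'(w_{I'})=L(w_{I'})>L(s)=N$ and $\af'(d_{I'})>L(s)=N$, this exhibits $bs\in\Omega_{>N}$ (or $sy\in\Omega_{>N}$), contradicting $bs,\,sy\in\Omega_N$ (Lemma~\ref{lem:bdy-basic}(i)). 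Therefore $l(bsy)=l(b)+1+l(y)$.

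The main obstacle is exactly this last case $l(d)=1$: unlike for $l(d)\ge3$ there is no ready-made statement such as Lemma~\ref{lem:shiyang}(i), and one has to turn a hypothetical length collapse into the appearance of a strictly larger element of $D$, which relies on the explicit combinatorics of reduced expressions for Coxeter groups with complete graph (the same circle of ideas used in Lemma~\ref{lem:shiyang} and \cite[Lem.~2.3]{xi2012afunction}). Everything else is routine bookkeeping with Lemmas~\ref{lem:bdy-basic} and \ref{lem:shiyang}.
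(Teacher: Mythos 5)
Your reduction to reduced-word concatenation, the preliminary facts ($d=d^{-1}$, $\mc{R}(b)\cap I=\emptyset=\mc{L}(y)\cap I$ via Lemma \ref{lem:bdy-basic}(ii)), and the case $l(d)\geq 3$ via Lemma \ref{lem:shiyang}(i) all match the paper's proof. The problem is the case $l(d)=1$, $d=s$ with $L(s)=N$, which is the real content of the lemma, and there your argument has a genuine gap: the step ``analyzing the overlap inside $W_{I'}$ \dots forces that factor of $bs$ (or of $sy$) to equal $w_{I'}$ or $d_{I'}$'' is not forced by anything you use. After localization one only gets that the tail of $b$, the middle $s$, and the head of $y$ form an alternating $\{s,u\}$-word of length $m_{su}$, split as $i+1+j$ with $i,j\geq 1$; when both $i$ and $j$ are of intermediate size, neither the length-$(i+1)$ factor of $bs$ nor the length-$(j+1)$ factor of $sy$ is long enough to be $w_{I'}$ or $d_{I'}$, so no contradiction with $bs,sy\in\Omega_N$ appears. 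Concretely, take $m_{su}=6$, $L(u)=1$, $L(s)=2=N$, $d=s$, $b=y=usu$: then $bs=usus$ and $sy=susu$ are reduced, lie in $\Omega_N$, and contain no element of $D_{>N}$ (namely $d_{\{s,u\}}=susus$ or $w_{\{s,u\}}$) as a factor, yet $l(bsy)=5<l(b)+1+l(y)=7$. So an argument using only $b\in U_d^{-1}$, $y\in U_d$ and $bs,sy\in\Omega_N$ cannot succeed — indeed the statement itself is false for general $b\in U_d^{-1}$, as this example shows (of course $b=usu\notin B_d$ there).

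What is missing is precisely the extra strength of $b\in B_d$: taking $w=b$, $v=d$ in the definition \eqref{eq:bd} gives $b\in\Omega_{<N}$, so no reduced expression of $b$ may contain the letter $s$ (since $\af'(s)=L(s)=N$) nor any letter of weight $\geq N$. In the localized picture this forces the $b$-side overlap to have length exactly $i=1$ and the other generator $u$ to satisfy $L(u)<L(s)$; consequently the \emph{entire} element $w(s,m_{su}-1)=d_{\{s,u\}}\in D_{>N}$ is a prefix of $sy$ (with lengths adding), so $sy\in\Omega_{>N}$, contradicting $sy\in\Omega_N$ from Lemma \ref{lem:bdy-basic}(i). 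This is exactly how the paper argues; your write-up never invokes $b\in\Omega_{<N}$, and without it the case $l(d)=1$ does not close.
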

\begin{proof}
	Note that $ l(d)\geq 3 $ or $ l(d)=1 $. If $ l(d)\geq 3 $, then it follows from Lemma \ref{lem:shiyang}(i). In the following, we take $ d =r\in S$. 
	
	We have $ r,br,ry\in\Omega_{ N} $ and $ b\in\Omega_{<N} $.  Then $ \mc{R}(br)=\{r\} =\mc{L}(ry)$; otherwise, $ br $ or $ ry \in \Omega_{>N}$.
	In other words, any reduced expression of $ br $ (resp. $ ry $) ends (resp. begins) with $ r $.
	
	Assume that $ 	bry<b\bd r\bd y $. Then we can find reduced expressions $ b=s_ps_{p-1}\cdots s_1 $, $ y=t_1t_2\cdots t_q $ and $  i,j\geq 1 $ such that \[
	s_is_{i-1}\cdots s_1r t_1t_2\cdots t_j=w_{rs}, \quad i+j+1=m_{rs}
	\]
	for some  $ s\in S$ with $3\leq  m_{rs}<\infty $.
	Since $ b\in\Omega_{<N} $, we have $ L(s)<L(r) $ and  $ i=1 $.   These imply that $$  ry=w(r,m_{rs}-1)\bd z  $$ for some $ z\in W $. 
 Since $ w(r,m_{rs}-1)\in D_{>N} $, we have $ry\in \Omega_{>N} $, a contradiction. Hence $bry=b\bd r\bd y$.
\end{proof}

\begin{prop}\label{prop:leq}
	If  $ d\in D_N $, $ x\in U_d^{-1} $, $ y\in U_d $, $ w\leq d $, then
\begin{equation}\label{eq:leq}
	\deg(\nt_{xw}\nt_y) \leq-\deg p_{w,d}.
\end{equation}
If moreover $ b\in B_d $, $ w<d $, then
	\begin{equation}\label{eq:lin}
\deg(\nt_{bw}\nt_y) <-\deg p_{w,d}.
\end{equation}
\end{prop}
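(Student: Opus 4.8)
The plan is to follow the structure of the proof of Proposition~\ref{prop:key1}, reducing everything to the parabolic subgroup $W_I$, where $I$ is the support of $d$ (so $|I|\le 2$, and $w\le d$ forces $w\in W_I$). First I would make $-\deg p_{w,d}$ explicit: writing $L^\sharp=L$ when $d=w_J$ and $L^\sharp=L'$ when $d=d_I$, one has $-\deg p_{w,d}=\af'(d)-L^\sharp(w)=N-L^\sharp(w)$ for all $w\le d$ --- for $d=w_J$ this is the classical identity $p_{w,w_J}=q^{L(w)-L(w_J)}$, and for $d=d_I$ it is Lemma~\ref{lem:degd}. Since the relevant Hecke-algebra computations take place inside $(\mc{H}_I)_{\leq N}$, Lemma~\ref{lem:nfxyz} transfers them to $\hn$ (in particular $\np_{w,d}=p_{w,d}$), and from the proof of Lemma~\ref{lem:bdy-basic}(ii) one extracts that $x\in U_d^{-1}$, $y\in U_d$ satisfy $\mc{R}(x)\cap I=\emptyset=\mc{L}(y)\cap I$, so that $\nt_{xw}=\nt_x\nt_w$ for every $w\le d$.

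Next I would dispose of the easy cases. The case $w=e$ of \eqref{eq:leq} is exactly Proposition~\ref{prop:key1} (note $-\deg p_{e,d}=N$); and the case $w=e$ of \eqref{eq:lin} follows since for $b\in B_d$ the factorization $bd=b\cdot d$ (with $l(bd)=l(b)+l(d)$ by Lemma~\ref{lem:length} and $v=d\ne e$) and the definition of $B_d$ force $b\in\Omega_{<N}$, so that the ``equality only if'' clause of Proposition~\ref{prop:key1} gives $\deg\nt_b\nt_y<N$. For $w\ne e$ I would split according to whether $l(xwy)=l(x)+l(w)+l(y)$. If this equality holds --- which by Lemma~\ref{lem:shiyang}(i) is automatic whenever $l(w)\ge 3$ --- then $\nt_{xw}\nt_y=\nt_{xwy}$ has degree $0$, and since $\deg p_{w,d}\le 0$ with strict inequality precisely when $w<d$, both \eqref{eq:leq} and \eqref{eq:lin} follow at once.

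In the remaining case $l(w)\le 2$ and $l(xwy)<l(x)+l(w)+l(y)$, I would use the combinatorics of Section~\ref{sec:complete}. For $l(w)=2$, writing $w=s_1s_2$ with $\{s_1,s_2\}=I$, Lemma~\ref{lem:shiyang}(ii)(iii) produces $r\in S\setminus I$ with $\mc{R}(xs_1)=\{s_1,r\}$, $\mc{L}(s_2y)=\{s_2,r\}$ and $\deg\nt_{xw}\nt_y\le\deg T_{xw}T_y=L(r)$; for $l(w)=1$ an analogous but simpler analysis of $T_{xw}T_y$ (via Lemma~\ref{lem:shiyang} and Lemma~\ref{lem:plusdeg}) either gives such a bound or shows this case cannot occur. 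It then remains to verify the arithmetic inequality $L(r)+L^\sharp(w)\le N$ (strict for \eqref{eq:lin}), which I would do exactly as in cases (ii)(2)--(5) of the proof of Proposition~\ref{prop:key1}: the descent conditions exhibit distinguished elements --- built from $w_{\{s_1,r\}}$, $w_{\{s_2,r\}}$ and from $d$ itself --- as length-additive factors of $x$, $y$ and $d$, hence lying in $D_{\leq N}$; the resulting ``half-sum'' inequalities of type \eqref{eq:half} add up to $L(s_1)+L(s_2)+L(r)\le N$, which dominates $L(r)+L^\sharp(w)$ in every configuration; and when $b\in B_d$, the factor arising from $b\in\Omega_{<N}$ lies in $D_{<N}$, making one of these inequalities strict, which yields \eqref{eq:lin}.

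The step I expect to be the main obstacle is this last verification: one must track precisely which distinguished elements occur as factors of $x$, $y$ and $d$, deal with the degenerate configuration $m_{s_1s_2}=m_{s_1r}=m_{s_2r}=3$ flagged in Lemma~\ref{lem:shiyang}(iii), and handle the bookkeeping when some $L$-values coincide (so the relevant $D$-elements are longest elements $w_J$ rather than of type $d_I$) --- in each case confirming the numerical inequality. This is the same flavour of dihedral/rank-$3$ arithmetic already carried out in Section~\ref{sec:rank2} and in the proof of Proposition~\ref{prop:key1}.
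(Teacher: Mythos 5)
Your overall strategy matches the paper's for most of the cases: the explicit formula $-\deg p_{w,d}=N-L^\sharp(w)$, the length-additive case, the $w=e$ case via Proposition \ref{prop:key1} together with $b\in\Omega_{<N}$, and the $l(w)=2$ cancellation case via Lemma \ref{lem:shiyang}(ii)(iii) plus arithmetic on $D$-factors of $xd$, $dy$ (and of $bs$ for strictness) are all essentially what the paper does. But there is a genuine gap in your treatment of $l(w)=1$ with $l(xwy)<l(x)+1+l(y)$. This case does occur (Lemma \ref{lem:shiyang} says nothing about elements of $W_I$ of length $1$, and one easily builds $x\in U_d^{-1}$, $y\in U_d$ with $\mc{R}(xr)\cap\mc{L}(y)\neq\emptyset$ for a letter $r\leq d$), so the fallback ``shows this case cannot occur'' is not available; and your proposed ``analogous but simpler analysis of $T_{xw}T_y$ via Lemma \ref{lem:shiyang} and Lemma \ref{lem:plusdeg}'' does not produce the required bound, because what is needed is $\deg \nt_{xr}\nt_y\leq N-L(r)$ (when $rd<d$), a bound strictly better than the generic bound $N$, and a direct $T$-basis cancellation analysis has no control over iterated cancellations without proving a new length-one analogue of Lemma \ref{lem:shiyang}(iii). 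The paper's proof at this point uses a different and essential trick: from $T_{xr}T_{ry}=\xi_rT_{xr}T_y+T_xT_y$ one gets $\xi_r\nt_{xr}\nt_y=\nt_{xr}\nt_{ry}-\nt_x\nt_y$, both terms on the right are $\leq N$ in degree by Proposition \ref{prop:key1}, and dividing by $\xi_r$ yields $N-L(r)$; the remaining subcase $rd>d$ (so $d=d_I$ and $r$ is the lighter letter) is handled separately because there $-\deg p_{r,d}=N+L(r)$, so the trivial bound $N$ already suffices, with strictness for $b\in B_d$ coming from $b,br\in\Omega_{<N}$. None of this is in your sketch, and it is the key step.

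A secondary, repairable imprecision: in the $l(w)=2$ case you claim the half-sum inequalities ``add up to $L(s_1)+L(s_2)+L(r)\leq N$ in every configuration''. When $d=d_I$ the inequality $\tfrac12 L(s_1)+L(s_2)\leq\tfrac12 N$ coming from ``$d\in D_N$'' is not available (there $N=L'(d_I)<L(w_I)$, and $w_I\in D_{>N}$ cannot be a factor of an element of $\Omega_N$); the paper instead uses only $L(w_{s_2,r})\leq N$, obtained from the heavy letter $s_2$ being a descent of $d$ on both sides, and deduces directly $L(r)<N-(L(s_2)-L(s_1))=-\deg p_{w,d_I}$. Your target inequality is still true in that configuration, but not by the route you describe, so this part of the bookkeeping needs to be redone along the paper's lines.
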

\begin{proof}
	\textbf{Case (i)}: $ l(w)\geq3 $. We have $ T_{xw}T_y=T_{xwy} $ by Lemma \ref{lem:shiyang} (i), and hence $ \deg(\nt_{xw}\nt_y)\leq 0 \leq-\deg p_{w,d}$, i.e. \eqref{eq:leq} holds. If moreover $ w<d $, then $ \deg p_{w,d}<0 $, and \eqref{eq:lin} holds.
	
	\textbf{Case (ii)}: $l(w)=2  $. Let $ w=st $ with $ I=\{s,t\}\subset S $. By the proof of case (i),  we only need to consider the case of  $ xwy<x\bd w\bd y$. In this situation, we have $ \mc{R}(xs)=\{s,r\} $, $ \mc{L}(ty)=\{t,r\} $ for some $ r\in S\setminus I $, and \(
\deg (T_{xw}T_y)=L(r)
	\). 
%Thus we only need to prove that
%	\begin{equation}\label{eq:ii2}
%	L(r)\leq -\deg p_{w,d}.
%	\end{equation}
	
	Now the proof is divided into the following two cases.
	
	\begin{itemize}
\item [(1):] $ d=w_I $. Since $ d=w_I\in D_N $, we have  \begin{equation}\label{eq:str1}
	 \frac12L(s)+L(t)\leq \frac12N  
	\end{equation} 
Since $ \mc{R}(xs)=\{s,r\} $ and $ xw_I\in\Omega_{ N} $, we have $ L(w_{s,r}) \leq N$, and in particular, 
	\begin{equation}\label{eq:str2}
		\frac12L(s)+L(r)\leq \frac12N .
	\end{equation} If moreover $ x\in B_d $, then $ xs\in \Omega_{<N} $, and hence \begin{equation}\label{eq:str3}
		\frac12L(s)+L(r)<\frac12 N .
	\end{equation} 
By \eqref{eq:str1} and \eqref{eq:str2}, we have $ L(s)+L(t)+L(r)\leq N $, or equivalently
\begin{equation}\label{eq:lr}
	L(r)\leq N-L(s)-L(t)=L(d)-L(w)=-\deg p_{w,d}.
\end{equation}
If moreover $ b\in B_d $, then by \eqref{eq:str3} the above inequality is strict. The proposition follows in this case.

%We have $ \deg p_{w,d}=L(s)+L(t)-L(d) $, and   \begin{equation}\label{eq:str1}
%	 L(s)+2L(t)\leq N  
%	\end{equation} since $ d=w_I\in D_N $. Since $ \mc{R}(xs)=\{s,r\} $ and $ xw_I\in\Omega_{ N} $, we have $ L(w_{s,r}) \leq N$, in particular, 
%	\begin{equation}\label{eq:str2}
%		L(s)+2L(r)\leq N .
%	\end{equation} By \eqref{eq:str1} and \eqref{eq:str2}, we have $ 2L(s)+2L(t)+2L(r)\leq 2N $, and hence
%\begin{equation}\label{eq:lr}
%	L(r)\leq N-L(s)-L(t)=-\deg p_{w,d}.
%\end{equation}

	\item [(2):] $ d=d_I \in W_I$, $ I=\{s_1,s_2\}=\{s,t\}$ with  $ L(s_1)<L(s_2) $. Since $ s_2d<d $, $ ds_2<d $, $ s_2\in\{s,t\} $, $ \mc{R}(xs)=\{s,r\} $, $ \mc{L}(ty)=\{t,r\} $ and $ xd,dy\in\Omega_{ N} $, we have $ L(w_{s_2,r})\leq N $. Hence $ L(r)+L(s_2)-L(s_1)<N $, which is equivalent to\[
L(r)<N-(L(s_2)-L(s_1))=L'(d)-L'(w)=-\deg p_{w,d}.
\]
\end{itemize}
	
	\textbf{Case (iii)}: $ l(w)=1 $, i.e. $ w=r $ for some $ r\in S $. By claim \eqref{eq:strict}, we have $ \deg(\nt_{xr}\nt_{ry})\leq N $ and $ \deg(\nt_{x}\nt_{ry})\leq N $. By \eqref{eq:tr}, we have $$  	\deg(\xi_r\nt_{xr}\nt_y) \leq N  .$$ This implies $$  	\deg(\nt_{xr}\nt_y) \leq N -L(r)\leq -\deg p_{r,d} .$$

Assume  $ x\in B_d $, $ w<d $.  If $ rd<d $, then $ x,xr\in \Omega_{<N} $. By Proposition \ref{prop:key1},  all the inequalities in the last paragraph are strict. If $ rd>d $, then $ d=d_I $ for some $ I=\{r,r'\}\subset S $ with $ L(r)<L(r') $, in which case we have $$  -\deg p_{r,d} =L'(d_I)-L'(r)=N+L(r)>\deg(\nt_{xr}\nt_y) .$$
	
	\textbf{Case (iv)}: $ l(w)=0 $. By Proposition \ref{prop:key1}, we have $ \deg (\nt_{x}\nt_{y}) \leq N$; if moreover $ x\in B_d $,  the inequality is strict since  $ x\in\Omega_{<N} $.
\end{proof}

\subsection{One-sided factorization}\label{subsec:onesided}
\begin{prop}\label{prop:predec}
	Let $ d\in D_N $, $ y\in U_d $. Then\[
	\nc_{dy}=\nc_d\nF_y
	\]
for some $ \nF_y=\sum_{y'\in U_d} g_{y',y} \nt_{y'}\in \hn $ such that \begin{itemize}
	\item $ g_{y',y}\neq 0 $ only if $ y'\leq y $, 
	\item $ g_{y,y}=1 $, 
	\item and $ \deg g_{y',y}<0 $ for $ y'<y $.
\end{itemize}

Similarly, for $ x\in U_{d}^{-1} $, we have 
\[
	\nc_{xd}=\nE_x\nc_d,
\]
where $ \nE_x =(\nF_{x^{-1}})^\flat$ and $ {\cdot}^\flat $ is an $ \mc{A} $-linear \textit{anti}-involution of $ \hn $ such that $ (\nt_z)^\flat= \nt_{z^{-1}}$. 
\end{prop}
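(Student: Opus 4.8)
The plan is to deduce the $\nE_x$ statement from the $\nF_y$ statement and to prove the latter by induction on $l(y)$. For the reduction: every $d\in D$ is an involution ($w_J^{-1}=w_J$, while $d_I$ has odd length with a palindromic reduced word, so $d_I^{-1}=d_I$), hence $d=d^{-1}$; combined with $p_{w,d}=p_{w^{-1},d^{-1}}$ — a relation which passes to the $\np_{w,d}$, since these are obtained from the $p_{w,d}$ through \eqref{eq:minus} — this gives $\nc_d^{\flat}=\nc_{d^{-1}}=\nc_d$. Now if $x\in U_d^{-1}$ then $x^{-1}\in U_d$, so the first statement supplies $\nc_{dx^{-1}}=\nc_d\nF_{x^{-1}}$; applying the anti-algebra map $\flat$, and using $\nc_w^{\flat}=\nc_{w^{-1}}$ together with $(dx^{-1})^{-1}=xd$, yields $\nc_{xd}=\nF_{x^{-1}}^{\flat}\nc_d^{\flat}=\nE_x\nc_d$. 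So it remains to build $\nF_y$ with $\nc_{dy}=\nc_d\nF_y$.

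For this I would induct on $l(y)$, with $\nF_e=\nt_e$ as base case. Assume $l(y)\ge1$, choose $s\in\mc{R}(y)$, and put $y_1=ys$. First, $y_1\in U_d$: the identity $l(dy)=l(d)+l(y)$ forces $l(dy_1)=l(d)+l(y_1)$, so $dy_1\in\Omega_{\ge N}$; and if $dy_1$ lay in $\Omega_{>N}=W_{>N}$, a length-additive factorization $dy_1=x'd'y'$ with $d'\in D_{>N}$ would give the length-additive factorization $dy=x'd'(y's)$, i.e. $dy\in\Omega_{>N}$, contradicting $dy\in\Omega_N$; hence $dy_1\in\Omega_N$, i.e. $y_1\in U_d$. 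By the induction hypothesis $\nc_{dy_1}=\nc_d\nF_{y_1}$ with $\nF_{y_1}$ of the stated triangular shape. Since $dy=(dy_1)s$ with $(dy_1)s>dy_1$, the Kazhdan--Lusztig product rule gives
\[
C_{dy_1}C_s=C_{dy}+\sum_{\substack{z<dy_1\\ zs<z}}M^s_{z,dy_1}\,C_z
\]
in $\mc{H}$, hence, projecting to $\hn$,
\[
\nc_{dy}=\nc_d\bigl(\nF_{y_1}\nc_s\bigr)-\sum_{\substack{z<dy_1\\ zs<z}}M^s_{z,dy_1}\,\nc_z .
\]
In $\hn$ one has $\nc_z=0$ whenever $\af(z)>N$; for a surviving $z$ one has $z\prec_{R} dy_1$ and $zs<z$, and the heart of the matter is to show that such a $z$ then lies in $\Omega_N$ and in fact equals $dy''$ for some $y''\in U_d$ with $l(y'')<l(y)$. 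Granting this, $\nc_z=\nc_{dy''}=\nc_d\nF_{y''}$ by induction, so the entire right-hand side is $\nc_d$ times an element of $\bigoplus_{y''\in U_d,\,l(y'')<l(y)}\mc{A}\,\nt_{y''}$, giving $\nc_{dy}=\nc_d\nF_y$ with $\nF_y=\nF_{y_1}\nc_s-\sum_{z=dy''}M^s_{z,dy_1}\nF_{y''}$. The required triangularity of $\nF_y$ is then read off: the leading term of $\nF_{y_1}\nc_s$ is $\nt_{y_1}\nt_s=\nt_y$ (as $y_1s=y>y_1$), the term $q^{-L(s)}\nF_{y_1}$ coming from $\nc_s=\nt_s+q^{-L(s)}\nt_e$ has degree $-L(s)<0$, and the remaining contributions are supported on $\{y'\in U_d:y'<y\}$ with negative-degree coefficients — the degree control coming from Proposition \ref{prop:leq} applied with $b=e\in B_d$ (which bounds $\deg(\nc_d\nt_w)$ against $-\deg p_{w,d}$), from $\deg p_{e,s}=-L(s)$, and from $\deg M^s_{z,dy_1}\le L(s)$.

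I expect the one genuine obstacle to be that middle step: determining exactly which $\nc_z$ survive in $\hn$ and matching them with $\{\,dy'':y''\in U_d,\ y''<y\,\}$ — equivalently, that the right $\prec_{R}$-ideal below $dy_1$ which meets $\wn$ and admits $s$ as a right descent is governed by $dU_d$. This forces the correction sum to play a double role: to annihilate the contributions coming from $\Omega_{>N}$ and to restore the support of $\nF_y$ to $U_d$ (any term $\nt_{y's}$ with $y's\notin U_d$ produced inside $\nF_{y_1}\nc_s$ must be among those cancelled). I would obtain this from the structural description of $\Omega_N$ in Lemma \ref{lem:bdy-basic} together with the sharp degree estimates of Propositions \ref{prop:key1} and \ref{prop:leq}, via a length-and-Bruhat analysis modelled on the proof of Lemma \ref{lem:bdy-basic}. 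Uniqueness of $\nF_y$, and hence its independence of the chosen $s$, is then automatic, since $\{\nc_d\nt_{y'}:y'\in U_d\}$ is triangular with pairwise distinct leading terms $\nt_{dy'}$, so left multiplication by $\nc_d$ is injective on $\bigoplus_{y'\in U_d}\mc{A}\,\nt_{y'}$.
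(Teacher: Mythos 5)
Your reduction of the $\nE_x$ half to the $\nF_y$ half via $\flat$ is fine (it is all the paper does for that part), and your closing uniqueness remark is exactly Lemma \ref{lem:independent1}. The gap is the step you yourself flag as ``the one genuine obstacle'': after projecting $C_{dy_1}C_s=C_{dy}+\sum_z M^s_{z,dy_1}C_z$ into $\hn$, you must show that every $z\in\wn$ with $M^s_{z,dy_1}\neq 0$ and $zs<z$ is of the form $dy''$ with $y''\in U_d$. Nothing you cite delivers this. Lemma \ref{lem:bdy-basic} is purely combinatorial information about $\Omega_N$, and Propositions \ref{prop:key1} and \ref{prop:leq} are degree bounds on products in the $\nt$-basis; none of them controls the coefficients $M^s_{z,dy_1}$, i.e.\ which $C_z$ actually occur in $C_{dy_1}C_s$. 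At this point of the induction you do not yet know that $W_{\geq N}$ is $\prec_{ {LR}}$-closed, that $dU_d$ is a right cell, or that $\nc_d\hn=\bigoplus_{y\in U_d}\mc{A}\,\nc_{dy}$ --- these are Theorem \ref{thm:cor}(iv), Corollary \ref{cor:rightcell} and Corollary \ref{cor:right}, all deduced \emph{from} Proposition \ref{prop:predec}, not before it. So the implication ``$z\prec_{ {R}}dy_1$, $z\in\wn$ $\Rightarrow z\in dU_d$'' is, inside your induction, essentially equivalent to the statement being proved, and your sketch offers no independent argument for it. The same difficulty already sits inside $\nF_{y_1}\nc_s$: it produces terms $\nt_{y's}$ with $y'\in U_d$ but possibly $dy's\in\Omega_{>N}$ (or $y's$ not even length-additive against all $v\leq d$), and to re-expand $\nc_d\nt_{y's}$ in $\bigoplus_{y''\in U_d}\mc{A}_{<0}\,\nc_d\nt_{y''}$ you need precisely \eqref{eq:dw}, which is not among your tools.

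This is exactly where the paper's proof takes a different, non-circular route designed to avoid ever identifying which KL terms survive in a product: it works with $\nc_d\nt_w$ for all $w$ in the larger set $Y_d=\{w\mid l(vw)=l(v)+l(w)\text{ for all }v\leq d\}$, constructs $\nF_w$ for every $w\in Y_d$ by the standard bar-invariance-plus-triangularity argument (Lemma \ref{lem:predec1}, using Lemma \ref{lem:independent1} and estimate \eqref{eq:leq}), and then observes that when $dw\in\Omega_{>N}=\wnn$ the element $\nc_d\nF_w$ is both bar invariant and in $(\hn)_{<0}$ (by \eqref{eq:degree}), hence zero by \eqref{claim:uniqueness}; this vanishing gives \eqref{eq:dw}, which is what plays the role of your missing matching/cancellation step. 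To make your induction on $l(y)$ work you would first have to prove an analogue of Lemma \ref{lem:predec1} and \eqref{eq:dw} for $w\in Y_d\setminus U_d$ --- at which point you have reconstructed the paper's argument rather than found an alternative to it.
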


This proposition will follow from Lemma \ref{lem:predec1}.

%In the following, we  assume that  $ d=w(t,2m-1) $ with $ a=L(s)<L(t) =b$, $ m_{s,t} =2m$. The case where  $ d=w_J $ for some $ J\subset S $ have the same result, which can be proved in a similar way.

\begin{lem}\label{lem:independent1}
	The elements $ \nc_d\nt_y,  y\in U_d $ of $ \hn $ are $ \mc{A} $-linearly independent.
\end{lem}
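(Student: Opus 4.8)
The plan is to exploit the unitriangularity of $\nc_d$ in the basis $\{\nt_w\}$ together with the length-additivity from Lemma \ref{lem:bdy-basic}(ii), and then to read off a single coefficient. First I would record that, for $y\in U_d$,
\[
\nc_d\nt_y=\sum_{w\le d}\np_{w,d}\,\nt_{wy},
\]
since $\nc_d=\sum_{w\le d}\np_{w,d}\nt_w$ with $\np_{d,d}=1$ and $\deg\np_{w,d}<0$ for $w<d$, and $\nt_w\nt_y=\nt_{wy}$ because $l(wy)=l(w)+l(y)$ by Lemma \ref{lem:bdy-basic}(ii). Every $wy$ occurring here satisfies $wy\le dy$ in the Bruhat order, with equality only when $w=d$; moreover $dy\in\Omega_{N}\subset\wn$ by \eqref{eq:empty}, so $\nt_{dy}$ is a genuine basis vector of $\hn$.

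Next I would expand the right-hand side in the basis $\{\nt_v\mid v\in\wn\}$: terms $\nt_{wy}$ with $wy\in\wn$ are already basis vectors, while those with $wy\in\wnn$ are rewritten recursively by \eqref{eq:minus}, which only produces $\nt_v$ with $v<wy\le dy$. The claim is that the coefficient of $\nt_{dy}$ in $\nc_d\nt_y$ is exactly $1$, and that $\nt_{dy}$ does not occur in $\nc_d\nt_{y'}$ whenever $y'\in U_d$, $y'\ne y$ and $l(y')\le l(y)$. Indeed, a basis term $\nt_{wy'}$ with $wy'\in\wn$ equals $\nt_{dy}$ only if $wy'=dy$, which forces $l(w)+l(y')=l(d)+l(y)$, hence $l(y')\ge l(y)$ and therefore $w=d$, $y'=y$; and a term arising from rewriting $\nt_{wy'}$ with $wy'\in\wnn$ is some $\nt_v$ with $v<wy'\le dy'$, where $v=dy$ would force either $w<d$ together with $dy<dy'$ (impossible, since $l(y)\ge l(y')$ gives $l(dy)\ge l(dy')$) or $w=d$ together with $dy'\in\wnn$ (impossible, since $dy'\in\Omega_{N}\subset\wn$).

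Finally, given a relation $\sum_{y\in U_d}a_y\,\nc_d\nt_y=0$ with $a_y\in\mc{A}$ not all zero, I would pick $y_0\in U_d$ with $a_{y_0}\ne0$ of maximal length among such $y$. By the previous step, the only $y$ with $a_y\ne0$ contributing $\nt_{dy_0}$ is $y_0$ itself, with coefficient $a_{y_0}$, so the coefficient of $\nt_{dy_0}$ in $\sum_y a_y\,\nc_d\nt_y$ is $a_{y_0}\ne 0$, contradicting the vanishing; hence all $a_y=0$. The only delicate point is the bookkeeping in the second paragraph: one must check that rewriting the ``out-of-range'' terms $\nt_{wy}$ with $wy\in\wnn$ through \eqref{eq:minus} can never reintroduce $\nt_{dy_0}$, and this is precisely where $dy\in\Omega_{N}\subset\wn$ and the maximality of $l(y_0)$ are used.
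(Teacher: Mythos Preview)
Your argument is correct and follows essentially the same route as the paper's proof: pick a top $y_0$, observe that in the expansion $\nc_d\nt_y=\sum_{w\le d}\np_{w,d}\nt_{wy}$ all terms lie weakly below $\nt_{dy}$ with leading coefficient $1$, then read off the coefficient of $\nt_{dy_0}$ after rewriting the $\wnn$-terms via \eqref{eq:minus}. The only cosmetic difference is that the paper selects $y_0$ maximal in the Bruhat order whereas you use maximal length; your choice is just as good (indeed cleaner, since there may be several Bruhat-maximal $y$), and you spell out more carefully why the recursive rewriting of $\nt_{wy'}$ with $wy'\in\wnn$ cannot reintroduce $\nt_{dy_0}$.
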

\begin{proof}
	Assume $$  \sum_{y\in U_d } a_y \nc_d\nt_y=0 $$ with $ y_0 $  the maximal element in $ U_d $ such that $ a_{y_0}\neq0 $. Then we have \[
	a_{y_0}\nt_{dy_0}+\sum_{z<dy_0}b_z\nt_z=0
	\] for some $ b_z \in \mc{A}$. Since  $ \nt_z\in \bigoplus_{\substack{z'\leq z\\z'\in\wn}}\mc{A}\nt_{z'} $ (using \eqref{eq:minus}), we have
	\[
	a_{y_0}\nt_{dy_0}+\sum_{\substack{z'<dy_0\\z'\in\wn}}b_{z}'\nt_{z'}=0
	\]  for some $ b'_z \in \mc{A}$.
	By Lemma \ref{lem:independent}, we have $ a_{y_0} =0$, a contradiction. This proves the lemma.
\end{proof}

Let $$  Y_d=\{ w\in W\mid vw=v\bd w\text{ for any }v\leq d\} . $$  For any $ w\in Y_d $, we have \begin{equation}\label{eq:mod1}
\nc_d\nt_w\equiv \nt_{dw}\mod{(\hn)_{<0}}  .
\end{equation} Note that $ U_d\subset Y_d $, see \eqref{eq:ddd}.
% It is easy to see that $ z\leq dw $ for $ w\in U_d $ if and only if $ z=vy $ for some $ v\leq d $ and $ y\leq w $.

%Let $ w\in Y_d $ such that $ dw\in\Omega_{>N} =\wnn$, then \[
%\nt_{dw}=\sum_{\substack{z<dw\\z\in\wn}} f_z \nt_z \text{ for some } f_z\in\mc{A}_{<0}.
%\]
%For a $ z $ as above, we write  $ z=vy $  with $ v\in W_I $, $ y\in Y_d $. Note that $ v\neq w_I $; otherwise $ z\in\Omega_{>N}=\wnn $, which is a contradiction. In particular, we always have $ L'(v)\leq L'(d) $. Then $ \nc_d\nt_z =\pm q^{L'(z)} C_d T_y$.  Note that $ y\leq z<dw $ implies that $ y\leq w $ since $ y $ is minimal in $ W_Iy $.
%Thus
%\[
%\nc_d\nt_w=\sum_{y\leq w,y\in Y_d} f_y\nc_d\nt_y\text{ for some }f_y\in \mc{A}_{<0}.
%\]
%%此处并不知道如何处理y=w的那一项
%Then using induction on length, we  have 
%\[
%\nc_d\nt_w=\sum_{y< w,y\in U_d} k_y\nc_d\nt_y\text{ for some }k_y\in \mc{A}_{<0}.
%\]
%%此处的细节比较复杂。牵涉到级数处理。
%The following lemma will give another proof for the above fact.

\begin{lem}\label{lem:predec1}
	Let $ w\in Y_d $. There exists a unique element $ \nF_w\in \hn $ such that  $ \nc_d\nF_w $ is bar invariant, and   \begin{equation}\label{key}
	\nF_w =\nt_w+\sum_{\substack{y<w\\y\in U_d}}g_{y,w} \nt_y.
\end{equation}
for some $ g_{y,w}\in\mc{A}_{<0} $.
\end{lem}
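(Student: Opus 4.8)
The plan is to run the standard Kazhdan--Lusztig existence/uniqueness scheme, but carried out inside the $\mc A$-span of $\{\nc_d\nt_y\mid y\in U_d\}$ rather than in all of $\hn$. The two facts that make this work are that $\nc_d$ is bar invariant (it is the image of $C_d$, and the bar involution on $\hn$ is induced from that on $\mc H$, so $\overline{\nc_dh}=\nc_d\,\overline h$ for every $h$) and that the elements $\nc_d\nt_y$, $y\in U_d$, are $\mc A$-linearly independent by Lemma \ref{lem:independent1}. I would first record two auxiliary facts. First, $Y_d$ is closed downward under the Bruhat order: for each $d\in D_N$ it is exactly the set of minimal-length representatives of the cosets $W_Jw$ (resp.\ $W_Iw$) of the relevant rank $\le 2$ parabolic, and such sets are Bruhat-order ideals. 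Second, for $w\in Y_d$ one has $w\notin U_d$ if and only if $dw\in\wnn$, and then $\deg\nt_{dw}<0$ by \eqref{eq:degree}, so by \eqref{eq:mod1} $\nc_d\nt_w\in(\hn)_{<0}$.

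Uniqueness: if $\nF_w$ and $\nF_w'$ both satisfy the conclusion, put $G=\nF_w-\nF_w'=\sum_{y\in U_d,\,y<w}c_y\nt_y$ with every $c_y\in\mc A_{<0}$. Then $\nc_dG$ is a difference of bar-invariant elements, hence bar invariant, so $\nc_d(G-\overline G)=0$. Expanding $\overline{\nt_y}$ in the $\nt$-basis (it equals $\nt_y$ plus an $\mc A$-combination of $\nt_z$, $z<y$, all $z\in Y_d$ by downward closure) and using that bar sends $\mc A_{<0}$ into $\mc A_{>0}$, the Bruhat-maximal $y$ with $c_y\ne 0$ survives in $G-\overline G$ with a nonzero coefficient; since $\{\nc_d\nt_y\mid y\in U_d\}$ is linearly independent this forces $G-\overline G$, hence $G$, to be $0$.

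Existence is by induction on $\ell(w)$ over $w\in Y_d$ (base case $w=e$, $\nF_e=\nt_e$). Put $h=\nc_d\nt_w$; by \eqref{eq:mod1}, $h\equiv\nt_{dw}\bmod(\hn)_{<0}$. Using the $\nt$-basis expansion of $\overline{\nt_w}$, write $\overline h-h=\nc_d(\overline{\nt_w}-\nt_w)=\sum_{z<w,\,z\in Y_d}a_z\,\nc_d\nt_z$. For every $z\in Y_d\setminus U_d$ occurring here the induction hypothesis produces $\nF_z$; since $\nc_d\nt_z\in(\hn)_{<0}$ the element $\nc_d\nF_z$ is bar invariant and lies in $(\hn)_{<0}$, hence is $0$ by the uniqueness inside $\hn$, i.e.\ claim \eqref{claim:uniqueness}, so $\nc_d\nt_z=-\sum_{y\in U_d,\,y<z}g_{y,z}\,\nc_d\nt_y$ with all $g_{y,z}\in\mc A_{<0}$. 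Substituting, $\overline h-h=\sum_{y\in U_d,\,y<w}b_y\,\nc_d\nt_y$ for uniquely determined $b_y\in\mc A$, and comparing the Bruhat-maximal coefficients of the two sides of $\overline{(\overline h-h)}=-(\overline h-h)$ gives $\overline{b_{y}}=-b_{y}$ for the maximal such $y$. Then peel: choose $g\in\mc A_{<0}$ with $g-\overline g=\pm b_{y}$, replace $h$ by $h\mp g\,\nc_d\nF_{y}$ (a bar-invariant adjustment using the already-built $\nF_y$), and iterate downward. After finitely many steps $h$ has become a bar-invariant element equal to $\nc_d\bigl(\nt_w-\sum_{y\in U_d,\,y<w}(\ldots)\nF_y\bigr)=:\nc_d\nF_w$, and expanding the $\nF_y$'s shows $\nF_w=\nt_w+\sum_{y\in U_d,\,y<w}g_{y,w}\nt_y$ with every $g_{y,w}\in\mc A_{<0}$.

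The main delicate point is exactly the bookkeeping that keeps all correction terms inside $U_d$: a priori $\overline{\nt_w}-\nt_w$ brings in $\nc_d\nt_z$ for $z\in Y_d$ that need not lie in $U_d$, and the resolution is the self-bootstrapping observation above, that such $\nc_d\nt_z$ is automatically an $\mc A_{<0}$-combination of $\nc_d\nt_y$, $y\in U_d$, because the inductively available element $\nc_d\nF_z$ is forced to vanish. Beyond this, everything is routine Kazhdan--Lusztig manipulation. Finally this yields Proposition \ref{prop:predec}: for $w=y\in U_d$ the element $\nc_d\nF_y$ is bar invariant and congruent to $\nt_{dy}$ modulo $(\hn)_{<0}$ with $dy\in\Omega_N\subset\wn$, hence equals $\nc_{dy}$ by the uniqueness in Lemma \ref{lem:char}, and the $\flat$-version follows by applying the anti-automorphism.
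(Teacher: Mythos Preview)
Your argument is correct and follows essentially the same route as the paper: induction on $\ell(w)$ over $Y_d$, using that for $z\in Y_d\setminus U_d$ the inductively built $\nc_d\nF_z$ is bar invariant and lies in $(\hn)_{<0}$ (hence vanishes by \eqref{claim:uniqueness}, giving \eqref{eq:dw}), which rewrites $\overline{\nc_d\nt_u}$ purely in terms of $\nc_d\nt_y$ with $y\in U_d$ and then lets the standard Kazhdan--Lusztig peeling run. The paper records this rewriting once as \eqref{eq:bar} and then invokes the usual uniqueness/existence routine; you spell the same steps out a bit more explicitly, but there is no substantive difference.
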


In this paragraph, we assume that the above lemma holds for a fixed $ w\in Y_d $. By  \eqref{eq:mod1},  $ \nc_d \nF_w \equiv \nt_{dw} \mod ({\hn})_{<0}$. If $ w\in U_d $, then by Lemma \ref{lem:char}, we have $ \nc_{dw}=\nc_d\nF_w $. Thus this lemma implies Proposition \ref{prop:predec}.
If   $ dw\in\Omega_{>N} =\wnn$, we actually have $ \nc_d \nF_w \equiv \nt_{dw}\equiv 0 \mod ({\hn})_{<0} $ (see \eqref{eq:degree}). By \eqref{claim:uniqueness}, we have $ \nc_d\nF_w =0$. In particular, if $ dw\in\Omega_{>N}  $,
%for $ w\in Y_d $ such that $ dw\in \Omega_{>N} $ and the above lemma holds, we have
\begin{equation}\label{eq:dw}
\nc_d\nt_w\in \bigoplus_{\substack{y<w\\y\in U_d}}\mc{A}_{<0} \nc_d \nt_y.
\end{equation} 

\begin{proof}[Proof of Lemma \ref{lem:predec1}]
	We prove it by induction on $ l(w) $. It is obvious for $ w=e $.  Fix a $ w\in Y_d $ with $ l(w)>1 $ and assume that  we have proved the lemma for all $ w' \in Y_d$ such that $ l(w')<l(w) $. 
	
It is well-known that $ \overline{T_u}=T_u+\sum_{z<u} R_{z,u}T_z $ for some $ R_{z,u}\in\mc{A} $. 
Using \eqref{eq:sdi} and  \eqref{eq:dw}, for any $ u\in Y_d $ with $ u\leq w $, we have
\begin{equation}\label{eq:bar}
	\overline{\nc_d\nt_u}= \nc_d\nt_u+\bigoplus_{\substack{y<u\\y\in U_d}} r_{y,u} \nc_d\nt_y \text{ for some }r_{y,u}\in\mc{A}.
\end{equation}
	Then it is a routine to prove that there are unique $ g_{y,w} \in \mc{A}$ with $ y\in U_d $, $ y< w $ such that  $ \nc_d\nF_w $ is bar invariant and \begin{equation}
	\nF_w =\nt_w+\sum_{\substack{y<w\\y\in U_d}}g_{y,w} \nt_y;
	\end{equation}
	see for example \cite[Theorem 5.2]{lusztig2003hecke}. 
For this, we need to use  Lemma \ref{lem:independent1} and  \eqref{eq:bar}. This completes the proof of this lemma and Proposition \ref{prop:predec}.
\end{proof}

\begin{cor}\label{cor:right}
	For any  $ d\in D_N $, we have \[
	\nc_d\hn=\bigoplus_{y\in U_d}\mc{A}\nc_{d}\nt_y= \bigoplus_{y\in U_d}\mc{A}\nc_{dy} ,
	\]
	Hence $ \{\nc_d\nt_y\mid y\in U_d \} $, $ \{\nc_{dy}\mid y\in U_d \} $ are  two $ \mc{A} $-basis of the right ideal of $ \hn $ generated by $ \nc_d $.
\end{cor}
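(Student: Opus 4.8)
The plan is to deduce everything from Proposition \ref{prop:predec} and Lemma \ref{lem:predec1} together with the structural results about $ Y_d \supset U_d $ collected in \eqref{eq:mod1}, \eqref{eq:dw}, and the linear independence statement Lemma \ref{lem:independent1}. The first observation is that $ \nc_d \hn $ is spanned over $ \mc{A} $ by the elements $ \nc_d \nt_w $ with $ w \in \wn $, since $ \{\nt_w \mid w \in \wn\} $ is an $ \mc{A} $-basis of $ \hn $ by Lemma \ref{lem:independent}. So it suffices to show that each $ \nc_d \nt_w $ lies in $ \sum_{y \in U_d} \mc{A}\, \nc_d \nt_y $.

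The key reduction is the following: I claim that for every $ w \in \wn $ one has $ \nc_d \nt_w \in \sum_{y \in U_d} \mc{A}\, \nc_d\nt_y $. First, if $ w \notin Y_d $, then $ l(vw) < l(v) + l(w) $ for some $ v \le d $, and a standard descent/exchange argument (or simply the observation that $ \nt_w $ can be rewritten via the quadratic relations after multiplying on the left by $ \nc_d $, using $ \nc_d T_s = \nc_d (\text{scalar}) $-type identities coming from $ C_s C_d $) lets us express $ \nc_d \nt_w $ as an $ \mc{A} $-combination of $ \nc_d \nt_{w'} $ with $ l(w') < l(w) $; more cleanly, one can first reduce to $ w \in Y_d $ by induction on length. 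Once $ w \in Y_d $, if $ w \in U_d $ there is nothing to prove, and if $ w \in Y_d \setminus U_d $ then $ dw \in \Omega_{>N} = \wnn $, so \eqref{eq:dw} gives $ \nc_d \nt_w \in \bigoplus_{y < w,\, y \in U_d} \mc{A}_{<0}\, \nc_d \nt_y $, which finishes the induction. This proves $ \nc_d \hn = \sum_{y \in U_d} \mc{A}\, \nc_d \nt_y $, and by Lemma \ref{lem:independent1} the sum is direct, so $ \{\nc_d \nt_y \mid y \in U_d\} $ is an $ \mc{A} $-basis of $ \nc_d \hn $.

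It remains to identify this with $ \bigoplus_{y \in U_d} \mc{A}\, \nc_{dy} $. By Proposition \ref{prop:predec}, $ \nc_{dy} = \nc_d \nF_y $ with $ \nF_y = \sum_{y' \le y,\, y' \in U_d} g_{y',y}\, \nt_{y'} $, $ g_{y,y} = 1 $; hence $ \nc_{dy} = \nc_d \nt_y + \sum_{y' < y,\, y' \in U_d} g_{y',y}\, \nc_d \nt_{y'} $. This is a unitriangular change of basis (with respect to the Bruhat order restricted to $ U_d $) from $ \{\nc_d \nt_y\} $ to $ \{\nc_{dy}\} $, with entries in $ \mc{A} $ and with the diagonal equal to $ 1 $, so it is invertible over $ \mc{A} $. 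Therefore $ \{\nc_{dy} \mid y \in U_d\} $ is also an $ \mc{A} $-basis of $ \nc_d \hn $, and $ \nc_d \hn = \bigoplus_{y \in U_d} \mc{A}\, \nc_{dy} $.

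I expect the main obstacle to be the spanning step, i.e.\ checking that $ \nc_d \nt_w $ for arbitrary $ w \in \wn $ (not just $ w \in Y_d $) lies in the span of the $ \nc_d \nt_y $, $ y \in U_d $. The honest way to handle this is induction on $ l(w) $: if $ w \notin Y_d $ pick $ s \in S $ with $ sw < w $ and $ s $ appearing "inside" some $ v \le d $, use the relation $ \nc_d \nt_{w} = \nc_d \nt_s \nt_{sw} $ together with $ \nc_d \nt_s \in \nc_d(\mc{A} \cdot 1 + \mc{A}\,\nt_s) $ from $ C_s $-multiplication being essentially scalar on the relevant piece, and reduce the length; if $ w \in Y_d $ apply \eqref{eq:dw} as above. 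Everything else — directness via Lemma \ref{lem:independent1} and the unitriangular passage between the two bases via Proposition \ref{prop:predec} — is routine.
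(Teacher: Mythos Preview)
Your proposal is correct and follows essentially the same route as the paper: reduce arbitrary $\nc_d\nt_w$ to the case $w\in Y_d$, then invoke \eqref{eq:dw} when $w\in Y_d\setminus U_d$, use Lemma \ref{lem:independent1} for directness, and finish with the unitriangular change of basis from Proposition \ref{prop:predec}. The paper compresses your induction-on-length reduction into the single line ``$\nc_d\nt_z\in\mc{A}\nc_d\nt_w$ for some $w\in Y_d$'': the point is that for every $r$ in the relevant rank-two subset $I$ one has $\nc_d\nt_r=(\text{scalar})\cdot\nc_d$ in $\hn$ (from $T_tC_{d_I}=q^bC_{d_I}$ and $T_sC_{d_I}=C_{w_I}-q^{-a}C_{d_I}$ with $\nc_{w_I}=0$, or the analogous relations for $d=w_J$), so $\nc_d\nt_z$ is literally a scalar multiple of $\nc_d\nt_w$ with $w$ the minimal coset representative. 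Your phrasing ``$\nc_d\nt_s\in\nc_d(\mc{A}\cdot 1+\mc{A}\nt_s)$'' undersells this; what you actually need and have is $\nc_d\nt_s\in\mc{A}\,\nc_d$.
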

\begin{proof}
	For any $ z\in W $,
	\begin{align*}
	\nc_d\nt_z&\in \mc{A}\nc_d\nt_w \text{ for some } w\in Y_d\\
	&\subset\bigoplus_{y\in U_d}\mc{A}\nc_d\nt_y \text{ by \eqref{eq:dw}}\\
	& = \bigoplus_{y\in U_d}\mc{A}\nc_{dy} \text{ by Proposition \ref{prop:predec}}.
	\end{align*}
	Thus $$  \nc_d\hn\subset\bigoplus_{y\in U_d}\mc{A}\nc_{d}\nt_y= \bigoplus_{y\in U_d}\mc{A}\nc_{dy}   .$$ It is obvious that $ \nc_d\hn\supseteq\bigoplus_{y\in U_d}\mc{A}\nc_{d}\nt_y$. Now the corollary follows.
\end{proof}

\subsection{Factorization formula and its corollaries}

For $ d\in D_N $, let $ \eta_d=h_{d,d,d} $. By Lemma \ref{lem:eta}, $ \deg \eta_d=N $.

\begin{thm}[Factorization formula]\label{thm:dec}
	For $ b\in B_d $, $ d\in D_N $, $ y\in U_d $, we have
	\begin{equation}\label{eq:dec}
	\nc_{bdy}=\nE_b\nc_d\nF_y \text{ in }\hn.
	\end{equation}
Hence $ \nc_{bd}\nc_{dy} =\eta_d \nc_{bdy}$.
\end{thm}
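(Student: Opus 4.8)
The plan is to identify $h:=\nE_b\nc_d\nF_y$ with $\nc_{bdy}$ by verifying the two properties that characterize $\nc_{bdy}$ in Lemma~\ref{lem:char}: that $h\equiv\nt_{bdy}\pmod{(\hn)_{<0}}$ and that $h$ is bar invariant. (In the degenerate situation $bdy\in\wnn$ one has $\nt_{bdy}\in(\hn)_{<0}$ by \eqref{eq:degree}, so both sides of \eqref{eq:dec} vanish, the left one by \eqref{claim:uniqueness}; hence one may assume $bdy\in\wn$.) Granting \eqref{eq:dec}, the last assertion is immediate: $\nc_{bd}\nc_{dy}=(\nE_b\nc_d)(\nc_d\nF_y)=\nE_b(\nc_d\nc_d)\nF_y=\eta_d\,\nE_b\nc_d\nF_y=\eta_d\nc_{bdy}$, where one uses the idempotent-type identity $\nc_d\nc_d=\eta_d\nc_d$ in $\hn$ with $\eta_d=h_{d,d,d}$. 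This identity holds because $C_d^2\in\mc{A}C_d+\mc{H}_{>N}$ --- standard when $d=w_J$, and when $d=d_I$ a consequence of \eqref{eq:tdi} and \eqref{eq:sdi} together with $w_I\in W_{>N}$ (so $\nc_{w_I}=0$) --- and $\deg\eta_d=\af'(d)=N$ by Lemma~\ref{lem:eta} (and directly for $d=w_J$), so $\eta_d$ is a nonzero, bar-invariant element of $\mc{A}$.

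For the congruence I would expand $h$ via Proposition~\ref{prop:predec}: write $\nE_b=\nt_b+\sum_{b'\in U_d^{-1},\,b'<b}e_{b'}\nt_{b'}$ and $\nF_y=\nt_y+\sum_{y'\in U_d,\,y'<y}g_{y',y}\nt_{y'}$ with $\deg e_{b'}<0$ and $\deg g_{y',y}<0$, and $\nc_d=\sum_{v\le d}p_{v,d}\nt_v$ with $p_{d,d}=1$ and $\deg p_{v,d}<0$ for $v<d$ (here $\np_{v,d}=p_{v,d}$ because $d$ lies in a rank $\le 2$ parabolic subgroup $W_I$, so the Kazhdan--Lusztig expansion of $C_d$ in $\mc{H}_I$ involves only $T_v$ with $v\le d$, and none of these $v$ lies in $(W_I)_{>N}$). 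Expanding the triple product, $h=\sum_{b'\le b,\,v\le d,\,y'\le y}e_{b'}p_{v,d}g_{y',y}\,\nt_{b'}\nt_v\nt_{y'}$. Each $d\in D_N$ is an involution (a longest element $w_J$, or $w(t,m_{st}-1)$, which has odd length and palindromic reduced word), so $v\le d$ implies $v^{-1}\le d$, whence $l(b'v)=l(b')+l(v)$ by Lemma~\ref{lem:bdy-basic}(ii) and $\nt_{b'}\nt_v\nt_{y'}=\nt_{b'v}\nt_{y'}$ falls under Proposition~\ref{prop:leq}, which bounds $\deg(\nt_{b'}\nt_v\nt_{y'})\le-\deg p_{v,d}$. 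Thus every summand has degree at most $\deg e_{b'}+\deg g_{y',y}\le 0$, and the diagonal summand $(b',v,y')=(b,d,y)$ equals $\nt_b\nt_d\nt_y=\nt_{bdy}$ by Lemma~\ref{lem:length}.

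The hypothesis $b\in B_d$ (rather than merely $b\in U_d^{-1}$) is used precisely to make the off-diagonal bounds strict. When $b'<b$ or $y'<y$ the bound $\deg e_{b'}+\deg g_{y',y}<0$ is already strict; the only remaining family is $b'=b$, $y'=y$ with $v<d$, contributing $p_{v,d}\,\nt_b\nt_v\nt_y$, for which the estimate above only gives degree $\le 0$. Here one invokes the strict inequality $\deg(\nt_b\nt_v\nt_y)<-\deg p_{v,d}$ from the second half of Proposition~\ref{prop:leq}, valid exactly because $b\in B_d$ and $v<d$, to conclude degree $<0$. Therefore $h-\nt_{bdy}\in(\hn)_{<0}$, i.e.\ $h\equiv\nt_{bdy}\pmod{(\hn)_{<0}}$.

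Bar invariance of $h$ follows from the idempotent identity: $\eta_d h=\nE_b(\nc_d\nc_d)\nF_y=(\nE_b\nc_d)(\nc_d\nF_y)=\nc_{bd}\nc_{dy}$, which is bar invariant since $\nc_{bd}$ and $\nc_{dy}$ are bar invariant (Lemma~\ref{lem:char}, as $bd,dy\in\wn$) and the bar involution is an algebra endomorphism of $\hn$; since $\eta_d$ is a nonzero bar-invariant element of $\mc{A}$ and $\hn$ is $\mc{A}$-free, hence $\mc{A}$-torsion-free, $\eta_d(\bar h-h)=0$ forces $\bar h=h$. Lemma~\ref{lem:char} then gives $h=\nc_{bdy}$, which is \eqref{eq:dec}. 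I expect the main obstacle to be the degree bookkeeping in the triple expansion: arranging it so that Proposition~\ref{prop:leq} applies termwise (which is why one needs the auxiliary observations that $d$ is an involution and that $\np_{v,d}=p_{v,d}$), and isolating the single family of summands --- those with $v<d$ and $b'=b$, $y'=y$ --- whose vanishing modulo $(\hn)_{<0}$ genuinely requires $b\in B_d$ and not just $b\in U_d^{-1}$.
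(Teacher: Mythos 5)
Your proof is correct and follows essentially the same route as the paper's: expand $\nE_b\nc_d\nF_y$ in the $\nt$-basis, bound the off-diagonal terms by the non-strict inequality \eqref{eq:leq} (with strictness supplied by $\deg g_{b',b}\,g_{y',y}<0$) and the terms with $b'=b$, $y'=y$, $v<d$ by the strict inequality \eqref{eq:lin}, obtain bar invariance from $\eta_d\cdot(\nE_b\nc_d\nF_y)=\nc_{bd}\nc_{dy}$, and conclude via Lemma \ref{lem:char} (resp. \eqref{claim:uniqueness} when $bdy\in\wnn$). Your extra justifications—that $d$ is an involution so Proposition \ref{prop:leq} applies termwise, and that $\nc_d\nc_d=\eta_d\nc_d$ holds in $\hn$—only make explicit what the paper leaves implicit, and your insistence on the weak bound for $b'<b$ matches the paper's own footnote on that point.
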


\begin{proof} We have

\begin{equation}
\nE_b \nc_d \nF_y=\sum_{\substack{x'\leq b,\,y'\leq y\\x'\in U_d^{-1},y'\in U_d \\w\leq d}} g_{x',b}p_{w,d}g_{y',y}\nt_{x'}\nt_{w}\nt_{y'}.
\end{equation}
If $ x'\neq b $ or $ y'\neq y $, then $ \deg (g_{x',b}g_{y',y})<0 $, and  $ \deg(p_{w,d}\nt_{x'}\nt_{w}\nt_{y'}) \leq 0$ by  \eqref{eq:leq}.
%\footnote{As J. Gao pointed out, it is not necessary that $ x'\in B_d $, although $ x'<x \in B_d$. A counterexample can be found in the case of the lowest two-sided cell of the affine Weyl group of type $ \tilde{G}_2 $. Hence here we use \eqref{eq:leq} instead of  \eqref{eq:lin}. The proofs for \cite[Thm. 3.4]{xie2015lowest} and \cite[Thm. 3.6]{xie2017complete} need to make a revision like here.} 
Hence if $ x'\neq b $ or $ y'\neq y $, we have$$  \deg (g_{x',b}p_{w,d}g_{y',y}\nt_{x'}\nt_{w}\nt_{y'}) <0 .$$ 
 If $ x'=b,y'=y $ and $ w\neq d $, by  \eqref{eq:lin} we have $ \deg(p_{w,d}\nt_{x'}\nt_{w}\nt_{y'}) < 0$ and  $$  \deg (g_{x',b}p_{w,d}g_{y',y}\nt_{x'}\nt_{w}\nt_{y'}) <0 .$$ 
Hence \begin{align*}
\nE_b \nc_d \nF_y
&\equiv \nt_{b}\nt_d\nt_y \mod{(\hn)_{<0}}\\
&\equiv \nt_{bdy}  \mod{(\hn)_{<0}}\quad 
\end{align*}
by Lemma \ref{lem:length}.
By Proposition \ref{prop:predec},  $$ \nc_{bd}\nc_{dy}=E_b \nc_d\nc_d \nF_y=\eta_d \nE_b \nc_d \nF_y  .$$ Therefore, $ \nE_b \nc_d \nF_y $ is bar invariant. If $ bdy\in\wn $, then by Lemma \ref{lem:char} we have $\nE_b\nc_d\nF_y= \nc_{bdy}$. If $ bdy\in \wnn $, we have $ \nt_{bdy}\equiv 0 \mod{(\hn)_{<0}}$ (see \eqref{eq:degree}),  and by claim \eqref{claim:uniqueness}  we have $\nE_b\nc_d\nF_y=0= \nc_{bdy}$. This completes the proof. (In the next theorem, we will see that $ bdy $ is alway in $ \wn $.)
\end{proof}

\begin{ass}\label{ass5}
In the rest of  section \ref{sec:main}, we fix an integer $ N>0 $ such that  $ \wnn=\Omega_{>N} $ and  $ (\text{P1,P4,P8})_{>N}  $ hold. 
\end{ass}
Note that this assumption  implies Assumption \ref{ass4}.

%\begin{lem}\label{cor:rightcell}
%	For $ d\in D_N $, the subset $\Phi_d:= dU_d $ (resp. $ \Gamma_d=\Phi_{d}^{-1} $) is a right (resp. left) cell of $ W $.
%\end{lem}
%\begin{proof}
%	For any  $ y\in U_d $,  by Proposition \ref{prop:predec}, $ h_{d,dy,dy}=h_{d,d,d} $, which has degree $ N $. Thus $ \af(dy)\geq N $. If $ \af(dy)>N $, then $ dy \in \wnn=\Omega_{>N} $, which contradicts with $ dy\in\Omega_{ N} $. Hence $ \af(dy)=N $, i.e. \begin{equation}\label{eq:af}
%	dU_d\subset W_N.
%	\end{equation} Thus $ \gamma_{d,dy,dy}\neq 0 $. By Lemma \ref{lem:cyc}(iv), we have $ d\sim_R dy $.
%	
%	Let $ d\sim_R w $. If $ w\in \wnn $, then $ d\in\wnn=\Omega_{>N}$ since $ \wnn $ is $ \sim_{ {LR}} $ closed by $ (\text{P}4)_{>N} $. This contradicts with $ d\in D_N \subset\Omega_{N}$. Hence $ w\in \wn $.  Since $ w\prec_R d $, we have $ \nc_w\in \nc_d\hn$. Then by Corollary \ref{cor:right}, we have $ w=dy $ for some $ y\in U_d $.
%\end{proof}

%\begin{rem}
%	Note that Proposition \ref{prop:predec} and Corollary \ref{cor:rightcell} can be deduced using  results in \cite{Geck2003induction}. Details are left to readers.
%\end{rem}

\begin{thm}
\label{thm:cor}
The following properties hold.
\begin{itemize}
\item [(i)] For $ d\in D_N $, $ b\in B_d $, we have $ \nc_{bd}\hn= \bigoplus_{y\in U_d}\mc{A}\nc_{bdy}  $.
\item [(ii)]For $ d\in D_N $, $ b\in B_d $, the subset $ \Phi_{b,d}:=bdU_d $ (resp. $ \Gamma_{b,d}:=\Phi_{b,d}^{-1} $) is a right (resp. left) cell of $ W $.
\item 	[(iii)]We have $W_N= \Omega_{ N}=\bigsqcup_{\substack{d\in D_N\\b\in B_d}}\Phi_{b,d} $.  In particular, the set of right (resp. left) cells in $ W_N $ is in bijection with the set $ \bigsqcup_{d\in D_N} B_d d $.
\item [(iv)]The subset $ W_{\geq N} $ is $ \prec_{ {LR}} $ closed, and $ W_N $ is a union of some two-sided cells.
\item[(v)] If $ w_1,w_ 2\in W_N$ with $ w_1\prec_R w_2 $, then $ w_1\sim_R w_2 $.
%\item [(vi)] For  $ d,d'\in D_N $, denote by $P_{d,d'}:=  \Phi_d\cap\Gamma_{d'}$.
%For  $ b\in B_d $, $ b'\in B_{d'} $, we have $ \Phi_{b,d}\cap \Gamma_{b',d'}=bP_{d,d'}b'^{-1} $, and\begin{equation}\label{key}
%W_N=\bigsqcup_{\substack{d,d'\in D_N \\b\in B_d ,b'\in B_{d'} }}bP_{d,d'}b'^{-1} .
%\end{equation}
%For any $ w\in W_N $, we have a unique  factorization  $ w=bp_wb'^{-1} $ with $ d,d'\in D_N$,  $ b\in B_d $, $ b' \in B_{d'}$, $ p_w\in P_{d,d'} $, $ l(w)=l(b)+l(p_w)+l(b') $ and\begin{equation}\label{eq:dec2}
%\nc_w=\nE_b\nc_{p_w}\nF_{b'^{-1}}.
%\end{equation}
%
%\item [(vii)] Let $ x,y,z\in W_N $ such that \[
%x=b_1p_xb_2^{-1},y=b_3p_yb_4^{-1},z=b_5p_zb_6^{-1},
%\]where $ d_i\in D_N $, $ b_i\in B_{d_i} $ for  $ 1\leq i\leq 6 $ and  $ p_x,p_y, p_z $ are  given the factorization  in (vi).  If $ \gamma_{x,y,z}\neq 0 $, then $ (b_2,d_2)=(b_3,d_3) $, $ (b_4,d_4)=(b_5,d_5)$, $ (b_6,d_6)=(b_1,d_1)$, and \[
%\gamma_{x,y,z}=\gamma_{p_x,p_y,p_z}.
%\]
	\end{itemize}
\end{thm}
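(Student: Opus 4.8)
Before anything else I would prove the statement on which everything rests: for $d\in D_N$, $b\in B_d$, $y\in U_d$ one has $bdy\in W_N$ (equivalently $bdy\in\Omega_N$). By Lemma \ref{lem:length} the product $bdy$ is length-additive and $d\in D_N\subset D_{\geq N}$, so $bdy\in\Omega_{\geq N}$; it remains to rule out $bdy\in\Omega_{>N}$. Suppose $bdy=p\,d'q$ with $l(bdy)=l(p)+l(d')+l(q)$ and $d'\in D_{>N}$, and look at the position of (a reduced word for) $d'$ inside the concatenation of reduced words of the $b$-, $d$- and $y$-parts of $bdy$. If the $d'$-part lies inside the $b$--$d$ prefix, then $bd$ itself admits a length-additive factorization through $d'\in D_{>N}$, contradicting $bd\in\Omega_N$ (Lemma \ref{lem:bdy-basic}(i)); symmetrically, if it lies inside the $d$--$y$ suffix then $dy\in\Omega_{>N}$, contradicting $dy\in\Omega_N$. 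In the only remaining case $d'$ straddles $d$, i.e. $d'=b_2 d y_1$ with $b=b_1 b_2$, $y=y_1 y_2$ all length-additive and $b_2\neq e\neq y_1$; here one argues by the explicit list of members of $D$, using that $b\in\Omega_{<N}$ and $b_1\in\Omega_{<N}$ (apply the defining property of $B_d$ to the factorizations $bd=b\cdot d$ and $bd=b_1\cdot(b_2 d)$), that $bd,dy\in\Omega_N$, and a comparison of $\af'$-values (respectively dihedral $\af$-values), to get a contradiction in each subcase. Once $bdy\notin\Omega_{>N}=\wnn$, the decomposition formula $\nc_{bd}\nc_{dy}=\eta_d\nc_{bdy}$ of Theorem \ref{thm:dec} shows $h_{bd,dy,bdy}=\eta_d$, of degree $N$, whence $\af(bdy)\geq N$; since $bdy\in\wn$ also $\af(bdy)\leq N$, so $\af(bdy)=N$.

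\textbf{Part (i).} By Theorem \ref{thm:dec} and Proposition \ref{prop:predec}, $\nc_{bdy}=\nE_b\nc_d\nF_y=\nc_{bd}\nF_y\in\nc_{bd}\hn$, while conversely $\nc_{bd}\hn=\nE_b(\nc_d\hn)=\nE_b\bigoplus_{y\in U_d}\mc{A}\nc_{dy}=\bigoplus_{y\in U_d}\mc{A}\nc_{bdy}$ using Corollary \ref{cor:right}. The $\nc_{bdy}$ are $\mc{A}$-linearly independent: by the proof of Theorem \ref{thm:dec}, $\nc_{bdy}\equiv\nt_{bdy}\bmod(\hn)_{<0}$, the elements $bdy$ ($y\in U_d$) lie in $\wn$ by the fact above and are pairwise distinct, so the $\nc_{bdy}$ have distinct basis vectors as leading terms.

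\textbf{Parts (ii) and (iii).} From $h_{bd,dy,z}=\eta_d\,\delta_{z,bdy}$ and $\af(bdy)=N$, the coefficient $\gamma_{bd,dy,(bdy)^{-1}}$ equals the leading coefficient of $\eta_d$, hence is nonzero; Lemma \ref{lem:cyc}(iii) then yields the cyclic equalities and the $\sim_L$-relations among $bd,\,dy,\,(bdy)^{-1}$, in particular $(bdy)^{-1}\sim_L(bd)^{-1}$ for every $y\in U_d$. Taking inverses, $bdy\sim_R bd$, so $\Phi_{b,d}=bdU_d$ is contained in the right cell of $bd$. Conversely, if $w\sim_R bd$ then $w\in\wn$ (since $bd\in W_N$ and $\wnn$ is $\prec_{LR}$-closed by $(\text{P}4)_{>N}$) and $\nc_w\in\nc_{bd}\hn=\bigoplus_{y}\mc{A}\nc_{bdy}$, forcing $w=bdy$ for some $y\in U_d$; hence $\Phi_{b,d}$ is exactly that right cell and $\Gamma_{b,d}=\Phi_{b,d}^{-1}$ the left cell of $(bd)^{-1}$, proving (ii). For (iii): $W_N\subset\Omega_N$ by Lemma \ref{lem:compute} together with Proposition \ref{prop:key1}; $\Omega_N\subset\bigcup_{d,b}\Phi_{b,d}$ is Lemma \ref{lem:bdy-basic}(iii); and $\bigcup_{d,b}\Phi_{b,d}\subset W_N$ by the underlying fact. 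Distinct $\Phi_{b,d}$ are disjoint (they are right cells), and one checks that $(d,b)\mapsto bd$ is injective on $\bigsqcup_{d\in D_N}B_d$ (a length-additive factorization $bd=b'd'$ with $d,d'\in D_N$, $b\in B_d$, $b'\in B_{d'}$ forces $d=d'$, $b=b'$, using Lemma \ref{lem:bdy-basic}(i) and the structure of $D$), so $W_N=\Omega_N=\bigsqcup_{d\in D_N,\,b\in B_d}\Phi_{b,d}$ and the right (left) cells in $W_N$ correspond bijectively to $\bigsqcup_{d\in D_N}B_d d$.

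\textbf{Parts (iv) and (v), and the obstacle.} The key observation is a one-step propagation: if $w\prec_R w'$ via $h_{w',x,w}\neq0$ and $w'=bdy'\in\Phi_{b,d}$, then either $x$ or $w$ lies in $\wnn$ (whence $w\in\wnn$, as $\wnn$ is $\prec_{LR}$-closed), or $w,x,w'\in\wn$ and $\nc_w$ occurs in $\nc_{bdy'}\nc_x=\nc_{bd}(\nF_{y'}\nc_x)\in\nc_{bd}\hn=\bigoplus_y\mc{A}\nc_{bdy}$, so $w=bdy''\in\Phi_{b,d}$. By induction on chain length, $w\prec_R bdy\Rightarrow w\in\Phi_{b,d}\cup\wnn\subset W_{\geq N}$; thus $W_{\geq N}$ is $\prec_R$-closed, and applying the anti-automorphism $\flat$ (equivalently, taking inverses, using that $\Omega_N$ and hence $W_N$ are inversion-stable) it is $\prec_L$-closed, hence $\prec_{LR}$-closed. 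Since $W_{>N}=\wnn$ is $\prec_{LR}$-closed too, $W_N=W_{\geq N}\setminus W_{>N}$ is a union of two-sided cells, giving (iv); and (v) is immediate: for $w_1\prec_R w_2\in W_N$, write $w_2\in\Phi_{b,d}$; then $w_1\in\Phi_{b,d}\cup\wnn$, and since $w_1\in W_N$ is disjoint from $\wnn$, $w_1\in\Phi_{b,d}$, a single right cell, so $w_1\sim_R w_2$. The real obstacle — and the only genuinely laborious part — is the combinatorial verification in the first paragraph that $bdy\notin\Omega_{>N}$, specifically the straddling case, which needs a careful case analysis of the elements of $D$ and their $\af'$-values organized around the constraints $b\in B_d$, $y\in U_d$ and $bd,dy\in\Omega_N$; the rest follows fairly formally from Theorem \ref{thm:dec}, Corollary \ref{cor:right}, Lemma \ref{lem:cyc}(iii), and the inversion-symmetry of the cell preorders.
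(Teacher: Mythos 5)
Your architecture for (i), (ii), (iv), (v) and most of (iii) is essentially the paper's, but the statement you call ``the underlying fact'' --- that $bdy\notin\Omega_{>N}$ (hence $bdy\in\wn$ and $\af(bdy)=N$) for $d\in D_N$, $b\in B_d$, $y\in U_d$ --- is a genuine hole, and it is the point on which everything else you write depends, since you already invoke it in (i)--(iii). You do not prove it: you sketch a case analysis on where a reduced word of $d'\in D_{>N}$ sits inside the concatenation of $b$, $d$ and $y$, and you yourself flag it as ``the real obstacle''. Worse, the prefix/suffix/straddle trichotomy organizing that analysis is itself unjustified: a length-additive factorization $bdy=p\,d'q$ carries no canonical position of $d'$ relative to the factorization $b\cdot d\cdot y$, and two length-additive factorizations of the same element need not align positionally, so even the setup of your case distinction needs an argument you have not supplied. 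The paper needs none of this combinatorics: it first proves (ii) (that $\Phi_{b,d}=bdU_d$ is a right cell, via Corollary \ref{cor:rightcell} and the decomposition formula of Theorem \ref{thm:dec}), records in particular that $bdy\sim_{LR}d$, and then excludes $bdy\in\Omega_{>N}=\wnn$ purely formally: $\wnn$ is $\prec_{LR}$-closed by $(\text{P4})_{>N}$, so $bdy\in\wnn$ would force $d\in\wnn$, contradicting $d\in\Omega_{N}$; only afterwards does it read off $\af(bdy)=N$ from $h_{bd,dy,bdy}=\eta_d$. So you have inverted the intended logical order and replaced a two-line cell-theoretic argument by an unexecuted and, as set up, unsound combinatorial claim.

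A second, smaller gap is in the disjointness part of (iii): you assert that a length-additive equality $bd=b'd'$ with $d,d'\in D_N$, $b\in B_d$, $b'\in B_{d'}$ forces $d=d'$ and $b=b'$, citing only ``the structure of $D$''. The paper proves this with a real case analysis: when $l(d),l(d')\geq 3$ it uses Lemma \ref{lem:xi}(i) to compare right descent sets, places $d$ and $d'$ in a common finite dihedral parabolic subgroup, and compares $\af$-values there; a separate argument is needed when one of $d,d'$ has length $1$. ``One checks'' is not a proof at this point. The remaining steps --- your (i), the two inclusions in (ii), and (iv)--(v) --- are correct and match the paper's once these two points are repaired.
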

\begin{proof} 
	Assertion (i) follows immediately from the factorization formula (Theorem \ref{thm:dec}) and Corollary \ref{cor:right}. 
	
	Fix $ d\in D_N $, $ b\in B_d $. We claim that if  $ u\in W_{\leq N} $ and $ C_u $ appears in a product $ C_{v} h$ for some $ v\in bdU_d $ and $ h\in \mc{H} $, then $ u\in bdU_d $.
%\begin{equation}\label{eq:claim1}
% 	u=bdy_2 \text{  for some }  y_2\in U_d .
%\end{equation}
%Note that $ \nc_u $ appears in the product $ \nc_{bdy_1} \tilde{h}$, where $ \tilde{h} $ is the image of $ h $ in $ \mc{H}_{\leq N} $.
Let $ v=bdy_1 $ with $ y_1\in U_d $.
 By the factorization formula and  (i),  $ \nc_u $ appears in \[
\nc_{bdy_1} \tilde{h}=\nc_{bd}\nF_{y_1}\tilde{h}\in  \nc_{bd}\hn= \bigoplus_{y\in U_d}\mc{A}\nc_{bdy},
\]
where $ \tilde{h} $ is the image of $ h $ in $ \mc{H} _{\leq N}$.
Then by Lemma \ref{lem:char}, we have $ u\in bdU_d $.
	
Now we claim that for any $ d\in D_N $, $ b\in B_d $,
\begin{equation}\label{eq:claim2}
bdU_d\cup W_{>N}\text{ is } \prec_R \text{ closed}.
\end{equation}
Since $ W_{>N} $ is $ \prec_{LR }$ closed, it suffices to prove that  
\begin{equation}\label{eq:claim3}
\text{if }u \in W_{\leq N} \text{ and } u\prec_{R}  v \text{ with }v\in bdU_d, \text{ then }u \in bdU_d.
\end{equation}
Since $ u\prec_R v $, we can find a sequence of elements \[
u=w_1, w_{2},\cdots, w_{k-1},w_k=v \text{ in } W
\]
such that, for all $ 1\leq i\leq k-1 $, $ C_{w_{i}} $ appears in $ C_{w_{i+1}}h_i $ for some $ h_i\in\mc{H} $. Then we have 
$ w_i\in W_{\leq N} $ for all $ 1\leq i\leq k $, because $ w_{i}\prec_R w_{i+1} $, $ u=w_1\in W_{\leq N} $ and $ W_{>N} $ is $ \prec_{R} $ closed. Now inductively applying the claim in the last paragraph, we have all $ w_i $ and in particular $ u $ belong to $ bdU_d $. This proves \eqref{eq:claim3} and  \eqref{eq:claim2}.

Now we prove assertion (ii).
	Suppose $ w\sim_{ {R}} bd$. If $ w\in \wnn$, then  $ bd\in\wnn $ since $ \wnn $ is $ \prec_{ {LR}} $
closed, a contradiction with $ bd\in\Omega_{ N}\subset W_{\leq N} $. Thus $ w\in\wn $.	Then by \eqref{eq:claim3}, we have $ w\in bdU_d $. Therefore the right cell containing $ bd $ is a subset of $  bdU_d $. 

Fix an arbitrary $ y\in U_d $. By the factorization formula, $  h_{d,dy,dy}$ is  equal to $h_{d,d,d}  $, which has degree $ N $.
 Thus $ \af(dy)\geq N $. If $ \af(dy)>N $, then $ dy \in \wnn=\Omega_{>N} $, which contradicts with \eqref{eq:ud}. Hence $ \af(dy)=N $, and  $ \gamma_{d,dy,(dy)^{-1}}\neq 0 $. By Lemma \ref{lem:cyc} (iv), we have $ d\sim_R dy $. In particular, $ d\prec_R dy $. Hence
 we can find a sequence of elements 
 $ d=w_1, w_{2},\cdots, w_k=dy $
 such that, for all $ 1\leq i\leq k-1 $, $ C_{w_{i}} $ appears in $ C_{w_{i+1}}h_i $ for some $ h_i\in\mc{H} $. In the same way as the proof of \eqref{eq:claim3}, one can see that, for all $ 1\leq i\leq k $,  $ w_i =dy_i $ for some $ y_i\in U_d $, and in particular $ w_i\in W_{\leq N} $.
Hence  for all $ 1\leq i\leq k-1 $, 
\begin{equation}\label{key}
0\neq \nc_{w_{i}} \text{ appears in }\nc_{w_{i+1}}\tilde{h}_i \in \bigoplus_{y\in U_d}\mc{A}\nc_{dy},
\end{equation}
%$ \nc_{w_{i}} $ appears in $ \nc_{w_{i+1}}\tilde{h}_i $,
 where $ \tilde{h}_i $ is the image of $ h_i $ in $ \mc{H}_{\leq N} $. 
 By left multiplying by $ \nE_b $ and using the factorization formula, one can see that   for all $ 1\leq i\leq k-1 $, 
 \begin{equation}\label{key}
\text{ if } \nc_{bw_{i}}  \neq 0, \text{ then }  \nc_{bw_{i}}\text{ appears in }\nc_{bw_{i+1}}\tilde{h}_i .
 \end{equation}
In particular, if $ \nc_{bw_{i}}  \neq 0 $, then  $ \nc_{bw_{i+1}}\neq 0  $.
Since $ bw_1=bd\in W_{\leq N} $, we actually have $ \nc_{bw_{i}}  \neq 0 $ for all $ i $. Hence $ C_{bw_{i}} $ appears in $ C_{bw_{i+1}}h_i  $ for all $ 1\leq i\leq k-1 $.
Thus $ bd\prec_R bdy $.
% This implies that $ bw_i\in W_{\leq N} $ and $ bw_j\prec_R bw_{j+1} $ for all admissible $ i ,j$, since $ bw_1=bd\in W_{\leq N} $. 
 Since $ bdy \prec _R bd $ is obvious,  we can conclude that $$   bdy\sim_R bd \in W_{\leq N}=\Omega_{ \leq N} . $$
 Since $ bdy=b\bd d\bd y\in \Omega_{ \geq N} $, we actually have 
 \begin{equation}\label{eq:contain}
 bdU_d\subset \Omega_{ N}.
 \end{equation}
Now we have proved that $ bdU_d $ is contained in a right cell. 
 Combined with the last paragraph, assertion (ii) follows.

%By Lemma \ref{lem:length}, we have $ B_dd U_d\subset\Omega_{\geq N} $. If $ bdy\in \Omega_{>N} =\wnn$ for some $ b\in B_d $, $ y\in U_d $, then $ bdy\sim_{ {LR}} d$  implies that  $d  \in \wnn=\Omega_{>N}$ since $ \wnn $ is $ \prec_{ {LR}} $ closed, which contradicts with  $ d\in \Omega_{ N}$. Hence $$  B_dd U_d\subset\Omega_{ N} \subset\wn .$$ 

By \eqref{eq:contain} and \eqref{eq:omega}, $ \Omega_{ N}=\bigcup_{d\in D_N}B_d dU_d  $ . By the factorization formula we have $ h_{bd,dy,bdy}=h_{d,d,d}  $ with degree $ N $, which implies that $ \af(bdy)\geq N $. Thus $ \Omega_{ N}\subset W_{\geq N} $. Since $ \Omega_{ N}\subset \Omega_{ \leq N}= \wn $, we have $ \Omega_{ N}\subset W_{N}   $.

Take any  $ x\in W_N $. By Lemma \ref{lem:compute},   $ \deg \nt_x\nt_{z}=N $  for some $ z\in W_{\leq N} $.
%By \eqref{eq:wn}, we have $ \deg\nf_{x,y,z}=N $ for some $ x,y\in\wn $, i.e. $ \nb_{x,y,z^{-1}}\neq 0 $. By Lemma \ref{lem:cyc} (iii), $ \nb_{z^{-1},x,y}\neq 0 $, and hence $ \deg \nt_{z^{-1}}\nt_x=N $. 
By Proposition \ref{prop:key1},  we have $ x\in \Omega_{ N} $. Therefore, $ \Omega_{ N}\supseteq W_{N}   $. Now we have  proved that $ W_N=\Omega_{ N}  $.

For (iii),
it remains to prove that the union is disjoint. In other words, we need to prove that if $ \Phi_{b,d}\cap\Phi_{b',d'} \neq \emptyset$ for some $ d,d'\in D_N $, $ b\in B_d $ and $ b'\in B_{d'} $, we have $ d=d' $ and $ b=b' $. Since $ \Phi_{b,d} $, $ \Phi_{b',d'}  $ are right cells, we have $ \Phi_{b,d}=\Phi_{b',d'}  $. Thus $ bd=b'd'y' $ for some $ y'\in U_{d'} $. Since $ b\in B_d $,  the definition of $ B_d $ implies $ y'=e $. Thus $ bd=b'd' $.
Then $ d\sim_L bd=b'd'\sim_L d' $. By assertion (ii) for left cells, we have $ dU_d=d'U_{d'} $. Then $ d=d'\bd y_1 $ and $ d'=d\bd y_2 $ for some $ y_1\in U_{d'},y_2\in U_{d} $. This forces $ d=d' $, and then $ b=b' $. This proves (iii).

By assertion (iii) and  claim \eqref{eq:claim2}, $ W_{\geq N} $ is $ \prec_{R} $ closed. It is also $ \prec_L $ closed since $ W_{\geq N} $ is stable by taking inverse. Thus $ W_{\geq N} $ is $ \prec_{LR} $ closed. Then $ W_N $ is a union of some two-sided cells, since $ W_{>N} $ is also $ \prec_{ {LR}} $ closed. This proves (iv).

Now we prove (v). By (iii), $ w_2 \in  bdU_d$ for some $ d\in D_N $ and $ b\in B_d $. Using claim \eqref{eq:claim3}, we have $ w_1\in bdU_d $. Thus $ w_1\sim_R w_2 $ by (ii).
\end{proof}

\begin{thm}\label{thm:cor2} 
	Recall that $ \Phi_{b,d }=bdU_d $, $ \Gamma_{b,d}=\Phi_{b,d }^{-1} $ and write $ \Phi_{d}=\Phi_{e,d} $, $ \Gamma_d=\Phi_d^{-1} $.
	For  $ d,d'\in D_N $, denote by $P_{d,d'}:=  \Phi_d\cap\Gamma_{d'}$.
	
	\begin{itemize}
		\item [(i)] 
		For  $ b\in B_d $, $ b'\in B_{d'} $, we have $ \Phi_{b,d}\cap \Gamma_{b',d'}=bP_{d,d'}b'^{-1} $, and\begin{equation}\label{key}
		W_N=\bigsqcup_{\substack{d,d'\in D_N \\b\in B_d ,b'\in B_{d'} }}bP_{d,d'}b'^{-1} ,
		\end{equation}
		i.e. for any $ w\in W_N $, there is a unique  factorization  $ w=b\bd p_w\bd b'^{-1} $ such that  $ d,d'\in D_N$,  $ b\in B_d $, $ b' \in B_{d'}$, $ p_w\in P_{d,d'} $. We have   \begin{equation}\label{eq:dec2}
		\nc_w=\nE_b\nc_{p_w}\nF_{b'^{-1}}.
		\end{equation}
		
		\item [(ii)] Let $ x,y,z\in W_N $ such that \[
		x=b_1p_xb_2^{-1},\quad y=b_3p_yb_4^{-1},\quad z=b_5p_zb_6^{-1},
		\]where $ d_i\in D_N $, $ b_i\in B_{d_i} $ for  $ 1\leq i\leq 6 $ and  $ p_x,p_y, p_z $ are  given by the factorization  in (vi).  If $ \gamma_{x,y,z}\neq 0 $, then $ (b_2,d_2)=(b_3,d_3) $, $ (b_4,d_4)=(b_5,d_5)$, $ (b_6,d_6)=(b_1,d_1)$, and \[
		\gamma_{x,y,z}=\gamma_{p_x,p_y,p_z}.
		\]
	\end{itemize}
\end{thm}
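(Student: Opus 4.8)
The plan is to deduce everything from the one‑sided decomposition by translating cells. Fix $ d\in D_N $, $ b\in B_d $; since $ \Phi_{b,d}=bdU_d=b\Phi_d $, left multiplication is a bijection $ \phi_b\colon \Phi_d\to\Phi_{b,d} $, $ v\mapsto bv $, with $ l(bv)=l(b)+l(v) $ by Lemma~\ref{lem:length}. First I would check that $ \phi_b $ preserves the \emph{left}-cell partition and translates Kazhdan--Lusztig elements: writing $ v=dy\in\Phi_d $ one has $ \nc_{bv}=\nc_{bdy}=\nE_b\nc_d\nF_y=\nE_b\nc_v $ by Theorem~\ref{thm:dec} and Proposition~\ref{prop:predec}; and $ bv\prec_{ {L}}v $ (a general Coxeter fact, as $ bv=b\cdot v $ is reduced), so since $ bv,v\in W_N $ have equal $ \af $-value $ N $, the $ \prec_{ {L}} $-analogue of Theorem~\ref{thm:cor}(v) (obtained by inverting (v)) gives $ bv\sim_{ {L}}v $. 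Dually, for $ d'\in D_N $, $ b'\in B_{d'} $, right multiplication by $ b'^{-1} $ gives a bijection $ \psi_{b'}\colon\Gamma_{d'}\to\Gamma_{b',d'}=\Gamma_{d'}b'^{-1} $ with $ l(vb'^{-1})=l(v)+l(b') $ (Lemma~\ref{lem:length}, using that every $ d'\in D $ is an involution), which preserves the right-cell partition ($ vb'^{-1}\prec_{ {R}}v $ plus Theorem~\ref{thm:cor}(v)) and satisfies $ \nc_{vb'^{-1}}=\nc_v\nF_{b'^{-1}} $ (apply the anti-automorphism $ {}^\flat $ to Theorem~\ref{thm:dec}, using $ (\nc_u)^{\flat}=\nc_{u^{-1}} $).

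Granting this, part~(i) is formal: $ \phi_b(\Phi_d\cap\Gamma_{b',d'})=\Phi_{b,d}\cap\Gamma_{b',d'} $ because $ \phi_b $ preserves left cells, and $ \psi_{b'}(\Phi_d\cap\Gamma_{d'})=\Phi_d\cap\Gamma_{b',d'} $ because $ \psi_{b'} $ preserves right cells; composing gives $ \Phi_{b,d}\cap\Gamma_{b',d'}=bP_{d,d'}b'^{-1} $. Intersecting the right-cell partition $ W_N=\bigsqcup\Phi_{b,d} $ (Theorem~\ref{thm:cor}(iii)) with the left-cell partition then yields $ W_N=\bigsqcup bP_{d,d'}b'^{-1} $; uniqueness of $ w=bp_wb'^{-1} $ is clear because $ (b,d) $ and $ (b',d') $ are the (unique) labels of the right cell and the left cell of $ w $. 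For lengths, $ w=bv_1 $ with $ v_1=b^{-1}w\in\Phi_d $ gives $ l(w)=l(b)+l(v_1) $, and $ v_1=p_wb'^{-1}\in\Gamma_{b',d'} $ gives $ l(v_1)=l(p_w)+l(b') $; and $ \nc_w=\nc_{bv_1}=\nE_b\nc_{v_1}=\nE_b\nc_{p_w}\nF_{b'^{-1}} $.

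For part~(ii), since $ x,y,z\in W_N $ and $ \af(z)=N $, Lemma~\ref{lem:cyc}(iv) and (iii) give $ \gamma_{x,y,z}=\gamma_{y,z,x}=\gamma_{z,x,y}\ (\neq 0) $ together with $ x\sim_{ {L}}y^{-1} $, $ y\sim_{ {L}}z^{-1} $, $ z\sim_{ {L}}x^{-1} $. By part~(i) the left cell of $ x=b_1p_xb_2^{-1} $ is $ \Gamma_{b_2,d_2} $, while $ y^{-1}=b_4p_y^{-1}b_3^{-1} $ is the part-(i) decomposition of $ y^{-1} $ (note $ p_y^{-1}\in P_{d_4,d_3} $), with left cell $ \Gamma_{b_3,d_3} $; hence $ x\sim_{ {L}}y^{-1} $ forces $ (b_2,d_2)=(b_3,d_3) $, and likewise $ (b_4,d_4)=(b_5,d_5) $, $ (b_6,d_6)=(b_1,d_1) $. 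Writing $ x=ap_xb^{-1} $, $ y=bp_yc^{-1} $, $ z=cp_za^{-1} $ with $ a\in B_\alpha $, $ b\in B_\beta $, $ c\in B_\gamma $ and $ p_x\in P_{\alpha,\beta} $, $ p_y\in P_{\beta,\gamma} $, $ p_z\in P_{\gamma,\alpha} $, it remains to prove $ \gamma_{x,y,z}=\gamma_{p_x,p_y,p_z} $.

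Here the key local fact is that for $ A,B\in W_N $ the product $ \nc_A\nc_B $ is supported on the block $ (\text{right cell of }A)\cap(\text{left cell of }B) $: $ h_{A,B,v}\neq 0 $ forces $ v\prec_{ {L}}B $ and $ v\prec_{ {R}}A $, whence $ v\in W_N $ ($ W_{\geq N} $ being $ \prec_{ {LR}} $-closed, Theorem~\ref{thm:cor}(iv)) and then $ v\sim_{ {R}}A $, $ v\sim_{ {L}}B $ by Theorem~\ref{thm:cor}(v). Applying this to $ \nc_x\nc_y=\nE_a(\nc_{p_xb^{-1}}\nc_{bp_y})\nF_{c^{-1}} $ and using $ \nE_a\nc_v\nF_{c^{-1}}=\nc_{avc^{-1}} $ for $ v\in P_{\alpha,\gamma} $ (part~(i)) gives $ \nc_x\nc_y=\sum_{v\in P_{\alpha,\gamma}}h_{p_xb^{-1},bp_y,v}\,\nc_{avc^{-1}} $; since $ z^{-1}=ap_z^{-1}c^{-1} $ with $ p_z^{-1}\in P_{\alpha,\gamma} $, comparing coefficients of $ \nc_{z^{-1}} $ yields $ \gamma_{x,y,z}=[q^N]h_{x,y,z^{-1}}=[q^N]h_{p_xb^{-1},bp_y,p_z^{-1}} $. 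By cyclicity of the $ q^N $-coefficients (Lemma~\ref{lem:cyc}) this equals $ [q^N]h_{bp_y,p_z,bp_x^{-1}} $; and $ \nc_{bp_y}\nc_{p_z}=\nE_b(\nc_{p_y}\nc_{p_z})=\sum_{v'\in P_{\beta,\alpha}}h_{p_y,p_z,v'}\,\nc_{bv'} $ shows $ h_{bp_y,p_z,bp_x^{-1}}=h_{p_y,p_z,p_x^{-1}} $, so $ \gamma_{x,y,z}=[q^N]h_{p_y,p_z,p_x^{-1}}=\gamma_{p_y,p_z,p_x}=\gamma_{p_x,p_y,p_z} $. The part requiring the most care is the cell-label bookkeeping together with checking that each use of $ \nc_{bv}=\nE_b\nc_v $, $ \nc_{vb'^{-1}}=\nc_v\nF_{b'^{-1}} $, $ \nE_a\nc_v\nF_{c^{-1}}=\nc_{avc^{-1}} $ is applied to an element whose part-(i) decomposition has the expected shape; the one genuinely substantive ingredient is that $ \phi_b $ and $ \psi_{b'} $ preserve the opposite-side cell partition, which rests on $ (\text{P9})_{\geq N} $ and $ (\text{P10})_{\geq N} $ as packaged in Theorem~\ref{thm:cor}(v).
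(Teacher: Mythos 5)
Your argument is correct, and for part (i) it is essentially the paper's proof in different clothing: both rest on Lemma \ref{lem:length}, Theorem \ref{thm:cor}(iii)(v) (and its $\prec_{L}$-analogue obtained by inverting, which you should state explicitly, using $W_N=W_N^{-1}$), and the one-sided/two-sided decomposition formulas; your translation maps $\phi_b$, $\psi_{b'}$ just package the paper's step ``$bdy\sim_L dy$, hence $dy=ub'^{-1}$ with $u\in\Gamma_{d'}$ and $u\sim_R dy$, so $u\in\Phi_d$''. Where you genuinely diverge is part (ii). The paper multiplies by $\eta_{d_2}^2$, inserts $\nc_{d_2b_2^{-1}}\nc_{b_2d_2}$, first proves the auxiliary fact that $\gamma_{d_2b_2^{-1},b_2d_2,p^{-1}}\neq0$ forces $p=d_2$ with value $\gamma_{d_2,d_2,d_2}$, and then extracts leading coefficients from a triple sum of $h$'s using the bound $\deg h_{w_1,w_2,w_3}\leq N$. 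You instead combine the two-sided formula \eqref{eq:dec2} with the ``block support'' observation (for $A,B\in W_N$, any $v\in\wn$ with $h_{A,B,v}\neq0$ lies in $W_N$ by Theorem \ref{thm:cor}(iv) and then in $(\text{right cell of }A)\cap(\text{left cell of }B)$ by Theorem \ref{thm:cor}(v)) to peel off $\nE_a$ and $\nF_{c^{-1}}$, obtaining the exact identities $h_{x,y,z^{-1}}=h_{p_xb^{-1},bp_y,p_z^{-1}}$ and, after one use of the cyclicity in Lemma \ref{lem:cyc}(iv), $h_{bp_y,p_z,bp_x^{-1}}=h_{p_y,p_z,p_x^{-1}}$, with a second use of cyclicity finishing. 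This buys you a cleaner argument (no division by $\eta_d$, no leading-term bookkeeping, no auxiliary lemma on $h_{db^{-1},bd,p}$) and in fact a slightly stronger output: equalities of full structure constants rather than only of their $q^N$-coefficients; the paper's route, by contrast, works directly with $\gamma$'s and needs only the degree bound. Two small points you should spell out, since you rely on them repeatedly: that the anti-automorphism $\flat$ descends to $\hn$ with $(\nc_u)^\flat=\nc_{u^{-1}}$ (because $p_{y,w}=p_{y^{-1},w^{-1}}$ and $W_{>N}$ is inverse-closed), which is what legitimizes the flipped identity $\nc_{vb'^{-1}}=\nc_v\nF_{b'^{-1}}$ for $v\in\Gamma_{d'}$; and that $e\in B_d$ (so that $\Phi_\alpha=\Phi_{e,\alpha}$, $\Gamma_\gamma=\Gamma_{e,\gamma}$ are indeed cells of the form used in your block-support step, or alternatively invoke Corollary \ref{cor:rightcell} directly). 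Neither is a gap, only a verification to record.
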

\begin{proof}
Let $ w\in \Phi_{b,d}\cap\Gamma_{b',d'} $. Then $ w=bdy $ for some $ y\in U_d $. Since $ bdy\prec_{ {L}} dy$,  by Theorem \ref{thm:cor} (iii) and (v) we have $ bdy\sim_{ {L}} dy $. Thus $ dy$ belongs to the left cell  $\Gamma_{b',d'} $, i.e. $ dy=u\bd b'^{-1} $ for some  $ u\in \Gamma_{d'} $. Since $ u\in W_N $, $ dy\prec_{ {R}} u $, by Theorem \ref{thm:cor} (v) again, we have  $ u\sim_{ {R}} dy $, and hence $ u\in\Phi_d $. Take $ p_w=u $. Then we have proved $ w=b\bd p_w\bd b'^{-1} $, $ p_w\in \Phi_{d}\cap\Gamma_{d'}=P_{d,d'} $, and by the factorization formula, $ \nc_w=\nE_b\nc_{dy} =\nE_b\nc_{u}\nF_{b'^{-1}}$. Then we have $ \Phi_{b,d}\cap \Gamma_{b',d'}=bP_{d,d'}b'^{-1} $. By Theorem \ref{thm:cor} (iii), \[
W_N=\left( \bigsqcup_{\substack{d\in D_N\\b\in B_d}}\Phi_{b,d}\right) \bigcap\left( \bigsqcup_{\substack{d'\in D_N\\b'\in B_{d'}}}\Gamma_{b',d'}\right) =\bigsqcup_{\substack{d,d'\in D_N \\b\in B_d ,b'\in B_{d'} }}\Phi_{b,d}\cap\Gamma_{b',d'}.
\]Then (i) follows.

Now we prove (ii).
By Lemma \ref{lem:cyc} (iv), if $ \gamma_{x,y,z}\neq 0 $ for $ x,y,z\in W_N $, then $ x\sim_{ {L}} y^{-1}  $, $ y\sim_{ {L}} z^{-1} $, $ z\sim_{ {L}} x^{-1} $, and hence $ (b_2,d_2)=(b_3,d_3) $, $ (b_4,d_4)=(b_5,d_5)$, $ (b_6,d_6)=(b_1,d_1)$ by Theorem \ref{thm:cor} (ii) and (iii).

By Theorem \ref{thm:cor} (iv) and (v), we have \[
\nc_{db^{-1}}\nc_{bd}=\sum_{p\in P_{d,d}} h_{db^{-1}, bd, p}\nc_p.
\]
For any  $ p\in P_{d,d}$, we have $ \nc_{p^{-1}} \nc_{db^{-1}}=\eta_d \nc_{p^{-1}b^{-1}}$ by Theorem \ref{thm:dec}. This implies that if $ \gamma_{ p^{-1}, db^{-1}, bd}\neq 0  $, then $$  p=d , \gamma_{ p^{-1}, db^{-1}, bd}=\gamma_{d,d,d} .$$ 
If $ \gamma_{db^{-1}, bd, p^{-1}}\neq 0 $, then by Lemma \ref{lem:cyc} (iv) we have $ \gamma_{ p^{-1}, db^{-1}, bd}= \gamma_{db^{-1}, bd, p^{-1}}\neq 0 $. Therefore $ \gamma_{db^{-1}, bd, p^{-1}}\neq 0 $ implies that  $ p=d $ and $  \gamma_{db^{-1}, bd,p^{-1}}=\gamma_{d,d,d}$.  

Keep notations of assertion (ii) and assume that  $ \gamma_{x,y,z}\neq 0 $.
%Let $ \gamma_{x,y,z} \neq 0$ with $ x=b_1p_xb_2^{-1} $, $ y=b_2p_yb_4^{-1} $, $ z=b_4p_zb_1^{-1} $ in $ \wn $. 
Let $ x_1=b_1p_x $, $ y_1= p_yb_4^{-1}$. By the factorization formula,
\begin{align*}
\eta_{d_2}^2 \nc_x\nc_y&= \nc_{x_1}\nc_{d_2b_2^{-1}}\nc_{b_2d_2}\nc_{y_1}\\
&=\sum_{p\in P_{d_2,d_2}}h_{d_2b_2^{-1},b_2d_2,p}\nc_{x_1}\nc_p\nc_{y_1}.
\end{align*}
Hence $ \eta_{d_2}^2 h_{x,y,z^{-1}}=\sum_{\substack{p\in P_{d_2,d_2}\\u\in \wn}} h_{d_2b_2^{-1},b_2d_2,p} h_{x_1,p,u}h_{u,y_1,z^{-1}} $. Using the fact that  $$  \deg h_{w_1,w_2,w_3}\leq N  $$ for $ w_1,w_2\in W $ and  $ w_3\in\wn $, one can see that 
\begin{align*}
\gamma_{d_2,d_2,d_2}^2\gamma_{x,y,z}
&=\sum_{\substack{p\in P_{d_2,d_2}\\u\in \wn}} \gamma_{d_2b_2^{-1},b_2d_2,p^{-1}}\gamma_{x_1,p,u^{-1}}\gamma_{u,y_1,z}\\
&=\sum_{u\in \wn}\gamma_{d_2,d_2,d_2}\gamma_{x_1,d_2,u^{-1}}\gamma_{u,y_1,z}\quad \text{(by the last paragraph)}\\
&=\gamma_{d_2,d_2,d_2}^2\gamma_{x_1,y_1,z}\quad \text{(since $ \nc_{x_1}\nc_{d_2}=\eta_{d_2} \nc_{x_1} $)}\\
&=\gamma_{d_2,d_2,d_2}^2\gamma_{p_x,p_y,p_z} \quad \left(\text{\begin{minipage}{7cm} by  the factorization formula, $$  \nc_{b_1p_x}\nc_{p_yb_4^{-1}}=\sum_{v\in P_{d_1,d_4}}h_{p_x,p_y,v}\nc_{b_1vb_4^{-1}} $$
\end{minipage}}\right).
\end{align*}
Then we have $ \gamma_{x,y,z} =\gamma_{p_x,p_y,p_z}$.
\end{proof}

\subsection{Preparation for the proof of P1-P15}\label{subsec:prepare}
Keep Assumption  \ref{ass5}. 
Recall notations $ \Delta(z) $, $ n_z $, and  $ \mc{D}_N $ from section \ref{subsec:basis}.

\begin{prop} \label{prop:p4} The following properties hold.
\begin{itemize} 
	\item [(i)] For $ z\in W_N $, we have $\af(z)\leq \Delta(z)  $, and \[
	\mc{D}_N=\{z\in W_N\mid \af(z)=\Delta(z) \}=\{bdb^{-1}\mid d\in D_N, b\in B_d \}.
	\]
	In particular, $ z^2=e $ for any $ z\in\mc{D}_{N} $, and  every left cell in $ W_N $ contains a unique element in $ \mc{D}_{N} $.
	
	\item [(ii)] Let $ x,y\in \wn $ and $ z\in \mc{D}_N $. Then $ \gamma_{x,y,z}\neq 0 $ if and only if $ x=y^{-1} $ and $ y\sim_{ {L}} z $. Moreover in this case $ \gamma_{x,y,z}=n_z =\pm1$.
\end{itemize}
\end{prop}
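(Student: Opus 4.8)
The plan is to prove (i) first — it packages $(\mathrm{P1})_N$ together with the description of $\mc{D}_N$ — and then to deduce (ii) from it. For (i): by Theorem~\ref{thm:cor}(iii) and Lemma~\ref{lem:length}, every $z\in W_N$ has a \emph{unique} expression $z=bdy$ with $d\in D_N$, $b\in B_d$, $y\in U_d$. The decomposition formula (Theorem~\ref{thm:dec}) gives $\nc_{bd}\nc_{dy}=\eta_d\,\nc_z$ with $\eta_d=h_{d,d,d}$ of degree $N$ and nonzero leading coefficient $\gamma_{d,d,d}$. I would apply $\ntau$, use $\eqref{eq:tau1}$ for the pair $\nc_{bd},\nc_{dy}$, the congruence $\ntau(\nc_z)\equiv p_{e,z}\bmod q^{-N-1}\mathbb{Z}[q^{-1}]$ from $\eqref{eq:tau}$, and $d^{-1}=d$, to obtain
\[
\eta_d\,\ntau(\nc_z)\in\delta_{y,\,b^{-1}}+\mc{A}_{<0}.
\]
Reading off degrees then gives $\deg p_{e,z}\le -N$ in all cases, i.e. $\af(z)=N\le\Delta(z)$; and $\deg p_{e,z}=-N$ (equivalently $z\in\mc{D}$) exactly when $y=b^{-1}$, in which case $z=bdb^{-1}$ and comparing leading coefficients yields $\gamma_{d,d,d}\,n_z=1$, so $\gamma_{d,d,d}=n_z=\pm1$. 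This identifies $\mc{D}_N=\{\,bdb^{-1}\mid d\in D_N,\ b\in B_d\,\}$. Since each $d\in D$ is an involution (both $w_J$ and $w(t,m_{st}-1)$ are), $z^2=e$ for $z\in\mc{D}_N$; and since the left cells contained in $W_N$ are exactly the $\Gamma_{b,d}$ (Theorem~\ref{thm:cor}), each of which contains $bdb^{-1}$ and — by disjointness of the right cells $\Phi_{b',d'}$ — no other element of $\mc{D}_N$, the remaining assertion of (i) follows.

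For the ``only if'' half of (ii): assuming $\gamma_{x,y,z}\neq0$ with $x,y\in\wn$, $z\in\mc{D}_N$, Lemma~\ref{lem:cyc}(iv),(iii) (using $\af(z)=N$) give $x,y\in W_N$, $x\sim_L y^{-1}$, $y\sim_L z$, and $\gamma_{x,y,z}=\gamma_{z,x,y}$. Writing $z=bdb^{-1}$ as in (i) — which is the decomposition of $z$ furnished by Theorem~\ref{thm:cor2}(i) — Theorem~\ref{thm:cor2}(ii) yields $\gamma_{x,y,z}=\gamma_{p_x,p_y,d}$ and forces (in the notation there) $x=b\,p_x\,b_2^{-1}$, $y=b_2\,p_y\,b^{-1}$ with $p_x\in\Phi_d$, $p_y\in\Gamma_d$. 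Writing $p_x=dy_1$ with $y_1\in U_d$, one has $\nc_d\nc_{p_x}=\nc_d\nc_d\nF_{y_1}=\eta_d\,\nc_{p_x}$, so $h_{d,p_x,w}=\eta_d\,\delta_{w,p_x}$; by cyclicity (Lemma~\ref{lem:cyc}(iv),(iii)) $\gamma_{p_x,p_y,d}=\gamma_{d,p_x,p_y}$, which is the coefficient of $q^N$ in $h_{d,p_x,p_y^{-1}}=\eta_d\,\delta_{p_y^{-1},p_x}$, i.e. $\gamma_{d,d,d}\,\delta_{p_y^{-1},p_x}$. Nonvanishing forces $p_y=p_x^{-1}$, hence $y^{-1}=b\,p_x\,b_2^{-1}=x$, and $\gamma_{x,y,z}=\gamma_{d,d,d}=n_z$ by (i); together with $y\sim_L z$ this is exactly the assertion.

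For the ``if'' half: given $x=y^{-1}$ and $y\sim_L z\in\mc{D}_N$, the element $y$ lies in the left cell of $z$, so $y,y^{-1}\in W_N$, and I would compute $\ntau(\nc_{y^{-1}}\nc_y)$ in two ways. On one hand $\eqref{eq:tau1}$ gives $\ntau(\nc_{y^{-1}}\nc_y)\in 1+\mc{A}_{<0}$. On the other, in $\nc_{y^{-1}}\nc_y=\sum_w h_{y^{-1},y,w}\nc_w$ every occurring $w\in\wn$ in fact lies in $W_N$, because $W_{\ge N}$ is $\prec_{LR}$-closed (Theorem~\ref{thm:cor}(iv)); using $\ntau(\nc_w)\equiv p_{e,w}\bmod q^{-N-1}\mathbb{Z}[q^{-1}]$ together with $\deg h_{y^{-1},y,w}\le\af(w)=N$ and, from (i), $\deg p_{e,w}\le -N$ with equality only for $w\in\mc{D}_N$ (leading coefficient $n_w$), only the $w\in\mc{D}_N$ contribute in degree $0$, so $\sum_{w\in\mc{D}_N}\gamma_{y^{-1},y,w}\,n_w=1$. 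By Lemma~\ref{lem:cyc}(iv),(iii) any $w\in\mc{D}_N$ with $\gamma_{y^{-1},y,w}\neq0$ satisfies $w\sim_L y$, hence $w=z$ by the uniqueness in (i); therefore $\gamma_{y^{-1},y,z}\,n_z=1$, i.e. $\gamma_{y^{-1},y,z}=n_z=\pm1\neq0$.

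The hard part will be the ``only if'' direction of (ii): upgrading the weak conclusion $x\sim_L y^{-1}$ of Lemma~\ref{lem:cyc} to the exact identity $x=y^{-1}$. The route above — reduce via Theorem~\ref{thm:cor2}(ii) to the ``diagonal'' element $d\in D_N$ and then exploit $\nc_d\nc_{dy_1}=\eta_d\,\nc_{dy_1}$ — does this, but keeping the bookkeeping of the decomposition indices $(b_i,d_i)$ straight (and checking that $z=bdb^{-1}$ is precisely the Theorem~\ref{thm:cor2}(i) decomposition, forcing $b_5=b_6=b$, $d_5=d_6=d$, $p_z=d$) is the delicate point. Everything else is of the same flavour as the $\ntau$-computations already carried out in Section~\ref{subsec:cyc}.
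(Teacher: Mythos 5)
Your proposal is correct, and for part (i) and the ``if'' half of (ii) it follows the paper's own argument: the paper likewise applies $\ntau$ to $\nc_{bd}\nc_{dy}=\eta_d\nc_z$, combines \eqref{eq:tau} and \eqref{eq:tau1} to get $\eta_d\,p_{e,z}\in\delta_{b,y^{-1}}+\mc{A}_{<0}$, and reads off $\af(z)\le\Delta(z)$ with equality exactly when $z=bdb^{-1}$; and its proof of (ii) is exactly the $\ntau(\nc_x\nc_y)$ computation you carry out for the ``if'' direction. The genuine divergence is the ``only if'' half of (ii), which you single out as the delicate point: the paper does not need any extra mechanism there, because its single computation already yields $\sum_{w\in W_N}h_{x,y,w}p_{e,w}\equiv\delta_{x,y^{-1}}\bmod\mc{A}_{<0}$, and by (i) the only degree-zero contribution comes from the unique element $z\in\mc{D}_N$ in the left cell of $y$, giving the identity $\gamma_{x,y,z}\,n_z=\delta_{x,y^{-1}}$ which settles both directions and the value $\gamma_{x,y,z}=n_z=\pm1$ at once. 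Your alternative route — reduce via Theorem~\ref{thm:cor2}(ii) to $\gamma_{d,p_x,p_y}$, then use $\nc_d\nc_{p_x}=\eta_d\nc_{p_x}$ to force $p_y=p_x^{-1}$ and hence $x=y^{-1}$ — is valid (the index bookkeeping you worry about does come out as you say, since $z=bdb^{-1}$ lies in $\Phi_{b,d}\cap\Gamma_{b,d}$ with $p_z=d$), but it re-deploys the multiplicative machinery whose content is essentially already inside the proof of Theorem~\ref{thm:cor2}(ii), so it is heavier than the paper's pairing argument; its one virtue is that it exhibits $x=y^{-1}$ structurally rather than through a coefficient identity. One small point of hygiene: the identity $h_{d,p_x,w}=\eta_d\,\delta_{w,p_x}$ you extract from $\nc_d\nc_{p_x}=\eta_d\nc_{p_x}$ holds as stated for $w\in\wn$ (it is an identity in $\hn$, via \eqref{eq:nhxyz}), which is all you use since the relevant $w=p_y^{-1}$ lies in $W_N$; phrased for all $w\in W$ it would need the extra remark that the discrepancy lives in $\mc{H}_{>N}$.
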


\begin{proof}
	Let $ z=bdy $ for some $ d\in D_N $, $ b\in B_d $, $ y\in U_d $. We have $ \nc_{bd}\nc_{dy}=\eta_d\nc_{z} $, and 
	\begin{align*}
	\eta_d p_{e,z}&=\tau(\eta_dC_{z})\\
	&\equiv \ntau(\eta_d\nc_z) \mod \mc{A}_{<0} \quad\text{ by \eqref{eq:tau} }\\
	&=\ntau (\nc_{bd}\nc_{dy})\\
	&\equiv \delta_{b,y^{-1}} \mod \mc{A}_{<0} \quad\text{ by \eqref{eq:tau1}}.
	\end{align*}
Hence $$  \Delta(z)=-\deg p_{e,z}\leq \deg \eta_d=N =\af(z) ,$$ and the equality holds if and only if $ b=y^{-1} $, i.e. $ z=bdb^{-1} $. Then (i) follows.

Let $ x,y\in \wn $, $ z\in \mc{D}_N $ with $ \gamma_{x,y,z}\neq 0 $. By Lemma \ref{lem:cyc}(iv), $$  x\sim_R z=z^{-1}\sim_L y  .$$ By Theorem \ref{thm:cor}, we have $ x,y\in W_N $ and $ \nc_x\nc_y=\sum_{w\in W_N}h_{x,y,w}\nc_w $. We have
\begin{align*}
\ntau(\nc_x\nc_y)&=\ntau(\sum_{w\in W_N}h_{x,y,w}\nc_w)\\
&\equiv \tau(\sum_{w\in W_N}h_{x,y,w}C_w) \mod{\mc{A}_{<0}}\text{ by \eqref{eq:tau}}\\
&=\sum_{w\in W_N}h_{x,y,w}p_{e,w}.
\end{align*}
Then by \eqref{eq:tau1} we have \begin{equation}\label{eq:mod}
 \sum_{w\in W_N}h_{x,y,w}p_{e,w}\equiv \delta_{x,y^{-1}} \mod \mc{A}_{<0}.
\end{equation}By (i), we have $$  \deg h_{x,y,w}p_{e,w}\leq 0  .$$ If $ \deg h_{x,y,z}p_{e,z}= 0 $, then $ \gamma_{x,y,z^{-1}}\neq 0 $ and $ z\in\mc{D}_N $. By (i), $ z $ must be the unique  element   that contained in  $ \mc{D}_N $  and   the left cell of $ y  $.
Hence for $ w\neq z $, $ \deg h_{x,y,w}p_{e,w}< 0  $.
By \eqref{eq:mod}, we have $ \gamma_{x,y,z}n_z=\delta_{x,y^{-1}} $. Thus $ \gamma_{x,y,z}\neq 0 $ implies that $ x=y^{-1} $ and  $ \gamma_{x,y,z}=n_z=\pm 1$. 

Conversely, for any $ x=y^{-1}\sim_{ {R}} z \in \mc{D}_N$,  the above arguments also show that $ \gamma_{x,y,z}n_z=1 $, and hence $  \gamma_{x,y,z}=n_z=\pm 1 $. This proves (ii).
\end{proof}

\begin{prop}\label{prop:p15}
 Let $ \mc{M} $ be an $ \mc{A}\otimes_{\mathbb{Z}}\mc{A} $ module with basis $ \{ m_w\mid w\in W_N \} $. 
	Let $ \hn $ act on $ \mc{M} $ on the left via\[
	(f\nc_{x}). (m_w)=\sum_{z\in W_N}(fh_{x,w,z}\otimes 1) m_z \text{ for } x\in \wn, f\in \mc{A}
	\]
	and
	on the right via\[
	(m_w).(f\nc_{x})=\sum_{z\in W_N}(1\otimes fh_{w,x,z}) m_z \text{ for } x\in \wn, f\in \mc{A}
	\]
	Then these two actions  commute with each other.
\end{prop}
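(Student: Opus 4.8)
## Proof plan for Proposition \ref{prop:p15}

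The plan is to reduce the commutativity of the two actions to the associativity of multiplication in $\hn$, using the structure constants $h_{x,y,z}$ for $x,y\in\wn$, $z\in W_N$. First I would record the key input: by Theorem \ref{thm:cor}(iv), $W_N$ is a union of two-sided cells, and by Theorem \ref{thm:cor}(v) together with Lemma \ref{lem:cyc}, for any $x,y\in\wn$ and $z\in W_N$ the product $\nc_x\nc_y$ has $\nc_z$-coefficient $h_{x,y,z}$, and moreover if $z\in W_N$ appears in $\nc_x\nc_y$ with a coefficient of degree $N$ then $x,y\in W_N$ as well. The crucial algebraic fact I need is that the "top part" of the structure constants behaves associatively: for $x,x'\in\wn$ and $z\in W_N$,
\[
\sum_{u\in\wn}h_{x,x',u}\,h_{u,\cdot,z}\;=\;\sum_{u\in\wn}h_{x,u,z}\,h_{x',\cdot,u},
\]
which is just the associativity $(\nc_x\nc_{x'})h = \nc_x(\nc_{x'}h)$ in $\hn$ restricted to $W_N$-components. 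The point is that all sums may be taken over $\wn$, and the restriction of the $\nc_z$-coefficient to $z\in W_N$ is meaningful precisely because $W_N$ is $\prec_{LR}$-closed inside $\wn$ (Theorem \ref{thm:cor}(iv)), so no $z\in W_N$ can be produced from, nor produce, elements outside $\wn$ via the ideal $\mc{H}_{>N}$.

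Next I would carry out the computation directly. Fix $x,x'\in\wn$, $w\in W_N$, and $f,f'\in\mc{A}$. On one side,
\[
\bigl((f\nc_x).m_w\bigr).(f'\nc_{x'})
=\sum_{z\in W_N}(fh_{x,w,z}\otimes 1)\,(m_z.(f'\nc_{x'}))
=\sum_{z,z'\in W_N}(fh_{x,w,z}\otimes f'h_{z,x',z'})\,m_{z'},
\]
while on the other side,
\[
(f\nc_x).\bigl(m_w.(f'\nc_{x'})\bigr)
=\sum_{z\in W_N}(1\otimes f'h_{w,x',z})\,\bigl((f\nc_x).m_z\bigr)
=\sum_{z,z'\in W_N}(fh_{x,z,z'}\otimes f'h_{w,x',z})\,m_{z'}.
\]
Comparing coefficients of $m_{z'}$, I must show
\[
\sum_{z\in W_N}h_{x,w,z}\otimes h_{z,x',z'}
=\sum_{z\in W_N}h_{x,z,z'}\otimes h_{w,x',z}
\quad\text{in }\mc{A}\otimes_{\mathbb Z}\mc{A}.
\]
This will follow by applying $(\text{P15})_{N}$ — more precisely the already-available identity $\sum_{z\in W}h_{w_1,x,z}\otimes h_{z,w_2,y}=\sum_{z\in W}h_{w_1,z,y}\otimes h_{x,w_2,z}$ valid whenever $x,y\in W_N$ with $\af(x)=\af(y)$ — but since P15 is what we are ultimately trying to prove, I would instead derive the required identity intrinsically from associativity in $\hn$: the two sides above are, respectively, the $\nc_{z'}$-component of $\nc_x(\nc_w\nc_{x'})$ and of $(\nc_x\nc_w)\nc_{x'}$, once one observes that $\nc_w\nc_{x'}=\sum_{z\in\wn}h_{w,x',z}\nc_z$ and that only $z\in W_N$ contribute to a $\nc_{z'}$ with $z'\in W_N$ (by $\prec_{LR}$-closedness of $W_N$ and of $W_{>N}$, which forces all intermediate $z$ with nonzero contribution into $W_N$). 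The key technical point making the two sums literally agree term-by-term, rather than merely as totals, is that for $w,z'\in W_N$ the elements $z\in\wn$ with $h_{w,x',z}\neq 0$ and $h_{x,z,z'}\neq 0$ (respectively $h_{x,w,z}\neq 0$ and $h_{z,x',z'}\neq 0$) all lie in $W_N$; this is exactly the content of $W_N$ being a union of two-sided cells.

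The main obstacle I anticipate is the bookkeeping of which index ranges may be freely enlarged from $W_N$ to $\wn$ (or $W$) and back without changing the sums, i.e. verifying carefully that the tensor-factor identity holds on the nose and not just after applying $\tau\otimes\tau$ or passing to leading coefficients. Concretely, I need: (a) for $z\notin W_N$ but $z\in\wn$, and $z'\in W_N$, the term $h_{w,x',z}\otimes h_{x,z,z'}$ vanishes — because $h_{x,z,z'}\neq 0$ with $z'\in W_N$ and $W_{>N}$ $\prec_{LR}$-closed forces $z\in W_{\geq N}\cap\wn=W_N$; and (b) the analogous statement with the roles swapped. Granting (a) and (b), both displayed sums equal the $\nc_{z'}$-coefficient of the triple product $\nc_x\nc_w\nc_{x'}\in\hn$ computed in the two associated ways, and associativity in $\hn$ finishes the proof. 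I would close by remarking that this commutativity is precisely the structure needed to speak of $\mc{M}$ as an $(\hn,\hn)$-bimodule, and it is this bimodule that will be used to extract $(\text{P15})_N$ in section \ref{sec:proof}.
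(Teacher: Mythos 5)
There is a genuine gap at the final step. Comparing coefficients of $m_{z'}$ you need the identity
\begin{equation*}
\sum_{z\in W_N}h_{x,w,z}\otimes h_{z,x',z'}\;=\;\sum_{z\in W_N}h_{x,z,z'}\otimes h_{w,x',z}
\qquad\text{in }\mc{A}\otimes_{\mathbb Z}\mc{A},
\end{equation*}
and you propose to get it from associativity of $\hn$, saying the two sides are the $\nc_{z'}$-components of $(\nc_x\nc_w)\nc_{x'}$ and $\nc_x(\nc_w\nc_{x'})$. But those components are the elements $\sum_z h_{x,w,z}\,h_{z,x',z'}$ and $\sum_z h_{x,z,z'}\,h_{w,x',z}$ of $\mc{A}$, i.e.\ the images of the two tensors under the multiplication map $\mc{A}\otimes_{\mathbb Z}\mc{A}\to\mc{A}$. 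Associativity only tells you these images coincide; it does not give equality of the tensors themselves, since $\sum_i a_ib_i=\sum_j c_jd_j$ in $\mc{A}$ does not imply $\sum_i a_i\otimes b_i=\sum_j c_j\otimes d_j$ in $\mc{A}\otimes_{\mathbb Z}\mc{A}$. The tensor identity is exactly the content of $(\text{P15})_N$, which, as you yourself note, cannot be assumed — so the argument is circular at the only nontrivial point. (Your bookkeeping in (a) and (b) about enlarging the index set from $W_N$ to $\wn$ is fine and is also used implicitly in the paper; that is not where the difficulty lies.)

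The paper's proof gets around this by exploiting the decomposition formula. It first proves the special case $(\nc_{xd}m_{d})\nc_{dy}=\nc_{xd}(m_{d}\nc_{dy})$ for $d\in D_N$, $x^{-1},y\in U_d$: writing $\nE_x\nc_d\nF_y=\sum_z b_z\nc_z$, bar invariance of the $b_z$ together with the bound $\deg h_{xd,dy,z}\leq N$ and $\gamma_{d,d,d}=n_d=\pm1$ forces $b_z=n_d\gamma_{xd,dy,z^{-1}}\in\mathbb{Z}$; an integer scalar commutes with both tensor factors, so both orders of acting give $\eta_d'\eta_d''\,n_d\sum_z\gamma_{xd,dy,z^{-1}}m_z$. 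The general case $w=bdy$ is then reduced to this special case by expanding $\nc_u\nc_{bd}$ and $\nc_{dy}\nc_v$ in the bases $\{\nc_{x'd}\}$, $\{\nc_{dy'}\}$ and formally cancelling $\eta_d'$ and $\eta_d''$. This use of integrality of the leading coefficients is the essential ingredient your plan is missing; without some such input the commutativity cannot follow from purely formal manipulations in $\hn$.
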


\begin{proof}
We  abbreviate $$ h'_{x,y,z}=h_{x,y,z}\otimes1\in\mc{A}\otimes\mc{A}, \quad h''_{x,y,z}=1\otimes h_{x,y,z}\in\mc{A}\otimes\mc{A},$$
\[
\eta_d'=h'_{d,d,d}, \quad  \eta_d''=h''_{d,d,d}  .
\]
	We  first claim that   
	\begin{equation}\label{claim:comm}
	(\nc_{xd}m_{d})\nc_{dy}=\nc_{xd}(m_{d}\nc_{dy}) \text{ for }d\in D_N,  x^{-1}, y\in U_d.
	\end{equation}
	
	By the factorization formula, $ \nc_{xd}\nc_{d}= \eta_d\nc_{xd} $ and hence $\nc_{xd}m_{d}=\eta_d'm_{xd}$. Similarly,  $m_{d}\nc_{dy}=\eta_d''m_{dy}$.
	
	Let  $\nE_x\nc_{d}\nF_{y}=\sum_{z\in \wn}b_z\nc_z$ for some $ b_{z}\in\mc{A} $. Then $ b_z $ is bar invariant and $h_{xd,dy,z}=b_z\eta_d$. Since $ \deg h_{xd,dy,z} \leq N  $ and $ \gamma_{d,d,d}=n_d=\pm 1 $, we have $ b_z\in\mathbb{Z} $ and $ b_z=\gamma_{xd,dy,z^{-1}}n_d $.
	So 
	\begin{align*}
	(\nc_{xd}m_{d})\nc_{du}&= \eta_d' m_{xd}\nc_{dy}\\ 
	&=\eta_d'\eta_d'' n_d\sum_{z\in\wn }\gamma_{xd,dy,z^{-1}}m_z.
	\end{align*}
	Similar computations show that $ \nc_{xd}(m_{d}\nc_{du})=\eta_d'\eta_d'' n_d\sum_{z\in\wn }\gamma_{xd,dy,z^{-1}}m_z $. Then claim \eqref{claim:comm} follows.

	Let $u,v\in \wn$,  $w\in W_N$ with $ w=bdy $, $ d\in D_N $, $b\in B_d$, $y\in U_d$. We have
	\begin{align*}
\eta_d'\eta_d''	\left(\nc_um_{bdy}\right)\nc_v&=\eta_d''\left(\nc_u\left(\nc_{bd}m_{dy}\right)\right)\nc_v\\
	&=\eta_d''\left(\left(\nc_u\nc_{bd}\right)m_{dy}\right)\nc_v\\ 
	&=\eta_d''\left(\sum_{x'\in U_d^{-1}}h'_{u,bd,x'd}\nc_{x'd}m_{dy}\right)\nc_v \\
	&=\left(\sum_{x'\in U_d^{-1}}h'_{u,bd,x'd}\nc_{x'd}\left(m_{d}\nc_{dy}\right)\right)\nc_v\\
	&=\left(\sum_{x'\in U_d^{-1}}h'_{u,bd,x'd}\left(\nc_{x'd}m_{d}\right)\nc_{dy}\right)\nc_v \text{ by \eqref{claim:comm}}\\
	&=\left(\sum_{x'\in U_d^{-1}}h'_{u,bd,x'd}\left(\nc_{x'd}m_{d}\right)\right)\left(\nc_{dy}\nc_v\right) \\
	&=\sum_{\substack{x'\in U_d^{-1}\\y'\in U_d}} h'_{u,bd,x'd}h''_{dy,v,dy'}\left(\nc_{x'd}m_{d}\right)\nc_{dy'}.
	\end{align*}
	
	Similar computations show that \[\eta_d'\eta_d''\nc_u(m_{bdy}\nc_v)= \sum_{\substack{x'\in U_d^{-1}\\y'\in U_d}} h'_{u,bd,x'd}h''_{dy,v,dy'}\nc_{x'd}(m_{d}\nc_{dy'}).\] Using \eqref{claim:comm} again, we have $(\nc_xm_{bdy})\nc_{y}=\nc_x(m_{bdy}\nc_y)$. This completes the proof.
\end{proof}

\section{Proof of P1-P15}\label{sec:proof}

\begin{ass}
	In this section, $ (W,S) $ is a Coxeter group with complete graph with a positive weight function $ L $.
\end{ass}

\begin{prop}\label{prop:ind}
If $ (\text{P1,P4,P7,P8})_{>N} $ and  $ W_{>N}=\Omega_{>N} $  hold for $ (W,S) $,  then   $ (\text{P1-P11})_{N} $,  $ (\text{P13-P15})_{N} $  and $ W_{N}=\Omega_{N} $ hold.
%	In particular, by induction on $ |S| $ and  $ N $, 
%	If  $ (\text{P1-P15})_{<N} $ hold for any  Coxeter group with complete graph, then $ (\text{P1-P15})_{N} $ also  hold. In particular, conjectures P1-P15 hold for Coxeter groups with complete graph.
\end{prop}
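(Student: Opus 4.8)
The plan is to reduce everything to the structural results of Section~\ref{sec:main}. First I would observe that the hypotheses $(\text{P1,P4,P8})_{>N}$ together with $W_{>N}=\Omega_{>N}$ are precisely Assumption~\ref{ass:indx}, so Theorems~\ref{thm:dec}, \ref{thm:cor}, \ref{thm:cor2} and Propositions~\ref{prop:p4}, \ref{prop:p15} are all available. Theorem~\ref{thm:cor}(iii) gives $W_N=\Omega_N$ at once; Theorem~\ref{thm:cor}(iv) gives that $W_{\geq N}$ is $\prec_{LR}$-closed (hence $(\text{P4})_N$) and that $W_N$ is a union of two-sided cells; Theorem~\ref{thm:cor}(ii),(v) give the explicit left/right cell structure of $W_N$. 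Proposition~\ref{prop:p4}(i) yields $(\text{P1})_N$ ($\af\le\Delta$ on $W_N$) and $(\text{P6})_N$ ($z^2=e$ for $z\in\mc{D}_N$), together with existence and uniqueness of a $\mc{D}_N$-element in each left cell contained in $W_N$.

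Next I would settle the $\gamma$-statements $(\text{P2})_N$, $(\text{P3})_N$, $(\text{P5})_N$, $(\text{P7})_N$, $(\text{P8})_N$, $(\text{P13})_N$. The common device: if $\gamma_{x,y,z}\neq0$ with $z\in W_N$ and one of $x,y$ lay in $W_{>N}$, then $(\text{P7,P8})_{>N}$ would force a left-cell relation of the form $z\sim_L x^{\pm1}$, hence $z\in W_{>N}$ by $\prec_{LR}$-closedness of $W_{>N}$, contradicting $z\in W_N$; so in fact $x,y\in\wn$, and then Lemma~\ref{lem:cyc}(iv) and Proposition~\ref{prop:p4}(ii) apply directly, giving $(\text{P2})_N$ and $(\text{P5})_N$; the cyclic symmetry $(\text{P7})_N$ and the cell relations $(\text{P8})_N$ follow from Lemma~\ref{lem:cyc}(iii),(iv), using also $(\text{P7})_{>N}$, resp. $(\text{P8})_{>N}$, whenever an argument sits in $W_{>N}$. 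For $(\text{P3})_N$ I would first use $(\text{P4})_N$ together with $(\text{P8})_{>N}$ to show that any $z\in\mc{D}$ with $\gamma_{y^{-1},y,z}\neq0$ must lie in $W_N$, hence in $\mc{D}_N$ with $z^{-1}=z$ by $(\text{P6})_N$; then Lemma~\ref{lem:cyc}(iv) and Proposition~\ref{prop:p4}(i)--(ii) give $z\sim_L y$ and its uniqueness. Finally $(\text{P13})_N$ is the combination of Proposition~\ref{prop:p4}(i),(ii) with the identification of left cells in $W_N$ as the $\Gamma_{b,d}$ of Theorem~\ref{thm:cor}(ii).

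The closure statements $(\text{P9})_N$, $(\text{P10})_N$, $(\text{P11})_N$, $(\text{P14})_N$ follow from the cell structure. $(\text{P10})_N$ is Theorem~\ref{thm:cor}(v) verbatim, and $(\text{P9})_N$ is its image under $w\mapsto w^{-1}$ (using $\af(w)=\af(w^{-1})$, so $W_N=W_N^{-1}$). For $(\text{P14})_N$ I would write $w=bdy$ with $d\in D_N$, $b\in B_d$, $y\in U_d$ (Lemma~\ref{lem:bdy-basic}(iii)); then $w\sim_R bd$, and since $d=d^{-1}$ and $b^{-1}\in U_d$ one has $bd=(db^{-1})^{-1}\in\Gamma_d$, so $w\sim_{LR}d$; as $\prec_{LR}$ is stable under inversion and $d^{-1}=d$, also $w^{-1}\sim_{LR}d$, whence $w\sim_{LR}w^{-1}$. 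For $(\text{P11})_N$, given $w'\prec_{LR}w$ with $\af(w')=\af(w)=N$, $(\text{P4})_{\geq N}$ (i.e. $(\text{P4})_N$ and $(\text{P4})_{>N}$) forces every element of a generating $\prec_L/\prec_R$ chain from $w'$ to $w$ to have $\af$-value $N$, so each elementary step is an equality by $(\text{P9})_N$ or $(\text{P10})_N$, giving $w'\sim_{LR}w$.

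Finally, $(\text{P15})_N$ is read off from Proposition~\ref{prop:p15}: commutativity of the left and right $\hn$-actions on $\mc{M}$ is exactly the P15 identity when the ``$x$'' and outer variables are taken in $\wn$. The step needing care is that P15 allows $w,w'\in W$ arbitrary and sums over all $z\in W$; here one checks, using $\nc_v=0$ and the ideal property of $\mc{H}_{>N}$ for $v\in W_{>N}$, that $h_{w,x,z}$ and $h_{z,w',y}$ (and the matching terms on the other side) vanish unless $w,w',z\in\wn$, and then $z\in W_{\geq N}$ by $(\text{P4})_{\geq N}$, hence $z\in W_N$, so the sums coincide with those of Proposition~\ref{prop:p15}. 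I expect the main obstacle to be precisely this bookkeeping --- keeping track of which elements are forced into $\wn$, resp. $W_N$, and invoking the correct instance of the hypotheses $(\text{P1,P4,P7,P8})_{>N}$ --- rather than any new estimate, since all the analytic input has already been packaged into the degree inequalities of Section~\ref{sec:main}.
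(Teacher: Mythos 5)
Your proposal is correct and follows essentially the same route as the paper's own proof: reduce to Assumption \ref{ass:indx}, read off $W_N=\Omega_N$, $(\text{P4})_N$, $(\text{P9}$--$\text{P11})_N$ from Theorem \ref{thm:cor}, the $\gamma$- and $\mc{D}_N$-statements from Lemma \ref{lem:cyc} and Proposition \ref{prop:p4}, $(\text{P14})_N$ from the decomposition $w=bdy$, and $(\text{P15})_N$ from Proposition \ref{prop:p15} after forcing $w,w',z$ into $\wn$ and $z$ into $W_N$. The only cosmetic differences (e.g.\ using the ideal property of $\mc{H}_{>N}$ rather than a termwise $(\text{P4})_{>N}$ argument for the vanishing in P15, and the phrasing of the reduction step in P7/P8) do not change the substance.
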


\begin{proof}
	First, $ W_{N}=\Omega_{N} $ follows from Theorem \ref{thm:cor} (iii).
	
	By Proposition \ref{prop:p4} (i) we have $ (\text{P1})_{N} $, $ (\text{P6})_{ N} $.  By Theorem \ref{thm:cor} (iv), if $ w\in W_N $ and $ w'\prec_{ {LR}} w $, then $ w'\in W_{\geq N} $, and hence $ \af(w')\geq N=\af(w) $. This proves $ (\text{P4})_{N}  $. By Theorem \ref{thm:cor} (v) we have $ (\text{P9,P10})_{ N} $. 
	
	If $ w' \prec_{ {LR}} w $ with $ w\in W_N $ and $ \af(w')=\af(w) $, then by definition we have a sequence  $ w'=w_1,w_2, \cdots, w_n=w$ such that $ w_i\prec_{ {L}} w_{i+1} $ or $ w_i\prec_{R} w_{i+1} $. By  $ (\text{P4})_{\geq N}  $, we have $$  N=\af(w_1)\geq \af(w_2)\geq \cdots \geq \af(w_n) =N .$$ By $  (\text{P9,P10})_{ N}  $ we have $ w'\sim_{ {LR}} w $. This proves $  (\text{P11})_{ N}  $.
	
By Theorem \ref{thm:cor} (iii), if $ w\in W_N $, then $ w=bdy $ for some $ d\in D_N $, $ b\in B_d $ and $ y\in U_d $. By Theorem \ref{thm:cor} (iii), $w\sim_{ {LR}} d$. Then $ w^{-1}\sim_{ {LR}} d^{-1}=d$, and hence $ w\sim_{ {LR}} w^{-1} $. This proves  $ (\text{P14})_{ N} $.

		Now we prove $ (\text{P8})_{N} $ and use notations in $ (\text{P8})_{\geq N} $. By $(\text{P8})_{>N}   $, we can assume that $ x,y,z\in \wn $ and at least one of them lies in $ W_N $. We claim that we always have $ z\in W_N $ no matter which one  belongs to $ W_N $. For example, if $ x\in W_N $, then $ \gamma_{x,y,z}\neq 0 $ implies that $ z^{-1}\prec_{ {LR}} x$, and by Theorem \ref{thm:cor} (iv), we have $ \af(z)\geq \af(x)=N $. But $ z\in \wn $. Thus $ z\in W_N $. Then
		using Lemma \ref{lem:cyc} (iv), we have $ x\sim_{ {L}} y^{-1} $, $ y\sim_{ {L}} z^{-1} $,  $ z\sim_{ {L}} x^{-1} $. This proves $ (\text{P8})_{N} $.
		
		Now we prove $ (\text{P7})_{N} $ and use notations in $ (\text{P7})_{\geq N} $. By $(\text{P7})_{>N}   $, we can assume that $ x,y,z\in \wn $ and one of them is in $ W_N $. If $  \gamma_{x,y,z},\gamma_{y,z,x},\gamma_{z,x,y} $ are all zero, then $\gamma_{x,y,z}=\gamma_{y,z,x}=\gamma_{z,x,y}$ is obvious.
		If one of $  \gamma_{x,y,z},\gamma_{y,z,x},\gamma_{z,x,y} $ is nonzero, say $  \gamma_{z,x,y}\neq 0 $, then by  arguments in the previous paragraph, we have $ y\in W_N $, and then applying Lemma \ref{lem:cyc} (iv) we have 
		$ \gamma_{x,y,z}=\gamma_{y,z,x}=\gamma_{z,x,y}$.   This proves $ (\text{P7})_{N} $.

	By   Proposition \ref{prop:p4} (i) and (ii), we have $ (\text{P13})_{ N}  $. 
	
	Let $ z\in \mc{D}_N $ with $ \gamma_{x,y,z}\neq 0 $. Then by $ (\text{P8,P4})_{N} $, we have $ x,y\in W_N $. By Proposition \ref{prop:p4} (ii), we have $ x=y^{-1} $. This proves $  (\text{P2})_{N}  $.
	
	Let $ y\in W_N $ and $ z\in\mc{D} $ with $ \gamma_{y^{-1},y,z} \neq 0$. Then by $ (\text{P8,P4})_{N} $, we have $ z\in \mc{D}_N $, and $ y\sim_{ {L}} z^{-1} $. By $ (\text{P6})_{N} $, $ z^{-1}=z $. Hence $ z $ is contained in the left cells that contains $ y $. By $ (\text{P13})_{N} $, $ z $ is unique. This proves $ (\text{P3})_{N} $.
	
	Let $ z\in \mc{D}_N $ with $ \gamma_{y^{-1},y,z}\neq 0 $. Then $ y\in W_N $ and by  Proposition \ref{prop:p4} (ii), $  \gamma_{y^{-1},y,z}=n_z=\pm1 $. This proves $ (\text{P5})_{ N}  $. 
	
%	By $  (\text{P7,P8})_{\geq N}  $, $  (\text{P13})_{N} $ and Proposition \ref{prop:p4} (ii), we have $ (\text{P2,P3,P5})_{ N}  $. 
Use notations in $ (\text{P15})_{N} $ and Proposition \ref{prop:p15}. We have  $ w,w' \in W$ and $ x,y\in W_N $.
If $ w $ or $ w' $  are in $ \wnn $, then both sides of the equation in $ (\text{P15})_{N} $ are  $  0  $ by $ (\text{P4})_{> N}  $. Assume now that $ w , w'\in \wn $.  If $ h_{w,x,z}\otimes h_{z,w',y}\neq 0 $, then $ z\prec_{ {LR}} x $ and $ y\prec_{ {LR}} z $. By  $ (\text{P4})_{\geq N}  $, $$  \af(z)\geq \af(x)=N=\af(y)\geq \af(z) ,$$ and hence $ z\in W_N $.
Thus the left side of the equation in $ (\text{P15})_{N} $ is the coefficient of $ m_y $ in $ (\nc_wm_x)\nc_{w'} $, and similarly the right side is that of $ \nc_w(m_x\nc_{w'}) $. They are equal by Proposition \ref{prop:p15}.	This proves  $ (\text{P15})_{N} $.
% Let $ J $ be the smallest  subset of $ S $ such that $ d\in W_J $ (note that $ |J |\leq 2$).  Let $ \af $, $ \af_I $, $ \af_J $ be the $ \af $-functions on $ W $, $ W_I $, $ W_J $ respectively. Note that $ W_I $ is also a Coxeter group with complete graph. By $ (\text{P1-P15})_{ >N}  $ for $ W $, we have $ (\text{P1-P15})_{>N}  $ for $ W_I$, and hence we have  Theorem \ref{thm:cor}(iii) for  $ W_I $.
%By Theorem \ref{thm:cor}(iii) for $ W $ and $ W_I $, we have $ \af(w)=\af_J(d) =N$, $ \af_I(w)=\af_J (d)=N$. Then  $ (\text{P12})_{ N}  $ follows.
\end{proof}

\begin{thm}\label{thm:last}
		 For any Coxeter group with complete graph $ (W,S) $, conjectures P1-P15 hold, and $ W_{ N}= \Omega_{ N}$ for any $ N $.
\end{thm}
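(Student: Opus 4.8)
The plan is a decreasing induction on $N$, with Proposition~\ref{prop:ind} as the inductive engine, and P12 treated separately at the very end. For the base case, recall that the boundedness conjecture holds for Coxeter groups with complete graph (\cite{xi2012afunction,Shi-Yang2016}), so there is an $N_0$ with $W_{>N_0}=\emptyset$; moreover the explicit formulas for $\af'$ together with the definition of $N_0$ give $\af'(d)\le N_0$ for every $d\in D$ (indeed $\af'(w_J)=L(w_J)\le N_0$, and $\af'(d_I)=L'(d_I)<L(w_I)\le N_0$), hence $D_{>N_0}=\emptyset$ and $\Omega_{>N_0}=\emptyset$. Thus for $N\ge N_0$ the statements $(\text{P1},\text{P4},\text{P7},\text{P8})_{>N}$ hold vacuously and $W_{>N}=\emptyset=\Omega_{>N}$.

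Next I would induct downward on $N$. Assume that for every $N'>N$ we have already proved $(\text{P1-P11})_{N'}$, $(\text{P13-P15})_{N'}$ and $W_{N'}=\Omega_{N'}$; taking the conjunction over such $N'$ yields precisely the hypotheses $(\text{P1},\text{P4},\text{P7},\text{P8})_{>N}$ and $W_{>N}=\Omega_{>N}$ of Proposition~\ref{prop:ind}, whose conclusion delivers $(\text{P1-P11})_{N}$, $(\text{P13-P15})_{N}$ and $W_{N}=\Omega_{N}$. Carrying this down to $N=0$ (where everything is trivial), we obtain $(\text{P1-P11})_{N}$, $(\text{P13-P15})_{N}$ and $W_{N}=\Omega_{N}$ for all $N\ge 0$; equivalently $(\text{P1-P11})_{\ge N}$ and $(\text{P13-P15})_{\ge N}$ hold for every $N$, i.e. P1-P11 and P13-P15 hold for $W$, and $W_N=\Omega_N$ for all $N$ as asserted in the theorem. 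Note that the same reasoning applies verbatim to any parabolic subgroup $W_I$, which is again a Coxeter group with complete graph and hence also satisfies the boundedness conjecture; in particular $(W_I)_N=(\Omega_I)_N$ for all $N$, and this conclusion does not involve P12.

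It remains to establish P12. Fix $I\subset S$; I claim $\af_I(w)=\af(w)$ for all $w\in W_I$, which is more than enough. By the combinatorial identities \eqref{eq:omegaj} and \eqref{eq:woi} (valid for $W_I$ inside $W$ regardless of any conjecture), $(\Omega_I)_M=\Omega_M\cap W_I$ for every $M$. Combining this with $W_M=\Omega_M$ proved above for $W$ and with $(W_I)_M=(\Omega_I)_M$ proved above for $W_I$, we get $(W_I)_M=W_M\cap W_I$ for all $M$; equivalently, for $w\in W_I$ one has $\af_I(w)\ge M\iff\af(w)\ge M$ for every $M$, whence $\af_I(w)=\af(w)$. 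This is P12, and as a byproduct it reproves $(W_I)_{\le N}=W_{\le N}\cap W_I$, the compatibility used implicitly throughout Section~\ref{sec:main}. Together with the previous paragraph this completes the proof.

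As for where the real difficulty lies: essentially all of it is already behind us once we reach this theorem. The degree estimates of Propositions~\ref{prop:key1} and~\ref{prop:leq}, the decomposition formula $\nc_{bdy}=\nE_b\nc_d\nF_y$ of Theorem~\ref{thm:dec} together with its cell-theoretic corollaries (Theorems~\ref{thm:cor} and~\ref{thm:cor2}), and the extraction of $(\text{P1-P11})_N$ and $(\text{P13-P15})_N$ in Proposition~\ref{prop:ind} are the substantive steps. Relative to those, the present argument is bookkeeping; the only points requiring a little care are to anchor the downward induction correctly by boundedness (checking $\af'(d)\le N_0$ for all $d\in D$), and to carry out the reduction of P12 to the parabolic case using only the purely combinatorial $\Omega$-identities together with the already-proven $W_M=\Omega_M$, so that no circular appeal to P12 for $W_I$ creeps in.
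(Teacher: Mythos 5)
Your proposal is correct and follows essentially the same route as the paper: decreasing induction anchored by the boundedness of the $\af$-function, with Proposition~\ref{prop:ind} supplying the inductive step, and P12 deduced afterwards from $W_M=\Omega_M$, the identity $(\Omega_I)_M=\Omega_M\cap W_I$, and the fact that $W_I$ is again a Coxeter group with complete graph. The extra care you take in verifying $\af'(d)\le N_0$ for the base case and in assembling $(\text{P1,P4,P7,P8})_{>N}$ from the statements at each level $N'>N$ only makes explicit what the paper leaves implicit.
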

\begin{proof}
	
	Since the $ \af $-function is bounded for Coxeter groups with complete graph,   we have $ W_{>N_0}=\Omega_{>N_0}=\emptyset $ for large enough $ N _0$, and hence $ (\text{P1-P11})_{>N_0} $ and $ (\text{P13-P15})_{>N_0} $ hold trivially. 
	Then by Proposition  \ref{prop:ind} and  decreasing induction on $ N $, one can  prove $ (\text{P1-P11})_{\geq  N} $, $ (\text{P13-P15})_{\geq N} $ and $ W_{ N}= \Omega_{ N}$ for all $ N $. Thus P1-P11, P13-P15 and $ W_{N}=\Omega_{N} $ hold for  any $ N $ and any Coxeter group with complete graph.
	
	At last we prove P12. Let $ w\in W_I$ with $ I\subset S $. Assume  $ \af(w)=N $.
	By the last paragraph, $ W_N=\Omega_{ N} $, and $ (W_I)_N=(\Omega_I)_{N} $ (note that $ W_I $ is also a Coxeter group with complete graph). Hence $$  w\in W_N\cap W_I=\Omega_{ N} \cap W_I= (\Omega_I)_{ N}=(W_I)_N $$ (see \eqref{eq:omegaj}).
	This proves P12.
%	
%	we have $ (\text{P1-P11})_{>N} $ and $ (\text{P13-P15})_{>N} $.
%	
%	We use induction on $ |S| $. If $ |S|=1 $ it is obvious. Assume that P1-P15 holds for any  Coxeter group with complete graph such that its rank is less than $ |S| $, in particular for any proper parabolic subgroup of $ (W,S) $. Then by  using Theorem \ref{thm:ind} and decreasing induction on $ N $, one can prove P1-P15 for $ (W,S) $. Note that to prove the base case of the deceasing induction on $ N $, we need  boundedness of $ \af $-functions (see \cite{xi2012afunction}): for a large enough $ N_0 $, we have $ W_{>N_0}=\emptyset $.
%	This completes the proof.
\end{proof}

\begin{cor}
 Assumptions \ref{ass4} and \ref{ass5}  and all the results in section \ref{sec:main}  hold for all  $ N\in\mathbb{N} $.
\end{cor}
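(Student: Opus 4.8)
The plan is to observe that the theorem just proved retroactively removes every hypothesis imposed by Assumption \ref{ass:indx}, after which all of section \ref{sec:main} becomes unconditional. Recall that Assumption \ref{ass:indx} fixes an integer $N>0$ for which $\wnn=\Omega_{>N}$ and $(\text{P1,P4,P8})_{>N}$ hold (a handful of results, up to and including Theorem \ref{thm:dec} and excluding Corollary \ref{cor:rightcell}, using only the weaker part ``$\wnn=\Omega_{>N}$ and $\prec_{LR}$-closed''); so it suffices to check that, for the Coxeter group with complete graph $(W,S)$, these conditions hold for \emph{every} integer $N$.

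First, by the preceding theorem P1-P15 hold for $(W,S)$, so in particular $(\text{P1})_{\ge M}$, $(\text{P4})_{\ge M}$ and $(\text{P8})_{\ge M}$ hold for all $M$, whence $(\text{P1,P4,P8})_{>N}$ holds for every $N$. Secondly, again by the theorem $W_M=\Omega_M$ for all $M$, so
\[
W_{>N} \;=\; \bigcup_{M>N} W_M \;=\; \bigcup_{M>N}\Omega_M \;=\;\Omega_{>N}
\]
for every $N$, i.e.\ $\wnn=\Omega_{>N}$. Hence Assumption \ref{ass:indx} is satisfied for every positive integer $N$; for $N\le 0$ the section already records that all its results hold trivially (with $W_N=\Omega_N=\{e\}$ when $N=0$, and $W_N=\Omega_N=\emptyset$, $W_{>N}=W=\Omega_{>N}$ when $N<0$). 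Consequently every result of section \ref{sec:main} — Propositions \ref{prop:key1}, \ref{prop:leq}, \ref{prop:predec}, \ref{prop:p4}, \ref{prop:p15}, the decomposition formula Theorem \ref{thm:dec}, Theorems \ref{thm:cor} and \ref{thm:cor2}, together with the intervening lemmas and corollaries — is valid for all integers $N$.

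The only point worth a remark is a seeming circularity, since the theorem is itself obtained, through Proposition \ref{prop:ind}, from results of section \ref{sec:main}. There is no vicious circle: the theorem was proved by \emph{decreasing} induction on $N$, where at stage $N$ one first derives Assumption \ref{ass:indx} for that particular $N$ from the inductive hypothesis $(\text{P1,P4,P8})_{>N}$ together with $W_{>N}=\Omega_{>N}$, then invokes section \ref{sec:main} for that $N$ only, and concludes the ``$\ge N$'' statements. The present corollary merely records the endpoint of this induction, so I expect no real obstacle — it is pure bookkeeping.
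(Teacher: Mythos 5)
Your proof is correct and is exactly the argument the paper intends (the corollary is stated as an immediate consequence of the theorem, which supplies $(\text{P1,P4,P8})_{>N}$ and $W_{>N}=\Omega_{>N}$ for every $N$, so Assumption \ref{ass:indx} holds unconditionally); your remark dispelling the apparent circularity via the decreasing induction is also the right bookkeeping.
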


\section{Cells}\label{sec:cells}

\begin{ass}
In this section, $ (W,S,L) $  is  a positively weighted Coxeter group with complete graph.
\end{ass}

Define $ \mathbb{A}=\{N\in\mathbb{N}\mid W_N\neq\emptyset \} $.

\begin{thm} \label{th:cell}
We have 
\[ W=\bigsqcup_{\substack{N\in\mathbb{A},d\in D_N\\b\in B_d,y\in U_d}} bdU_d \]
is the  partition into right cells and 
 \[W=\bigsqcup_{N\in \mathbb{A}}\Omega_{ N} \] is the partition into  two-sided cells.
\end{thm}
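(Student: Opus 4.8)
The plan is to assemble Theorem \ref{th:cell} from the structural results already established in Section \ref{sec:main}, now available for every integer $N$ by the preceding Corollary. First I would record the $\af$-fibration of $W$: since the $\af$-function is bounded and takes values in $\mathbb{N}$, we have $W=\bigsqcup_{N\in\mathbb{A}}W_N$, where $\mathbb{A}=\{N\mid W_N\neq\emptyset\}$. By the main theorem $W_N=\Omega_N$ for every $N$, so this is also $W=\bigsqcup_{N\in\mathbb{A}}\Omega_N$. Theorem \ref{thm:cor}(iv) tells us each $W_N$ (equivalently each $\Omega_N$) is a union of two-sided cells, so to prove the second displayed partition it remains to show each $\Omega_N$ is a \emph{single} two-sided cell, i.e. any two elements of $\Omega_N$ are $\sim_{LR}$-equivalent.

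For that single-cell claim, I would argue as follows. Fix $w\in\Omega_N=W_N$. By Theorem \ref{thm:cor}(iii), $w\in\Phi_{b,d}=bdU_d$ for some $d\in D_N$, $b\in B_d$, $y\in U_d$ with $w=bdy$; and by Theorem \ref{thm:cor}(ii) together with the remark inside its proof, $w\sim_{LR}d$. Hence every element of $\Omega_N$ is two-sided-equivalent to some $d\in D_N$. So it suffices to show that all the $d\in D_N$ lie in one two-sided cell. If $D_N$ has a single element this is immediate; otherwise I would invoke $(\text{P11})_N$ (proved in Proposition \ref{prop:ind}) after exhibiting $\prec_{LR}$ relations between distinct $d,d'\in D_N$ — but the cleaner route is: since $W_N$ is a union of two-sided cells and each two-sided cell in $W_N$ contains, by Proposition \ref{prop:p4}(i) and Theorem \ref{thm:cor}, elements of the form $bdb^{-1}$; actually the most economical argument is to note that $\af$ is constant ($=N$) on $W_N$, every $w\in W_N$ satisfies $w\sim_{LR}d$ for some $d\in D_N$, and then show directly that any two $d,d'\in D_N$ satisfy $d\prec_{LR}d'$. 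This last point is where the real content lies: one needs that the Kazhdan–Lusztig preorder links all the distinguished elements of a given $\af$-level. I expect this to follow by exhibiting, for $d=w_J$ or $d=w(t,m_{st}-1)$ and similarly $d'$, a chain using that products $\nc_x\nc_{d}$ can reach $\nc_{d'}$-multiples — concretely, using Theorem \ref{thm:cor}(iii) which says the right cells in $W_N$ are indexed by $\bigsqcup_{d\in D_N}B_d d$ and these all sit inside the two-sided decomposition, combined with $(\text{P14})_N$ giving $w\sim_{LR}w^{-1}$.

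For the right-cell partition, the argument is more direct. By Theorem \ref{thm:cor}(ii), for each $N\in\mathbb{A}$, $d\in D_N$, $b\in B_d$, the set $\Phi_{b,d}=bdU_d$ is a right cell of $W$. By Theorem \ref{thm:cor}(iii), $W_N=\Omega_N=\bigsqcup_{d\in D_N,\,b\in B_d}\Phi_{b,d}$, a disjoint union. Taking the union over $N\in\mathbb{A}$ and using $W=\bigsqcup_{N\in\mathbb{A}}W_N$ yields
\[
W=\bigsqcup_{\substack{N\in\mathbb{A},\,d\in D_N\\ b\in B_d}}bdU_d,
\]
and every block on the right is a right cell, so this is exactly the partition of $W$ into right cells (every right cell appears, since every element of $W$ lies in one of these blocks, and distinct blocks are distinct right cells by the disjointness). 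The indexing by $y\in U_d$ in the statement is just the enumeration of elements within the block $bdU_d$, so no extra work is needed there.

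The main obstacle is the claim that $\Omega_N$ is a single two-sided cell, i.e. that all $d\in D_N$ are $\sim_{LR}$-equivalent; everything else is bookkeeping over the already-proved $W_N=\Omega_N$ and Theorem \ref{thm:cor}. I would handle the obstacle by combining $(\text{P4})_{\geq N}$ (so $\prec_{LR}$ within $W_N$ forces $\sim_{LR}$, via $(\text{P11})_N$) with an explicit verification that for any two distinguished elements $d,d'$ of the same $\af$-value one has a $\prec_{L}$ or $\prec_{R}$ relation between suitable elements of their cells — e.g. using that $\nc_{b'd'}$ occurs in some product $h\,\nc_{bd}$ when the relevant parabolic data overlaps, or reducing to the known two-sided cell structure of rank-$2$ parabolics via Lemma \ref{lem:xi} and Lemma \ref{lem:shiyang}. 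If a fully uniform such chain is awkward, the fallback is to cite that $W_N$ is a union of two-sided cells and then rule out more than one cell by a dimension/counting comparison of $\gamma$-coefficients using Theorem \ref{thm:cor2}(ii), which shows the $\gamma$-structure on $W_N$ is governed entirely by the single ``core'' $P_{d,d'}$ data, forcing connectivity.
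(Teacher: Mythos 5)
Your treatment of the right-cell partition is fine and matches the paper: it is exactly Theorem \ref{thm:cor}(ii)(iii) summed over $N\in\mathbb{A}$, together with $W=\bigsqcup_N W_N=\bigsqcup_N\Omega_N$. The gap is in the two-sided statement. You correctly isolate the real content --- that any two $d,d'\in D_N$ must be shown $\sim_{LR}$-equivalent --- but you never actually prove it; you only say you ``expect'' a chain of $\prec_{L}$/$\prec_{R}$ relations to exist, and your fallback (a counting comparison of $\gamma$-coefficients via Theorem \ref{thm:cor2}(ii)) cannot work: Theorem \ref{thm:cor2} is stated in terms of $P_{d,d'}=\Phi_d\cap\Gamma_{d'}$ but never asserts $P_{d,d'}\neq\emptyset$, and nonemptiness of this intersection is precisely the point at issue. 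Likewise $(\text{P14})_N$ and $(\text{P11})_N$ only convert $\prec_{LR}$ within $W_N$ into $\sim_{LR}$; they produce no $\prec_{LR}$ relation between distinct $d,d'$ to begin with. That this cannot be a purely formal consequence of the structure theorems is shown by the paper's own appendix: for right-angled Coxeter groups all of Section \ref{sec:main} goes through, yet $W_N$ can contain more than one two-sided cell (the rank-$3$ example with $L(s_1)=L(s_3)=1$, $L(s_2)=2$), so the complete-graph hypothesis must enter at exactly this step.

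The paper closes the gap by an explicit construction: given $d_1,d_2\in D_N$ it exhibits an element $x\in d_1U_{d_1}\cap U_{d_2}^{-1}d_2$, so that $d_1\sim_R x\sim_L d_2$. Concretely, if $d_2=w_J$ one takes $x$ to be the longest element of the coset $d_1W_J$; if $d_1=d_{J_1}$, $d_2=d_{J_2}$ with $J_1\cap J_2=\emptyset$ one takes $x=d_1d_2$; if $J_1=J_2$ then $d_1=d_2$; and if the supports share one generator $r$ one takes $x=d_{J_1}trsd_{J_2}$. Checking that each such $x$ lies in $\Omega_N$ (so that it sits in the right cell $d_1U_{d_1}$ and the left cell $U_{d_2}^{-1}d_2$ by Theorem \ref{thm:cor}(ii)) uses $m_{st}\geq 3$ for all $s\neq t$. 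To repair your proof you would need to supply this (or an equivalent) construction; without it the claim that each $\Omega_N$ is a single two-sided cell is unsupported.
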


\begin{proof}
The right cell partition has been given in  Theorem \ref{thm:cor} (ii) and (iii).

By Theorem \ref{thm:cor}, the  subset $ W_N=\Omega_{ N} $ is a union of   two-sided cells with $ \af $-values $ N $.
For $ w\in \Omega_{ N} $, we have $ w=bdy $ for some $ d\in D_N $, $ b\in B_d $, $ y\in U_d $, and hence $ w\sim_{ {LR}} d $. Thus any element $ w \in W_N$ is $ \sim_{ {LR}} $ equivalent to some $ d\in D_N $. To determine the two-sided cells in $ W_N $, it is enough to determine the restriction of $ \sim_{ {LR}} $ on $ D_N $.

Let $ d_1, d_2\in D_N $. We find an element $ x\in \Omega_{ N} $ as follows.

 If one of $ d_1,d _2  $,  say $ d_2 $, is the longest element $ w_J $ of  some finite parabolic subgroup $ W_J $, then we take $ x $ to be the longest element of the coset $ d_1W_J $.  

Assume that  $ d_1=d_{J_1} $, $ d_2=d_{J_2} $ for some $ J_1, J_2 \subset S$ (see section \ref{sec:rank2} for  definition of $ d_J $). There are three  cases:
\begin{itemize}
\item If  $ J_1\cap J_2=\emptyset $, then we take  $ x=d_1d_2 $.
		
\item If $ J_1\subset J_2 $ or $ J_2\subset J_1 $, then we must have $ J_1=J_2 $ since $ |J_1|=|J_2|=2 $. In this case,  we take $ x=d_1=d_2$.
			
\item If $ J_1=\{s,r\}$ or $ J_2=\{t,r\}$,  then we take $ x=d_{J_1}trsd_{J_2} $. 
%One can check that $ x $ alway has a unique reduced expression. 
\end{itemize}

Using the the fact that $ m_{t_1t_2}\geq 3 $ for any $ t_1,t_2\in S $, one can check directly that  the element $ x $ constructed as above belongs to $ \Omega_{ N} $, and hence belongs to $ d_1U_{d_1}\cap U_{d_2}^{-1}d_2 $. Therefore,\[
d_1\sim_{ {R}} x\sim_{ {L}} d_2.
\]
Hence $ D_N $ is $ \sim_{ {LR}} $ connected. This completes the proof.
\end{proof}

%\begin{ex}
%We consider the number of two-sided cells.
%\begin{itemize}
%\item [(i)]If $ m_{st}=3 $ for all $ s, t\in S $ with $ s\neq t $, then there are only 3 two-sided cells in $ W $.
%
%\item [(ii)]If $ (W,S) $ is a rank 3 Coxeter group with complete graph, then there are at most 10 two-sided cells.
%\end{itemize}
%\end{ex}

\begin{cor}~
\begin{itemize}
\item [(i)]Each two-sided cell of $ W $ has a nonempty intersection with a finite parabolic subgroup $ W_I $  of $ W $, and hence there are only finitely many two-sided cells in $ W $. 
\item [(ii)]Assume $ J\subset S $. If $ W_ N\cap W_J\neq \emptyset$, then $ W_ N\cap W_J$ is a two-sided cell of $ W_J $. Similar, if $ \Phi $ is a right cell of $ W $ and $ \Phi\cap W_J\neq \emptyset$, then $ \Phi\cap W_J $ is a right cell of $ W_J $.
\item [(iii)] If $ d\in D_N $, $ y\in U_d  $  and $ y=y_1\bd y_2 $ for some $ y_1,y_2\in W $, then $ y_1\in U_d $. This implies that  the right cell $ bdU_d $ is right-connected in the sense that for any $ u,v\in bdU_d $ we have a sequence $u= w_1, w_2,\cdots, w_k =v$ such that $ w_i^{-1}w_{i+1}\in S $ for all $ 1\leq i<k $.
\end{itemize}
\end{cor}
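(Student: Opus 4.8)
The plan is to derive all three assertions from Theorem \ref{th:cell}, Theorem \ref{thm:cor}, Proposition \ref{prop:p4} and the main theorem (P1--P15 together with $W_N=\Omega_N$), using throughout that every standard parabolic subgroup $W_J$ of $W$ is again a Coxeter group with complete graph (with the restriction of $L$, still positive), so that all results of the paper apply verbatim to $(W_J,J,L)$.

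For (i) I would invoke Theorem \ref{th:cell}, which identifies the two-sided cells of $W$ with the sets $\Omega_N=W_N$, $N\in\mathbb{A}$; here $D_N\neq\emptyset$ whenever $W_N\neq\emptyset$ by \eqref{eq:omega}, and by the definition of $D$ each $d\in D$ lies in a finite parabolic subgroup $W_J$ with $|J|\le 2$ (either $d=w_J$, or $d=w(t,m_{st}-1)$ in a finite dihedral group $W_{\{s,t\}}$), so $W_N$ meets that $W_J$ in at least $\{d\}$. Since $S$ is finite, $D$ is finite, hence $\mathbb{A}=\af'(D)$ is finite and there are finitely many two-sided cells; the parenthetical remark needs only that the number of right cells inside $W_N$ equals $\sum_{d\in D_N}|B_d|$ by Theorem \ref{thm:cor}(ii)(iii), which may be infinite by \cite{bedard1986infinity-leftcell,xi2012afunction}.

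For (ii), the two-sided part is immediate: $W_N\cap W_J=(W_J)_N$ by P12, $(W_J)_N=(\Omega_J)_N$ by the main theorem for $W_J$, and $(\Omega_J)_N=\Omega_N\cap W_J$ by \eqref{eq:omegaj}, while Theorem \ref{th:cell} for $W_J$ makes $(\Omega_J)_N$ a single two-sided cell of $W_J$ when nonempty. For the right-cell part I would argue as follows. Let $\Phi$ be a right cell of $W$ with $\Phi\cap W_J\neq\emptyset$, say $\af\equiv N$ on $\Phi$. Because the structure constants $h_{x,y,z}$ for $x,y,z\in W_J$ agree in $\mc{H}_J$ and $\mc{H}$, one has $x\sim_R^J y\Rightarrow x\sim_R y$, so $\Phi\cap W_J$ is a disjoint union of right cells of $W_J$, all lying in $(W_J)_N$. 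Next, P12 and the equality of Kazhdan--Lusztig polynomials give $\mc{D}\cap W_J=\mc{D}^J$, and each right cell of $W_J$ in $(W_J)_N$ contains a unique element of $\mc{D}^J$ (P13 and P6 for $W_J$, passing between right and left cells by inversion); on the other hand $\Phi^{-1}$ is a left cell in $W_N$ (inversion preserves $\af$), which by P13 contains a unique element of $\mc{D}$, and by P6 this element is its own inverse, so $\Phi$ contains a unique element of $\mc{D}$. Hence the $\mc{D}^J\subseteq\mc{D}$ points attached to the various $W_J$-cells inside $\Phi\cap W_J$ must all coincide, so there is exactly one such cell and $\Phi\cap W_J$ equals it.

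For (iii), take $d\in D_N$, $y\in U_d$ and $y=y_1y_2$ with $l(y)=l(y_1)+l(y_2)$. The identity $l(dy_1)=l(d)+l(y_1)$ is automatic ($dy_1$ is a prefix of $dy$ and $l(dy)=l(d)+l(y_1)+l(y_2)$), so since $d\in D_N\subseteq D_{\geq N}$ the definition of $\Omega_{\geq N}$ gives $dy_1\in\Omega_{\geq N}=W_{\geq N}$, i.e.\ $\af(dy_1)\ge N$. On the other hand $dy=(dy_1)y_2$ with $l(dy)=l(dy_1)+l(y_2)$ yields $dy\preceq_R dy_1$, and since $dy\in\Omega_N=W_N$, P4 forces $N=\af(dy)\ge\af(dy_1)$; therefore $\af(dy_1)=N$, so $dy_1\in W_N=\Omega_N$ and $y_1\in U_d$. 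Iterating, every prefix of every $y\in U_d$ lies in $U_d$, so for a reduced word $y=s_1\cdots s_n$ the chain $bd,bds_1,\ldots,bds_1\cdots s_n=bdy$ lies in $bdU_d$ with $(bds_1\cdots s_j)^{-1}(bds_1\cdots s_{j+1})=s_{j+1}\in S$ (telescoping, using $s_i^2=e$), giving the asserted right connectedness. I expect the right-cell part of (ii) to require the most care, as it hinges on checking the compatibility of the construction with passage to $W_J$ — namely $\mc{D}\cap W_J=\mc{D}^J$ and the right-cell form of P13 for $W_J$; a more computational route, avoiding $\mc{D}$, is to write $w=bdy$ as in Theorem \ref{thm:cor}(iii) for $w\in\Phi\cap W_J$, note that $b,d,y\in W_J$ (a reduced factorization of an element of a standard parabolic stays inside it), and then verify directly that $D_N\cap W_J$, $U_d\cap W_J$, $B_d\cap W_J$ are the corresponding data of $W_J$, whence $\Phi\cap W_J=bd(U_d\cap W_J)$ is the right cell of $W_J$ through $w$.
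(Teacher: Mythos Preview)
Your proof is correct and follows the spirit the paper intends; the paper's own proof is a single sentence, ``It follows immediately from Theorem~\ref{th:cell}'', so you have simply filled in what the paper leaves implicit.

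One small remark on (ii), right-cell part: your primary argument via $\mc{D}$ and P13 is perfectly valid, but the route most directly suggested by Theorem~\ref{th:cell} is the alternative you sketch at the end. Namely, writing $\Phi=bdU_d$ and picking $w=bdy\in\Phi\cap W_J$, the reduced factorisation forces $b,d,y\in W_J$; then \eqref{eq:omegaj} gives $U_d\cap W_J=U_d^J$ and $B_d\cap W_J=B_d^J$ (the corresponding data for $W_J$), so $\Phi\cap W_J=bd(U_d\cap W_J)=bdU_d^J$, which is a right cell of $W_J$ by Theorem~\ref{th:cell} applied to $W_J$. Your $\mc{D}$-argument has the advantage of being independent of the explicit parametrisation of right cells, while the direct approach is what makes the corollary ``immediate'' from Theorem~\ref{th:cell}.
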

\begin{proof}
It follows immediately from Theorem \ref{th:cell}.
\end{proof}

Note that it is possible that there are infinitely many right cells in $ W $, i.e. $ B_d $ is an infinite set for some $ d\in D $, (see \cite{bedard1986infinity-leftcell,xi2012afunction}).
%If $ d_1, d_2\in D_N $, then we will find an element $ x\in P_{d_1,d_2} $ as follows.
%\begin{itemize}
%	\item If $ d_1=w_{J_1} $, $ d_2=w_{J_2} $ for some $ J_1\cap J_2=\emptyset $, then we let $ x=d_1d_2 $. Then one can see that $ x\in \Omega_{ N} $.
%	
%\item If $ d_1=w_{J_1} $, $ d_2=w_{J_2} $ and $ J_1\subset J_2 $ or $ J_2\subset J_1 $, then we must have $ J_1=J_2 $ since $ \af(d_1)=\af(d_2) $. In this case,  we take $ x=d_1\in P_{d_1,d_2} $.
%	
%\item If $ d_1=w_{J_1} $, $ d_2=w_{J_2} $ with  $ J_1=\{s,r\}$ or $ J_2=\{t,r\}$, then we take $ x=w_{sr}rw_{rt}\in P_{d_1,d_2} $.
%		
%\item If $ d_1=d_{J_1} $, $ d_2=d_{J_2} $ for some $ J_1\cap J_2=\emptyset $, then we take  $ x=d_1d_2 \in \Omega_{ N} $.
%		
%\item If $ d_1=d_{J_1} $, $ d_2=d_{J_2} $ and $ J_1\subset J_2 $ or $ J_2\subset J_1 $, then we must have $ J_1=J_2 $ since $ |J_1|=|J_2|=2 $. In this case,  we take $ x=d_1=d_2\in P_{d_1,d_2} $.
%			
%\item If $ d_1=d_{J_1} $, $ d_2=d_{J_2} $ with  $ J_1=\{s,r\}$ or $ J_2=\{t,r\}$,  then we take $ x=d_{J_1}trsd_{J_2} $. One can check that $ x $ alway has a unique reduced expression. Then one can see that $ x\in\Omega_{ N} $.
%				
%\item If $ d_1=d_{J_1} $, $ d_2=w_{J_2} $ with $ J_1\cap J_2=\emptyset $, then we take  $ x=d_1d_2$.
%
%\item If $ d_1=d_{J_1} $, $ d_2=w_{J_2} $ with $ J_1\subset J_2 $ or $ J_2\subset J_1 $, then it is impossible that $ \af(d_1)=\af(d_2) $.
%
%\item If $ d_1=d_{J_1} $, $ d_2=w_{J_2} $ with  $ J_1=\{s,r\}$ or $ J_2=\{t,r\}$,  then we take $ x $ to be the longest element in the coset $ d_1W_{J_2} $.
%\end{itemize}

\section{Possible generalizations}\label{sec:generalized}

In this section, we study Coxeter groups such that \begin{itemize}
	\item [$ (*) $]all irreducible components of finite parabolic subgroups of $ W $ have rank at most 2.
\end{itemize} For example,  the following Coxeter groups satisfy the above condition:
\begin{itemize}
	\item Coxeter groups with complete graph,
\item 	right-angled Coxeter groups,
\item infinite Coxeter groups of rank 3,
\item Coxeter groups satisfying $ m_{st}\neq 3 $ for all $ s\neq t\in S $.
\end{itemize}
Note that the boundedness conjecture for the above examples has been proved, see \cite{xi2012afunction,Bel2004,zhou,gao2016rank3,li-Shi-non3edge}.

We expect that methods of this paper for the proof of P1-P15 for Coxeter groups with complete graph  also works for the Coxeter groups satisfying $ (*) $. 
This is based on the following  observations.
The results in sections \ref{sec:rank2} and \ref{sec:complete} are used only in subsection \ref{subsec:degest}.
In the arguments of subsections \ref{subsec:onesided}-\ref{subsec:prepare} and section \ref{sec:proof}, we actually does not use any particular properties of Coxeter groups with complete graph except the one that their finite dihedral groups have rank 1 or 2. Let us give more details in the following.

Fix a Coxeter group $ (W,S) $ satisfying $ (*) $.
Define the set $ D $ to be the set of elements $ d $ of the form\[
d=d_1d_2\cdots d_k \text{ for some } k\in \mathbb{N},
\]
where $ d_j $, for each $ 1\leq j\leq k $, is 
\begin{itemize}
	\item [(i)] either the longest element of an irreducible finite parabolic subgroup $ W_{I_j} $,
	\item [(ii)] or the element $ d_{I_j} $ when $ I_j=\{s,t\}  $ for some $ s,t\in S $ satisfying $ 4\leq m_{st}<\infty $, $ L(s)\neq L(t) $,
\end{itemize}
and we require that $ I_1, I_2, \cdots, I_k $ have no intersection with each other.
Note that $ d_1,d_2,\cdots, d_k $ commute with each other, and  when $ k=0 $ we have $ d=e $. Define a function $ \af': D\to \mathbb{N} $ by\[
\af'(d)=\sum _{1\leq j\leq k} \af'(d_j),
\]
where $  \af'(d_j)=L(d_{I_j}) $ in case (i) and $  \af'(d_j)=L'(d_{I_j}) $ in case (ii). Define \[
D_{\geq N}=\{d\in D\mid \af'(d)\geq N \},
\]
and similarly $ D_{> N} $, $ D_{N} $. Obviously, these generalize the corresponding notations in section \ref{sec:main} for Coxeter groups with complete graph.
Using $ D $ as above, one can naturally generalize the notations
 $ \Omega_{\geq N} $, $ \Omega_{>N} $,  $ \Omega_{\leq N} $, $ \Omega_{N}$, $ U_d $ and $ B_d $   ($ d\in D_N $)  in  section \ref{sec:main} to the present case for Coxeter groups satisfying $ (*) $, see \eqref{eq:omegaN}, \eqref{eq:ud} and \eqref{eq:bd}.
 
 \begin{lem}
 	Let $ (W,S) $ be a Coxeter group satisfying $ (*) $.\begin{itemize}
 		\item [(i)] Let $ d\in D_N $. For any $ y\in U_d $, $ w\leq d $, we have $ wy=w\bd y $. 
 		\item [(ii)] For any  $ w\in \Omega_{ N} $,  we have $ w=b\bd d\bd y $ for some $ d\in D_N $, $ b\in B_d $ and $ y\in U_d $.
 	\end{itemize}
 \end{lem}

This is a  generalization of Lemma \ref{lem:bdy-basic}. The proof is almost the same.

The following proposition generalizes
the results in subsection \ref{subsec:onesided}.

\begin{prop}
	Let $ (W,S) $ be a Coxeter group satisfying $ (*) $. Let $ N $ be an integer such that $ W_{>N} =\Omega_{> N}$ and $ W_{>N} $ is $ \prec_{LR} $ closed.
	
	For $ d\in D_N $, $ y\in U_d $, we have\[
	\nc_{dy}=\nc_d\nF_y,
	\]
	where 
	$ \nF_y=\sum_{y'\in U_d} g_{y',y} \nt_{y'} $ is an element of $\hn $ such that\begin{itemize}
		\item $ g_{y',y}\neq 0 $ only if $ y'\leq y $, 
		\item $ g_{y,y}=1 $, 
		\item and $ \deg g_{y',y}<0 $ for $ y'<y $.
	\end{itemize}
Similarly, for $ x\in U_d^{-1} $, we have $ \nc_{xd} =\nE_x \nc_d$ with $ \nE_x=(\nF_{x^{-1}})^{\flat} $.

Moreover,   we have
\[
\nc_d\hn=\bigoplus_{y\in U_d}\mc{A}\nc_{d}\nt_y= \bigoplus_{y\in U_d}\mc{A}\nc_{dy}.
\]
\end{prop}

For the proof, see Proposition \ref{prop:predec} and Corollary \ref{cor:right}.

\begin{conj}\label{conj1}
Let $ (W,S) $ be a Coxeter group satisfying $ (*) $.  Let $ N $ be an integer such that $ W_{>N} =\Omega_{> N}$ and $ W_{>N} $ is $ \prec_{LR} $ closed.
\begin{enumerate}
	\item For all $ x,y\in \wn $, the equality $\deg \nt_x\nt _y =N $ holds  only if $ x,y\in \Omega_{ \geq N} $.
	\item For $ d\in D_N $, $ b\in B_d $, $ y\in U_d $, we have $ bdy=b\bd d \bd y $.
	\item 
		If  $ d\in D_N $, $ x\in U_d^{-1} $, $ y\in U_d $, $ w\leq d $, then
		\begin{equation}
		\deg(\nt_{xw}\nt_y) \leq-\deg p_{w,d}.
		\end{equation}
		If moreover $ b\in B_d $, $ w<d $, then we have a strict inequality
		\begin{equation}
		\deg(\nt_{bw}\nt_y) <-\deg p_{w,d}.
		\end{equation}
\end{enumerate}
\end{conj}
These three statements of this conjecture correspond to Proposition \ref{prop:key1}, Lemma \ref{lem:length} and Proposition \ref{prop:leq}, respectively. Thus this conjecture holds for Coxeter groups with complete graph.
% due to results in sections \ref{sec:rank2} and \ref{sec:complete}.
   This conjecture is exactly what we need in the proof of the factorization formula.

\begin{thm}\label{thm:gen1}
Let $ (W,S) $ be a Coxeter group satisfying $ (*) $.  Let $ N $ be an integer such that $ W_{>N} =\Omega_{> N}$, $ W_{>N} $ is $ \prec_{LR} $ closed, and  Conjecture \ref{conj1} holds.

For  $ d\in D_N $, $ b\in B_d $, $ y\in U_d $, we have
\begin{equation}
\nc_{bdy}=\nE_b\nc_d\nF_y \text{ in }\hn.
\end{equation}
\end{thm}

The proof is  repeating that of Theorem \ref{thm:dec}.

\begin{thm}\label{thm:gen2}
Let $ (W,S) $ be a Coxeter group satisfying $ (*) $.  Let $ N $ be an integer such that $ W_{>N} =\Omega_{> N}$, $ (\text{P1},\text{P4}, \text{P8})_{>N} $  and  Conjecture \ref{conj1} hold. Then we have the following properties.
\begin{enumerate}
	\item We have $$  W_{N}=\Omega_{ N}=\bigsqcup_{d\in D_N, b\in B_d} bdU_d ,$$ which is a decomposition into right cells.
	\item The set $ W_{\geq N} $ is $ \prec_{LR} $ closed.
	\item  If $ w_1,w_2\in W_N $ with $ w_1\prec_{R} w_2 $, then $ w_1\sim_{ {R}} w_2 $.
	
	\item The statements in Propositions \ref{prop:p4} and \ref{prop:p15} hold in the present case.
\end{enumerate}
\end{thm}

The proof of this theorem is completely similar to those of Theorem \ref{thm:cor},  Propositions \ref{prop:p4} and \ref{prop:p15} .

\begin{thm}\label{thm:generalized}
Let $ (W,S) $ be a Coxeter group satisfying $ (*) $. Assume that  the boundedness conjecture and Conjecture \ref{conj1} hold for all parabolic subgroups of $ (W,S) $. Then conjectures P1-P15 hold.
\end{thm}

\begin{proof}[Sketch of  proof]
We need first prove that
$ W_{\geq N}=\Omega_{ \geq N} $,
and  $ (\text{P1-P11})_{\geq N} $ and  $  (\text{P13-P15})_{\geq N}  $ hold, by applying decreasing induction on $ N $. When $ N $ is large enough, they hold trivially by the boundedness conjecture. The details of induction is completely similar to the proof of Proposition \ref{prop:ind}, and we need Theorems \ref{thm:gen1} and  \ref{thm:gen2}.

By the above arguments, we obtain that $ W_{ N}=\Omega_{N} $ for all $ N $, and P1-P11 and P13-P15 hold. These also hold for all parabolic subgroups of $ W $ by our assumptions. Then one can prove P12 using the method in the proof of Theorem \ref{thm:last}
\end{proof}
%\newpage

\appendix
\section{Conjectures P1-P15 for finite dihedral groups}\label{ap:dihedral}
%The finite dihedral group is a kind of special Coxeter group with complete graph. It is naturally hoped that 

The aim of this section is to give a new  proof of  P1-P15 for  (irreducible) finite dihedral groups, based on some straightforward  computations. Note that the original proof is due to \cite{geck2011rank2}.

Let us   explain our strategy.

In the proof of P1-P15 for Coxeter groups with complete graph, we used P1-P15 and  cell partitions for finite dihedral groups, see Lemma \ref{lem:ppd}.   They are only used in the proofs of Lemma \ref{lem:degI}, Proposition \ref{prop:key} and equation \eqref{eq:woi}. Both Lemma \ref{lem:degI} and Proposition \ref{prop:key} are only used in the proof of Proposition \ref{prop:key1}. The equation \eqref{eq:woi} is only used to show that $ \nf_{u,v,w} $ that computed in the finite dihedral groups coincides with that computed in the whole group, see the paragraph before subsection \ref{subsec:degest}. This claim is also only used in the proof of Proposition \ref{prop:key1}. In other words, we didn't use Lemma \ref{lem:ppd} after Proposition \ref{prop:key1}. Hence if we can prove Proposition \ref{prop:key1} for finite dihedral groups, then the arguments after Proposition \ref{prop:key1} also work since finite dihedral groups are special Coxeter groups with complete graph.

By the last paragraph, to prove P1-P15 for finite dihedral groups, it is enough to prove  Proposition \ref{prop:key1} for for finite dihedral groups. This follows from the following stronger proposition.

In the following,  $ W_I $ is a dihedral group with $ I=\{s,t\} $, $ 3\leq m_{st}<\infty $.

\begin{prop}\label{prop:rank2}
	Let $ N\in\mathbb{N} $. For $ x,y\in \wn $, the equality $ \deg\nt_x\nt_y =N$ holds only if $ y\in (\Omega_I)_{ N} $. Moreover, $ (W_I)_N= (\Omega_I)_{ N} $ and $ (W_I)_{\geq N} $ is $ \prec_{LR} $ closed.
\end{prop}

We prove this proposition by decreasing induction on $ N $.

%\subsection{The lowest two-sided cell}

The following lemma is easy.
\begin{lem}\label{lem:low}
	For any $ x,y\in W_I $, we have\[
	\deg T_xT_y\leq L(w_I).
	\]
	The equality holds if and only if $ x=y=w_I $.
\end{lem}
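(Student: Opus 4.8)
The plan is to prove Lemma \ref{lem:low} by a direct induction on the length of $y$, using the defining relations of the Hecke algebra. The statement is that $\deg T_x T_y \le L(w_I)$ for all $x,y \in W_I$, with equality only when $x = y = w_I$. Since $W_I$ is dihedral, every element has at most one reduced expression, and the combinatorics of left/right descent sets is completely transparent, so the induction will be clean.

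First I would set up the induction on $l(y)$. The base case $l(y) = 0$ is trivial: $T_x T_e = T_x$ has degree $0 \le L(w_I)$, with equality forcing $L(w_I) = 0$, impossible for a genuine dihedral group, so in fact equality never occurs unless one allows the degenerate case — and in any event $x = y = w_I$ fails, consistent with the claim. For the inductive step, write $y = y' r$ with $r \in \mc{R}(y)$, $l(y') = l(y) - 1$. Then $T_x T_y = T_x T_{y'} T_r$. If $x y' < x y' r$ does not hold we split via $T_x T_{y'} = \sum_z f_{x,y',z} T_z$ and multiply each $T_z$ by $T_r$ on the right: $T_z T_r = T_{zr}$ if $zr > z$, and $T_z T_r = T_{zr} + \xi_r T_z$ if $zr < z$. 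Here $\deg \xi_r = L(r) \le L(w_I)$, so each summand has degree at most $\max\{\deg f_{x,y',z}, \deg f_{x,y',z} + L(r)\}$.

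The key point is to control when the extra factor $\xi_r$ can be applied and push the degree up to exactly $L(w_I)$. The term $\deg f_{x,y',z} + L(r)$ can reach $L(w_I)$ only if $\deg f_{x,y',z} = L(w_I) - L(r)$, and by the induction hypothesis applied to $T_x T_{y'}$ (which has degree $\le L(w_I)$, but we need the sharper statement that $\deg f_{x,y',z} \le L(w_I) - L(r)$ unless something forces $z$ to be large). The cleanest way is to track, alongside the degree bound, the more precise claim: $\deg T_x T_y \le L(w_I) - L(\text{(the shortest element of }W_I\text{ whose coset }x W_I\text{ or }W_I y\text{-type gap})...)$ — but this risks overcomplicating things. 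Instead I would phrase the induction so that we prove simultaneously: for $x, y \in W_I$, $\deg T_x T_y = \deg f_{x,y,w_I} = L(x) + L(y) - L(w_I)$ whenever $l(x) + l(y) \ge l(w_I)$ and $\mc{R}(x) \cap \mc{L}(y)$ behaves appropriately (this is essentially Lemma \ref{lem:plusdeg} already proved in the text), and $\deg T_x T_y \le 0$ otherwise. Then the degree of $T_x T_y$ is at most $L(x) + L(y) - L(w_I) \le L(w_I) + L(w_I) - L(w_I) = L(w_I)$, with equality forcing $L(x) = L(y) = L(w_I)$, i.e. $x = y = w_I$.

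The main obstacle, modest as it is, is bookkeeping the case $l(x) + l(y) < l(w_I)$ versus $\ge l(w_I)$, and making sure the induction variable (length of $y$) interacts correctly with the reduction $y \mapsto y'$ — in particular that when we peel off $r$ from the right of $y$, the descent-set conditions needed to invoke the sharp form of Lemma \ref{lem:plusdeg} are preserved. Since dihedral descent sets are either a single generator or all of $I$ (only for $w_I$), this case analysis has very few branches. Alternatively, and perhaps most economically, one simply observes that $\deg T_x T_y = \deg f_{x,y,z}$ for the Bruhat-maximal $z$ with $f_{x,y,z} \ne 0$, that this maximal $z$ is $\le w_I$ hence has $L(z) \le L(w_I)$, and that $\deg f_{x,y,z} = \tau(T_x T_y T_{z^{-1}})$ combined with Lemma \ref{lem:plusdeg}-type estimates gives $\deg f_{x,y,z} \le L(x) + L(y) - L(z)$; taking $z = w_I$ in the worst case yields the bound, and the equality analysis is as above. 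I expect no genuine difficulty here; it is a warm-up computation recorded for use in the appendix's self-contained treatment of the dihedral case.
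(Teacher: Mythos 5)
The paper itself records this lemma without proof (``The following lemma is easy''), so your proposal has to stand on its own, and its central step fails. The strengthened induction hypothesis you pivot to --- that $\deg T_xT_y=\deg f_{x,y,w_I}=L(x)+L(y)-L(w_I)$ whenever $l(x)+l(y)\geq l(w_I)$ (with suitable descent conditions), and $\deg T_xT_y\leq 0$ otherwise --- is false in both branches. For the second branch, $T_sT_s=1+\xi_sT_s$ has degree $L(s)>0$ although $l(s)+l(s)=2<l(w_I)$. For the first branch, take $m_{st}=4$, $L(t)>2L(s)$, and $x=y=sts$: a direct computation gives
\[
T_{sts}T_{sts}=1+\xi_sT_s+\xi_tT_{sts}+\xi_sT_{tst}+\xi_s^2T_{w_I},
\]
so $\deg T_xT_y=L(t)$, strictly larger than $L(x)+L(y)-L(w_I)=2L(s)$, and the maximal-degree coefficient sits at $z=sts$, not at $z=w_I$. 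This also refutes your ``most economical'' variant: $\deg T_xT_y$ is the maximum of $\deg f_{x,y,z}$ over all $z$ and need not be attained at the Bruhat-maximal $z$; moreover the valid general estimate $\deg f_{x,y,z}\leq L(x)+L(y)-L(z)$ is worst for \emph{small} $z$ (at $z=e$ it only gives $L(x)+L(y)$, up to $2L(w_I)$), so ``taking $z=w_I$ in the worst case'' is backwards. Lemma \ref{lem:plusdeg} controls only the coefficient of $T_{w_I}$ and says nothing about the other coefficients, which is precisely where the degree can be larger.

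The statement does have the short proof its placement suggests, and it is essentially your first, abandoned route carried to completion: right multiplication of any element of the Hecke algebra of $W_I$ by $T_r$ raises $\deg$ by at most $L(r)$ (since $T_zT_r$ equals $T_{zr}$ or $T_{zr}+\xi_rT_z$), so induction along a reduced word of $y$ gives $\deg T_xT_y\leq L(y)$, and applying the anti-automorphism $T_w\mapsto T_{w^{-1}}$ gives $\deg T_xT_y\leq\min\{L(x),L(y)\}$. Since $L(w)<L(w_I)$ for every $w\neq w_I$ (because $L(w_I)=L(w)+L(w^{-1}w_I)$ and $L$ is positive), the bound $\deg T_xT_y\leq L(w_I)$ follows, and equality forces $L(x),L(y)\geq L(w_I)$, hence $x=y=w_I$. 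The strengthening your induction actually needs is ``the total degree gained is at most $L(y)$,'' not the $L(x)+L(y)-L(w_I)$ dichotomy you proposed.
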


%This proves Proposition \ref{prop:key1} for $ N=L(w_I) $. By the second paragraph, for $ N=L(w_I) $, $ (W_I)_{\geq N}=(\Omega_I)_{\geq N} =\{ w_I\}$ is $ \prec_{LR} $ closed.

By  Lemmas \ref{lem:low} and \ref{lem:compute}, we have $ (W_I)_N=\emptyset =(\Omega_I)_{ N} $ for $ N>L(w_I) $, and $ (W_I)_N=\{w_I\} =(\Omega_I)_{ N} $ for $ N=L(w_I) $. Using $$  T_sC_{w_I} =q^a C_{w_I}, \quad T_tC_{w_I} =q^b C_{w_I}  ,$$ one can see that $ \{ w_I\} $ is $ \prec_{ {LR}} $ closed. For $ N>L(w_I) $, the equality $ \deg\nt_x\nt_y =N$ cannot hold.  For $ N=L(w_I) $, the equality $ \deg\nt_x\nt_y =N$  hold only if $ y=w_0 $.  
Therefore Proposition \ref{prop:rank2} holds for $ N\geq L(w_I) $.

\begin{lem}\label{lem:like}
	Assume that $ a\leq b $,  $ N=L(w_I)-1 $. We have $(W_I)_{\leq N} =W_I\setminus\{w_I\}$.
	For any $ u,v\in  (W_I)_{\leq N}  $, we have	\begin{equation}\label{eq:deguv}
	\deg  \nt_u\nt _v \leq b+\left\lfloor \frac{l(u)-1}{2}\right\rfloor(b-a).
	\end{equation}
	
	If $ s\in \mc{R}(u) $, then we have \begin{equation}\label{eq:deguv2}
	\deg  \nt_u\nt _v \leq b+\left\lfloor \frac{l(u)-2}{2}\right\rfloor(b-a).
	\end{equation}
	If  $ \mc{R}(u)\cap \mc{L}(v)= \emptyset $, then we have  
	\begin{equation}\label{eq:deguvba1}
	\deg  \nt_u\nt _v \leq \left\lfloor \frac{l(u)}{2}\right\rfloor(b-a).
	\end{equation}
\end{lem}

	Note that   \begin{equation}\label{eq:lu}
\left\lfloor \frac{l(u)}{2}\right\rfloor(b-a)< b+\left\lfloor \frac{l(u)-2}{2}\right\rfloor(b-a)\leq  b+\left\lfloor \frac{l(u)-1}{2}\right\rfloor(b-a).
\end{equation}
\begin{proof}
	Due to Lemma \ref{lem:low} and $ N=L(w_I)-1 $, we have  $(W_I)_{\leq N} =(W_I)_{<L(w_I)} =W_I\setminus\{w_I\}$.
	
	We prove \eqref{eq:deguv}, \eqref{eq:deguv2} and \eqref{eq:deguvba1} by induction on $ l(u) $.
	If $ l(u)=0 $, then they clearly hold. Assume now that $ l(u)>0 $ and that they hold for elements of  length less than $ l(u)$. 
	
	Assume that $ \mc{R}(u)\cap \mc{L}(v)= \emptyset $.  If $uv=u\bd v $, then the result is obvious. If $ uv<u\bd v$, then there exist $ u',u'' $ such that $ u=u'\bd u'' $, $ u''\neq e $ and $ w_I=u''\bd v $. Since $ \nt_{w_I}=-\sum _{y\neq w_I} p_{y,w_I}\nt_y $, we have \begin{align*}
	\nt_u\nt_v&=\nt_{u' }\nt_{w_I} \\
	&=-\sum_{y\neq w_I} p_{y,w_I}\nt_{u'}\nt_y.
	\end{align*}
	Since $ l(u')<l(u) $,	by the induction hypothesis, we have $$  \deg 	\nt_{u'}\nt_y\leq b+\left\lfloor \frac{l(u')-1}{2}\right\rfloor(b-a)\leq b+ \left\lfloor \frac{l(u)-2}{2}\right\rfloor(b-a).  $$ 
	Since $ \deg p_{y,w_I}\leq -a $ for $ y\neq w_I $, we have
	\[
	\deg \nt_u\nt_v \leq -a+b+\left\lfloor \frac{l(u)-2}{2}\right\rfloor(b-a)=\left\lfloor \frac{l(u)}{2}\right\rfloor(b-a). 
	\] 
	This proves \eqref{eq:deguvba1}. 
	
	%If $ \deg   \nt_u\nt_v =\left\lfloor \frac{l(u)+1}{2}\right\rfloor(b-a)$, then by the above arguments, $ \deg \nt_{u'}\nt_{d_I} =b+\left\lfloor \frac{l(u)-1}{2}\right\rfloor(b-a)$. Hence $ l(u')= $
	
	%	We claim that 
	%	\begin{equation}\label{eq:deguvba}
	%	\deg  \nt_u\nt _v \leq \left\lfloor \frac{l(u)+1}{2}\right\rfloor(b-a),
	%	\end{equation}
	%	which is stronger than \eqref{eq:deguv}.
	%	
	%	Let $ r\in\mc{R}(u) $. Then $ \nt_u\nt_v=\nt_{ur}\nt_{rv} $. If $ rv\in (W_I)_{\leq N} $, by induction hypothesis, we have the following inequality
	%		\begin{equation}
	%		\deg  \nt_u\nt _v=\deg \nt_{ur}\nt_{rv}  \leq b+\left\lfloor \frac{k-1}{2}\right\rfloor(b-a),
	%		\end{equation}
	%	which is stronger  than \eqref{eq:deguv}.
	%	If $ rv\in (W_I)_{\leq N} $, i.e. $ rv=w_I $, then by using $ \nt_{w_I}=-\sum _{y\neq w_I} p_{y,w_I}\nt_y $ and the induction hypothesis, we also have the above relations.
	
	Assume that $ r\in \mc{R}(u)\cap \mc{L}(v)\neq \emptyset $. Then\[
	\nt_{u}\nt_{v}=\nt_{ur}\nt_{rv}+\xi_r \nt_{ur}\nt_{v}.
	\]
	Note that $ \mc{R}(ur)\cap \mc{L}(v)= \emptyset $. Applying the induction hypothesis,
	we have \[
	\deg \nt_{ur}\nt_{rv} \leq b+\left\lfloor \frac{l(ur)-1}{2}\right\rfloor(b-a)=a+\left\lfloor \frac{l(u)}{2}\right\rfloor(b-a),
	\]
	\[
	\deg	\xi_r \nt_{ur}\nt_{v}\leq L(r)+\deg	\nt_{ur}\nt_{v}\leq L(r)+\left\lfloor \frac{l(ur)}{2}\right\rfloor(b-a)=L(r)+\left\lfloor \frac{l(u)-1}{2}\right\rfloor(b-a).
	\]
	Since $ L(r)\leq b $ and $ a+\left\lfloor \frac{l(u)}{2}\right\rfloor(b-a)\leq b+\left\lfloor \frac{l(u)-1}{2}\right\rfloor(b-a) $, we have 
	 \[ \deg	\nt_{u}\nt_{v}\leq b+\left\lfloor \frac{l(u)-1}{2}\right\rfloor(b-a).\]
	If $ s\in \mc{R}(u) $, then $ r=s $, $ L(r)=a $, and hence 
	\[\deg	\nt_{u}\nt_{v}\leq a+\left\lfloor \frac{l(u)}{2}\right\rfloor(b-a)=b+\left\lfloor \frac{l(u)-2}{2}\right\rfloor(b-a)\]
	using $  a+\left\lfloor \frac{l(u)-1}{2}\right\rfloor(b-a)\leq a+\left\lfloor \frac{l(u)}{2}\right\rfloor(b-a) $.
	This completes the proof.
%	Combined with \eqref{eq:deguvba1} and \eqref{eq:lu}, we obtain \eqref{eq:deguv} and \eqref{eq:deguv2}. 
%	If $ r=s $, then we have  a stronger estimation
%		\[
%	\deg	\xi_r \nt_{ur}\nt_{v}\leq a+\deg	\nt_{ur}\nt_{v}\leq a+\left\lfloor \frac{l(ur)}{2}\right\rfloor(b-a)=a+\left\lfloor \frac{l(u)-1}{2}\right\rfloor(b-a).
%	\]
%Since $ a+\left\lfloor \frac{l(u)-1}{2}\right\rfloor(b-a)\leq a+\left\lfloor \frac{l(u)}{2}\right\rfloor(b-a)\leq b+\left\lfloor \frac{l(u)-1}{2}\right\rfloor(b-a)$, we have
%	 \[ \deg	\nt_{u}\nt_{v}\leq b+\left\lfloor \frac{l(u)-1}{2}\right\rfloor(b-a),  \text{ and   } \] \[\text{ if }s\in \mc{R}(u), \deg	\nt_{u}\nt_{v}\leq a+\left\lfloor \frac{l(u)}{2}\right\rfloor(b-a)=b+\left\lfloor \frac{l(u)-2}{2}\right\rfloor(b-a).\]
%	 Combined with \eqref{eq:deguvba1} and \eqref{eq:lu}, we obtain \eqref{eq:deguv} and \eqref{eq:deguv2}. 
\end{proof}

	\begin{cor}\label{cor:equal}
		Assume that $ a= b $,  $ N=L(w_I)-1 $.
For any $  u,v\in (W_I)_{\leq N}  =W_I\setminus\{ w_I\}$, we have  \begin{equation}\label{eq:uvba}
\deg \nt_u\nt _v\leq a.
\end{equation}
The equality holds  only if $ u,v\neq e$.
	\end{cor}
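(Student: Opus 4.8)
The plan is to deduce this directly from Lemma~\ref{lem:like} by specializing to the equal-parameter situation $a=b$. Lemma~\ref{lem:like} is stated under the hypothesis $a\le b$, which is satisfied here (with equality), and it already records that $(W_I)_{\le N}=W_I\setminus\{w_I\}$ when $N=L(w_I)-1$. Setting $b-a=0$ in the estimate \eqref{eq:deguv} immediately yields
\[
\deg\nt_u\nt_v\le b+\Big\lfloor\tfrac{l(u)-1}{2}\Big\rfloor(b-a)=b=a
\]
for all $u,v\in(W_I)_{\le N}$, which is the asserted inequality. So no new computation is needed for the bound itself; one only has to observe that all the floor terms drop out in the equal-parameter case.

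For the equality statement I would argue by contraposition. Suppose $u=e$ or $v=e$. If $u=e$, then $\nt_u\nt_v=\nt_v$, and since $v\in(W_I)_{\le N}\subset\wn$, the element $\nt_v$ is one of the basis elements of $\hn$, so $\deg\nt_u\nt_v=\deg\nt_v=0$. Because the weight function is positive, $a=L(s)>0$, hence $\deg\nt_u\nt_v=0<a$. The case $v=e$ is identical, using $\nt_u\nt_e=\nt_u$. Therefore the equality $\deg\nt_u\nt_v=a$ can occur only when both $u\ne e$ and $v\ne e$, as claimed.

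There is essentially no obstacle here: the corollary is a straightforward specialization of the preceding lemma together with the trivial remark about multiplication by $\nt_e=1$. The only things to verify are bookkeeping points — that the hypothesis $a\le b$ of Lemma~\ref{lem:like} degenerates correctly, that the floor coefficients of $b-a$ all vanish, and that $\nt_v$ has degree $0$ for $v$ in the basis index set $\wn$ — none of which require any real work.
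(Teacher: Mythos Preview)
Your proposal is correct and essentially matches the paper's own proof, which simply says the corollary ``follows immediately from Lemma~\ref{lem:like}.'' The only minor difference is that for the equality clause the paper presumably intends the reader to invoke \eqref{eq:deguvba1} (which, when $a=b$ and $\mc{R}(u)\cap\mc{L}(v)=\emptyset$ --- in particular when $u=e$ or $v=e$ --- gives $\deg\nt_u\nt_v\le 0$), whereas you argue directly that $\nt_e\nt_v=\nt_v$ has degree~$0$; both are equally valid and equally trivial.
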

This follows immediately from Lemma \ref{lem:like}.

Assume $ a=b $. By the above corollary and Lemma \ref{lem:compute}, for $a< N<L(w_I) $, $ (W_I)_N=\emptyset $, and  $ (W_I)_a\subset (\Omega_I)_a=W_I\setminus \{ e,w_I \} $. Conversely, for $ v\in  (\Omega_I)_a$ and $ r\in \mc{L}(v) $, we have $ \deg\nt_r\nt_v=a $ (with $ N=a $), which implies that $ v\in (W_I)_a $ by Lemma \ref{lem:compute} again. Thus $ (W_I)_a=(\Omega_I)_a $. It is easy to see that $ (W_I)_{\geq N} $ is $ \prec_{LR} $ closed for $ a\leq N<L(w_I) $. Therefore, Proposition \ref{prop:rank2} holds for $ a\leq N<L(w_I) $ in the case of $ a=b $.

%By this corollary, one can see that when $b= a$ and  $a \leq N<L(w_I) $, Proposition \ref{prop:key1} holds. Then  by the second paragraph we have $ (W_I)_{\geq N}=(\Omega_I)_{ \geq N} $ is $ \prec_{LR} $ closed for such $ N $. In particular, $ W_{<a}=\{e\} $.
\begin{cor}\label{cor:unequal}
	Assume that $ a<b $,  $ N=L(w_I)-1 $.
	For any $  u,v\in (W_I)_{\leq N}  $, we have  \begin{equation}
	\deg \nt_u\nt _v\leq L'(d_I)= mb-(m-1)a.
	\end{equation}
	The equality holds  if and only if $ u=v=d_I$.
%	If  $ (u,v)\neq (d_I,d_I)$, we have 
%\begin{equation}\label{eq:degreeneqd}
%	\deg\nt_u\nt_v\leq b+(m-2)(b-a).
%\end{equation}
%	
\end{cor}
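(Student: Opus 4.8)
The plan is to deduce the corollary directly from Lemma~\ref{lem:like} together with Lemma~\ref{lem:eta}, with essentially no new computation. Write $m_{st}=2m$, so that $(W_I)_{\leq N}=W_I\setminus\{w_I\}$ is precisely the set of elements of length $\leq 2m-1$, and recall $L'(d_I)=b+(m-1)(b-a)$ with $d_I=w(t,2m-1)$.

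For the bound itself I would simply note that any $u\in(W_I)_{\leq N}$ has $l(u)\leq 2m-1$, hence $\lfloor(l(u)-1)/2\rfloor\leq m-1$, so \eqref{eq:deguv} gives $\deg\nt_u\nt_v\leq b+\lfloor(l(u)-1)/2\rfloor(b-a)\leq L'(d_I)$. For the equality analysis, assume $\deg\nt_u\nt_v=L'(d_I)$. The previous inequality forces $\lfloor(l(u)-1)/2\rfloor=m-1$, i.e. $l(u)=2m-1$; applying it to $\deg\nt_u\nt_v=\deg\nt_{v^{-1}}\nt_{u^{-1}}$ (with $v^{-1}\in(W_I)_{\leq N}$ since $\af(v^{-1})=\af(v)$) forces $l(v)=2m-1$ as well. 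The only elements of $W_I$ of length $2m-1$ are $d_I$, with $\mc{R}(d_I)=\{t\}$, and $w(s,2m-1)$, with $\mc{R}=\mc{L}=\{s\}$. If $u=w(s,2m-1)$ then $s\in\mc{R}(u)$, and the sharper estimate \eqref{eq:deguv2} improves the bound to $b+\lfloor(2m-3)/2\rfloor(b-a)=L'(d_I)-(b-a)<L'(d_I)$, a contradiction; hence $u=d_I$, and the symmetric argument applied to $\nt_{v^{-1}}\nt_{u^{-1}}$ (using $s\in\mc{R}(v^{-1})=\mc{L}(v)$ when $v=w(s,2m-1)$) forces $v=d_I$.

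For the converse, that $u=v=d_I$ indeed attains the bound, I would pass to the quotient $(\mc{H}_I)_{\leq N}$. Since $h_{d_I,d_I,z}\neq 0$ forces $z\prec_{LR}d_I$, hence $\af(z)\geq\af(d_I)=L'(d_I)$, hence $z\in\{d_I,w_I\}$, we get $C_{d_I}C_{d_I}=\eta_{d_I}C_{d_I}+h_{d_I,d_I,w_I}C_{w_I}$ in $\mc{H}_I$; modulo $C_{w_I}$ this reads $\nc_{d_I}\nc_{d_I}=\eta_{d_I}\nc_{d_I}$ with $\deg\eta_{d_I}=L'(d_I)$ by Lemma~\ref{lem:eta}. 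Writing $\nc_{d_I}=\nt_{d_I}+\varepsilon$ with $\deg\varepsilon<0$ and expanding, each of $\eta_{d_I}\varepsilon$, $\nt_{d_I}\varepsilon$, $\varepsilon\nt_{d_I}$, $\varepsilon\varepsilon$ has degree $<L'(d_I)$ by the bound already proved, so $\nt_{d_I}\nt_{d_I}=\eta_{d_I}\nt_{d_I}+(\text{terms of degree}<L'(d_I))$ and therefore $\deg\nt_{d_I}\nt_{d_I}=\deg\eta_{d_I}=L'(d_I)$.

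I expect the routine part to be genuinely routine; the two points needing attention are (a) invoking the refined estimate \eqref{eq:deguv2} to discard $w(s,2m-1)$ and so single out $d_I$ among the two elements of maximal admissible length, and (b) the converse inequality $\deg\nt_{d_I}\nt_{d_I}\geq L'(d_I)$, for which one must re-enter the quotient algebra to kill the $C_{w_I}$-contribution before applying Lemma~\ref{lem:eta}.
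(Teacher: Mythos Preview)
Your upper bound and the ``only if'' direction match the paper's argument essentially verbatim: the paper also uses \eqref{eq:deguv} to force $l(u)=2m-1$ and then \eqref{eq:deguv2} to rule out $s\in\mc{R}(u)$, concluding $u=d_I$ (and symmetrically for $v$).

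For the converse direction the paper takes a different route. Rather than invoking Lemma~\ref{lem:eta}, it proves by induction on $k$ that $\deg\nt_{u_k}\nt_{d_I}=kb-(k-1)a$ for $u_k=w(t,2k-1)$, expanding $\nt_{u_{k-1}}\nt_{w_0}$ via $\nt_{w_0}=-\sum_{y\neq w_0}p_{y,w_0}\nt_y$ and controlling all terms by the estimates of Lemma~\ref{lem:like} together with the explicit degrees of $p_{y,w_0}$. Your route through Lemma~\ref{lem:eta} and the expansion $\nc_{d_I}=\nt_{d_I}+\varepsilon$ is shorter and avoids that induction; the bounding of the cross terms $\nt_{d_I}\varepsilon$, $\varepsilon\nt_{d_I}$, $\varepsilon^2$, $\eta_{d_I}\varepsilon$ by the already-proved upper bound is correct.

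There is, however, a circularity in one step. You justify $C_{d_I}C_{d_I}\in\mc{A}C_{d_I}+\mc{A}C_{w_I}$ by ``$h_{d_I,d_I,z}\neq 0$ forces $z\prec_{LR}d_I$, hence $\af(z)\geq\af(d_I)=L'(d_I)$''. But at this point in Appendix~\ref{ap:dihedral} neither P4 below $w_I$ nor the value $\af(d_I)=L'(d_I)$ has been established; Corollary~\ref{cor:unequal} is precisely the input that later yields these. The fact you need is already contained in the \emph{proof} of Lemma~\ref{lem:eta}: equations \eqref{eq:sdi} and \eqref{eq:tdi} give $\nt_t\nc_{d_I}=q^b\nc_{d_I}$ and $\nt_s\nc_{d_I}=-q^{-a}\nc_{d_I}$ in the quotient (where $\nc_{w_I}=0$), so $\nt_y\nc_{d_I}\in\mc{A}\nc_{d_I}$ for all $y$, whence $\nc_{d_I}\nc_{d_I}=\eta_{d_I}\nc_{d_I}$. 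Replace your P4 argument by a direct appeal to these identities and the proof goes through.
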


\begin{proof}
	Since $ l(u)\leq 2m-1 $, by \eqref{eq:deguv}, we have $  \deg \nt_u\nt _v\leq mb-(m-1)a $. 
	Assume that $  \deg \nt_u\nt_v= mb-(m-1)a $ holds. Then by \eqref{eq:deguv}, $ l(u)=2m-1 $,  and by  \eqref{eq:deguv2}, we have $ ut<u $. These  imply that $ u=d_I $, and similarly $ v=d_I $. 
%		If  $ l(u) $ or $ l(v)\leq 2m-2 $, then by \eqref{eq:deguv}, we have $ \deg \nt_u\nt_v \leq b+(m-2)(b-a)$. If $ l(u) = l(v)= 2m-1 $ and $ (u,v)\neq (d_I,d_I)$, then $ u$ or $v=w(s,2m-1) $, and then $ \deg \nt_u\nt_v \leq b+(m-2)(b-a)$  by \eqref{eq:deguv2}. This proves \eqref{eq:degreeneqd}.
	
We claim that for  $ u_k=w(t,2k-1)$ with $ 1\leq  k\leq m $ we have 
	\begin{equation}\label{eq:udi}
	\deg \nt_{u_k}\nt_{d_I}= L'(u_k)=kb-(k-1)a,
	\end{equation}
	which implies that $  \deg \nt_{d_I}\nt_{d_I}= mb-(m-1)a  $.

	We prove \eqref{eq:udi} by induction on $ k $. If $ k=1 $, it is obvious. Assume that $ k\geq 2 $ and  that we have proved it for elements $ u_{k'} $ with $ k'<k $.
	
	We have
	\begin{align*}
	\nt_{u_k}\nt_{d_I}
	&=\nt_{u_{k-1}st}\nt_{d_I}\\
	&=\nt_{u_{k-1}s}\nt_{td_I}+\xi_t  \nt_{u_{k-1}s}\nt_{d_I}\\
	&=\nt_{u_{k-1}}\nt_{std_I}+\xi_s \nt_{u_{k-1}}\nt_{td_I}+ \xi_t  \nt_{u_{k-1}}\nt_{w_I}\\
	&=\nt_{u_{k-1}}\nt_{std_I}+\xi_s \nt_{u_{k-1}}\nt_{td_I}\\
	&\phantom{=}-\sum_{y\neq w_I, d_I}\xi_t p_{y,w_I} \nt_{u_{k-1}}\nt_{y}-\xi_t p_{d_I,w_I}\nt_{u_{k-1}}\nt_{d_I}.
	\end{align*}
	By \eqref{eq:deguv},  we have $$  \deg (\nt_{u_{k-1}}\nt_{std_I})\leq b+(k-2)(b-a) <b+(k-1)(b-a)  .$$
	By \eqref{eq:deguvba1}, we have $$ \deg (\xi_s \nt_{u_{k-1}}\nt_{td_I}) \leq a+(k-2)(b-a)<b+(k-1)(b-a) . $$
	By \eqref{eq:deguv} and the fact that  $ \deg  \xi_t p_{y,w_I} \leq 0$ for $ y\neq w_I,d_I $, we have $$ \deg  (\xi_t p_{y,w_I} \nt_{u_{k-1}}\nt_{y} )<b+(k-1)(b-a)   .$$ By the induction hypothesis, we have $$  \deg \xi_t p_{d_I,w_I}\nt_{u_{k-1}}\nt_{d_I}= b+(k-1)(b-a) .$$
	Hence $$   \deg \nt_{u_k}\nt_{d_I}= b+(k-1)(b-a)  .$$ This proves claim \eqref{eq:udi}.
\end{proof}

Assume $ a<b $. As above, by  Corollary \ref{cor:unequal} and Lemma \ref{lem:compute}, for $ L'(d_I)<N<L(w_I) $, $ (W_I)_{N}=\emptyset=(\Omega_I)_N $, and for $ N=L'(d_I) $, $ (W_I)_N=\{d_I\}=(\Omega_I)_N $. By \eqref{eq:sdi}, the set $ \{d_I,w_I\} $ is $ \prec_{LR} $ closed. Therefore, Proposition \ref{prop:rank2} holds for $L'(d_I) \leq N<L(d_I) $ in the case of $ a<b $.

\begin{lem}\label{lem:unequal}
	 Assume that $ a<b $, and $ N=L'(d_I)-1 $. Then $ (W_I)_{\leq N}=W_I\setminus\{d_I,w_I\} $. For any $ u,v\in (W_I)_{\leq N} $, we have\[
	 \deg\nt_u\nt_v\leq b.
	 \]
	 If moreover $  \mc{R}(u)\cap\mc{L}(v)=\emptyset $,
	 \[
	 \deg\nt_u\nt_v\leq 0.
	 \]
	 The equality $  \deg\nt_u\nt_v=b $ holds  only if  $ t $ appears in reduced expressions of $ u $ and $ v $.
\end{lem}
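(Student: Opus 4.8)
The plan is to prove all three assertions simultaneously by induction on $l(u)$, running the same kind of argument as in Lemma~\ref{lem:like} and Corollary~\ref{cor:unequal}, but now inside the smaller quotient $(\mc{H}_I)_{\leq N}$ in which \emph{both} $\nt_{d_I}$ and $\nt_{w_I}$ must be expanded via \eqref{eq:minus}.

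First I would record the elementary bookkeeping. From the two preceding propositions we have $(W_I)_M=\emptyset$ for $L'(d_I)<M<L(w_I)$, $(W_I)_{L'(d_I)}=\{d_I\}$ and $(W_I)_{L(w_I)}=\{w_I\}$, so $(W_I)_{>N}=\{d_I,w_I\}$ and $(W_I)_{\leq N}=W_I\setminus\{d_I,w_I\}$. By Lemma~\ref{lem:degd}, $\deg p_{z,d_I}=L'(z)-L'(d_I)$, and a short evaluation of $L'$ on the (linearly ordered) elements of $W_I$ gives $\deg p_{z,d_I}\leq a-b<0$ for every $z<d_I$, hence $\deg\nt_{d_I}\leq a-b$. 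By the computation inside the proof of Lemma~\ref{lem:degd}, $\deg p_{y,w_I}=L(y)-L(w_I)$ for $y\leq w_I$; this equals $-a$ exactly when $y=d_I$ and is $\leq -b$ for every other $y<w_I$, so $\deg\nt_{w_I}\leq -b$. Note that every element occurring in the expansion of $\nt_{d_I}$ lies in $(W_I)_{\leq N}$, and every element occurring in that of $\nt_{w_I}$ lies in $(W_I)_{\leq N}$ except $d_I$ itself.

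Now the induction on $l(u)$, the base case $l(u)=0$ being immediate. \emph{Disjoint case:} if $\mc{R}(u)\cap\mc{L}(v)=\emptyset$, I claim $\deg\nt_u\nt_v\leq0$. If $l(uv)=l(u)+l(v)$ this is clear, since $\nt_u\nt_v=\nt_{uv}$ has degree $0$ when $uv\in(W_I)_{\leq N}$ and degree $<0$ when $uv\in\{d_I,w_I\}$. If $l(uv)<l(u)+l(v)$, then the alternating word for $uv$ exceeds length $l(w_I)$, so $u=u'u''$ with $l(u)=l(u')+l(u'')$, $u''\neq e$, $u''v=w_I$ and $l(w_I)=l(u'')+l(v)$; thus $\nt_u\nt_v=\nt_{u'}\nt_{w_I}=-\sum_{y<w_I}p_{y,w_I}\,\nt_{u'}\nt_y$. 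For $y\neq d_I$ each summand has degree $\leq(-b)+b=0$ by the induction hypothesis applied to $(u',y)$, using $l(u')<l(u)$. For $y=d_I$, expand once more: $p_{d_I,w_I}\nt_{u'}\nt_{d_I}=-p_{d_I,w_I}\sum_{z<d_I}p_{z,d_I}\,\nt_{u'}\nt_z$, each summand of degree $\leq(-a)+(a-b)+b=0$ by the induction hypothesis applied to $(u',z)$. Hence $\deg\nt_u\nt_v\leq0$. \emph{Common-descent case:} if $r\in\mc{R}(u)\cap\mc{L}(v)$, write $\nt_u\nt_v=\nt_{ur}\nt_{rv}+\xi_r\nt_{ur}\nt_v$. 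Since $\mc{R}(ur)\cap\mc{L}(v)=\emptyset$, the disjoint case already treated, applied to $(ur,v)$ (with $l(ur)<l(u)$), gives $\deg\nt_{ur}\nt_v\leq0$, so $\deg\xi_r\nt_{ur}\nt_v\leq L(r)$, which is $\leq b$ and is $\leq a<b$ when $r=s$; and $\deg\nt_{ur}\nt_{rv}\leq b$ by the induction hypothesis. Therefore $\deg\nt_u\nt_v\leq b$. Finally, if $\deg\nt_u\nt_v=b$, the disjoint case is excluded (it yields $\leq0$), so we are in the common-descent case; if the common descent is $t$ we are done, and if it is $s$ then the term $\xi_s\nt_{ur}\nt_v$ has degree $\leq a<b$, forcing $\deg\nt_{us}\nt_{sv}=b$, whence by the induction hypothesis $t$ occurs in reduced expressions of $us$ and of $sv$, and therefore of $u=(us)s$ and $v=s(sv)$.

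The step that needs the most care is the constant $0$, rather than $b-a$, in the wrap-around subcase: it is genuinely a two-step unravelling — first $\nt_{w_I}$, then the copy of $\nt_{d_I}$ it produces — and it closes only because $d_I$ is the \emph{unique} $y<w_I$ with $\deg p_{y,w_I}$ as large as $-a$, while the inductive bound on $\nt_{u'}\nt_z$ is exactly $b$ and $\deg p_{z,d_I}\leq a-b$, so the three contributions $-a$, $a-b$, $b$ cancel; for every other $y$ the surplus $\deg p_{y,w_I}\leq -b$ absorbs the inductive bound $b$ outright. Everything else is the same alternating-word bookkeeping as in Lemma~\ref{lem:like}.
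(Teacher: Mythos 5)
Your proposal is correct and follows essentially the same route as the paper: induction on $l(u)$, the wrap-around case handled by expanding $\nt_{w_I}$ and then the resulting $\nt_{d_I}$ term via \eqref{eq:minus} with the degree bounds $\deg p_{y,w_I}\leq -b$ ($y\neq d_I$), $\deg p_{d_I,w_I}=-a$, $\deg p_{z,d_I}\leq a-b$ from Lemma \ref{lem:degd}, and the common-descent case via $\nt_u\nt_v=\nt_{ur}\nt_{rv}+\xi_r\nt_{ur}\nt_v$. The only (harmless) deviation is the final "equality only if $t$ appears" clause, which you derive inductively from the descent analysis, whereas the paper observes directly that $t$ absent forces $u\in\{e,s\}$ and hence degree $\leq a<b$.
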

\begin{proof}
	We prove it by induction on $ l(u) $. If $ l(u)=0 $, it is obvious. Assume that $ l(u)>0 $ and that it is known for elements with smaller length.

	Assume that $ \mc{R}(u)\cap \mc{L}(v)= \emptyset $.  If $uv=u\bd v$, then the result is obvious. If $ uv<u\bd v $, then there exist $ u',u'' $ such that $ u=u'\bd u'' $, $ u''\neq e $ and $ w_I=u''\bd v  $. Since 
	\begin{align*}
	  \nt_{w_I}&=-\sum _{y\neq d_I, w_I} p_{y,w_I}\nt_y  -p_{d_I,w_I}\nt_{d_I}\\
	  &=-\sum _{y\neq d_I, w_I} p_{y,w_I}\nt_y  +q^{-a}\sum _{z<d_I}p_{z,d_I}\nt_z,
	\end{align*}
we have \begin{align*}
	\nt_u\nt_v&=\nt_{u' }\nt_{w_I} \\
&=-\sum _{y\neq d_I, w_I} p_{y,w_I}\nt_{u'}\nt_y  +q^{-a}\sum _{z<d_I}p_{z,d_I}\nt_{u'}\nt_z.
	\end{align*}
	Note that $ \deg   p_{y,w_I}\leq -b $ for $ y\neq d_I, w_I $, $\deg p_{z,d_I}\leq-(b-a) $ for $ z<d_I $ (see Lemma \ref{lem:degd}). Applying induction hypothesis, $\deg \nt_{u'}\nt_y \leq b $, $  \deg \nt_{u'}\nt_z \leq b$, and hence $ \deg \nt_u\nt_v\leq 0 $.
	
	If $r\in \mc{R}(u)\cap\mc{L}(v)  $, then
\begin{equation}\label{eq:urv}
	\nt_{u}\nt_{v}=\nt_{ur}\nt_{rv}+\xi _r\nt_{ur}\nt_{v}.
\end{equation}

	Note that $ \mc{R}(ur)\cap \mc{L}(v)= \emptyset $. Applying induction hypothesis, we have $\nt_{ur}\nt_{rv}\leq b$ and $ \deg \nt_{ur}\nt_{v} \leq 0$, and hence $ \deg \nt_{u}\nt_{v}\leq b $.
	
Now we prove the last sentence of the lemma. If $ t $ does not appear in the (unique) reduced expression of $ u $, then $ u=s $, and obviously $ \deg\nt_u\nt_v=a <b$. Hence $  \deg\nt_u\nt_v=b $ implies that $ t $ appears the reduced expression of u, and similar of $ v $.
\end{proof}

Assume $ a<b $. As above, by  Lemmas \ref{lem:unequal} and \ref{lem:compute}, for $ b<N<L'(d_I) $, $ (W_I)_N=\emptyset=(\Omega_I)_N $, and $ (W_I)_b\subset W_I\setminus\{ e,s, d_I,w_I\} =(\Omega_I)_b$.  If $ v\in  (\Omega_I)_b$ and $ t\in\mc{L}(v) $, then $ \deg \nt_t\nt_{v}=b $; if $ v\in  (\Omega_I)_b$ and $ s\in \mc{L}(v) $, then  $ v=st\bd y$ for some $ y $ and
\[
\nt_{ts}\nt_{v}=\xi_a\nt_{tv}+\xi_b\nt_{ty}+\nt_y
\]
with $ N=b $, which implies that $ \deg \nt_{ts}\nt_{v}=b $. Thus by Lemma \ref{lem:compute}, $ (\Omega_I)_b \subset (W_I)_b$. Then we have proved $ (\Omega_I)_b = (W_I)_b $.

For $ u\in (W_I)_b $, we have $ u=x\bd d\bd y $ with $ d=t$, $ x\in B_d= \{e,s\} $,  $ y\in U_d $. Then we have a factorization formula 
$ \nc_u=\nE_x\nc_d\nF_{y} $
with $ N=b $ as in Theorem \ref{thm:dec}. Then using  arguments in Theorem \ref{thm:cor}, one can prove that $ (W_I)_{\geq b}  $ is $ \prec_{LR} $ closed.

Then   Proposition \ref{prop:rank2} follows for $ b\leq N<L'(d_I) $ in the case of $ a<b $.

%We claim that $ W_I\setminus\{e ,s\} $ is $ \prec_{R} $ closed. Let $ u\in W_I\setminus\{e ,s\} $. It suffices to prove that if $ C_x $ appears in $ C_u h $ for some $ h\in\mc{H} $,  we have $ x\in W_I\setminus\{e ,s\} $. 
%
%If $ t\in \mc{L}(u) $, then $ t\in \mc{L}(x)$, and hence $ x\in W_I\setminus\{e ,s\} $. If $ s\in \mc{L}(u) $, then $ u=s\bd t\bd y$ for some $ y $. Then one can prove a  formula\[
%\nc_{st}\nc_{ty}=(q^b+q^{-b})\nc_{sty}
%\] 
%with $ N=b $, see Theorem \ref{thm:dec} and its proof. Using this formula, one can see that $ x\in (W_I)_{>b} $ or $ x=s\bd t\bd y' $ for some $ y' $. Thus $ x\in W_I\setminus\{e ,s\} $. This proves this claim.
%
%By this lemma, one can see that when $a<b$ and  $b \leq N<L'(d_I) $, Proposition \ref{prop:key1} holds. Then  by the second paragraph we have $ (W_I)_{\geq N}=(\Omega_I)_{ \geq N} $ is $ \prec_{LR} $ closed for such $ N $. In particular, $ W_{<b}=\{e,s\} $.

The following lemma is easy.
\begin{lem}
 Assume that $ a<b $, and $ N=b-1 $.  We have $ (W_I)_{\leq N} =\{ e, s\}$. For $ u,v\in  (W_I)_{\leq N}   $, we have\[
\deg\nt_u\nt_v\leq a.
\]
The equality  holds if and  only if   $ u = v=s $.
\end{lem}

As above, one can see that Proposition \ref{prop:rank2} holds for $ a\leq N<b $ in the case of $ a<b $.

Using $ (W_I)_{<a}=\{e\} $, one can easily see that Proposition \ref{prop:rank2}  for $ 0\leq  N<a $. This completes the proof of Proposition \ref{prop:rank2}.

\section{Right-angled Coxeter groups}\label{ap:ra}

	Throughout this section,  $ (W,S) $ is a right-angled Coxeter group, i.e. $ m_{st}=2 $ or $ \infty $ for any $ s\neq t\in S $. Let $ L $ be a fixed positive weight function on $ W $.

The goal of this section is to prove P1-P15 for $ (W,S,L) $.

%Note that, in \cite{Bel2004}, there is a proof of the boundedness conjecture  and some results about cells  for right-angled Coxeter groups in the equal parameter case.

Let $ D $ be the set of elements $ w $ such that $w= s_1 s_2\cdots s_p $ for some $ s_i\in S $ with $ m_{s_is_j} =2$ for any $ i\neq j $. Define \[
D_N=\{w\in D\mid L(w)=N\}\text{ and } D_{\geq 
	N}=\{w\in D\mid L(w)\geq N\}\text{ etc.}
\]
Define $ \Omega_{\geq N} $ be the set of elements $ w=x\bd d\bd y $ for some $ d\in D_{\geq N} $, $ x,y\in W $. Let $ \Omega_{ N}=\Omega_{\geq N}\setminus \Omega_{>N} $, and $ \Omega_{<N}=W\setminus \Omega_{\geq N} $. For $ d\in D_N $,  $ U_d $ and $ B_d $ are defined as in \eqref{eq:ud} and \eqref{eq:bd}. 

Let $ N_0 =\max\{N\mid D_N\neq \emptyset \}$. Such an integer exists because $ S $ is a finite set.

By Theorem \ref{thm:generalized}, to prove P1-P15 for $ (W,S) $, we only need to check the following theorem. 
\begin{thm}\label{thm:right-angled}
	Let $ (W,S) $ be a right-angled Coxeter group. 
	
	The boundedness conjecture holds for $ W $, i.e.  $ \deg T_x T_y\leq N_0 $ for any $ x,y\in W $.

	Fix $ N\in\mathbb{N} $ such that $ W_{>N}=\Omega_{>N} $ is $ \prec_{LR} $ closed. We have the following properties.
	\begin{enumerate}
		\item  For $ x,y\in W_{\leq N} $, the equality $ \deg\nt_x\nt_y=N $ holds  only if $ y\in\Omega_{ N} $.
		\item For $ d\in D_N $, $ x\in U_d^{-1} $, $ y\in U_d $, we have $ xdy=x\bd d\bd y $.
		\item For  $ d\in D_N $,  $ v<d $, $ y\in U_d $,
		we have
		\begin{align}\label{key}
		\deg \nt_x\nt_v\nt_y&\leq N-L(v) \text{ if } v\leq d, x\in U_d^{-1},\\
		\deg \nt_b\nt_v\nt_y&< N-L(v)  \text{ if } v< d, b\in B_d.
		\end{align}
	\end{enumerate}
\end{thm}

%Note that if $ d\in D_N $, then $ v\in D_{<N} $ for any $ v<d $.

The rest of this section is devoted to the proof of this theorem.

%\subsection{Boundedness conjecture}
Recall that the exchange condition of a Coxeter group says that if $ w=s_1s_2\cdots s_n$ is a reduced expression and $ sw<w $, then there exists some $ i $ such that $ sw=s_1\cdots \hat{s}_i\cdots s_n $, where $ \hat{s}_i $ denotes deleting $ s_i $.

\begin{lem}\label{lem:comm}
	Assume that $ sw=wt>w $ for some $ w\in W $ and $ s,t\in S $, and $ w=s_1s_2\cdots s_n $ is a reduced expression. Then we have $ s=t $, and $ s $ commutes with all $ s_i $, $ i=1\cdots n $.
\end{lem}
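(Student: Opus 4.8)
The plan is to combine the exchange condition with a careful bookkeeping of lengths. First I would observe that since $sw > w$, we have $\ell(sw) = \ell(w)+1$, and $s s_1 s_2 \cdots s_n$ is a reduced expression for $sw$; likewise $s_1 \cdots s_n t$ is a reduced expression for $wt$. From $sw = wt$ we get $s s_1 \cdots s_n = s_1 \cdots s_n t$, i.e. $s s_1 \cdots s_n t = w$ up to the relation, but more usefully: $s w t^{-1} = w$, so $s\cdot w\cdot t = sw\cdot t$ has length $\ell(w)$ since $swt=w$. The key move is to apply the exchange condition to $s\cdot(s_1 s_2\cdots s_n t)$: we know $\ell(s\cdot(wt)) = \ell(swt) = \ell(w) < \ell(wt)$, so multiplying the reduced word $s_1 s_2 \cdots s_n t$ on the left by $s$ decreases length, and the exchange condition says $s$ cancels one of the letters $s_1, \dots, s_n, t$.

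Next I would run an induction on $n = \ell(w)$. If $s$ cancels the final letter $t$, then $s(s_1\cdots s_n t) = s_1\cdots s_n$, forcing $s = t$ (since $s\cdot(s_1\cdots s_n t) = w = s_1\cdots s_n$ means $st$ acts trivially after the word, hence $s=t$ because in a right-angled group the only way a single generator on each side cancels is $m_{st}=2$ giving $st=ts$ — wait, more carefully: $s(s_1\cdots s_n)t = s_1\cdots s_n$ with the bracketed word reduced forces, by comparing reduced expressions, that $s$ commutes past everything). Let me instead structure the induction cleanly: the hard case is when $s$ cancels some $s_i$ with $i \le n$, giving $s s_1 \cdots s_n = s_1 \cdots \hat{s}_i \cdots s_n$, and then $s_1\cdots\hat s_i\cdots s_n t = w = s_1\cdots s_n$, which would force $\ell(w) = n-1+1 = n$ consistently but gives $s_1\cdots\hat s_i\cdots s_n\, t = s_1\cdots s_n$, i.e. $t = s_n^{-1}\cdots \widehat{s_i^{-1}}\cdots s_1^{-1} s_1\cdots s_n$; this is getting messy, so the cleaner path is:

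Set $w' = s_1 s_2 \cdots s_{n-1}$, so $w = w' s_n$. The plan is to show $s$ commutes with $s_n$ and that $sw' = w't$ with $sw' > w'$, then apply the induction hypothesis to $w'$. To get $s$ commutes with $s_n$: from $sw = wt$ and the exchange condition applied as above, $s$ cancels a letter of $s_1\cdots s_n t$. I would argue that $s$ cannot cancel $t$ in a way that causes problems, and that if $s$ cancels $s_i$ for $i<n$, one derives a contradiction with the reducedness of $w$ or reduces the situation; the surviving case is that $s = s_n$ or $s$ commutes with $s_n$. In a right-angled group, $m_{s,s_n} \in \{2,\infty\}$; if $s$ appears adjacent to $s_n$ with a braid-type cancellation it would need $m_{s,s_n}$ finite and odd or the relation to mix them, impossible unless $s=s_n$ or they commute. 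Concretely: if $s = s_n$, then $sw = s_1\cdots s_{n-1}$, so $\ell(sw) < \ell(w)$, contradicting $sw>w$; hence $s\ne s_n$, and the only remaining option from the exchange analysis is $m_{s,s_n}=2$, i.e. $ss_n = s_ns$. Then $sw = s w' s_n = (sw') s_n$ and $wt = w' s_n t$; since also one shows $s_n t = t s_n$ by a symmetric argument on the right, we get $sw' = w' t$, and $\ell(sw') = \ell(sw) - 1 = \ell(w')+1 > \ell(w')$. By the induction hypothesis $s = t$ and $s$ commutes with $s_1,\dots,s_{n-1}$; combined with $ss_n = s_n s$ we conclude $s=t$ commutes with all $s_i$, completing the induction (base case $n=0$ being trivial: $s=t$ and the empty commutation claim).

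The main obstacle I expect is the casework in the exchange-condition step: showing rigorously that the only way left-multiplication by $s$ can shorten the reduced word $s_1\cdots s_n t$ (given that $\ell(swt)=\ell(w)$) is by "commuting $s$ into the word and cancelling against a matching $s$", which crucially uses that $W$ is right-angled (so no genuine braid relations of odd or $\ge 3$ length). One must rule out $s$ cancelling some interior $s_i$ with $i<n$ while $s$ does not commute with all of $s_{i+1},\dots,s_n$ — in a general Coxeter group this is exactly where braid moves intervene, but here $m_{st}\in\{2,\infty\}$ forces the deleted occurrence to be reachable by commutations, i.e. $s$ commutes with $s_{i+1},\dots,s_n$ and $s_i = s$; one then checks this still yields the desired commutation relations. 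Handling this transparently — perhaps via the standard fact that in any Coxeter group $sw<w$ with $w=s_1\cdots s_n$ reduced means $s = s_1\cdots s_{i-1} s_i s_{i-1}\cdots s_1$ for some $i$, specialized to the right-angled case where this conjugate equals $s_i$ and the conjugating part must commute with $s$ — is the technical heart of the argument.
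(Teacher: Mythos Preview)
Your proposal circles the right ideas but never lands a clean argument, and it contains a concrete error. The paper's proof is a two-line induction that you are missing entirely: since $sw>w$ we have $s\in\mc{L}(sw)$, and since $sw=wt=s_1\cdots s_nt$ we also have $s_1\in\mc{L}(sw)$; two distinct left descents force $m_{ss_1}<\infty$, hence $m_{ss_1}=2$ in the right-angled case. Commuting $s$ past $s_1$ and cancelling $s_1$ on the left yields $s(s_2\cdots s_n)=(s_2\cdots s_n)t$ with $s(s_2\cdots s_n)>s_2\cdots s_n$, and induction on $n$ finishes. No exchange-condition casework is needed.

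In your attempt there is a genuine mistake: you write ``if $s=s_n$, then $sw=s_1\cdots s_{n-1}$''. This confuses left and right multiplication; what is true is $ws_n=s_1\cdots s_{n-1}$, which says nothing about $\ell(sw)$. So your argument that $s\neq s_n$, and hence that the only remaining option is $m_{s,s_n}=2$, collapses. Peeling from the right is the wrong end: the hypothesis places $s$ on the left, so the natural letter to interact with is $s_1$, not $s_n$; your ``symmetric argument on the right'' for $s_nt=ts_n$ is likewise unsupported.

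There is also a gap in the exchange-condition route you sketch at the end. Exchange applied to $s\cdot(s_1\cdots s_nt)$ gives $s=(s_1\cdots s_{i-1})s_i(s_1\cdots s_{i-1})^{-1}$ for some $i$ (or the analogous statement with $t$). In a right-angled group conjugate simple reflections are equal, so $s=s_i$ and the product $s_1\cdots s_{i-1}$ centralises $s$. But you still need that \emph{each} $s_j$ with $j<i$ commutes with $s$; passing from ``the product centralises $s$'' to ``each factor commutes with $s$'' is exactly the nontrivial step, and it is not addressed. It can be done via the standard heap/word combinatorics for right-angled groups, but that is more work than the paper's descent-set observation.
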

\begin{proof}
	Note that $ s,s_1\in\mc{L}(sw) $. Then $ m_{ss_1}<\infty $ and hence $ m_{ss_1}=2  $, i.e. $ s $ commutes with $ s_1 $. Thus we obtain  $ ss_2s_3\cdots s_n=s_2s_3\cdots s_n t $. Then one can use induction on $ n $ to prove the lemma.
\end{proof}

\begin{lem}\label{lem:ra-com}
	Assume that $ x=s_ps_{p-1}\cdots s_1 $ and $ y=t_1t_2\cdots t_q $ are reduced expressions, and $ xy<x\bd y $. Let $ i $ be  the integer such that \[
	l(s_{i-1}\cdots s_1y)=l(y)+i-1, \text{ and } s_is_{i-1}\cdots s_1y<s_{i-1}\cdots s_1y.
	\]
	Let $ x_1=s_is_{i-1}\cdots s_1 $, and $ j $  be the integer such that \[
	l(x_1t_1t_2\cdots t_{j-1})=l(x_1)+j-1, \text{ and } x_1t_1t_{2}\cdots t_j<x_1t_1t_{2}\cdots t_{j-1}.
	\] Then $ s_i=t_j $ and $ s_i $ commutes with  $ s_{i'} $, $ t_{j'} $  for all $ i'<i $, $ j'<j $.
\end{lem}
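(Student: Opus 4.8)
The statement is a Coxeter-combinatorial fact about right-angled groups: when two reduced words $x=s_p\cdots s_1$ and $y=t_1\cdots t_q$ satisfy $l(xy)<l(x)+l(y)$, the ``first cancellation'' occurs at a generator $s_i$ which must equal the corresponding generator $t_j$ on the $y$-side and must commute with everything lying between. The strategy is to reduce this to the previous lemma (the one asserting that $sw=wt>w$ forces $s=t$ and $s$ central on a reduced word for $w$) by cutting $x$ and $y$ at the indices $i$ and $j$ where cancellation first happens from each side.

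First I would set $w = s_{i-1}\cdots s_1 t_1 \cdots t_{j-1}$, which by the defining properties of $i$ and $j$ is a reduced word of length $(i-1)+(j-1)$: indeed $l(s_{i-1}\cdots s_1 y) = l(y)+i-1$ gives a reduced word for $s_{i-1}\cdots s_1 y$, and then the choice of $j$ as the first index where $x_1 t_1\cdots t_j$ drops says precisely that $s_{i-1}\cdots s_1 t_1 \cdots t_{j-1}$ stays reduced while left-multiplication by $s_i$ and right-multiplication by $t_j$ each decrease length. I should be slightly careful here: the index $j$ is defined relative to $x_1 = s_i s_{i-1}\cdots s_1$, not relative to $s_{i-1}\cdots s_1$, so I first note that $s_i(s_{i-1}\cdots s_1 y) < s_{i-1}\cdots s_1 y$ means $s_i$ is a left descent of $s_{i-1}\cdots s_1 y$, and by the exchange condition applied along the reduced word $s_{i-1}\cdots s_1 t_1\cdots t_q$ the cancelled letter is some $t_{j}$ (it cannot be one of the $s_{i'}$, $i'<i$, by minimality of $i$). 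This identifies $j$ and shows $s_i w = w' $ where $w' = s_{i-1}\cdots s_1 t_1\cdots \hat t_j \cdots t_{j-1}$... actually cleaner: it shows $s_i \cdot s_{i-1}\cdots s_1 \cdot t_1\cdots t_{j-1} = s_{i-1}\cdots s_1 \cdot t_1 \cdots t_{j-1} \cdot t_j$, i.e.\ $s_i w = w t_j$ with $w$ reduced of length $(i-1)+(j-1)$ and $s_i w > w$ (since $l(s_i w) = l(s_{i-1}\cdots s_1 y)\cdot$ truncated $\ge l(w)+1$; this needs a short verification that truncating $y$ to $t_1\cdots t_{j-1}$ does not destroy the length increase, which follows because $j$ was chosen as the first drop).

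Once I have $s_i w = w t_j > w$ with $w = s_{i-1}\cdots s_1 t_1\cdots t_{j-1}$ a reduced word, the previous lemma applies verbatim: it yields $s_i = t_j$ and that $s_i$ commutes with every letter of this reduced word, i.e.\ with $s_{i-1},\dots,s_1$ and with $t_1,\dots,t_{j-1}$. That is exactly the conclusion $s_i = t_j$ and $s_i$ commutes with $s_{i'}$ for all $i'<i$ and $t_{j'}$ for all $j'<j$. So the whole argument is: (1) extract $i,j$; (2) check the truncated word $w$ is reduced and that $s_i w = w t_j > w$; (3) invoke the prior lemma.

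\textbf{Main obstacle.} The one genuinely delicate point is step (2): verifying that after truncating $y$ at position $j$ the element $s_i w = s_i s_{i-1}\cdots s_1 t_1\cdots t_{j-1}$ still has length strictly greater than $w$, rather than the truncation accidentally making $s_i$ a descent on the left. This must be argued from the minimality built into the definitions of $i$ (first left-multiplication of $y$ by a prefix of $x$ that drops) and $j$ (first right-truncation point), probably via a careful application of the exchange/deletion condition to the single reduced word $s_i s_{i-1}\cdots s_1 t_1 \cdots t_q$, tracking that the unique deletion caused by the left descent $s_i$ lands at position $t_j$ and not earlier. Everything else is a routine unwinding of definitions plus the cited lemma.
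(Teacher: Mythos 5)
Your global strategy is the same as the paper's: reduce to the preceding lemma with $w=s_{i-1}\cdots s_1t_1\cdots t_{j-1}$. The gap is that the hypothesis of that lemma, namely the identity $s_iw=wt_j$ for the $j$ defined in the statement, is never actually established. Your route applies the exchange condition to the left descent $s_i$ of $s_{i-1}\cdots s_1y$; this only tells you that the deleted letter is some $t_k$, i.e. $s_i(s_{i-1}\cdots s_1t_1\cdots t_{k-1})=(s_{i-1}\cdots s_1t_1\cdots t_{k-1})t_k$ for some $k$, and the assertion ``this identifies $j$'' --- that this $k$ coincides with the first-drop index $j$ of the statement --- is precisely the nontrivial point; ``because $j$ was chosen as the first drop'' is not an argument, and your closing remark that one should track where the deletion lands is a restatement of the problem, not a solution. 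You have also located the difficulty in the wrong place: with the statement's $j$, the inequality $s_iw>w$ is immediate, since $s_iw=x_1t_1\cdots t_{j-1}$ has length $i+j-1$ by the definition of $j$; likewise $wt_j=s_{i-1}\cdots s_1t_1\cdots t_j$ is reduced (a prefix of the reduced word $s_{i-1}\cdots s_1t_1\cdots t_q$), so right multiplication by $t_j$ increases, not decreases, the length of $w$. What is missing is the equality $s_iw=wt_j$. Two further slips: the word $s_is_{i-1}\cdots s_1t_1\cdots t_q$ you call reduced is not reduced (that is the whole point), and in your exchange step the deleted letter cannot be an $s_{i'}$ because $s_i\cdots s_1$ is reduced (a subword of the reduced expression of $x$), not ``by minimality of $i$''.

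The clean way to close the gap is to run the exchange on the other side, with the statement's $j$ from the start: $t_j$ is a right descent of $x_1t_1\cdots t_{j-1}$, whose word $s_is_{i-1}\cdots s_1t_1\cdots t_{j-1}$ is reduced, so $x_1t_1\cdots t_j$ is obtained by deleting one letter of it. The deleted letter is no $t_{j'}$ with $j'<j$ (since $t_1\cdots t_j$ is reduced), and it cannot be $s_{i'}$ with $i'<i$: cancelling $s_i\cdots s_{i'+1}$ on the left would give $s_{i'}(s_{i'-1}\cdots s_1t_1\cdots t_j)=s_{i'-1}\cdots s_1t_1\cdots t_{j-1}$, so $s_{i'}$ is a left descent of $s_{i'-1}\cdots s_1t_1\cdots t_j$ and hence of $s_{i'-1}\cdots s_1y$ (lengths add when the tail $t_{j+1}\cdots t_q$ is appended, both words being segments of the reduced word $s_{i-1}\cdots s_1t_1\cdots t_q$), contradicting $l(s_{i'}\cdots s_1y)=l(y)+i'$, which is forced by the definition of $i$. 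Hence the deleted letter is $s_i$, which says exactly $s_iw=wt_j>w$, and the previous lemma finishes the proof; this is where the minimality of $i$ genuinely enters.
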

\begin{proof}
Take
	$$  s=s_i , t=t_j , \text{ and }w= s_{i-1}\cdots s_2s_1t_1t_2\cdots t_{j-1} ,$$
	and then apply Lemma \ref{lem:comm}.
\end{proof}

\begin{lem}\label{lem:xdy}
Let $ x,y\in W $, $ d\in D $ such that $xd=x\bd d$, $ dy=d\bd y $, $ xdy<x\bd d\bd y$.
Then there exists $ s\in S$, $ x',y'\in W$ such that
 $$  x=x'\bd s , \quad y=s\bd y' ,\quad sd=ds >d .$$ This implies  \[
T_{x}T_dT_y=T_{x'}T_dT_{y'}+\xi_s T_{x'}T_{sd}T_{y'}.
\]
\end{lem}

\begin{proof}
Assume that $ xdy<x\bd d\bd y$,   and $$  x=s_p\cdots s_1 , \quad y=t_1\cdots t_q, \quad d=r_1r_2\cdots r_k  $$ are reduced expressions. Let $ i $ be the integer such that \[
l(s_{i-1}\cdots s_1dy)=l(dy)+i-1, \text{ and } s_is_{i-1}\cdots s_1dy<s_{i-1}\cdots s_1dy.
\]
Let $ x_1=s_is_{i-1}\cdots s_1 $, and  $ j $  be the integer such that \[
l(x_1dt_1t_2\cdots t_{j-1})=l(x_1d)+j-1, \text{ and } x_1dt_1t_{2}\cdots t_j<x_1dt_1t_{2}\cdots t_{j-1}.
\] By Lemma \ref{lem:ra-com}, $ s_i=t_j $ and $ s_i $ commutes with  $ s_{i'} $, $ t_{j'} $, $ r_{k'} $  for all $ i'<i $, $ j'<j $, $ k'\leq k$.
Take $$  s=s_i=t_j ,\quad x'=s_p\cdots \hat{s}_i\cdots s_1  ,\quad y'=t_1\cdots \hat{t}_j\cdots t_q .$$
 We have $ x=x's $, $ y=sy' $, $ sd=ds >d$. Then  \[
T_{x}T_dT_y=T_{x'}T_dT_{y'}+\xi_s T_{x'}T_{sd}T_{y'}.
\]
\end{proof}

%\begin{cor}
% For $ d\in \Omega_{ N_0} $, we  have $ xdy=x\bd d \bd y $ for any $ x,y\in W $ such that 
% $ xd=x\bd d $, $ dy=d\bd y $.
%\end{cor}
%\begin{proof}
%	Otherwise, by Lemma \ref{lem:xdy}, there exists $ s\in S $ such that $ sd=ds>d $, which contradicts with $ d\in \Omega_{ N_0} $.
%\end{proof}

\begin{lem}\label{lem:bound-ra}
	Let $ x,y\in W $, $ d\in D $ such that $xd=x\bd d$, $ dy=d\bd y $. Then
	\begin{equation}\label{eq:degc}
	\deg T_xT_dT_y\leq N_0-L(d).
	\end{equation}
\end{lem}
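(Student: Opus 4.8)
The plan is to prove \eqref{eq:degc} by induction on $l(y)$, exactly paralleling the structure of the proof of Proposition \ref{prop:key1} (the boundedness-type statement for complete graphs) but in the much simpler right-angled setting, where the only ``dihedral'' pieces that occur are of the form $d\in D$ (a product of mutually commuting generators) and the longest element $w_I$ of a parabolic $W_I$ is itself such a product. The base case $l(y)=0$ asks for $\deg T_xT_d\le N_0-L(d)$; since $l(xd)=l(x)+l(d)$ we have $T_xT_d=T_{xd}$, which has degree $0$, and $L(d)\le N_0$ by definition of $N_0$, so the inequality holds. The case $l(y)=1$, say $y=r$, splits on whether $dr>d$: if so, $T_xT_dT_r=T_{xdr}$ has degree $0\le N_0-L(d)$; if $dr<d$ (impossible since $l(dy)=l(d)+l(y)$, so this case does not arise), and if $r\notin\mathcal R(d)$ but $rd=dr$ one still gets $T_{xdr}$ after reassociating. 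So the genuinely new phenomenon only appears when a letter of $y$ can be ``pushed into'' $d$.

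First I would handle the easy reductions. If $r\in\mathcal L(y)\setminus\mathcal R(xd)$, write $T_xT_dT_y=T_{xd}T_y=T_{(xd)r}T_{ry}$; here $l((xd)r)=l(xd)+1$, $l(xd\cdot ry)$ may or may not be additive, but we may regroup as $T_{xdr}T_{ry}$ and, noting $r$ commutes with every letter of $d$ (since $\mathcal L(dr)$ would otherwise force $m$-values, cf.\ Lemma \ref{lem:ra-com}), rewrite $xdr=x'dr$ with $l(dy')$ additive for the shorter word, invoking the induction hypothesis on $ry$ (which has smaller length, and $L(d)$ unchanged). If instead every $r\in\mathcal L(y)$ satisfies $r\in\mathcal R(xd)$ and $l(xd\cdot ry)<l(xd)+l(ry)$, use the exchange-type reasoning of Lemma \ref{lem:ra-com}: there is a generator $s$ with $s=t_j$ commuting with an initial segment of $y$ and with a terminal segment of $xd$; in the right-angled case the relevant parabolic $W_I$ generated by $\{s\}\cup(\text{commuting neighbours})$ has its ``longest element'' $w_I$ lying in $D$, and one picks up an extra factor $d' := d\cdot s$ or $s\cdot w_I$-type element, still in $D$, with $L$-value bounded by $N_0$. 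Concretely, $T_xT_dT_y = \xi_s T_{x d s \cdots y} + (\text{lower-length products})$, and one bounds the degree of the $\xi_s$ term by $L(s)\le L(d')-\text{(something)}\le N_0-L(d)$ after observing that $ds\in D$ so $L(d)+L(s)=L(ds)\le N_0$, while the remaining products have strictly shorter $y$ and absorb into the induction hypothesis. (One should use here that $T_sU = \xi_s(\cdots)+U'$ with $U'$ of strictly smaller length, just as in Lemma \ref{lem:shiyang}(iii), but with all $m$-values $2$, so the ``dihedral'' computation trivializes.)

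The main obstacle I expect is bookkeeping the additivity hypotheses $l(xd)=l(x)+l(d)$, $l(dy)=l(d)+l(y)$ when $d$ grows to $ds\in D$: one must check that after the exchange move the new triple $(x_1, ds, y_1)$ (with $l(y_1)<l(y)$) again satisfies the analogous length-additivity conditions so that the induction hypothesis applies with $L(ds)$ in place of $L(d)$, and that $L(ds)\ge L(d)$ so $N_0-L(ds)\le N_0-L(d)$, making the bound only tighter for the recursive term. This is where Lemma \ref{lem:ra-com} does the real work: it guarantees $s$ commutes with all the intervening letters, so $xd = x_1 \cdot(\text{commuting block})$ and $y = (\text{commuting block})\cdot y_1$ with genuinely reduced factorizations, and the ``commuting block'' together with $s$ spans a finite right-angled parabolic whose longest element is in $D$. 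Once that structural point is nailed down, the degree estimates are immediate: every term is either of the form $T_{(\text{reduced word})}$ of degree $0$, or $\xi_s$ times such a term with $L(s)+L(d)\le N_0$, or a strictly shorter product covered by induction.
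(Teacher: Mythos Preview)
Your core idea is right and matches the paper: when $l(xdy)<l(x)+l(d)+l(y)$, Lemma \ref{lem:ra-com} produces a generator $s$ appearing in both $x$ and $y$ which commutes with every letter of $d$, and one obtains the recursion
\[
T_xT_dT_y=T_{x'}T_dT_{y'}+\xi_s\,T_{x'}T_{sd}T_{y'},
\]
with $sd\in D$, $L(sd)=L(d)+L(s)$, and both $l(x')<l(x)$, $l(y')<l(y)$. Your discussion of the bookkeeping (checking that $(x',sd,y')$ and $(x',d,y')$ again satisfy the additivity hypotheses) is on point and is exactly what the paper uses.

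The gap is in your ``easy reduction'' (Case A). You take $r\in\mc{L}(y)\setminus\mc{R}(xd)$, write $T_xT_dT_y=T_{xdr}T_{ry}$, and then assert that $r$ commutes with every letter of $d$. This is not true in general: for instance if $d=s_1$, $r=s_2$ with $m_{s_1s_2}=\infty$, and $y=s_2y''$, then $l(dy)=l(d)+l(y)$ and $r\notin\mc{R}(d)$, yet $r$ does not commute with $d$, so $dr\notin D$. You then cannot form a new triple $(x_1,d_1,ry)$ with $d_1\in D$ and $L(d_1)\ge L(d)$, so the induction hypothesis on $ry$ gives only $\deg\le N_0-L(d_1)$ for some possibly smaller $L(d_1)$, which is too weak. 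Peeling letters off $y$ one at a time does not preserve the quantity $L(d)$ you need to track.

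The paper avoids this entirely by not introducing Case A at all. It argues by induction on $l(x)$ (equivalently $l(y)$, or $l(x)+l(y)$) with a single dichotomy: either $l(xdy)=l(x)+l(d)+l(y)$, in which case $\deg T_xT_dT_y=0\le N_0-L(d)$; or not, in which case Lemma \ref{lem:ra-com} applies directly to the pair $(xd,y)$ and hands you the common commuting letter $s$ without any preliminary stripping. Your Case B is essentially this second branch; just drop Case A and you have the paper's proof.
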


\begin{proof}
We use induction on $l (x) $. If $ l(x)=0 $,  the lemma is obvious. Assume that $ l(x)>0 $ and the lemma holds for all $ x' $ such that $ l(x')<l(x) $.

If $ xdy=x\bd d \bd y $,  the lemma is obvious. If $ xdy<x\bd d \bd y $, then by Lemma \ref{lem:xdy},  there exists $ s\in S$, $ x',y'\in W$ such that
 \[
T_{x}T_dT_y=T_{x'}T_dT_{y'}+\xi_s T_{x'}T_{sd}T_{y'}
\]
with  $  x=x'\bd s ,  y=s\bd y' , sd=ds >d .$ By the induction hypothesis,  $$  \deg T_{x'}T_dT_{y'}\leq N_0-L(d), \quad\deg  \xi_s T_{x'}T_{sd}T_{y'}\leq L(s)+N_0-L(sd)  .$$
Hence $  \deg T_{x}T_dT_y\leq N_0-L(d)$.
\end{proof}

\begin{cor}\label{cor:bouned}
For any $ x,y\in W $, we have $ \deg T_x T_y\leq N_0 $.
\end{cor}

\begin{proof}
	Take $ d=e $ in Lemma \ref{lem:bound-ra}.
\end{proof}

In the following, $ N $ is a fixed positive integer such that $ \wnn=\Omega_{>N} $ is $ \prec_{LR} $ closed. Then we can use results in subsection \ref{subsec:basis}.

\begin{lem}\label{lem:lengthadd}
	For  $ d\in D_N,$  $ x\in U_d^{-1} ,$  $y\in U_d$, we have $ xdy=x\bd d\bd y  $.
\end{lem}

\begin{proof}
	Otherwise, by Lemma  \ref{lem:xdy}, there exists some $ s\in S $, $ y'\in W $ such that  $ y=s\bd y' $, $ sd=ds >d$. This implies that $ dy =d\bd s\bd y'\in \Omega_{>N}$, a contradiction with $ y\in U_d $.
\end{proof}

\begin{lem}\label{lem:leqN}
For $ d\in D_{>N} $, we have\[
\nt_d=\sum_{v\in D_{\leq N}, v<d}g_{v,d}\nt_v
\]
for some $ g_{v,d}\in \mc{A} $ such that $ \deg g_{v,d}\leq L(v)-L(d) $.
\end{lem}

\begin{proof}
	Since $ C_d=\sum_{v\leq d} (q^{L(v)-L(d)} )T _v $ and $ d\in D_{>N} $, we have\[
	\nt_d=-\sum_{v<d} (q^{L(v)-L(d)})\nt _v.
	\]
	Note that $ v<d $ implies $ v\in D $, and
	it is possible that $ v\in D_{>N} $. One can use induction on length to prove that \[
	\nt_d=\sum_{v\in D_{\leq N}, v<d}g_{v,d}\nt_v
	\]
	for some $ g_{v,d}\in \mc{A} $ such that $ \deg g_{v,d}\leq L(v)-L(d) $.
\end{proof}

\begin{prop}\label{prop:bound}
	Let $ x,y\in W $, $ d\in D  $  such that $xd=x\bd d$, $ dy=d\bd y $. Then 
	\begin{equation}\label{key}
	\deg \nt_x\nt_d\nt_y\leq N-L(d),
	\end{equation}
 and the equality holds only if $ xd,dy\in\Omega_{\geq N} $.
\end{prop}\label{pro:ra-main}
\begin{proof}
%	 The proof is partly similar to that of Lemma \ref{lem:bound-ra}. 
	 We prove it  by induction on the length of $ x $.
	 
	Assume $ l(x)=0 $  and $ d\in D_{\leq N} $. Then
	\[
	\deg \nt_x\nt_d\nt_y =	\deg \nt_{dy}\leq 0\leq N-L(d).
	\]
	(When $ dy\in W_{>N} $, we need use  \eqref{eq:degree}.)
If the equality $ \deg \nt_x\nt_d\nt_y =  N-L(d)$ holds, then $ d\in D_N $ and $ xd,dy\in \Omega_{ \geq N} $.
	 
	 Assume $ l(x)=0 $  and $ d\in D_{> N} $. Using Lemma \ref{lem:leqN}, \[
	 	 \nt_x\nt_d\nt_y =\sum _{v\in D_{\leq N},v< d}	g_{v,d} \nt_{vy}
	 \]
	with $ g_{v,d} \in\mc{A}$ such that $ \deg g_{v,d} \leq L(v)-L(d) $. Since $$ \deg 	(g_{v,d} \nt_{vy})\leq L(v)-L(d) \leq N-L(d) ,$$ we have
$ 	\deg \nt_x\nt_d\nt_y\leq   N-L(d). $ Since $ d\in D_{>N} $, we always have $ xd,dy\in \Omega_{\geq N} $.

	 Assume now that $ l(x)>1 $ and that the lemma holds for all $ x' $ with $ l(x')<l(x) $.
	%	We prove \eqref{eq:degc} by induction on the length of $ x $

	Assume that $ xdy<x\bd d\bd y $. By Lemma \ref{lem:xdy}, there exists some $ s\in S $, $ x',y'\in W $ such that $ x=x's $, $ y=sy' $, $ sd=ds >d$, and  \[
	\nt_{x}\nt_d\nt_y=\nt_{x'}\nt_d\nt_{y'}+\xi_s \nt_{x'}\nt_{sd}\nt_{y'}.
	\]
	By the induction hypothesis, we  have $$  \deg \nt_{x'}\nt_d\nt_{y'} \leq N-L(d) \text{ and }\deg  \nt_{x'}\nt_{sd}\nt_{y'}  \leq N-L(sd) ,$$ and hence $ \deg \nt_{x}\nt_d\nt_y\leq N-L(d) $. 
	Suppose that  $ \deg \nt_{x}\nt_d\nt_y= N-L(d) $. Then  $$  \deg \nt_{x'}\nt_d\nt_{y'}= N-L(d) \text{ or }\deg  \nt_{x'}\nt_{sd}\nt_{y'}  = N-L(sd) ,$$ which implies that $ x'd\in \Omega_{\geq N} $ or $ x'sd\in \Omega_{\geq N} $ by the induction hypothesis for $ x' $. Hence $ xd=x'sd=x'ds\in \Omega_{\geq N} $.  Similarly, $ y\in\Omega_{\geq N} $.	
	
		Assume  $xdy=x\bd d \bd y$ and $ d\in D_{\leq N} $. Then $\deg \nt_x\nt_d\nt_y \leq 0\leq N-L(d)$.   If  $ \deg \nt_x\nt_d\nt_y= N-L(d) =0$, then $ d\in D_N $ and  $ xd, dy\in\Omega_{\geq N} $.
	%	 in fact $ xd, dy\in\Omega_{ N}  $ since $ xd,dy\in   W_{\leq N} =\Omega_{\leq N}$.
	
	Assume  $xdy=x\bd d \bd y$ and $ d\in D_{> N} $.   Using Lemma \ref{lem:leqN}, \[
	\nt_x\nt_d\nt_y =\sum _{v\in D_{\leq N},v< d}	g_{v,d} \nt_x\nt_v\nt_{y}
	\]
	with $ g_{v,d} \in\mc{A}$ such that $ \deg g_{v,d} \leq L(v)-L(d) $. By the last two paragraphs, we have 	\[
	\deg	(g_{v,d} \nt_x\nt_v\nt_{y})\leq L(v)-L(d)+N-L(v)=N-L(d).
	\]
	Hence $ \deg \nt_x\nt_d\nt_y \leq N-L(d) $. If the  equality holds, then $ xv,vy\in \Omega_{ \geq N} $ for some $ v $, which implies that $xd,dy \in \Omega_{\geq N} $.
	
	This completes the proof.
\end{proof}

\begin{cor}\label{cor:bounded1}
	For any $ x,y\in\wn $, the equality  $ \deg \nt_x\nt_y= N $  holds only if $ x,y\in\Omega_{ N} $. 
\end{cor}
\begin{proof}
	Take $ d=e $ in Proposition \ref{prop:bound}, and note that $ x,y\in\wn= \Omega_{ \leq N} $.
\end{proof}

\begin{cor}\label{cor:bounded2}
Let   $ d\in D_N $, $ y\in U_d $.
\begin{itemize}
	\item [(i)] For $v\leq d,$  $ x\in U_d^{-1},$ we have $  \deg \nt_x\nt_v\nt_y\leq N-L(v)$.
	\item [(ii)] For $v< d,$  $ x\in B_d,$ we have $  \deg \nt_x\nt_v\nt_y< N-L(v)$.
\end{itemize}
\end{cor}
\begin{proof}
First note that for $ v<d $ we have $ v\in D_{<N} $.

Assertion (i) follows immediately from Proposition \ref{prop:bound}.

In assertion (ii), since  $v< d,$  $ x\in B_d$, we have $ xv\in \Omega_{<N} $.This  implies that $  \deg \nt_x\nt_v\nt_y< N-L(v)$ by Proposition \ref{prop:bound}.
\end{proof}

Now we have proved Theorem \ref{thm:right-angled}, see Corollaries \ref{cor:bouned}, \ref{cor:bounded1}, \ref{cor:bounded2} and Lemma \ref{lem:lengthadd}. Then P1-P15 follow from Theorem \ref{thm:generalized}.

%\subsection{Conjectures P1-P15}
%
%By the last subsection and the arguments in section \ref{sec:proof}, one can prove   conjectures P1-P15   for right-angled Coxeter groups.
%
%\subsection{Two-sided cells}
%Contrary to Theorem \ref{th:cell},
%the following example shows that it is not necessary that $ W_N $ contains just  one two-sided cell for (irreducible) right-angled Coxeter groups.

%\begin{ex}
%Let $ (W,S) $ be a Coxeter group such that $ S=\{s_1,s_2,s_3\} $ and $ m_{s_1s_2}=m_{s_2s_3}=\infty $, $ m_{s_1s_3}=2 $. Assume the weight function $ L $ is given by $ L(s_1)=L(s_3)=1 $, $ L(s_2)=2 $. Then one can see that 
%\begin{align*}
%D_0&=\{e\},\\
%D_1&=\{s_1,s_3\},\\
%D_2&=\{ s_2,s_1s_3 \},
%\end{align*}
%and that 
%\begin{align*}
%\Omega_0&=\{e\},\\
%\Omega_1&=\{s_1,s_3\},\\
%\Omega_2&=W\setminus \{e, s_1,s_3\} ,
%\end{align*}
%Hence $ W_1=\Omega_{1} $ contains two two-sided cells, and $ W_2=\Omega_{2} $ just one.
%\end{ex}

%\newpage %\include{rank3compute}
\subsection*{Acknowledgments}
This work is completed during a visit to  the University of Sydney, and it was inspired when preparing a talk for an informal seminar organized by Geordie Williamson.
The author would like to thank Geordie Williamson for helpful discussions,
 and   the University of Sydney for  hospitality.

 The  author is  supported by NSFC Grants No.11601116 and No. 11801031, and Beijing Institute of Technology Research Fund Program for Young Scholars.

\bibliography{complete2}

\end{document}